\newtheorem{theoremalph}{Theorem}
\newtheorem{Theorem}{Theorem}[section]
\newtheorem*{Theorem A}{Theorem A}
\newtheorem*{Thm}{Theorem}
\newtheorem*{Conjecture}{Conjecture}
\newtheorem{Definition}[Theorem]{Definition}
\newtheorem{Proposition}[Theorem]{Proposition}
\newtheorem{Lemma}[Theorem]{Lemma}
\newtheorem*{Remark}{Remark}
\newtheorem{Corollary}[Theorem]{Corollary}
\newtheorem*{Claim}{Claim}
 \def\NN{{\mathbb N}} 
 \def\RR{{\mathbb R}} 
\def\TT{{\mathbb T}}
   \def\cN{{\cal N}} 
    \def\cU{{\cal U}}
    \def\cX{{\cal X}}
\def\triangleq{\stackrel{\triangle}{=}}
\def\dim{\operatorname{dim}}
\def\Sing{\operatorname{Sing}}
\def\orb{\operatorname{Orb}}
\def\e{{\varepsilon}}
\begin{document}

\title{Morse-Smale systems and horseshoes for three dimensional singular flows}

\author{Shaobo Gan \and Dawei Yang\footnote{D. Yang is the corresponding author. S. G. is supported by 973 project 2011CB808002, NSFC 11025101 and 11231001. D.Y. was partially supported by NSFC 11001101, Ministry of Education of P. R. China 20100061120098.}}

\date{}

\maketitle


\begin{abstract}
We prove that for every three-dimensional vector field, either it can be accumulated by Morse-Smale ones, or it can be accumulated by ones with a transverse homoclinic intersection of some hyperbolic periodic orbit in the $C^1$ topology.

\end{abstract}

\section{Introduction}

\subsection{The main result}

One of the main subjects in differentiable dynamical systems is to describe the dynamics of ``most'' dynamical
systems. These theories were established in the last century. See \cite{Ano05} for instance. An important progress is due to Peixoto \cite{Pei62}:
\begin{Thm}[Peixoto]
Assume that $M^2$ is a closed surface. A $C^1$ vector field on $M^2$ is $C^1$ structurally stable vector field iff it is Morse-Smale and every
vector field could be accumulated by a structurally stable one in the $C^1$ topology.
%
\end{Thm}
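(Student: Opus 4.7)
The plan is to separate the two assertions: (i) structural stability is equivalent to being Morse-Smale, and (ii) structurally stable vector fields are $C^1$-dense in the space $\mathfrak{X}^1(M^2)$.

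For (i), the easier implication is Morse-Smale $\Rightarrow$ structurally stable. I would proceed by first using the Grobman-Hartman theorem to get a local conjugacy near each hyperbolic singularity and each hyperbolic periodic orbit (via a Poincaré section). Because the non-wandering set $\Omega(X)$ consists only of finitely many hyperbolic singularities and periodic orbits and all invariant manifolds meet transversally, one can patch the local conjugacies along stable and unstable manifolds, and then extend to wandering orbits by a fundamental-domain construction; the lack of saddle connections guarantees that the gluing is unambiguous. For the reverse implication, I would argue by contradiction: a non-hyperbolic singularity or periodic orbit can be perturbed to change its topological type (a parabolic fixed point can be split or annihilated, the eigenvalue condition can be altered by a linear tweak); accumulation of periodic orbits can be produced or destroyed by a small perturbation; and a saddle connection in dimension two can be broken by a $C^1$-small bump supported in a flow-box, moving one separatrix off the other. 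Each of these operations contradicts structural stability.

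For (ii), the density statement, the strategy is: start with an arbitrary $X \in \mathfrak{X}^1(M^2)$ and successively perturb. First invoke the Kupka-Smale theorem to replace $X$ by a $C^1$-close field whose singularities and periodic orbits are all hyperbolic and whose invariant manifolds meet transversally. In dimension two, transverse intersection of two one-dimensional manifolds is generic and the stable/unstable manifolds of saddles are curves, so after this step the only pathologies that can prevent Morse-Smale are (a) the presence of non-trivial recurrent orbits, (b) saddle connections, and (c) infiniteness of singularities or periodic orbits. The Poincaré-Bendixson theorem restricts $\omega$- and $\alpha$-limit sets on $M^2$ to singularities, periodic orbits, or graphics of saddle separatrices, which is the key two-dimensional input that controls the global picture.

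The main technical obstacle is handling (a): non-trivially recurrent orbits must be eliminated by a closing-type argument. I would invoke Peixoto's own closing lemma on surfaces (a $C^1$-perturbation converts a non-trivially recurrent orbit into a periodic orbit), which uses the two-dimensional geometry in an essential way via the existence of transverse arcs and the orientation structure. After closing, one treats the newly produced periodic orbits by Kupka-Smale again to make them hyperbolic. For (b), a saddle connection on a surface can be broken by a flow-box perturbation transverse to the connecting separatrix, and one checks by a priori estimates that only finitely many such breakings are needed, and that they can be performed without creating new recurrence thanks to the control given by Poincaré-Bendixson. Finally, (c) follows because hyperbolic singularities and hyperbolic periodic orbits are isolated and $M^2$ is compact, so their number is automatically finite.

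The hardest step is unquestionably the closing lemma part, since one must close a recurrent orbit by a $C^1$-perturbation while simultaneously preserving hyperbolicity of the already-established critical elements and not accidentally creating new saddle connections. The secondary difficulty is the bookkeeping needed to show that, after an inductive sequence of perturbations (closing, hyperbolizing, breaking connections), the process terminates and the final vector field is genuinely Morse-Smale and still $C^1$-close to the original $X$.
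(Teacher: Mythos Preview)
The paper does not prove this statement. Peixoto's theorem is quoted in the introduction as a classical result, with a reference to \cite{Pei62}, purely to motivate the main results of the paper on three-dimensional flows; no proof or sketch is given. Consequently there is nothing in the paper to compare your proposal against.

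That said, your outline follows the standard strategy one finds in textbook treatments: Kupka--Smale to hyperbolize critical elements, Poincar\'e--Bendixson to constrain limit sets, a closing argument to eliminate nontrivial recurrence, and flow-box perturbations to break saddle connections. One caution: the statement in the paper says ``closed surface'' without qualification, but Peixoto's original 1962 argument is for \emph{orientable} surfaces, and the non-orientable case has a separate and considerably more delicate history (the closing step is where orientability is used, via the structure of returns to a transversal). If you intend a full proof matching the stated generality, you would need to either restrict to orientable $M^2$ or invoke the later work extending the result.
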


Smale was interested in the generalization of Peixoto's result and he asked whether Morse-Smale vector fields are dense in the space of vector fields.
Soon, Levinson and Thom pointed out that Morse-Smale vector fields would not be dense (without a rigorous proof). See \cite{Ano05}. Smale
noticed the point and he constructed his famous horseshoe (for two dimensional diffeomorphisms or
three-dimensional vector fields) \cite{Sma65} which shows that the dynamics may be very complicated and
Morse-Smale systems would not be dense in the space of diffeomorphisms or vector fields. As in \cite[Page
16]{Ano05}: ``At that moment the world turned upside down \dots, and a new life began''.

Actually, Poincar\'e found an important phenomenon in his famous work \cite{Poi90} on celestial mechanics, which
was called ``doubly asymptotical solution''. Nowadays mathematicians call it \emph{transverse homoclinic
intersection}. Smale found that his horseshoe is closely related to transverse homoclinic intersections. Three classical results are known:
\begin{itemize}

\item Poincar\'e showed that transverse homoclinic intersections can survive under small perturbations.
Moreover, if a system has one transverse homoclinic intersection, then it has infinitely many transverse
homoclinic intersections \cite{Poi90}.

\item Birkhoff showed that if a plane system has one transverse homoclinic intersection, then it has infinitely many hyperbolic periodic orbits \cite{Bir35}.

\item Smale proved that the existence of transverse homoclinic intersection is equivalent to the existence of horseshoe \cite{Sma65}.

\end{itemize}

Hence there are two kinds of typical dynamical systems: Morse-Smale system or
a system with a horseshoe. Their dynamical behavior is quite different:
\begin{itemize}

\item The dynamics of Morse-Smale system is very simple: the chain recurrent set of a Morse-Smale system is a set containing finitely many hyperbolic periodic orbits or singularities. The topological entropy is robustly zero.

\item The dynamics of a system with a horseshoe is very complicated: its chain recurrent set contains a non-trivial basic set with dense periodic orbits. The topological entropy is robustly positive.

\end{itemize}

Is there other typical dynamics beyond the above two ones? Palis formulated the idea for
diffeomorphisms, and he conjectured that
\begin{Conjecture}[Palis \cite{Pal00,Pal05,Pal08}]
Every system can be approximated either by Morse-Smale systems or by systems exhibiting a horseshoe (non-trivial hyperbolic
basic set).

\end{Conjecture}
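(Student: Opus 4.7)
The plan is to argue by contraposition: assume $X$ is a three-dimensional vector field that cannot be $C^1$-approximated by vector fields with a transverse homoclinic intersection of a hyperbolic periodic orbit, and show it is $C^1$-approximable by Morse-Smale vector fields. By the $C^1$ Kupka-Smale theorem and standard generic-dynamics technology (Bonatti-Crovisier-type residuality), I may in addition restrict $X$ to a $C^1$-residual subset of the open set of vector fields without robustly-approximable transverse homoclinic intersections; on such a subset all critical elements are hyperbolic with transverse invariant-manifold intersections, and the chain recurrent set is the Hausdorff limit of periodic orbits and singularities. Morse-Smaleness then reduces to showing that every chain recurrence class of $X$ is trivial (a single hyperbolic periodic orbit or singularity) and that there are only finitely many of them; finiteness will follow once each class is trivial, by a standard no-cycles argument using Kupka-Smaleness.

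So suppose toward contradiction that some chain recurrence class $C$ is nontrivial. \textbf{Case 1: $C$ contains no singularity.} Here the strategy is a flow version of the Ma\~n\'e-Pujals-Sambarino dichotomy. First, I argue that $C$ must carry a dominated splitting transverse to the flow: otherwise, by Ma\~n\'e's perturbation lemma combined with the Hayashi / Wen-Xia connecting lemma for flows, I can produce a periodic orbit in $C$ with complex non-real eigenvalues that can be further perturbed (via Franks' lemma for flows) into a homoclinic tangency, which then unfolds generically to a transverse homoclinic intersection - contradicting the standing assumption. With a dominated splitting in place, if the splitting is in fact uniformly hyperbolic then $C$ is a nontrivial hyperbolic basic set and already contains transverse homoclinic intersections. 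If instead one of the bundles fails to be uniformly contracting/expanding, a Pujals-Sambarino-type argument again produces, after perturbation, a non-hyperbolic periodic orbit in $C$ whose bifurcation yields a transverse homoclinic.

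\textbf{Case 2: $C$ contains a singularity $\sigma$.} This is where I expect the main obstacle. Under Kupka-Smaleness $\sigma$ is a hyperbolic saddle, but in three dimensions a nontrivial $C$ through $\sigma$ can exhibit genuinely singular Lorenz-like recurrence, so the Pujals-Sambarino scheme does not apply directly. The plan is to force the dominated splitting on the regular part of $C$ to extend across $\sigma$ into a singular-hyperbolic splitting in the sense of Morales-Pacifico-Pujals; here one must carefully match the index of the splitting to the eigenvalue structure at $\sigma$ and use the absence of approximable transverse homoclinics to exclude degenerate eigenvalue configurations at $\sigma$ via generic perturbations supported near the singularity. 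Once singular hyperbolicity is established, periodic orbits $\gamma_n$ accumulating on $\sigma$ (produced by the connecting lemma along recurrent regular orbits through a neighborhood of $\sigma$) have very long periods and invariant manifolds that, after a small perturbation, can be arranged to intersect transversally, producing the forbidden transverse homoclinic.

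The principal technical obstacle is precisely this singular case: the interaction between regular recurrence and singularities is the reason three-dimensional flows differ sharply from surface diffeomorphisms, and every step (connecting lemma, Franks' lemma, Pujals-Sambarino dichotomy) has to be revisited in a form robust to the presence of $\sigma$. The heart of the argument is therefore the \emph{singular extension} step together with the construction of transverse homoclinic intersections from singular-hyperbolic chain classes; once this is established, both cases contradict the hypothesis and every nontrivial chain class is excluded, leaving $X$ in the Morse-Smale closure.
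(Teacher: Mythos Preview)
The statement as labeled is a conjecture; what the paper actually proves is the three-dimensional flow case (Theorem~A), and your proposal is an attempt at that. Your overall reduction---restrict to a generic $X$ far from transverse homoclinics (hence far from tangencies), show every chain recurrence class is trivial, split into non-singular and singular cases---matches the paper's architecture, and your non-singular Case~1 is essentially right (the paper streamlines it via Lemma~\ref{Lem:dominatedwithoutsingularity}: for generic $X$, a non-singular chain transitive set with a dominated splitting for $\psi_t$ is automatically hyperbolic, so no Pujals--Sambarino sub-case analysis is needed).

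The genuine gap is in Case~2. You propose to ``force the dominated splitting on the regular part of $C$ to extend across $\sigma$ into a singular-hyperbolic splitting'' and then extract periodic orbits with transverse homoclinics. But this reverses the actual logical dependence. What one can obtain directly (and this already takes the whole of Section~\ref{Sec:lorenz-like}: Lorenz-like eigenvalue configuration, $W^{ss}(\sigma)\cap C=\{\sigma\}$, Liao's shadowing and sifting) is only \emph{partial} hyperbolicity $E^{ss}\oplus E^{cu}$ with $E^{ss}$ contracting; there is no mechanism, from the mere absence of approximable homoclinics, forcing $E^{cu}$ to be area-expanding. Upgrading to singular hyperbolicity (Theorem~\ref{Thm:partialtosingular}, from \cite{BGY11}) requires \emph{already knowing} that $C$ contains a periodic orbit. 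So the real obstacle is: given a partially hyperbolic Lyapunov-stable class $C(\sigma)$ \emph{with no periodic orbits}, derive a contradiction. This is the content of Section~\ref{Sec:birthhomo}: one builds a cross-section system adapted to $C(\sigma)$, locates a minimal non-singular-hyperbolic subset carrying an ergodic measure with nonpositive $E^{cu}$-Jacobian, and through a sequence of connecting-lemma and Franks-lemma perturbations (maximizing the number of singular homoclinic orbits, then exploiting the combinatorics of how separatrices return to the cross-sections) produces a weak Kupka--Smale $Z$ near $X$ with a sink whose basin closure meets $C(\sigma_Z)$---contradicting robust Lyapunov stability. Your proposal acknowledges the singular case is the crux but supplies no substitute for this argument; the ``singular extension'' step you describe is not available, and the periodic orbits you invoke via the connecting lemma live in perturbations, not in $C$, so they do not feed back into establishing singular hyperbolicity of $C$ itself.
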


In this paper, we manage to prove such kind of results for three dimensional vector fields.

\begin{theoremalph}[Main Theorem]\label{Thm:main}

Every three dimensional vector field can be $C^1$ approximated by Morse-Smale ones or by ones exhibiting a Smale horseshoe. In other words, Morse-Smale vector fields and vector fields with a non-trivial hyperbolic set (Smale horseshoe) form a dense open set in the space of $C^1$ three dimensional vector fields.

\end{theoremalph}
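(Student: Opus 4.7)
The plan is to argue by contradiction inside a suitable $C^1$-residual subset of $\cX^1(M^3)$: if $X$ is approximated neither by Morse-Smale vector fields nor by vector fields carrying a transverse homoclinic intersection of some hyperbolic periodic orbit, a contradiction will be derived. Thanks to Smale's theorem recalled in the introduction, producing a horseshoe is equivalent to producing such a transverse homoclinic intersection, and the latter is persistent under $C^1$-small perturbations; so it suffices to work with transverse homoclinic intersections throughout.

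First I would pass to a dense $G_\delta$ subset $\cR\subset\cX^1(M^3)$ on which standard generic properties hold: (i) every critical element is hyperbolic and intersections of invariant manifolds are transverse (Kupka-Smale); (ii) the chain recurrent set decomposes into chain recurrence classes, and the class of any hyperbolic critical element coincides with its homoclinic class, with periodic orbits dense inside it (a consequence of the connecting lemma for pseudo-orbits of Bonatti-Crovisier); (iii) every ergodic invariant measure is weak-$*$ approximated by periodic measures with converging Lyapunov exponents (Ma\~n\'e's ergodic closing lemma in its flow version). If $X\in\cR$ already possesses a transverse homoclinic point we are done; otherwise no hyperbolic periodic orbit of $X$ has one.

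Under these assumptions I would show that every chain recurrence class of $X$ is reduced to a single hyperbolic periodic orbit or a single hyperbolic singularity. Suppose $C$ is a class containing a hyperbolic periodic orbit $P$ but not reduced to $P$. Then (ii) yields, after an arbitrarily small $C^1$-perturbation, a homoclinic orbit $\gamma\subset W^s(P)\cap W^u(P)$. By hypothesis $\gamma$ must correspond to a tangency of $W^s(P)$ and $W^u(P)$; a generic $C^1$-unfolding of this tangency, together with Hayashi's connecting lemma, then produces a transverse homoclinic intersection of $P$, contradicting the standing assumption. Hence every chain recurrence class with a periodic orbit consists of that orbit alone, and hyperbolicity forces finitely many such classes. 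A last generic perturbation making every heteroclinic intersection between them transverse promotes $X$ to a Morse-Smale vector field.

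The main obstacle, and the step where three-dimensionality is crucially used, is to rule out a chain recurrence class $C$ containing a singularity $\sigma$ together with regular recurrent orbits (Lorenz-type behavior). Here $\sigma$ cannot be removed from $C$ by a perturbation, and the Liao extended linear Poincar\'e flow on the regular part of $C$ need not admit a dominated splitting. The strategy I would pursue is: apply the ergodic closing lemma inside $C$ to extract a hyperbolic periodic orbit $Q\subset C$; use Hayashi's connecting lemma at $\sigma$, together with the fact that $W^u(\sigma)$ and $W^s(\sigma)$ recur inside $C$, to create heteroclinic connections between $Q$ and $\sigma$; then unfold these connections (and any resulting tangencies) into a transverse homoclinic intersection of $Q$. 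The hardest point is to perform these perturbations in a way compatible with the hyperbolic splitting at $\sigma$ (whose stable index may differ from that of $Q$), without prematurely destroying the class $C$ or the hyperbolic behaviour of $Q$; I expect this singular-class analysis to be the distinctive and most delicate content of the proof.
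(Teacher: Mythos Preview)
Your overall architecture matches the paper's: both reduce Theorem~A to showing that for a generic $X$ far from horseshoes (hence far from $\overline{\cal HT}$) every chain recurrence class is a single hyperbolic critical element, and then use local maximality to rule out infinitely many of them. The non-singular analysis is roughly right, though the paper is more direct: for generic $X$ one already has $C(P)=H(P)$, so a non-trivial class containing a periodic orbit $P$ \emph{already} carries a transverse homoclinic point of $P$ --- no unfolding of tangencies is needed. You also skip non-singular aperiodic classes; the paper dispatches these by observing that such a class is a Hausdorff limit of periodic orbits (Crovisier), which inherit a uniform dominated splitting of the linear Poincar\'e flow since $X\notin\overline{\cal HT}$, and hence the limit is hyperbolic and contains periodic orbits by shadowing --- contradiction.

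The genuine gap is your treatment of a non-trivial class $C(\sigma)$ containing a singularity. Your proposed route (ergodic closing lemma to find a saddle $Q\subset C$, then connecting lemma to build $Q\leftrightarrow\sigma$ connections, then unfold into a transverse homoclinic of $Q$) does not work as stated and is not what the paper does. The basic obstruction is that $C(\sigma)$ may be \emph{aperiodic}: the ergodic closing lemma only produces periodic orbits for \emph{nearby} vector fields, and nothing guarantees these are saddles (they may well be sinks --- this is precisely what the paper exploits) or that they sit in the continuation $C(\sigma_Y)$; moreover the index mismatch between $\sigma$ and a putative $Q$ makes the ``unfold to a homoclinic'' step entirely unclear. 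The paper's mechanism is completely different. One first proves (via the dominated splitting of the linear Poincar\'e flow, Liao's scaled estimates and shadowing, and a mixed-domination lemma passing from $\psi_t$ to $\Phi_t$) that $C(\sigma)$ is partially hyperbolic $E^{ss}\oplus E^{cu}$, that every singularity inside is Lorenz-like, and that $W^{ss}(\rho)\cap C(\sigma)=\{\rho\}$. Then, assuming $C(\sigma)$ contains no periodic orbit, one builds a singular cross-section system, isolates a minimally non-singular-hyperbolic transitive subset, analyses homoclinic loops of singularities on the cross-sections, and produces by perturbation a weak-Kupka-Smale $Y$ with a \emph{sink} whose basin closure meets $C(\sigma_Y)$. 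Since $C(\sigma_Y)$ is shown to be robustly Lyapunov stable for weak-Kupka-Smale perturbations, this is impossible. So the contradiction comes not from manufacturing a saddle homoclinic, but from a sink basin touching a Lyapunov-stable singular class --- an entirely different mechanism from the one you sketch, and one you should not expect to shortcut.
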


Important progress has been made for the conjecture of Palis for diffeomorphisms: in $C^1$ topology, Pujals-Sambarino \cite{PuS00} proved it for two-dimensional diffeomorphisms (as a corollary of a stronger result); Bonatti-Gan-Wen \cite{BGW07} gave a prove for three-dimensional diffeomorphisms; and finally Crovisier \cite{Cro10} proved the conjecture for \emph{any}-dimensional diffeomorphisms.

Comparing with diffeomorphism case, singularities of vector fields bring more difficulties. This prevents one to use some techniques of diffeomorphisms to singular vector fields, such as
Crovisier's central model. By considering the sectional Poincar\'e maps of the flows, sometimes one can get some (not all)
similar properties between $d$-dimensional vector fields and $(d-1)$-dimensional diffeomorphisms. But singular
vector field displays different dynamics, e.g., the famous \emph{Lorenz
attractor} \cite{Lor63}. In the spirit of Lorenz attractor, \cite{ABS77,Guc76,GuW79} constructed \emph{geometric
Lorenz attractor} in a theoretical way. Roughly, geometric Lorenz attractor is a robust attractor of
three-dimensional vector field, and it contains a hyperbolic singularity which is accumulated by hyperbolic
periodic orbits in a robust way.

Lorenz attractor is not hyperbolic because of the existence of singularity. On the other hand, Ma\~n\'e \cite{Man82} showed
that a robust attractor of a surface diffeomorphism is hyperbolic. This implies that the dynamics of 3-dimensional singular flows
are different from 2-dimensional diffeomorphisms.
Morales-Pacifico-Pujals \cite{MoP03,MPP98,MPP04} studied geometric Lorenz attractor in an abstract
way. They found the right concept, i.e., \emph{singular hyperbolicity}, to describe the hyperbolicity of Lorenz attractor, and they proved that a robust transitive set of a three-dimensional vector field is singular hyperbolic. But the dynamics of singular hyperbolic set are not as clear as hyperbolic set. For instance, there is no shadowing lemma of Anosov-Bowen type.

The dynamics of general transitive sets with singularities for three-dimensional vector fields are even more unclear for us than singular hyperbolic sets, even if the transitive sets have some dominated splitting with respect to the linear Poincar\'e flow. These are main difficulties that we encounter. For a non-trivial transitive set \emph{without singularities} of a generic any-dimensional vector field, one can adapt Crovisier's central model \cite{Cro10} to get a transverse homoclinic intersection of a hyperbolic
periodic orbit.

Let us be more precise. Let $M^d$ be a $d$-dimensional $C^\infty$ compact Riemannian manifold without boundary. Denote by ${\cal X}^1(M^d)$ the space of $C^1$ vector fields on $M^d$. Given $X\in{\cal X}^1(M^d)$, denote by $\phi_t=\phi^X_t$ the $C^1$ flow generated by $X$ and $\Phi_t={\rm d}\phi_t: TM^d\to TM^d$ the tangent flow on the tangent bundle $TM^d$. If $X(\sigma)=0$, then $\sigma$ is called a \emph{singularity} of $X$. Other points are called \emph{regular}. Let ${\rm
Sing}(X)$ be the set of singularities of $X$. For a regular point $p$, if $\phi_t(p)=p$ for some $t>0$, then $p$ is called \emph{periodic}. Let ${\rm Per}(X)$ be the set of periodic points of $X$. If
$x\in{\rm Sing}(X)\cup{\rm Per}(X)$, then $x$ is called a \emph{critical point} of $X$ and $\orb(x)$ is called a \emph{critical orbit} or \emph{critical element} of $X$.

For an invariant set $\Lambda$ and a $\Phi_t$-invariant bundle $E\subset T_\Lambda M^d$, we say that $E$ is \emph{contracting} (w.r.t. the tangent flow $\Phi_t$) if there are constants $C\ge 1, \lambda>0$ such that $\|\Phi_t|_{E(x)}\|\le C{\rm e}^{-\lambda t}$ for every $x\in \Lambda$ and $t\ge 0$; we say that $E$ is \emph{expanding} if it is contracting for $-X$.

An invariant set $\Lambda$ of $X$ is \emph{hyperbolic} if $TM^d$ has
continuous $\Phi_t$-invariant splitting
$$
T_{\Lambda}M^d= E^s\oplus \langle X\rangle\oplus E^u
$$
(where the fibre $\langle X(x)\rangle$ at $x$ is 0-dimensional or 1-dimensional according to
$x$ is a singularity or not), such that $E^s$ is contracting and $E^u$ is expanding.
If $\dim E^s$ is independent of $x\in\Lambda$, then $\dim E^s$ is called the \emph{index} of $\Lambda$.

For a critical point $x$, if $\orb(x)$ is a hyperbolic set, then we say that $x$ or $\orb(x)$ is hyperbolic. One
can define its index as the index of the hyperbolic set $\orb(x)$.

Recall that a $C^1$ vector field $X$ is \emph{Morse-Smale} if the non-wandering set $\Omega(X)$ of $X$ consists
of only finitely many hyperbolic critical elements and their stable and unstable manifolds intersect
transversely. We use ${\cal MS}$ to denote the set of Morse-Smale vector fields in ${\cal X}^1(M^d)$. For a
hyperbolic periodic orbit $\gamma$, define
\begin{eqnarray*}
W^s(\gamma)&=&\{x\in M^d: \lim_{t\to+\infty}d(\phi_t(x),\gamma)=0.\}\\
W^u(\gamma)&=&\{x\in M^d: \lim_{t\to-\infty}d(\phi_t(x),\gamma)=0.\}
\end{eqnarray*}

We know (\cite{HPS77}) that $W^s(\gamma)$ and $W^u(\gamma)$ are submanifolds, which are called the stable and unstable manifolds of $\gamma$. If $W^s(\gamma)\pitchfork W^u(\gamma)\setminus\gamma\neq\emptyset$, then one says that $\gamma$ has a \emph{transverse homoclinic orbit}\footnote{Singularities cannot have transverse homoclinic intersections.}. One says that \emph{$X$ has a transverse homoclinic
orbit} if for some hyperbolic periodic orbit $\gamma$ of $X$, $\gamma$ has a transverse homoclinic orbit. Recall that: Birkhoff-Smale theorem asserts that the existence of transverse homoclinic orbits is equivalent to the
existence of Smale's horseshoe (non-trivial hyperbolic basic set). We denote:
$${\cal HS}=\{X\in{\cal X}^1(M^d):~X~\textrm{has a transverse homoclinic orbit}\}.$$
One can restate Theorem A as:

\indent{\bf ${\cal MS}\cup{\cal HS}$ is open and dense in ${\cal X}^1(M^3)$.}

\subsection{More on three-dimensional flows}

Given a vector field $X$, let $\phi_t$ be the flow generated by $X$. For any $\varepsilon>0$,
$\{x_0,x_1,\cdots,x_n\}$ is called an \emph{$\varepsilon$-chain}(or $\varepsilon$-pseudo-orbit) from $x_0$ to $x_n$ if there are $t_i\ge 1$ such that
$d(\phi_{t_i}(x_i),x_{i+1})<\varepsilon$ for any $0\le i\le n-1$. For $x,y\in M^d$, one says that $y$ is
\emph{chain attainable} from $x$ if for any $\varepsilon>0$, there is an $\varepsilon$-chain
from $x$ and $y$. If $x$ is chain attainable from itself, then $x$ is called a
\emph{chain recurrent point}. The set of chain recurrent points is called \emph{chain recurrent set} of $X$, and denoted by ${\rm CR}(X)$. Chain bi-attainability is a closed equivalence relation in ${\rm CR}(X)$. For each $x\in{\rm CR}(X)$, the equivalent class containing $x$ is called the chain recurrent class of $x$, and denoted by $C(x)$ or $C({\rm Orb}(x))$. These are standard by Conley's theory \cite{Con78}.

Bonatti and Crovisier \cite{BoC04} extended the $C^1$ connecting lemma to pseudo-orbits. An application of their result gave a useful classification of chain recurrent classes for \emph{$C^1$-generic} vector fields\footnote{Their results are stated for diffeomorphisms. The proof can be adapted to the case of vector field by a parallel way.}: if a chain recurrent class contains a periodic orbit, then it is the \emph{homoclinic class} of this periodic orbit; otherwise, it is called an \emph{aperiodic class}. Here, the \emph{homoclinic class} of a hyperbolic periodic orbit is defined to be the closure of all transverse homoclinic orbits of this periodic orbit.

Hyperbolic periodic orbits may have non-transverse homoclinic intersections, which are called \emph{homoclinic tangencies}. Newhouse \cite{New70,New74,New79} studied the bifurcations of homoclinic tangencies crucially, which generate rich dynamics. Newhouse phenomena give typical dynamics beyond uniformly hyperbolic dynamics. There are many results for diffeomorphisms far away from ones with a homoclinic tangency. One can see the introduction of \cite{CSY11}.

In this work, we can prove that every \emph{non-trivial} chain recurrent class is a homoclinic class for $C^1$ generic vector fields which are far away from homoclinic tangencies. Here, a chain recurrent class is called \emph{non-trivial} if it is not reduced to be a critical orbit.
\begin{theoremalph}\label{Thm:homoclinicclass}
There is a dense ${G}_\delta$ set ${\cal R}\subset {\cal X}^1(M^3)$ such that, for every $X\in{\cal R}$, if $X$ cannot be accumulated by ones with a homoclinic tangency, then every non-trivial chain recurrent class of $X$ is a homoclinic class.

\end{theoremalph}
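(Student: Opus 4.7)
The plan is to reduce Theorem B to producing a periodic orbit inside every non-trivial chain recurrent class, by combining Bonatti–Crovisier's connecting lemma for pseudo-orbits with a hyperbolicity argument adapted to three-dimensional singular flows.

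First, the desired $G_\delta$ set $\cR$ will be the intersection of several standard residual subsets of $\cX^1(M^3)$: the Kupka–Smale $G_\delta$; the Bonatti–Crovisier residual $\cR_0$ of \cite{BoC04}, for which every chain recurrent class of $X\in\cR_0$ that contains a hyperbolic periodic orbit coincides with the homoclinic class of that orbit; and a countable collection of generic semi-continuity conditions on the maps $X\mapsto \CR(X)$, $X\mapsto \overline{\Per(X)}$ and $X\mapsto \Sing(X)$ over a countable base of open sets. With such $\cR$ in hand, Theorem B reduces to the following claim: if $X\in\cR$ is not $C^1$-accumulated by vector fields with a homoclinic tangency, then every non-trivial chain recurrent class $C$ of $X$ contains a periodic orbit.

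The non-singular case $C\cap \Sing(X)=\emptyset$ is the easier one. On such a compact invariant set the linear Poincaré flow $\psi_t$ is well defined on the $2$-dimensional normal bundle $N$ over $C$. Since $X$ is far from tangencies, an adaptation of the Bonatti–Gan–Wen analysis of three-dimensional diffeomorphisms, carried out via $\psi_t$ as in the theory of star flows, should yield a dominated splitting $N=N^s\oplus N^u$ over $C$. A Liao–Pliss selection then produces regular points in $C$ with uniform contraction and expansion estimates for $\psi_t$, and Hayashi's flow connecting lemma closes such points into a hyperbolic periodic orbit inside $C$. The property defining $\cR_0$ then identifies $C$ with the corresponding homoclinic class.

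The main obstacle, which drives the whole paper, is the singular case: $C$ is non-trivial and contains some $\sigma\in\Sing(X)$. The linear Poincaré flow blows up at $\sigma$, Crovisier's central model does not apply directly, and the Morales–Pacifico–Pujals singular-hyperbolic theory only gives partial information because $C$ need not be robustly transitive. The plan is to first propagate a dominated splitting for $\psi_t$ from periodic orbits whose orbits approach $C$ (again invoking the far-from-tangency hypothesis to exclude small-angle configurations), and then, in the spirit of singular-hyperbolic arguments, to show that this splitting extends continuously across the regular part of $C$ and is compatible with the hyperbolic splitting at $\sigma$, essentially forcing the regular part of $C$ to have a singular-hyperbolic or partially-hyperbolic structure on the side where $\sigma$ is accumulated by the dynamics of $C$. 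A careful Pliss-type argument applied to pseudo-orbits in $C$ that enter and leave a small neighbourhood of $\sigma$, controlling precisely the contribution of the transit time near the singularity, should then provide regular points of $C$ with uniform hyperbolicity estimates for $\psi_t$; Hayashi's connecting lemma applied to such points yields a hyperbolic periodic orbit inside $C$, and $\cR_0$ finishes the proof. The hard technical heart is precisely the control of $\psi_t$ through $\sigma$ in the Pliss step, and this is where three-dimensionality and the far-from-tangency hypothesis are used in an essential way.
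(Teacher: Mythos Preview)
Your reduction and the non-singular case are essentially on the right track; the paper handles $C\cap\Sing(X)=\emptyset$ by showing such a $C$ is hyperbolic (via a dominated splitting for $\psi_t$ coming from nearby periodic orbits, as you suggest), and then periodic orbits are produced by the shadowing lemma.

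The singular case, however, has a genuine gap. Your plan is to run a Pliss-type selection on orbit segments in $C$ that cross a neighbourhood of $\sigma$, obtain points with uniform $\psi_t$-estimates, and then close them with Hayashi's connecting lemma to produce a periodic orbit \emph{inside} $C$. The difficulty is that neither Liao's shadowing lemma nor the connecting lemma places the resulting periodic orbit in $C(\sigma)$: they only give a periodic orbit \emph{close} to the selected orbit arc. In the non-singular case this is harmless because $C$ is shown to be hyperbolic, so nearby periodic orbits are forced into $C$; but in the singular case $C(\sigma)$ is not hyperbolic, and there is no mechanism to pull the shadowed orbit into the class. The paper does use exactly the Liao sifting/shadowing machinery you have in mind (Proposition~\ref{Pro:mixingdominated}), but its output is either a partially hyperbolic splitting $E^{ss}\oplus F$ on $C(\sigma)$ for the tangent flow, or a sequence of periodic orbits whose homoclinic classes merely \emph{intersect} $C(\sigma)$ --- which already finishes the proof in that branch. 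It does not attempt to place a periodic orbit directly inside $C(\sigma)$ by closing.

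The step you are missing is what the paper does once partial hyperbolicity $T_{C(\sigma)}M^3=E^{ss}\oplus E^{cu}$ is established and $C(\sigma)$ is assumed aperiodic (Section~\ref{Sec:birthhomo}). The argument is an indirect one by perturbation: one shows that $C(\sigma)$ is robustly Lyapunov stable among weak Kupka--Smale vector fields (Lemma~\ref{Lem:stablepropertyofquasiattractor}), builds an adapted cross-section system near the Lorenz-like singularities, and then, working inside a minimally non-singular-hyperbolic transitive subset $\Lambda_Y\subset C(\sigma_Y)$ for nearby $C^2$ weak Kupka--Smale $Y$, performs a delicate sequence of perturbations (using the ergodic closing lemma, Franks' lemma, and a combinatorial analysis of homoclinic connections of the singularities) to produce a weak Kupka--Smale $Z$ close to $X$ carrying a periodic \emph{sink} whose basin closure meets $C(\sigma_Z)$. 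This contradicts the robust Lyapunov stability. None of this is captured by ``Pliss plus Hayashi''; the cross-section construction, the $\cN$-set argument, and the creation of a sink whose basin touches the class are the technical heart of the singular case and must be supplied.
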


Theorem~\ref{Thm:homoclinicclass} is stronger than Theorem~\ref{Thm:main}. We will see this point in Section~\ref{Sec:reduction}.

An important conjecture made by Palis for surface diffeomorphisms is: every two-dimensional diffeomrophism can be accumulated either by ones with a homoclinic tangency, or by uniformly hyperbolic ones. This was proved by Pujals-Sambarino \cite{PuS00} in the $C^1$ topology. For three-dimensional vector fields, as mentioned in \cite{Pal91}, excluding homoclinic tangencies and uniform hyperbolic systems, the typical dynamics may include the homoclinic orbits of singularities or Lorenz-like attractors. Arroyo-Rodriguez \cite{ArR03} proved the conjecture of Palis if homoclinic orbits of singularities were involved for three-dimensional vector fields. It is still an open problem about the density of Lorenz-like attractors or repellers beyond uniform hyperbolicity and homoclinic bifurcations of periodic orbits (even in the $C^1$ topology).

%

Morales-Pacifico-Pujals \cite{MPP98,MPP04} defined what is ``Lorenz-like'' in a dynamical way. In \cite{MoP03}, Morales-Pacifico gave the notion of
singular Axiom A without cycle. Let's be more precise.

We say that a continuous invariant splitting $T_\Lambda M^d=E\oplus F$ w.r.t. the tangent flow over a compact invariant set $\Lambda$ is a \emph{dominated splitting} with respect to the tangent flow $\Phi_t$ if  there are constants $C\ge 1, \lambda>0$, such that
$\|\Phi_t|_{E(x)}\|\|\Phi_{-t}|_{F(\phi_t(x))}\|\le C{\rm e}^{-\lambda t}$ for every $x\in \Lambda$ and $t\ge 0$. For a compact invariant set $\Lambda$, we say that $\Lambda$ admits a \emph{partially hyperbolic splitting} if there is a continuous invariant splitting $T_\Lambda M^d=E^s\oplus E^c\oplus E^u$ w.r.t. $\Phi_t$, where $E^s$ is contracting, $E^u$ is expanding, and both $E^s\oplus (E^c\oplus E^u)$ and $(E^s\oplus E^c)\oplus E^u$ are dominated splittings. In the above definition, $E^s$ or $E^u$ is allowed to be trivial.

\begin{Definition}
A transitive set $\Lambda$ of $X\in{\cal X}^1(M^3)$ is called a \emph{singular hyperbolic attractor} if
\begin{enumerate}

\item There is a neighborhood $U$ of $\Lambda$ such that
$$\Lambda=\bigcap_{t\ge 0}\phi_t(U).$$

\item $\Lambda$ contains a singularity of index $2$, and every singularity in $\Lambda$ has index 2.

\item $\Lambda$ admits a partially hyperbolic splitting $T_\Lambda M^3=E^s\oplus E^{cu}$, where $\dim E^s=1$ and $E^{cu}$ area-expanding: there are constants $C\ge 1$, $\lambda>0$ such that for any $x\in\Lambda$ and for any $t\ge 0$, one has $|\det \Phi_{-t}|_{F(x)}|\le C {\rm e}^{-\lambda t}$.

\end{enumerate}

$\Lambda$ is called a \emph{singular hyperbolic repeller} if it is a singular hyperbolic attractor for $-X$.

\end{Definition}

One knows that the geometric Lorenz attractors as in \cite{Guc76,GuW79,ABS77} are singular hyperbolic attractors. $X\in{\cal X}^1(M^3)$ is called \emph{singular Axiom A without cycle} as in \cite{MoP03} if the chain recurrent set of $X$ contains only finitely many chain recurrent classes; moreover each chain recurrent class is a hyperbolic basic set, or a singular hyperbolic attractor, or a singular hyperbolic repeller.

Singular Axiom A flows include Lorenz-like flows. \cite{ArR03,MoP03} asked whether singular Axiom A flows and flows with a homoclinic tangency are typical phenomena.
\begin{Conjecture}
Every $X\in{\cal X}^1(M^3)$ can be accumulated either by vector fields with a homoclinic tangency, or by singular Axiom A vector fields without cycle.

\end{Conjecture}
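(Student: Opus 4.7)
The plan is to combine Theorem~\ref{Thm:homoclinicclass} with a structural analysis of chain recurrent classes under the far-from-tangency hypothesis. Fix the dense $G_\delta$ set ${\cal R}\subset{\cal X}^1(M^3)$ from Theorem~\ref{Thm:homoclinicclass} and let $X\in{\cal R}$ be a vector field that cannot be $C^1$-accumulated by ones with a homoclinic tangency. By Theorem~\ref{Thm:homoclinicclass}, every non-trivial chain recurrent class $C$ of $X$ is a homoclinic class $H(\gamma)$ of some hyperbolic periodic orbit $\gamma$. The remaining work is to prove three things: (i) each such $H(\gamma)$ is either a hyperbolic basic set, a singular hyperbolic attractor, or a singular hyperbolic repeller; (ii) there are only finitely many chain recurrent classes; (iii) there are no cycles among these classes.

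For (i), I would first exploit the far-from-tangency hypothesis to build a dominated splitting on $H(\gamma)$ with respect to the linear Poincar\'e flow, following the philosophy of Pujals--Sambarino in the surface case and its extensions by Wen to star flows. If $H(\gamma)$ contains no singularity, a sectional Poincar\'e return argument plus the absence of small angles between stable/unstable bundles (the typical consequence of being far from tangencies) should promote dominated splitting to uniform hyperbolicity, giving a hyperbolic basic set. If $H(\gamma)$ does contain a singularity $\sigma$, Morales--Pacifico--Pujals force $\sigma$ to be Lorenz-like; the plan is then to combine the dominated splitting $E^s\oplus E^{cu}$ with area-expansion on $E^{cu}$ (extracted from the interplay between the expanding direction of $\sigma$ and the periodic orbits accumulating it) to conclude that $H(\gamma)$ is a singular hyperbolic attractor, or symmetrically a repeller.

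For (ii), finiteness would follow by observing that both hyperbolic basic sets and singular hyperbolic attractors/repellers are robust, admit local product structure, and contain $C^1$-open neighborhoods of the hyperbolic periodic orbits they carry; a standard compactness argument on $M^3$ then bounds their number. For (iii), the attractor/repeller dichotomy supplied by (i) makes cycles impossible in a residual way: one runs a generic argument using the $C^1$ connecting lemma for pseudo-orbits of Bonatti--Crovisier, combined with the fact that a chain recurrent class which is simultaneously attained and left by another singular hyperbolic class would create a heteroclinic configuration incompatible with being far from tangencies.

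The main obstacle, and the reason the conjecture remains open, is step (i) in the singular setting: dominated splitting plus ``far from tangencies'' does \emph{not} automatically yield singular hyperbolicity, because one must rule out coexistence inside a single homoclinic class of a Lorenz-like singularity $\sigma$ with periodic orbits whose contracting direction has \emph{weaker} expansion than the strong stable direction of $\sigma$ (a mismatch of indices that destroys area-expansion on $E^{cu}$). Controlling this requires a fine analysis of the strong stable foliation near $\sigma$ and how returning periodic orbits align with it, for which Crovisier's central model is not directly available. Establishing area-expansion \emph{uniformly} on the whole class, starting only from genericity and the far-from-tangency hypothesis rather than from an a priori attractor assumption, is the technical crux that current flow methods do not yet reach.
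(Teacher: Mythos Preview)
The statement you are addressing is a \emph{Conjecture}; the paper does not prove it and explicitly presents it as open, offering only partial progress via Theorem~\ref{Thm:dominationtosingular}. So there is no ``paper's own proof'' to compare against, and your proposal is, appropriately, a strategy sketch together with an explanation of why it does not close.

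Your outline is broadly sound, and you are right that the crux lies in step~(i) for singular classes. But your localisation of the obstacle can be sharpened in light of what the paper actually establishes. The paper already proves (Theorem~\ref{Thm:homoclinicclass}) that for generic $X$ far from tangencies every non-trivial singular class $C(\sigma)$ \emph{is} a homoclinic class, and (Theorem~\ref{Thm:dominationtosingular}) that \emph{if} $C(\sigma)$ admits a dominated splitting with respect to the \emph{tangent flow} $\Phi_t$, then it is a singular hyperbolic attractor or repeller. Thus the missing ingredient is not ``promoting domination to singular hyperbolicity'' --- that step is done --- but rather obtaining the tangent-flow dominated splitting on $C(\sigma)$ in the first place, once $C(\sigma)$ is known to be a non-trivial homoclinic class. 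The paper's Proposition~\ref{Pro:mixingdominated} gives a dichotomy: either one gets the partially hyperbolic splitting $E^{ss}\oplus F$, or there exist periodic orbits $\gamma_n$ in the class with stable Lyapunov exponent tending to zero. When $C(\sigma)$ is \emph{not} a homoclinic class that second alternative is vacuously excluded (this is how Theorem~\ref{Thm:dichotomysingularclass} is proved), but once $C(\sigma)$ \emph{is} a homoclinic class nothing in the paper rules it out. So the precise obstruction is the possible persistence, inside a singular homoclinic class and far from tangencies, of saddles with arbitrarily weak contraction along $\Delta^{cs}$ --- closer to a ``weak exponent'' phenomenon than the ``index mismatch'' you describe.

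Two further caveats on your sketch. First, your argument for~(ii) (finiteness) via ``robustness plus compactness'' is too quick: singular hyperbolic attractors need not be locally maximal in the usual sense, and the paper does not supply a bound on their number. Second, your plan for~(i) in the non-singular case implicitly invokes the Arroyo--Rodriguez~Hertz mechanism, which requires $C^2$ regularity; passing to $C^1$-generic $X$ needs the additional generic arguments the paper carries out (e.g.\ Lemma~\ref{Lem:dominatedwithoutsingularity}).
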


We get some progress on this conjecture.

\begin{theoremalph}\label{Thm:dominationtosingular}
There is a dense ${G}_\delta$ set ${\cal R}\subset {\cal X}^1(M^3)$ such that for any $X\in{\cal R}$ and $\sigma$, if the chain recurrent class $C(\sigma)$ is nontrivial and admits a dominated splitting $T_{C(\sigma)} M^3=E\oplus F$ w.r.t. the tangent flow, then $C(\sigma)$ is a singular hyperbolic attractor or a singular hyperbolic repeller.

\end{theoremalph}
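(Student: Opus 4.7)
The plan is to combine $C^1$-generic machinery with a Liao--Ma\~n\'e-style bootstrap that upgrades the dominated splitting $E\oplus F$ over $C(\sigma)$ to singular hyperbolicity. First I would take $\mathcal{R}$ to be a dense $G_\delta$ on which the standard generic properties hold simultaneously: Kupka--Smale; Pugh's general density theorem; the Bonatti--Crovisier consequence that every nontrivial chain recurrent class is the Hausdorff limit of periodic orbits it contains; continuous dependence of dominated splittings on the base point; and upper semicontinuity of $C(\sigma)$ with respect to $X$. Replacing $X$ by $-X$ if necessary I assume $\dim E=1$ and $\dim F=2$, and aim to show that $C(\sigma)$ is a singular hyperbolic attractor; the other possibility is obtained by applying the same argument to $-X$, yielding a singular hyperbolic repeller.

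\emph{Step 1 (Lorenz-like singularities).} I would first show that every $\sigma'\in\Sing(X)\cap C(\sigma)$ has index $2$. By Kupka--Smale, $\sigma'$ is hyperbolic; domination decomposes the spectrum of $DX(\sigma')$ so that the eigenvalue in $E(\sigma')$ has strictly smaller real part than the two in $F(\sigma')$, and non-triviality of $C(\sigma)$ rules out $\sigma'$ being a sink or a source. If $\sigma'$ had index $1$, then $E(\sigma')$ would be the stable eigenspace and $W^s(\sigma')$ a $1$-dimensional stable manifold tangent to $E$. Hausdorff-approximating periodic orbits $\gamma_n\subset C(\sigma)$ must enter arbitrarily small neighborhoods of $\sigma'$, and their limit traces a regular orbit in $W^s(\sigma')\setminus\{\sigma'\}\subset C(\sigma)$. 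Along such an orbit the direction $X/\|X\|$ is tangent to $W^s(\sigma')$ and hence converges to $E(\sigma')$ at $\sigma'$, contradicting the standard fact that strict domination forces $X$ to lie in $F$ on the regular part of $C(\sigma)$. Hence $\sigma'$ has index $2$.

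\emph{Step 2 (contraction of $E$ and area expansion of $F$).} Next, I would approximate $C(\sigma)$ by hyperbolic periodic orbits $\gamma_n\subset C(\sigma)$. If $E$ were not uniformly contracting on $C(\sigma)$, a Liao sifting / Ma\~n\'e ergodic closing argument would furnish such $\gamma_n$ along which the mean Lyapunov exponent on $E$ tends to $0$; a Franks-type $C^1$ perturbation along one of them would then create a sink close to $C(\sigma)$, contradicting generic continuity of $C(\sigma)$ together with the presence of the index-$2$ saddle singularity just established. A parallel argument using the induced two-form on $F$ together with the absence of sources in $C(\sigma)$ yields area expansion of $F$. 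This produces the partially hyperbolic splitting $E^s\oplus E^{cu}=E\oplus F$ with $E^{cu}$ area-expanding, and volume behaviour at each singularity then enforces the Lorenz-like inequality $\lambda_s+\lambda_u>0$.

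\emph{Step 3 and main obstacle.} Finally, strong stable manifold theory applied to $E^s$ supplies a continuous family of $1$-dimensional strong stable manifolds through every point of $C(\sigma)$ (including along the most contracting eigendirection at each singularity); saturating a small neighborhood of $C(\sigma)$ by these manifolds and invoking $C^1$-generic Lyapunov stability of chain recurrent classes (Conley theory plus upper semicontinuity) yields a trapping region $U$ with $C(\sigma)=\bigcap_{t\ge 0}\phi_t(U)$, so $C(\sigma)$ is an attractor. The principal obstacle will be Step 2: the classical linear Poincar\'e flow on the normal bundle is undefined at singularities, and orbits approaching $\sigma$ linger arbitrarily long nearby, where derivative estimates on the tangent flow can degenerate sharply. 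Overcoming this requires a scaled (or extended) linear Poincar\'e flow compatible with $E\oplus F$, so that the ``no sink, no source'' perturbation argument propagates from the regular part of $C(\sigma)$ across the singular set.
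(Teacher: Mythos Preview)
Your plan has a genuine gap, and it is exactly the gap the paper is written to close. In Step~2 you write that you will ``approximate $C(\sigma)$ by hyperbolic periodic orbits $\gamma_n\subset C(\sigma)$'', attributing this to a ``Bonatti--Crovisier consequence that every nontrivial chain recurrent class is the Hausdorff limit of periodic orbits it contains''. No such statement is available. Crovisier's approximation theorem gives periodic orbits Hausdorff-converging to any chain-transitive set, but those orbits need \emph{not} lie in the class; generically there may be aperiodic classes, and a singular chain class $C(\sigma)$ with a dominated splitting is not a priori excluded from being one. Without $\gamma_n\subset C(\sigma)$ your Franks-lemma step produces a sink that is merely \emph{near} $C(\sigma)$; such a sink is its own chain class and its existence contradicts neither the continuity of $Y\mapsto C(\sigma_Y)$ nor the presence of an index-$2$ saddle inside it. The same objection applies to your area-expansion argument for $F$, and Step~3 never addresses transitivity, which is part of the definition of a singular hyperbolic attractor.

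The paper's route is therefore quite different from yours. The proof of Theorem~\ref{Thm:dominationtosingular} is a two-line reduction: by Theorem~\ref{Thm:dominationimplyperiodic} the class $C(\sigma)$ is a homoclinic class (in particular it \emph{contains} periodic orbits), and then Theorem~\ref{Thm:partialtosingular} (quoted from \cite{BGY11}) upgrades the dominated splitting to singular hyperbolicity and gives the attractor/repeller conclusion. All the work is hidden in Theorem~\ref{Thm:dominationimplyperiodic}: one first uses \cite{BGY11} to make the splitting partially hyperbolic with $E^{ss}$ contracting; then, \emph{assuming} $C(\sigma)$ contains no periodic orbit, one builds a system of singular cross-sections near $C(\sigma)$ (Section~\ref{Sec:birthhomo}), passes to a minimally non-singular-hyperbolic transitive subset carrying a non-expanding ergodic measure, and through a sequence of $C^1$ perturbations (connecting lemma, ergodic closing, Franks) produces a sink whose basin closure actually \emph{meets} $C(\sigma_Y)$, contradicting the robust Lyapunov stability of $C(\sigma_Y)$ established in Lemma~\ref{Lem:stablepropertyofquasiattractor}. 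The obstacle you flag (extending the linear Poincar\'e flow across singularities) is real but secondary and is handled by Liao's scaled flow $\psi_t^*$ in Section~\ref{Sec:differentflows}; the primary obstacle---getting a periodic orbit, or a sink, that is \emph{dynamically linked} to $C(\sigma)$ rather than merely nearby---is the core of the paper and is absent from your plan.
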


Notice that in a joint work with C. Bonatti \cite{BGY11}, we proved this result by adding an additional assumption that: $C(\sigma)$ contains a periodic orbit. So, according to this result, to prove the above theorem, we assume that $C(\sigma)$ contains no periodic orbits. Then we can consider the return map of (singular) cross-sections. After a sequence of perturbations, we will get a contradiction. This is one of the main points of this work.

\subsection{Entropy of flows}
The entropy of a flow is defined to be the entropy of the time-one map of the flow. The definition meets some
problems: there are two topological equivalent flows, one has zero entropy and the other one has positive
entropy. This pathology happens because of the existence of singularities. See \cite{Ohn80,Tho90,SYZ09} for references. But using the main theorem of the paper, we can prove:

\begin{Theorem}
There is a dense open set ${\cal U}\subset {\cal X}^1(M^3)$ such that for any $X\in{\cal U}$, for any $Y$
topological equivalent to $X$, one has $h(X)=0$ iff $h(Y)=0$.
\end{Theorem}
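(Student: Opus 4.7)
The plan is to take $\cU = {\cal MS}\cup{\cal HS}$, which by Theorem~\ref{Thm:main} is dense and open in $\cX^1(M^3)$, and to verify the entropy rigidity under topological equivalence separately on the two pieces (which are disjoint, since a Morse-Smale flow has a finite non-wandering set and a flow with a horseshoe does not).

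\emph{Case 1: Morse-Smale vector fields.} For $X\in{\cal MS}$, the non-wandering set $\Omega(X)$ is a finite union of hyperbolic critical orbits. The restriction of the time-one map $\phi^X_1$ to $\Omega(X)$ is a homeomorphism of a finite collection of fixed points and circles, so it has zero topological entropy; since the entropy of a flow is carried by $\Omega$, one concludes $h(X)=0$. If $Y$ is topologically equivalent to $X$ via an orbit-preserving homeomorphism $h\colon M^3\to M^3$, then $\Omega(Y)=h(\Omega(X))$ is again a finite union of critical orbits of $Y$ (a singularity maps to a singularity and a periodic orbit to a periodic orbit), and the same argument gives $h(Y)=0$.

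\emph{Case 2: horseshoes.} If $X\in{\cal HS}$, the Birkhoff-Smale theorem furnishes a compact hyperbolic $\phi^X_t$-invariant set $\Lambda\subset M^3\setminus\Sing(X)$ whose Poincar\'e return map is conjugate to a full shift on finitely many symbols. In particular $h(\phi^X_1|_{\Lambda})>0$, so $h(X)>0$. For any $Y$ topologically equivalent to $X$ via $h$, the image $\Lambda'=h(\Lambda)$ is compact, $\phi^Y_t$-invariant, and disjoint from $\Sing(Y)$, because $h$ maps singularities to singularities. Since $\Lambda'$ is compact and contains no singularity, the time-change function relating the two flows is continuous and bounded above and away from zero on a neighbourhood of $\Lambda'$. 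An Abramov-type comparison (cf.~\cite{Ohn80,SYZ09}) then yields $h(\phi^Y_1|_{\Lambda'})>0$, whence $h(Y)>0$.

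The only delicate step is Case~2. The entropy pathology under topological equivalence mentioned in \cite{Ohn80,Tho90,SYZ09} is driven by reparametrizations that concentrate time near singularities; because the horseshoe $\Lambda$ is a compact invariant set disjoint from $\Sing(X)$, such concentration is impossible on $\Lambda$ and the reparametrization is uniformly bi-Lipschitz there. This is exactly what ensures that positive entropy survives the topological equivalence, and combined with Case~1 this establishes the equivalence $h(X)=0\iff h(Y)=0$ throughout $\cU$.
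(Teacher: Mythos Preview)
Your proof is correct and follows essentially the same approach as the paper: take $\cU={\cal MS}\cup{\cal HS}$ from Theorem~\ref{Thm:main}, argue that Morse-Smale structure (and hence zero entropy) passes to any topologically equivalent $Y$, and for the horseshoe case use that $\Lambda$ lies away from singularities so the reparametrization is bounded and positive entropy survives. The paper's write-up is terser---it simply invokes the known fact that $h(X)>0\iff h(Y)>0$ for non-singular equivalent flows---while you spell out the Abramov-type reasoning, but the substance is the same.
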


\begin{proof}
By Theorem~\ref{Thm:main}, there is a dense open set ${\cal U}\subset{\cal X}^1(M^3)$ such that for any
$X\in{\cal U}$, either $X$ is Morse-Smale, or $X$ has a non-trivial hyperbolic basic set. Thus for any
$X\in{\cal U}$, one has
\begin{itemize}

\item either $h(X)=0$, then $X$ is Morse-Smale, thus for any $Y$ which is topological equivalent to $X$, for any
point $x$, the forward iteration and backward iteration of $x$ with respect to $\phi_t^Y$ go to a critical element. This feature implies that $h(Y)=0$.

\item or $h(X)>0$, then $X$ has a non-trivial hyperbolic basic set.  Since for non-singular equivalent flows $X, Y$, $h(X)>0$ iff $h(Y)>0$. We have that $h(Y)>0$.
\end{itemize}

\end{proof}

For the relationship between zero-entropy vector fields and Morse-Smale vector fields, one has

\begin{Theorem}

If a three-dimensional vector field $X\in{\cal X}^1(M^3)$ can be accumulated by $C^1$ robustly zero-entropy vector fields, then it can be $C^1$ accumulated by Morse-Smale vector fields.

\end{Theorem}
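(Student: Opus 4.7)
The plan is to deduce the statement directly from the Main Theorem (Theorem~\ref{Thm:main}) together with the classical fact that any Smale horseshoe carries positive topological entropy. Write ${\cal Z} := \{W \in {\cal X}^1(M^3) : h(W) = 0\}$ for the set of zero-entropy three-dimensional vector fields, so ``robustly zero-entropy'' means ``in the $C^1$-interior $\mathrm{int}({\cal Z})$''. Given $\varepsilon>0$, the hypothesis supplies a vector field $Y \in \mathrm{int}({\cal Z})$ with $d_{C^1}(X,Y) < \varepsilon/2$, and I choose an open $C^1$-neighborhood ${\cal U}$ of $Y$ such that ${\cal U} \subset {\cal Z}$ and $\mathrm{diam}({\cal U}) < \varepsilon/2$; in particular ${\cal U}$ is contained in the open $\varepsilon$-ball around $X$.

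Now I apply Theorem~\ref{Thm:main}: the set ${\cal MS} \cup {\cal HS}$ is open and dense in ${\cal X}^1(M^3)$, so I can pick $Z \in {\cal U} \cap ({\cal MS} \cup {\cal HS})$, which automatically satisfies $d_{C^1}(X,Z) < \varepsilon$. The decisive point is that $Z$ cannot lie in ${\cal HS}$: by the Birkhoff--Smale theorem, any vector field in ${\cal HS}$ admits a non-trivial hyperbolic basic set (a Smale horseshoe), which is conjugate, via a suitable cross-section and suspension, to a subshift of finite type with positive topological entropy; hence $h(Z)>0$. But $Z \in {\cal U} \subset {\cal Z}$ forces $h(Z) = 0$, a contradiction. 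Consequently $Z \in {\cal MS}$, and since $\varepsilon>0$ was arbitrary, $X$ is $C^1$-accumulated by Morse--Smale vector fields.

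There is essentially no real obstacle once the Main Theorem is in hand. The only subtlety is to exploit \emph{robustness} of zero entropy rather than just zero entropy of a single approximating $Y$: an arbitrary $C^1$-small perturbation of $Y$ could a priori create a transverse homoclinic intersection and thus a horseshoe, ruining the zero-entropy property; by instead moving through the open neighborhood ${\cal U}$ on which $h \equiv 0$ is known to persist, the perturbation that lands in ${\cal MS} \cup {\cal HS}$ is automatically excluded from ${\cal HS}$, which is exactly what the argument needs.
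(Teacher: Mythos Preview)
Your proof is correct and follows essentially the same approach as the paper: both use Theorem~\ref{Thm:main} to find, inside a small open set of zero-entropy vector fields near $X$, a field that is either Morse--Smale or has a horseshoe, and then rule out the horseshoe option because it would force positive entropy. Your write-up is slightly more detailed in justifying why a horseshoe has positive entropy, but the logic is identical.
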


\begin{proof}
By the assumptions, for any $C^1$ neighborhood ${\cal U}$ of $X$, there is an open set ${\cal V}\subset {\cal U}$ such that every vector field $Y$ in ${\cal V}$ has zero-entropy. By Theorem~\ref{Thm:main}, by reducing ${\cal V}$ if necessary, one can assume that for every $Y\in\cal V$, either it is Morse-Smale, or it has a non-trivial hyperbolic basic set. Since $Y\in\cal V$ can only have zero-entropy, one has $Y$ is Morse-Smale. This ends the proof.

\end{proof}

\subsection{Organization of this paper}

The proof of the theorems is not short. Especially for vector fields, they involve more definitions and
notations.

\begin{enumerate}

\item In section~\ref{Sec:differentflows}, we give various kinds of definitions of flows associated to a vector field $X$. Liao defined these flows in a very abstract way. In fact, all these flows have their geometric meanings. We will deal with dominated splittings for the tangent flow and the linear Poincar\'e flow. For the estimations stated in this section (which is crucial for singular flow), Liao had very original ideas by a sequence of papers. We restate some of them and give the proof ourself. The proof is more intuitive.

\item In Section~\ref{Sec:generic}, we will study generic properties by connecting lemmas, ergodic closing lemma. \cite{BoC04} gave a $C^1$ connecting lemma for pseudo-orbits which helps us to obtain generic results for chain recurrent classes. We notice that Lyapunov stable chain recurrent classes with a critical element are robust for generic vector fields. The proof is not difficult, but it
opens a new door: Lyapunov stable chain recurrent class will survive under generic small perturbations.

\item In Section~\ref{Sec:reduction} we first give the proof of Theorem~\ref{Thm:main} by assuming Theorem~B. Then we reduce the proofs of Theorem~B and Theorem~C to several sub-results.

\item In Section~\ref{Sec:lorenz-like}, we prove that the Lyapunov stable chain recurrent class admits a partially hyperbolic splitting $E^{ss}\oplus E^{cu}$ and every singularity in the chain recurrent class is Lorenz-like. In this section, the main novelty of this paper is that we use some uniform estimation on vector fields away from homoclinic tangencies and a suitable application of Liao's shadowing lemma.

\item In Section~\ref{Sec:birthhomo}, we prove that every nontrivial partially hyperbolic chain recurrent class with singularities contains periodic orbits for generic three-dimensional vector fields. We notice that it contains a periodic orbit iff it is singular hyperbolic. When the chain recurrent class is not singular hyperbolic, it is not singular hyperbolic robustly. Then by a sequence of perturbations, the continuation of the chain recurrent class intersects the closure of the basin of some sink. This implies that the continuation of the chain recurrent class is not Lyapunov stable. We can get a contradiction by Lemma~\ref{Lem:stablepropertyofquasiattractor}. The difficulty we encounter is similar to the case of one-dimensional endomorphisms with singularities, where ``singularities'' means that the points where the endomorphisms fail to be a local diffeomorphisms. For flows, for every central unstable curve in the cross-section, in principle we will know that its length will grow near the local stable manifold of the singularities. But when it is cut by local stable manifold of singularities, its image under return map will be disconnected. This facts make the dynamics unclear. We have a good control in this section for this phenomenon.
\end{enumerate}

\section{Flows associated to a vector field and dominated splittings}\label{Sec:differentflows}

\subsection{Tangent flow, linear Poincar\'e flow and their extensions}\label{Sub:flows}

Given $X\in{\cal X}^1(M^d)$, $X$ generates a $C^1$ flow $\phi_t:M^d\to M^d$, and the \emph{tangent flow}
$\Phi_t={\rm d}\phi_t:TM^d\to TM^d$. Denote by $\pi:~TM^d\to M^d$ the bundle projection.

Denote the normal bundle of $X$ by
$$
{\cal N}={\cal N}^X=\bigcup_{x\in M^d\setminus\Sing (X)} {\cal N}_x,
$$
where ${\cal N}_x$ is the orthogonal complement of the flow direction $X(x)$, i.e.,
$$
{\cal N}_x=\{v\in TM^d: v\perp X(x)\}.
$$
Given $x\in M^d\setminus\Sing(X)$ and $v\in {\cal N}_x$, $\psi_t(v)$ is the orthogonal projection of $\Phi_t(v)$ on
${\cal N}_{\phi_t(x)}$ along the flow direction, i.e.,
$$
\psi_t(v)=\Phi_t(v)-\frac{\langle \Phi_t(v), X(\phi_t(x))\rangle}{|X(\phi_t(x))|^2}X(\phi_t(x)),
$$
where $\langle\cdot, \cdot\rangle$ is the inner product on $T_xM$ given by the Riemannian metric.

By the definition, $\|\psi_t\|$ is uniformly bounded for $t$ in any bounded interval although it is just defined on the regular set which is not compact in general.

This flow could also be defined in a more general way by Liao \cite{Lia89}. \cite{LGW05} used the terminology of ``extended linear Poincar\'e flow''.
For every point $x\in M^d$, one could define the sphere fiber at $x$ by
$$S_x M^d=\{v:~v\in T_x M^d,~|v|=1\}.$$
The sphere bundle $SM^d=\bigcup_{x\in M^d} S_x M^d$ is compact. One can define the \emph{unit tangent flow}
$$\Phi^I_t:~SM^d\to SM^d$$
as
$$\Phi^I_t(v)=\frac{\Phi_t(v)}{|\Phi_t(v)|}$$
for any $v\in SM^d$.

Given a compact invariant set $\Lambda$ of $X$, denote
$$\widetilde\Lambda= {\rm Closure}\left(
\bigcup_{x\in \Lambda\setminus{\rm Sing}(X)}\frac{X(x)}{|X(x)|}\right).
$$
in $SM^d$. Thus the essential difference between $\widetilde\Lambda$ and $\Lambda$ is on the singularities. We have more information on $\widetilde\Lambda$: it tells us how regular points in $\Lambda$ accumulate singularities.

For any $x\in M^d$, and any two orthogonal vectors $v_1,v_2\in T_x M^d$, if $|v_1|\neq0$, one can define

$$\chi_t(v_1,v_2)=(\Phi_t(v_1),\Phi_t(v_2)-\frac{\langle \Phi_t(v_1),\Phi_t(v_2)\rangle}{|\Phi_t(v_1)|^2}\Phi_t(v_1)).$$

By definition the two components of $\chi_t$ are still orthogonal. If one denotes $$\chi_t=({\rm
proj}_1(\chi_t),{\rm proj}_2(\chi_t)),$$ for any regular point $x\in M^d$ and any vector $v\in {\cal N}_x$, one
has
$$\psi_t(v)={\rm proj}_2\chi_t(X(x),v).$$

One can normalize the first component of $\chi_t$: for any $x\in M^d$, and any two orthogonal vectors $v_1,v_2\in T_x M^d$, if $|v_1|=1$, one can define
$$\chi_t^\#(v_1,v_2)=(\Phi_t^I(v_1),~\Phi_t(v_2)-\frac{\langle \Phi_t(v_1),\Phi_t(v_2)\rangle}{|\Phi_t(v_1)|^2}\Phi_t(v_1)).$$
$\chi_t^\#$ is also a continuous flow, and for any regular point $x$ and any $v\in{\cal N}_x$, one has
$$\chi_t^\#(\frac{X(x)}{|X(x)|},v)=(\Phi_t^I(\frac{X(x)}{|X(x)|}),~\psi_t(v)).$$

By the continuity of $\chi_t$, one can extend the definition of $\psi_t$ ``to singularities'': for any $u\in \widetilde\Lambda$, one defines $\widetilde{\cal N}_u=\{v\in T_{\pi(u)} M:~\langle u,v\rangle=0\}$. $\widetilde{\cal N}$ is a $(d-1)$-dimensional vector bundle on the base space $\widetilde\Lambda$ (for a formal discussion, see \cite{LGW05}). For $u\in\widetilde\Lambda$ and $v\in \widetilde{\cal N}_{u}$, one can define $\widetilde {\psi}_t(v)={\rm proj}_2\chi_t(u,v)$. By the definition we know that ${\rm proj}_2\chi_t$ is a continuous flow defined on $\widetilde \Lambda$. Thus, $\widetilde\psi_t$ can be viewed as a compactification of $\psi_t$.

\subsection{Scaled linear Poincar\'e flow $\psi_t^*$ and Liao's estimation}
For our purpose, we need another flow $\psi_t^*:{\cal N}\to{\cal N}$ (called {\em scaled linear Poincar\'e
flow}). Given $x\in M^d\setminus\Sing(X)$, and $v\in {\cal N}_x$,
$$
\psi_t^*(v)=\frac{|X(x)|}{|X(\phi_t(x))|}\psi_t(v)=\frac{\psi_t(v)}{\|\Phi_t|_{\langle X(x)\rangle}\|}.
$$
In our case,
this scaled linear Poincar\'e flow will help us to overcome some difficulties produced by singularities. It gives uniform estimations on some non-compact sets.

\begin{Lemma}\label{Lem:starbounded}
For any $\tau>0$, there is $C_\tau>0$ such that for any $t\in[-\tau,\tau]$,
$$\|\psi_t^*\|\le C_\tau,$$
where $\|\psi_t^*\|=\sup\{|\psi_t^*(v)|: v\in {\cal N} {\rm \ and\ } |v|=1\}.$

\end{Lemma}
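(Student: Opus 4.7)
The plan is to reduce the bound on $\psi_t^*$ to uniform bounds on the tangent flow $\Phi_t$ over the compact manifold $M^3$, by exploiting the flow-invariance identity $\Phi_t(X(x)) = X(\phi_t(x))$ to reinterpret the scaling factor.

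First I would unpack the denominator. Since $\Phi_t$ sends $X(x)$ to $X(\phi_t(x))$, we have
\[
\|\Phi_t|_{\langle X(x)\rangle}\| \;=\; \frac{|X(\phi_t(x))|}{|X(x)|} \;=\; \bigl|\Phi_t(u)\bigr|,
\qquad u := \frac{X(x)}{|X(x)|}\in S_xM^d.
\]
Hence for any regular $x$ and any $v\in\mathcal{N}_x$ with $|v|=1$,
\[
\psi_t^*(v) \;=\; \frac{\psi_t(v)}{|\Phi_t(u)|}.
\]
The numerator is easy: $\psi_t(v)$ is the orthogonal projection of $\Phi_t(v)$ onto $\mathcal{N}_{\phi_t(x)}$, so $|\psi_t(v)|\le |\Phi_t(v)|\le \|\Phi_t|_{T_xM^d}\|$. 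The delicate point is to bound $|\Phi_t(u)|$ from below uniformly in $x$, since naively it can become small as $x$ approaches a singularity.

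The bound from below comes from inverting the tangent flow: applying $\Phi_{-t}$ to $\Phi_t(u)$ yields $u$, which has norm $1$, so
\[
1 = |u| \le \bigl\|\Phi_{-t}|_{T_{\phi_t(x)}M^d}\bigr\|\cdot |\Phi_t(u)|,
\qquad\text{i.e.}\qquad |\Phi_t(u)| \ge \bigl\|\Phi_{-t}|_{T_{\phi_t(x)}M^d}\bigr\|^{-1}.
\]
Combining, for every regular $x$, every unit $v\in \mathcal{N}_x$, and every $t\in[-\tau,\tau]$,
\[
|\psi_t^*(v)| \;\le\; \bigl\|\Phi_t|_{T_xM^d}\bigr\|\cdot\bigl\|\Phi_{-t}|_{T_{\phi_t(x)}M^d}\bigr\|.
\]

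Finally I would close the argument by compactness. Since $M^3$ is compact and $(t,y)\mapsto \|\Phi_t|_{T_yM^d}\|$ is continuous on $[-\tau,\tau]\times M^3$, the quantity
\[
C_\tau \;:=\; \sup_{|s|\le \tau}\;\sup_{y\in M^3}\bigl\|\Phi_s|_{T_yM^d}\bigr\|^{2}
\]
is finite, and it provides the required uniform bound. The entire argument is essentially a one-page direct computation; the main conceptual step (and the only place where a choice has to be made) is the rewriting of the scaling factor $|X(x)|/|X(\phi_t(x))|$ as $1/|\Phi_t(u)|$, which replaces a ratio that is singular near $\mathrm{Sing}(X)$ by a ratio that is controlled by operator norms of $\Phi_{\pm t}$ over the compact base, so there is no real obstacle.
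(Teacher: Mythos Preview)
Your proof is correct and follows essentially the same approach as the paper: bound $|\psi_t(v)|$ above by $\|\Phi_t\|$ and bound $\|\Phi_t|_{\langle X(x)\rangle}\|$ below via the inverse tangent flow, then invoke compactness of $M^d$ and continuity of $\Phi_t$ for $|t|\le\tau$. The paper's version is terser (it simply asserts that $\Phi_t$ is uniformly bounded away from $0$ and $\infty$, hence $\psi_t$ and then $\psi_t^*$ are bounded), but your write-up makes the same mechanism explicit.
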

\begin{proof}
For $t\in[-\tau,\tau]$, first we know that $\Phi_t$ is uniformly bounded from 0 and $\infty$; from the definition of the linear Poincar\'e flow, we know that $\psi_t$ is uniformly bounded. Thus, $\psi_t^*(x)=\psi_t(x)/\|\Phi_t|_{\left< X(x) \right>}\|$ is uniformly bounded.
\end{proof}

For each $\beta>0$, one can define the
normal manifold $N_{x}(\beta)$ of $x$ as the following:
$$N_{x}(\beta)=\exp_x({\cal N}_x(\beta)),$$
where ${\cal N}_x(\beta)=\{v\in{\cal N}_x: |v|\le\beta\}.$
Take $\beta_*>0$ small enough such that for any $x\in M$, $\exp_x$ is a diffeomorphism from ${\cal N}_x(\beta_*)$ to its image $N_{x}({\beta}_{*}).$

To study the dynamics in a small neighborhood of a periodic orbit of a vector field, Poincar\'e defined the sectional return map of a cross section of a periodic point. By generalizing this idea to every regular point, one can define the sectional Poincar\'e map for any two points in the same regular orbit. For our convenience, we define the sectional Poincar\'e map in the normal bundle.

Given $T>0$ and $x\in M^d\setminus{\rm Sing}(X)$, the flow $\phi_t$ defines a local holonomy map $P_{x,\phi_T(x)}$ from $N_x(\beta_*)$ to $N_{\phi_T(x)}(\beta_*)$ in a small neighborhood of $x$. Hence its lift map in the normal bundle gives a map ${\cal P}_{x,\phi_T(x)}:~U\to {\cal N}_{\phi_T(x)}(\beta_*)$, where $U$ is a small neighborhood of $x$ in ${\cal N}_x(\beta_*)$ and ${\cal P}_{x,\phi_T(x)}=\exp_{\phi_T(x)}^{-1}\circ P_{x,\phi_T(x)}\circ \exp_x$. Note that when $T'>T>0$, the domain of ${\cal P}_{x,\phi_{T'}(x)}$ is contained in the domain of ${\cal P}_{x,\phi_T(x)}$.
Usually the size of $U$ depends on the orbit of $x$: if $x$ is very close to a singularity, then $U$ should be very small. But after scaling, we have the following uniform estimation for the relative size of $U$.

\begin{Lemma}\label{Lem:domainofPoincare}
Given $X\in{\cal X}^1(M^d)$ and $T>0$, there is $\beta_T>0$ such that for any regular point $x$, ${\cal P}_{x,\phi_T(x)}$ is well defined on ${\cal N}_{x}(\beta_T |X(x)|)$.
\end{Lemma}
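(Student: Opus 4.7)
The plan is to reduce the statement to an application of the implicit function theorem to the ``return time'' to the normal section, carefully tracking how the relevant constants scale with $|X(x)|$ so that the radius $\beta_T|X(x)|$ of the allowed perturbation disk shrinks proportionally to $|X(x)|$ near the singular set while the constant $\beta_T$ itself remains independent of $x$.

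First I would fix $L=\sup_{z\in M^d}\|DX(z)\|<\infty$ and record the uniform Gr\"onwall bounds $d(\phi_s(y),\phi_s(x))\le e^{L|s|}d(x,y)$ and $e^{-L|s|}|X(y)|\le|X(\phi_s(y))|\le e^{L|s|}|X(y)|$, the second one obtained by differentiating $s\mapsto |X(\phi_s(y))|^2$. Writing $a=|X(x)|$ and $y=\exp_x(v)$ with $|v|=r\le \beta_T a$, these give, throughout $s\in[0,T]$, the estimates $d(\phi_s(y),\phi_s(x))\le e^{LT}r$ and $|X(\phi_s(y))|\ge e^{-LT}a-Le^{LT}r$; choosing $\beta_T\le e^{-2LT}/(2L)$ already guarantees that the orbit of $y$ remains regular on $[0,T]$, and, after possibly shrinking $\beta_T$ further using the uniform lower bound on the injectivity radius of $M^d$, that each $\phi_s(y)$ lies inside the injectivity domain of $\exp_{\phi_T(x)}$ for $s$ near $T$.

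Next I would introduce the scalar ``return function'' $g(s)=\langle\exp_{\phi_T(x)}^{-1}(\phi_s(y)),X(\phi_T(x))\rangle$, defined for $s$ near $T$, and look for its root. From the bounds above, $|g(T)|\le e^{2LT}ra$, while $g'(T)\to|X(\phi_T(x))|^2\ge e^{-2LT}a^2$ as $r\to 0$, uniformly in $x$ since $d\exp_{\phi_T(x)}$ tends to the identity and $X$ is $C^1$. Shrinking $\beta_T$ once more, one obtains $g'(s)\ge\tfrac12 e^{-2LT}a^2$ on an interval of uniform length around $T$; the implicit function theorem then produces a unique $\tau=\tau(y)$ in that interval with $g(\tau)=0$, satisfying $|\tau-T|\le Ce^{4LT}r/a\le Ce^{4LT}\beta_T$ for an absolute constant $C$. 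A one-line estimate gives $|w(\tau)|\le|w(T)|+|\tau-T|\cdot\sup_{s}|X(\phi_s(y))|\lesssim e^{5LT}r$ for the normal component $w(\tau)=\exp_{\phi_T(x)}^{-1}(\phi_\tau(y))$, so a final choice of $\beta_T$ makes $|w(\tau)|\le\beta_*$, i.e.\ $\phi_\tau(y)\in N_{\phi_T(x)}(\beta_*)$, which is exactly the statement that $\cP_{x,\phi_T(x)}$ is defined at $v$.

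The main obstacle I anticipate is ensuring that the root $\tau$ chosen by the IFT is genuinely the relevant one, and not a spurious crossing of the section coming from another segment of the orbit (for instance, if $x$ is periodic with period less than $T$, so the cross section at $\phi_T(x)$ is revisited repeatedly). This is resolved by noting that we only search for a root of $g$ inside an $x$-independent interval around $T$ of length $Ce^{4LT}\beta_T$, on which $g'$ is strictly bounded away from $0$, so existence and uniqueness there are automatic regardless of what happens outside. The scaling by $|X(x)|$ is the essential feature: the bound on $|\tau-T|$ carries the factor $a=|X(x)|$ in the denominator, and it is exactly compensated by $r\le\beta_T |X(x)|$ in the numerator, which is what makes a uniform choice of $\beta_T$ possible.
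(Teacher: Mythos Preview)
Your argument is correct. The scaling analysis is the right idea: the key quantities $|g(T)|$, $g'(T)$, and the length of the interval on which $g'$ stays bounded below all scale with the right powers of $a=|X(x)|$ so that a single $\beta_T$ works. One small remark: what you call the implicit function theorem is really the intermediate value theorem together with strict monotonicity of $g$ on the interval; the IFT proper would give smoothness of $\tau(y)$, which you do not need here.

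Your route differs from the paper's in two ways. First, the paper proves the lemma for a fixed short time step (of order $\beta_0$ depending only on $\|DX\|$) by working in an exponential chart at $x$ with $e_1=X(x)/|X(x)|$, showing directly that the first coordinate of the flow is monotone so the orbit crosses the graph-like section $N_{\phi_t(x)}$ exactly once; then for general $T$ it partitions $[0,T]$ into $O(T/\beta_0)$ short steps and composes the short-time Poincar\'e maps. You instead handle all of $[0,T]$ in one shot, using global Gr\"onwall bounds to control $d(\phi_s(y),\phi_s(x))$ and $|X(\phi_s(y))|$ and then finding the crossing of $N_{\phi_T(x)}$ directly via your function $g$. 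Second, the paper works in coordinates at the \emph{initial} point $x$, whereas you work at the \emph{terminal} section $N_{\phi_T(x)}$. Your approach is more streamlined and avoids the induction; the paper's approach, on the other hand, sets up the local-chart machinery that is reused verbatim in the proof of the next lemma (uniform continuity of $D\cP_{x,\phi_T(x)}$), where explicit expressions for $\partial\cP/\partial y$ in the chart are computed. If you were writing up both lemmas, the paper's route would save some repetition.
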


\begin{proof}
After taking an orthonormal basis $\{e_1,e_2,\cdots, e_d\}$ of $T_xM$, we get a coordinate system:
$$
{\rm Exp}_x: {\mathbb{R}}^d\to M^d,
$$
such that
$$
{\rm Exp}_x(z)=\exp_x(\sum_{i=1}^dz_ie_i).
$$
In the coordinate, the flow generated by the vector field $X$ satisfies the following differential equation:
$$
\frac{{\rm d} z}{{\rm d} t}=\hat{X}(z),
$$
where
$$
\hat{X}(z)=D{\rm Exp}_x^{-1}\circ X({\rm Exp}_x(z)).
$$
Note that $\hat{X}$ depends on $x\in M^d$.

Assume that $\beta_*$ is small enough such that for any $x\in M^d$, $|z|\le \beta_*$, and any two unit vectors $v$ and $w$,
$$
0.999<| D{\rm Exp}_x(z)v|<1.001,~~~|\angle(D{\rm Exp}_x(z)v,D{\rm Exp}_x(z)w)-\angle(v,w)|<0.001.
$$
We will prove the lemma in the local coordinate. Note that ${\rm Exp}_x(0)=x$ and
$|X(x)|=|\hat{X}(0)|$.

Denote by $$K=\sup_{x\in M^d, |z|\le\beta_*}\{|\hat{X}(z)|, \|D\hat{X}(z)\|\}.$$
Since $D{\rm Exp}_x$ and $D{\rm Exp}_x^{-1}$ are uniformly bounded with respect to $x$, we have
$$K<\infty.$$
Assume $\beta_0<\min\{1,\beta_*\}/(1000K)$. For any regular point $x$, take $e_1=X(x)/|X(x)|$. Then
$\hat{X}(0)=(|X(x)|, 0, \cdots, 0)$. For $|z|\le\beta_0|X(x)|$, according to the mean-value theorem, there exists $\xi\in{\mathbb R}^d, |\xi|\le\beta_0|X(x)|$,
$$
|\hat{X}(z)|=|\hat{X}(0)+D\hat{X}(\xi)z|\ge |X(x)|-K\beta_0|X(x)|\ge 0.999|X(x)|>0.
$$
This implies that $N_{x}(\beta_0|X(x)|)\cap{\rm Sing}(X)=\emptyset.$

Denote by $\hat{\phi}_t(z)=(\hat{\phi}^1, \cdots, \hat{\phi}^d)$ the solution of $\hat{X}(z)$ such that $\hat{\phi}_0(z)=z$. Then for $0\le t\le \beta_0$, if $|z|\le \frac{\beta_0}{3}|X(x)|$ and $|\phi_s(z)|\le\beta_0|X(x)|$ for $s\in[0,t]$, we have
$$
|\hat{\phi}_t(z)|=|z+\int_0^t \hat{X}(\hat{\phi}_t(z)){\rm d}t|\le
(\frac{\beta_0}{3}+1.001t)|X(x)|.
$$
Let $t$ be the time such that $|\hat{\phi}_s(z)|\le\beta_0|X(x)|$ for $s\in[0,t)$ and
$|\hat{\phi}_t(z)|=\beta_0|X(x)|.$ From the above estimation, we have that
$$
t\ge \frac 23\beta_0/1.001>\frac 12\beta_0.
$$

By reducing $\beta_0$ if necessary, for $|z|\le\beta_0|X(x)|$,
$$\sup_{t\in(-\beta_0,\beta_0)}\frac{|\hat{X}(z)|}{|\hat{X}(\hat{\phi}_t(z))|}<\frac{1}{1000K},~~~\sup_{t\in(-\beta_0,\beta_0)}\angle(\hat{X}(z),~\hat{X}(\hat{\phi}_t(z)))<\frac{1}{1000K}$$

Since {\bf $e_1=X(x)/|X(x)|$}, ${\cal N}_x=\{z: z_1=0\},$ and $\hat{X}(0)=(|X(x)|,0,\cdots,0),$
we have that for $|z|\le\beta_0|X(x)|$, $\hat{X}_1(z)\in[0.999|X(x)|,1.001|X(x)|]$.

\begin{Claim}
For $0<r<\beta_0$, for any $z=(0,y)$, $y\in{\mathbb R}^{d-1}$, $|y|\le r|X(x)|/2$,  for any $t\in[\beta_0/3, 2\beta_0/3]$, there exists a unique $\tau=\tau(t,y)\in(0,\beta_0]$ such that $\hat{\phi}_\tau(0,y)\in  \hat{N}_{\phi_t(x)}(r)$, where
$$
\hat{N}_x(r)={\rm Exp}^{-1}({N}_x(r)).
$$
\end{Claim}

\begin{proof}[\bf Proof of Claim.]
Since $|z|<1000\beta_0 K$, $t\in[0,\beta_0]$ and $\beta_0$ is small, $\mathrm{Exp}_x$ is almost an isometry, and ${N}_{\phi_t(x)}$ is the graph of a map $f=f_t: \mathbb{R}^{d-1}\to {\mathbb{R}}$ with $|\frac{\partial{f}}{\partial y}|<0.001$.
Let $z=(0,y)$ with $|y|\le\beta_0|X(x)|$. Then
$$
{\rm proj}_1\hat{\phi}_{\beta_0}(z)=\int_0^{\beta_0}\hat{X}_1(\hat{\phi}_t(z)){\rm d}t\ge 0.999|X(x)|\beta_0.
$$
This means that $z$ and $\hat{\phi}_{\beta_0}(z)$ are on the different sides of the graph of $f_t$ for $t\in [\beta_0/3, 2\beta_0/3]$, from which one can define $\tau(t,y)$. Notice that $f$ is $C^1$ and $X$ is $C^1$, we have that $\tau(t,y)$ is $C^1$ w.r.t. $t$ and $y$.
\end{proof}

For any $T\ge \beta_0$, let $n=[3T/\beta_0]$ and a partition
$$
0=t_0<t_1<\cdots<t_{n-1}<t_n=T,
$$
such that $t_i=i\beta_0/3, i=0,1,\cdots, n-1$. Then we have $t_n-t_{n-1}\in[\beta_0/3,2\beta_0/3].$ Then we can define $\beta_T$ inductively.

\end{proof}

%
%
%
%

\begin{Lemma}\label{Lem:uniformcontinuous}
Let $X\in{\cal X}^1(M^d)$ and $T>0$. By reducing $\beta_T>0$  as in Lemma~\ref{Lem:domainofPoincare} if necessary, for any $x\in M^d\setminus{\rm Sing}(X)$, for the sectional Poincar\'e map
$${\cal P}_{x,\phi_T(x)}: {\cal N}_{x}(\beta_T|X(x)|)\to {\cal N}_{\phi_T(x)}(\beta_*),$$
$D{\cal P}_{x,\phi_T(x)}(y)$ is uniformly continuous in the following sense: for any $\epsilon>0$ there exists $\delta\in(0, \beta_T]$ such that for any $x\in M^d\setminus {\rm Sing}(X)$ and $y,y'\in {\cal N}_x (\beta_T|X(x)|)$, if $|y-y'|\le\delta|X(x)|$, then
    $$
    |D\mathcal{P}_{x,\phi_T(x)}(y)-D\mathcal{P}_{x,\phi_T(x)}(y')|<\epsilon.
    $$
    (Note that $D\mathcal{P}_{x,\phi_T(x)}(0)=\psi_T|_{{\mathcal N}_x}$.)
    And hence there exists $K_T>0$ (independent of $x$) such that
    $$
    |D\mathcal{P}_{x,\phi_T(x)}|\le K_T.
    $$
\end{Lemma}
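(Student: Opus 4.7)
The plan is to remove the awkward dependence on $|X(x)|$ by a rescaling argument: work in the local exponential coordinates $\mathrm{Exp}_x$ of the previous lemma and introduce the dilated variable $w = z/|X(x)|$, which turns the family of sectional Poincaré maps (parametrised by the base point $x$) into a family of maps on a fixed-size ball in $\mathbb{R}^d$. Concretely, I would define the rescaled vector field
\[
\tilde X(w) = \frac{1}{|X(x)|}\hat X\bigl(|X(x)|\,w\bigr)
\]
on $\{|w|\le \beta_0\}$. Then $\tilde X(0)=e_1$, $|\tilde X(w)|\ge 0.999$ for $|w|\le\beta_0$ (by the estimate already obtained in Lemma~\ref{Lem:domainofPoincare}), and $D\tilde X(w)=D\hat X(|X(x)|w)$ is bounded by $K$ \emph{uniformly in $x$}. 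Since $M^d$ is compact and $X$ is $C^1$, $DX$ has a uniform modulus of continuity $\omega$, and it transfers to $D\tilde X$ with the same $\omega$ evaluated at $|X(x)|\cdot|w-w'| \le \|X\|_\infty\,|w-w'|$, giving equicontinuity of $\{D\tilde X\}_{x}$ independent of $x$.

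Next I would run the construction of $\tau(t,y)$ from the proof of Lemma~\ref{Lem:domainofPoincare} verbatim but for $\tilde X$ instead of $\hat X$, producing the rescaled sectional Poincaré map
\[
\tilde{\mathcal P}(w) \;=\; \frac{1}{|X(x)|}\, \mathcal P_{x,\phi_T(x)}\bigl(|X(x)|w\bigr),
\]
defined on $\{w\in \tilde{\mathcal N}_x : |w|\le \beta_T\}$. For $T\le\beta_0$ the transition time $\tau$ stays bounded, and standard $C^1$ dependence of flows on initial conditions, applied to the \emph{equicontinuous} family $\{\tilde X\}_x$, gives a uniform bound and a uniform modulus of continuity for $D\tilde{\mathcal P}(w)$ that depend only on the modulus $\omega$, on $K$, and on $T$ — but not on $x$. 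Translating back via $D\tilde{\mathcal P}(w)=D\mathcal P_{x,\phi_T(x)}(|X(x)|w)$ yields the claimed estimate: for any $\varepsilon>0$ a uniform $\delta\in(0,\beta_T]$ exists so that $|y-y'|\le\delta|X(x)|$ implies $|D\mathcal P_{x,\phi_T(x)}(y)-D\mathcal P_{x,\phi_T(x)}(y')|<\varepsilon$, and the uniform bound $K_T$ falls out of the same flow estimate at $w=0$.

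For the case $T>\beta_0$, I would use the partition $0=t_0<t_1<\cdots<t_n=T$ already constructed in Lemma~\ref{Lem:domainofPoincare} and write $\mathcal P_{x,\phi_T(x)}$ as a composition of $n$ short-time sectional Poincaré maps $\mathcal P_{\phi_{t_i}(x),\phi_{t_{i+1}}(x)}$, each with $t_{i+1}-t_i\le\beta_0$. Each factor satisfies the scaled uniform estimate proved above (with the basepoints moved along the orbit, where $|X(\phi_{t_i}(x))|$ plays the role of $|X(x)|$; crucially, the scaling used in the domain of the $(i{+}1)$-th factor matches the scaling in the codomain of the $i$-th factor, so the chain rule can be applied in the rescaled coordinates without mismatched normalisations). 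Invoking the chain rule and the uniform bounds iteratively, one obtains uniform continuity of $D\mathcal P_{x,\phi_T(x)}$ and the bound $K_T$ depending only on $T,K,\omega$.

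The main obstacle I anticipate is precisely this bookkeeping in the composition step: one must check that the output of each short-time factor lies comfortably in the rescaled domain of the next, and that the factor $|X(\phi_{t_i}(x))|/|X(\phi_{t_{i+1}}(x))|$ appearing when one renormalises between consecutive factors is uniformly bounded — which it is, on a compact time interval $[0,T]$, by continuity of $X$ and the uniform $C^0$ bound on the tangent flow. Once this is verified, passing the uniform modulus through the finite composition is routine.
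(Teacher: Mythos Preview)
Your proposal is correct and takes a genuinely different route from the paper's proof. The paper works directly in the (unrescaled) local coordinates, derives an explicit formula for $\partial\mathcal{P}/\partial y$ via implicit differentiation of the hitting-time equation $\hat\varphi_{\tau(y)}(0,y)=h(\hat\psi_{\tau(y)}(0,y))$, and then checks term by term that each constituent of this formula is uniformly bounded and uniformly continuous with the correct scaling (the factors $|X|$ in numerator and denominator cancel after grouping). Your approach instead absorbs the scaling at the outset: the substitution $w=z/|X(x)|$ converts the whole family of problems into a single equicontinuous family of $C^1$ vector fields on a fixed ball, with $\tilde X(0)=e_1$, $\|D\tilde X\|\le K$ and a common modulus of continuity for $D\tilde X$, after which uniform $C^1$ dependence of flows and of first-hit maps is a black-box consequence. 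Both arguments finish the large-$T$ case by the same composition along the partition from Lemma~\ref{Lem:domainofPoincare}.

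What each buys: the paper's explicit-formula approach is self-contained and makes the source of every constant visible, at the cost of a page of algebra; your rescaling approach is shorter and more conceptual, isolating the one genuine idea (that the dependence on $|X(x)|$ disappears after normalisation), but it leans on an unstated lemma about uniform $C^1$ dependence of first-hit maps for equicontinuous $C^1$ families --- routine, but you should either state it or sketch why the implicit-function theorem applied to the hitting equation inherits the uniform modulus. Your discussion of the composition step, including the renormalisation factors $|X(\phi_{t_i}(x))|/|X(\phi_{t_{i+1}}(x))|$ and the inclusion of images in successive domains, is exactly the bookkeeping the paper glosses over with the word ``prolongation'', so you have in fact been more careful there.
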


\begin{proof}

We will still use the notations and terminologies as in the proof of Lemma~\ref{Lem:domainofPoincare}. We first assume that $T<\beta_0$.
Notice that we are in a local Euclidean coordinate, $N_z={\cal N}_z$ for each regular point $z$.

In the local coordinate, we assume that the vector field $\hat{X}$ has the following form $\hat{X}(x,y)=(f(x,y),g(x,y))$, where $x\in\RR^1$, $y\in\RR^{d-1}$,
$f:\RR^d\to\RR^1$ and $g:\RR^{d}\to\RR^{d-1}$ are continuously $C^1$ maps such that
\begin{itemize}

\item $\hat{X}(0,0)=(f(0,0),0)$, where $f(0,0)>0$.

\item for any $(x,y)$ in the local coordinate, one has $f(x,y)\in (0.999f(0,0), 1.001f(0,0))$ and $|g(x,y)|<f(0,0)/1000$.
\end{itemize}

The flow of $\hat{X}$ satisfies the following differential equations:
\[\begin{array}{ll}\
&\displaystyle\frac{\mathrm{d}x}{\mathrm{d}t}=  f(x,y), \\[0.2cm] &\displaystyle\frac{\mathrm{d}y}{\mathrm{d}t}=  g(x,y).
\end{array}\]

We assume that the solution of these differential equations is $(\widehat{\varphi}_t(x,y),\widehat{\psi}_t(x,y))$, where $\widehat{\varphi}_t(x,y):~\RR^1\times\RR^d\to\RR^1$ and $\widehat{\psi}_t(x,y):~\RR^1\times\RR^d\to\RR^{d-1}$.

Now we consider the expression of the sectional Poincar\'e map $\mathcal{P}_T:~\mathcal{N}_{x}(\beta_0|X(x)|)\to \mathcal{N}_{\phi_T(x)}(\beta_*)$ in this local coordinate. The local coordinate of $x$ is $(0,0)$. Thus $\mathcal{N}_{(0,0)}(\beta_*)\subset\{0\}\times \RR^{d-1}$. Now we consider $(\widehat{\varphi}_T(0,0),\widehat{\psi}_T(0,0))$, whose normal manifold is contained in a graph of an affine map $h:\RR^{d-1}\to \RR$ such that $|Dh|<0.001$.

For $y\in\RR^{d-1}$,
$$\mathcal{P}(0,y)=\mathcal{P}(y)=(\widehat{\varphi}_\tau(0,y),\widehat{\psi}_\tau(0,y))^T,$$
where the time function $\tau:~\RR^{d-1}\to\RR^1$ satisfies
$$\widehat{\varphi}_{\tau(y)}(0,y)=h(\widehat{\psi}_{\tau(y)}(0,y)).$$

By differentiating $y$ in the above equality, one has

$$\left.\frac{\partial \widehat{\varphi}}{\partial t}\right|_{t=\tau(y)} \frac{\partial \tau}{\partial y}+\frac{\partial \widehat{\varphi}}{\partial y}=\frac{\partial h}{\partial y}(\frac{\partial \widehat{\psi}}{\partial t}\frac{\partial \tau}{\partial y}+\frac{\partial \widehat{\psi}}{\partial y}).$$
Notice that in the above equality, $\partial \tau/\partial y$, $\partial \widehat{\varphi}/\partial y$ and $\partial h/\partial y$ are row vectors with $d-1$ elements, $\partial \widehat{\psi}/\partial t$ is a column vector with $(d-1)$ elements, $\partial \widehat{\psi}/\partial y$ is a $(d-1)\times (d-1)$ matrix.

By solving the above equality, one has

$$\frac{\partial \tau}{\partial y}=\frac{\frac{\partial h}{\partial y}\frac{\partial \widehat{\psi}}{\partial y}-\frac{\partial \widehat{\varphi}}{\partial y}}{\frac{\partial \widehat{\varphi}}{\partial t}-\frac{\partial h}{\partial y}\frac{\partial \widehat{\psi}}{\partial t}}.$$

Thus,

$$\frac{\partial \mathcal{P}}{\partial y}=\begin{pmatrix} \frac{\partial \widehat{\varphi}}{\partial t}\frac{\partial \tau}{\partial y}+\frac{\partial \widehat{\varphi}}{\partial y} \\ \frac{\partial \widehat{\psi}}{\partial t}\frac{\partial \tau}{\partial y}+\frac{\partial \widehat{\psi}}{\partial y}\end{pmatrix}=   \begin{pmatrix}   \frac{\partial \widehat{\varphi}}{\partial t}   \\  \frac{\partial \widehat{\psi}}{\partial t}   \end{pmatrix} \frac{\frac{\partial h}{\partial y}\frac{\partial \widehat{\psi}}{\partial y}-\frac{\partial \widehat{\varphi}}{\partial y}}{\frac{\partial \widehat{\varphi}}{\partial t}-\frac{\partial h}{\partial y}\frac{\partial \widehat{\psi}}{\partial t}}+\begin{pmatrix}   \frac{\partial \widehat{\varphi}}{\partial y}   \\  \frac{\partial \widehat{\psi}}{\partial y}   \end{pmatrix}. $$

By the expression of the differential equations, one has

$$\frac{\partial \mathcal{P}}{\partial y}=\widehat{X}(\mathcal{P}(y))\frac{\frac{\partial h}{\partial y}\frac{\partial \widehat{\psi}}{\partial y}-\frac{\partial \widehat{\varphi}}{\partial y}}{\frac{\partial \widehat{\varphi}}{\partial t}-\frac{\partial h}{\partial y}\frac{\partial \widehat{\psi}}{\partial t}}+\begin{pmatrix}   \frac{\partial \widehat{\varphi}}{\partial y}   \\  \frac{\partial \widehat{\psi}}{\partial y}   \end{pmatrix}$$
In another form,
$$\frac{\partial \mathcal{P}}{\partial y}=\frac{\widehat{X}\circ \mathcal{P}(y)}{\widehat{f}\circ \mathcal{P}(y)-\frac{\partial h}{\partial y}\widehat{g}\circ \mathcal{P} (y)}\left( \frac{\partial h}{\partial y}\frac{\partial \widehat{\psi}}{\partial y}-\frac{\partial \widehat{\varphi}}{\partial y}  \right)+\begin{pmatrix}   \frac{\partial \widehat{\varphi}}{\partial y}(0,y)   \\  \frac{\partial \widehat{\psi}}{\partial y}(0,y)   \end{pmatrix}.$$

Since $0.999<|f|/|\widehat{X}|<1.001$ and $|g|/|X|<0.001$, there is a uniform constant $\widehat{K}>0$ such that

$$\|\frac{\partial \mathcal{P}}{\partial y}\|\le \widehat{K}.$$

For any $\varepsilon>0$, since the tangent flow $\Phi_t$ is uniformly continuous, there is $\delta>0$ such that for any $y,y'\in N_x(\beta_T|X(x)|)$, if $|y-y'|\le\delta|X(x)|$, one has
$$
\|\frac{\partial \widehat{\varphi}}{\partial y}(0,y)-\frac{\partial \widehat{\varphi}}{\partial y}(0,y')\|<\varepsilon/4,~~~\|\frac{\partial \widehat{\psi}}{\partial y}(0,y)-\frac{\partial \widehat{\psi}}{\partial y}(0,y')\|<\varepsilon/4.
$$

Let $\alpha(y)=\widehat{X}\circ \mathcal{P}(y)$ and $\beta(y)=\widehat{f}\circ \mathcal{P}(y)-\partial h/\partial y (\widehat{g}\circ \mathcal{P}(y))$, then we have
$$\frac{\alpha(y)}{\beta(y)}-\frac{\alpha(y')}{\beta(y')}=\frac{\alpha(y)-\alpha(y')}{\beta(y)}+\frac{\beta(y')-\beta(y)}{\beta(y)\beta(y')}\alpha(y').$$

We have the following estimation by the mean value theorem:
$$\|\alpha(y)-\alpha(y')\|\le \|D\widehat{X}\|\|\mathcal{P}(y)-\mathcal{P}(y')\|\le \widehat{K}\|D\widehat{X}\||y-y'|.$$

By reducing $\delta$ if necessary, for any $y,y'\in N_x(\beta_0|X(x)|)$, if $|y-y'|\le\delta|X(x)|$, one has

$$\|\frac{\alpha(y)-\alpha(y')}{\beta(y)}\|\le 1.001 \frac{\widehat{K}\|D\widehat{X}\|\delta|X(x)|}{|X(x)|}\le 2\delta \widehat{K}.$$
We just need to choose $\delta<\varepsilon/8\widehat{K}$

$$\|\frac{\beta(y)-\beta(y')}{\beta(y)\beta(y')} \alpha(y')\|\le \frac{\|\alpha(y')\|}{|\beta(y')|}\frac{\widehat{K}\|D\widehat{f}+\widehat{K}D\widehat{g}\||y-y'|}{|\widehat{X}|}<\varepsilon/4,$$
if we reduce $\delta$ again.

Let
$$F(y)=\frac{\widehat{X}\circ \mathcal{P}(y)}{\widehat{f}\circ \mathcal{P}-\partial h/\partial y}\widehat{g}\circ \mathcal{P},~~~G(y)=\frac{\partial h}{\partial y}\frac{\partial \widehat{\psi}}{\partial y}-\frac{\partial \widehat{\varphi}}{\partial y}.$$

We know that $F(y)$ is uniformly continuous and $G(y)$ is uniformly continuous. Thus $F(y)G(y)$ is uniformly continuous.

Combining all above estimations, we can know that $\mathcal{P}_{x,\phi_T(x)}$ is uniformly continuous.

For any $T\ge \beta_0$, let $n=[3T/\beta_0]$ and a partition
$$
0=t_0<t_1<\cdots<t_{n-1}<t_n=T,
$$
such that $t_i=i\beta_0/3, i=0,1,\cdots, n-1$. Then we have $t_n-t_{n-1}\in[\beta_0/3,2\beta_0/3].$ Then by using the prolongation, we know the result is true.
\end{proof}

Sometimes one needs to consider the \emph{scaled sectional Poincar\'e map ${\cal P}^*$} which is defined
in the following way:
$${\cal  P}^*_{x,\phi_T(x)}(y)=\frac{{\cal P}_{x,\phi_T(x)}(y|X(x)|)}{|X(\phi_T(x))|}$$
for each $y\in{\cal N}_x(\beta_T)$. Thus ${\cal P}^*_{x,\phi_T(x)}$ is a map from ${{\cal
N}_x(\beta_T)}$ to ${\cal N}_{\phi_T(x)}$.

\begin{Lemma}\label{Lem:estimationscaled}

Given $X\in{\cal X}^1(M^d)$ and $T>0$, there are constants $\beta_T>0$ and $K_T>0$ such that for any $t\in(0,T)$ and any regular point $x\in M^d$,
\begin{enumerate}

\item ${\cal P}^*_{x,\phi_t(x)}$ can be defined on ${\cal N}_x(\beta_T)$.

\item $D{\cal P}^*_{x,\phi_t(x)}$ is uniformly continuous: for any $\varepsilon>0$, there is $\delta>0$, such that for any $y,~z\in {\cal N}_x(\beta_T)$, if $d(y,z)<\delta$, one has $\|D{\cal P}^*_{x,\phi_t(x)}(y)-D{\cal P}^*_{x,\phi_t(x)}(z)\|<\varepsilon$.

\item $D_y
{\cal P}^*_{x,\phi_T(x)}(y)|_{y=0}=\psi_T^*(x)$.

\item $\|D{\cal P}^*_{x,\phi_t(x)}(y)\|\le K_T$ for any $y\in {\cal N}_x(\beta_T)$.

\end{enumerate}

\end{Lemma}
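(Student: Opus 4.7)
The plan is to reduce each of (1)--(4) to the corresponding estimate for the unscaled sectional Poincaré map $\cP_{x,\phi_t(x)}$ via the chain rule, and then to absorb the singular scalings using only the fact that $t$ stays in the bounded interval $(0,T)$. Differentiating the defining identity
\[
\cP^*_{x,\phi_t(x)}(y) \;=\; \frac{\cP_{x,\phi_t(x)}(|X(x)|\,y)}{|X(\phi_t(x))|}
\]
in $y$ yields
\[
D\cP^*_{x,\phi_t(x)}(y) \;=\; \alpha(x,t)\, D\cP_{x,\phi_t(x)}(|X(x)|\,y), \qquad \alpha(x,t) \;:=\; \frac{|X(x)|}{|X(\phi_t(x))|} \;=\; \frac{1}{\|\Phi_t|_{\langle X(x)\rangle}\|}.
\]
The crucial observation I would make is that, because $\|\Phi_{\pm s}\|$ is uniformly bounded on $TM^d$ for $s\in[0,T]$, the scalar $\alpha(x,t)$ is bounded above by a constant $A_T$ depending only on $T$. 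This single bound turns the non-uniform estimates for $\cP$ into the uniform ones claimed for $\cP^*$.

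Taking $\beta_T$ from Lemma~\ref{Lem:domainofPoincare}, item (1) is immediate: $y\in \cN_x(\beta_T)$ if and only if $|X(x)|\,y\in \cN_x(\beta_T|X(x)|)$, which is exactly the domain on which $\cP_{x,\phi_t(x)}$ is defined. For item (3) I would evaluate the chain rule identity at $y=0$ and use the identification $D\cP_{x,\phi_T(x)}(0)=\psi_T|_{\cN_x}$ recorded at the end of Lemma~\ref{Lem:uniformcontinuous}, which gives $D\cP^*_{x,\phi_T(x)}(0) = \alpha(x,T)\,\psi_T|_{\cN_x} = \psi_T^*(x)$ directly from the definition of $\psi_t^*$.

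For item (4) I would combine the chain rule identity with the uniform bound $\|D\cP_{x,\phi_t(x)}\|\le K_T$ from Lemma~\ref{Lem:uniformcontinuous}, obtaining $\|D\cP^*_{x,\phi_t(x)}(y)\|\le A_T K_T$, and relabel this product as a new $K_T$. For item (2), given $\varepsilon>0$ I would apply Lemma~\ref{Lem:uniformcontinuous} with tolerance $\varepsilon/A_T$ to produce $\delta_0\in(0,\beta_T]$; then for $y,y'\in \cN_x(\beta_T)$ with $|y-y'|<\delta_0$, the scaled inputs $|X(x)|\,y$ and $|X(x)|\,y'$ lie within $\delta_0|X(x)|$ of each other, so their $D\cP$-images differ by less than $\varepsilon/A_T$, and multiplication by $\alpha(x,t)\le A_T$ yields $\|D\cP^*(y)-D\cP^*(y')\|<\varepsilon$.

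Strictly speaking there is no hard part: the real work has already been done in Lemma~\ref{Lem:domainofPoincare} and Lemma~\ref{Lem:uniformcontinuous}, and this lemma simply repackages those results after the uniform rescaling by $|X(x)|$ in the source and $|X(\phi_t(x))|$ in the target. The only step requiring a moment's thought is verifying that the potentially singular ratio $\alpha(x,t)$ is in fact uniformly bounded for $t\in(0,T)$, which hinges on $T$ being bounded and is the same mechanism (already used in Lemma~\ref{Lem:starbounded}) that makes the scaled linear Poincaré flow worth considering in the first place.
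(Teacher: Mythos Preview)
Your proposal is correct and follows essentially the same approach as the paper: reduce each item to the unscaled estimates of Lemmas~\ref{Lem:domainofPoincare} and~\ref{Lem:uniformcontinuous} via the chain rule, using the uniform bound on $|X(x)|/|X(\phi_t(x))|$ for $t\in(0,T)$. The only cosmetic difference is in item (4), where the paper invokes the boundedness of $\psi_T^*$ (Lemma~\ref{Lem:starbounded}) together with item (2), whereas you use the $K_T$ bound from Lemma~\ref{Lem:uniformcontinuous} directly; both routes are equally valid and equally short.
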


\begin{proof}
Item 1 is true because of Lemma~\ref{Lem:domainofPoincare}.

For any $y,z\in {\cal N}_x(\beta_T)$, one has
$$D{\cal  P}^*_{x,\phi_T(x)}(y)-D{\cal  P}^*_{x,\phi_T(x)}(z)=\frac{|X(x)|}{|X(\phi_T(x))|}(D{\cal  P}_{x,\phi_T(x)}(y|X(x)|)-D{\cal  P}_{x,\phi_T(x)}(z|X(x)|)).$$
Since $|X(x)|/|X(\phi_T(x))|$ is uniformly bounded, item 2 follows from Lemma~\ref{Lem:uniformcontinuous}.

For item 3, we have

$$D_y{\cal  P}^*_{x,\phi_T(x)}(y)=D_y \left(\frac{{\cal P}_{x,\phi_T(x)}(y|X(x)|)}{|X(\phi_T(x))|}\right)=D_y{\cal P}_{x,\phi_T(x)}(y|X(x)|)\frac{|X(x)|}{|X(\phi_t(x))|}.$$

Thus, when $y=0$, one has
$$D_y{\cal  P}^*_{x,\phi_T(x)}(0)=D_y{\cal P}_{x,\phi_T(x)}\frac{|X(x)|}{|X(\phi_T(x))|}=\frac{\psi_T(x)}{\|\Phi_T|_{\left<X(x) \right>}\|}=\psi_T^*(x).$$

Item 4 holds because that $\psi_T^*$ is uniformly bounded and item 2.

\end{proof}

\subsection{Franks' Lemma and dominated splittings}

As in the diffeomorphism case, one needs Franks' lemma to get some information on the derivative along periodic orbits. We state a version of Franks' lemma for flows which is taken from \cite[Theorem A.1]{BGV06}. Liao also had a version by using his standard differential equations \cite[Proposition 3.4]{Lia79}.

\begin{Lemma}\label{Lem:Franks}
Given $X\in\cX^1(M^d)$ and a $C^1$ neighborhood $\cU\subset\cX^1(M)$ of $X$, there is a neighborhood ${\cal V}\subset {\cal U}$ of $X$ and $\varepsilon>0$ such that for any $Y\in{\cal U}$, for any periodic orbit $\orb(x)$ of $Y$ with period $T\ge 1$, any neighborhood $U$ of ${\orb(x)}$ and any partition of $[0,T]$:
$$
0=t_0<t_1<\cdots<t_l=T,\quad 1\le t_{i+1}-t_i\le 2, i=0,1,\cdots, l-1,
$$
and any linear isomorphisms $L_i: \cN_{\phi^Y_{t_i}(x)}\to\cN_{\phi^Y_{t_{i+1}}(x)}, i=0,1,\cdots,l-1$ with $\|L_i-\psi^Y_{t_{i+1}-t_i}|_{\cN_{\phi_{t_i}(x)}}\|\le\e$, there exists $Z\in\cU$ such that $\psi^Z_{t_{i+1}-t_i}|_{\cN_{\phi_{t_i}(x)}}=L_i$ and $Z=Y$ on $(M^d\setminus U)\cup \orb(x)$.
\end{Lemma}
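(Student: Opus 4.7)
The plan is to prove Franks' lemma segment by segment. For each $i\in\{0,\dots,l-1\}$ I would construct an additive, compactly supported perturbation $\xi_i$ of $Y$ that (a) vanishes on $\orb(x)$, (b) is supported in a thin tubular neighborhood of the segment $\{\phi^Y_t(x):t\in[t_i,t_{i+1}]\}$ contained in $U$, and (c) modifies the linear Poincar\'e flow along that segment so as to realize the prescribed $L_i$. Since the supports for distinct $i$ meet only along $\orb(x)$ and each $\xi_i$ vanishes there, the sum $Z:=Y+\sum_i\xi_i$ agrees with $Y$ on $(M^d\setminus U)\cup\orb(x)$; in particular $\orb(x)$ is still a periodic orbit of $Z$ with the same period, and the $\cN_{\phi_{t_i}(x)}$ are unchanged.

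For the local step I would pick flowbox coordinates $(s,y)\in[0,\tau_i]\times\RR^{d-1}$, with $\tau_i:=t_{i+1}-t_i\in[1,2]$, in which $Y=\partial/\partial s$ on the axis $\{y=0\}$ and the slices $\{s=\text{const}\}$ realize the normal manifolds; the tube radius $r>0$ is chosen small enough to fit inside $U$. Fix a smooth bump $\rho:[0,\tau_i]\to[0,\infty)$ supported in $(1/4,\tau_i-1/4)$ with $\int\rho=1$, and look for $\xi_i$ of the form $\xi_i(s,y)=\rho(s)A_iy$ (smoothly cut off near $|y|=r$), with $A_i\in\mathrm{End}(\RR^{d-1})$ to be determined. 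Then $\xi_i$ vanishes on the axis, so $Z=Y$ on $\orb(x)$, and $\|\xi_i\|_{C^1}\le C\|A_i\|$ for a constant $C$ depending only on the chosen $\rho$. The variational equation for the linear Poincar\'e flow of $Z$ along the axis yields, to first order in $A_i$,
\[
\psi^Z_{\tau_i}\big|_{\cN_{\phi_{t_i}(x)}}-\psi^Y_{\tau_i}\big|_{\cN_{\phi_{t_i}(x)}}
=\int_0^{\tau_i}\rho(s)\,\psi^Y_{\tau_i-s}\,A_i\,\psi^Y_s\,ds+O(\|A_i\|^2).
\]
Expressed through the scaled linear Poincar\'e flow $\psi^*$, which by Lemma~\ref{Lem:starbounded} is uniformly bounded (and uniformly bounded below, by composition with its backward analogue) on the time window $[0,2]$, the map $A_i\mapsto\int_0^{\tau_i}\rho(s)\psi^Y_{\tau_i-s}A_i\psi^Y_s\,ds$ is a linear isomorphism of $\mathrm{End}(\RR^{d-1})$ whose operator norm and inverse operator norm are controlled uniformly in the base point and in $Y\in\cV$. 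The inverse function theorem then supplies, for any $L_i$ at distance at most $\varepsilon$ from $\psi^Y_{\tau_i}|_{\cN_{\phi_{t_i}(x)}}$, an $A_i$ of norm at most $C'\varepsilon$ realizing it.

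The last step is to choose $\cV\subset\cU$ together with $\delta>0$ such that every $C^1$-perturbation of $Y\in\cV$ of $C^1$-size at most $\delta$ still lies in $\cU$, and then to pick $\varepsilon$ so that $CC'\varepsilon<\delta$. This makes $Z\in\cU$ and realizes the prescribed $L_i$'s. The main obstacle, and the reason this is nontrivial for singular flows, is the uniformity of the constant $C'$ when $\orb(x)$ passes close to a singularity of $Y$: there the ratios $|X(\phi_t(x))|/|X(x)|$ and the naive flowbox radii degenerate, and uniform control is only available after passing to the scaled frame. This is exactly the point at which the uniform bounds of Lemmas~\ref{Lem:starbounded}, \ref{Lem:uniformcontinuous} and \ref{Lem:estimationscaled} are indispensable. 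The only genuinely new feature compared to the diffeomorphism Franks' lemma is the constraint $Z=Y$ on $\orb(x)$, which is automatic for the ansatz $\xi_i(s,y)=\rho(s)A_iy$ since it vanishes on the axis.
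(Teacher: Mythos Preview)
The paper does not prove this lemma at all: it is quoted verbatim from \cite[Theorem A.1]{BGV06} (with an alternative reference to Liao \cite[Proposition 3.4]{Lia79}), so there is no in-paper argument to compare your proposal against. Your segment-by-segment ansatz with a linear normal perturbation is indeed the standard strategy behind both references, so the overall architecture is fine.

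There is, however, a genuine gap in your local step. You assert that the linearized ``derivative map''
\[
A_i\ \longmapsto\ \int_0^{\tau_i}\rho(s)\,\psi^Y_{\tau_i-s}\,A_i\,\psi^Y_s\,ds
\]
is a linear isomorphism of $\mathrm{End}(\RR^{d-1})$ with uniformly bounded inverse, and you justify this solely by the uniform two-sided bounds on $\psi^*$. Boundedness of the factors does \emph{not} imply invertibility of this averaged conjugation: writing $P_s=\psi^Y_s$, the map equals $P_{\tau_i}\int_0^{\tau_i}\rho(s)\,P_s^{-1}A_iP_s\,ds$, and an average of conjugates of $A_i$ can annihilate directions (think of $P_s$ rotating at constant angular speed and $A_i$ traceless symmetric). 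So as written the inverse-function-theorem step is unsupported. The standard cure, and what the cited proofs actually do, is to choose the normal frame along the segment so that $\psi^Y_s$ becomes the identity in coordinates (i.e.\ trivialize $\cN$ by $\psi^Y_s$ itself rather than by an orthonormal frame). In those coordinates your ansatz $\xi_i(s,y)=\rho(s)A_iy$ yields $\psi^Z_{\tau_i}=e^{A_i}$ and the problem reduces to $A_i=\log\bigl((\psi^Y_{\tau_i})^{-1}L_i\bigr)$, with $\|A_i\|$ controlled by $\varepsilon\cdot\|\psi^Y_{-\tau_i}\|$. The uniform bound you need here is simply the uniform boundedness of $\|\psi^Y_t\|$ for $t\in[-2,2]$, which the paper observes immediately after defining $\psi_t$; the scaled flow $\psi^*$ and Lemmas~\ref{Lem:uniformcontinuous}, \ref{Lem:estimationscaled} (which concern the nonlinear sectional Poincar\'e map, not the linear cocycle) play no role in Franks' lemma and should be dropped from your argument.
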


\begin{Remark}\label{Rem:timelength}
For simplicity, the time length of the partition is restricted to $[1,2]$. We remark that this is not a serious restriction. Sometimes, the system may contain periodic orbits with period less than $1$. But in our consideration, usually singularities are all hyperbolic. So there is a lower bound for the periods of periodic orbits. And after scaling, we may assume that the lower bound is $1$.
\end{Remark}

In this paper, we will use two estimations obtained by Franks' lemma: dominated splittings for vector fields away
from homoclinic tangencies and uniform estimation along a sequence of periodic orbits restricted on the stable bundle.

\paragraph{Dichotomy for the stable bundle:}

For any hyperbolic periodic orbit $\gamma$, ${\cal N}_\gamma$ admits a natural splitting ${\cal N}_{\gamma}={\cal N}^s\oplus {\cal N}^u$ such that ${\cal N}^s$ is contracting and ${\cal N}^u$ is expanding w.r.t. the linear Poincar\'e flow $\psi_t$.

By using the methods of periodic linear systems as in \cite{BDP03,BGV06,Man82,Wen02}, one can have the following
dichotomy result for a hyperbolic periodic orbit with large period:

\begin{Lemma}\label{Lem:dichotomyalongstable}
Given $X\in{\cal X}^1(M^d)$, for any $C^1$ neighborhood ${\cal U}$ of $X$, there are $\eta_{\cal U}>0$ and
$\iota_{\cal U}>0$ and a neighborhood ${\cal V}\subset{\cal U}$ of $X$ such that for any hyperbolic periodic
orbit $\gamma$ of index $i$ of $Y\in\cal V$ with $\tau(\gamma)>\iota_{\cal U}$, then
\begin{itemize}

\item either, there is $Z\in{\cal U}$ such that $\gamma$ is a hyperbolic periodic orbit of index $i-1$;

\item or, for any $x\in\gamma$, for \emph{any} time partition
$$0=t_0<t_1<\cdots<t_n=\tau(\gamma),$$
verifying $t_{i+1}-t_i\ge \iota_{\cal U}$ for $0\le i\le n-1$, one has
$$\prod_{i=0}^{n-1}\|\psi_{t_{i+1}-t_{i}}|_{{\cal N}^s(\phi_{t_i}(x))}\|\le C \exp\{-\eta_{\cal U} \tau(\gamma)\}.$$
\end{itemize}
\end{Lemma}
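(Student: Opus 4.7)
The plan is to prove the dichotomy via Franks' lemma together with a periodic linear systems argument in the spirit of \cite{Man82,BDP03,BGV06,Wen02}, adapted to the flow setting through the linear Poincaré flow.

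First I would apply Franks' lemma (Lemma~\ref{Lem:Franks}) to $X$ and $\cU$ to obtain a neighborhood $\cV\subset\cU$ and a constant $\varepsilon>0$ such that any $\varepsilon$-small perturbation of the pieces $\psi^Y_{t_{j+1}-t_j}|_{\cN_{\phi^Y_{t_j}(x)}}$ of the linear Poincaré cocycle along a periodic orbit of $Y\in\cV$, for any partition with step lengths in $[1,2]$, is realized by some $Z\in\cU$. Then I would set $\eta_\cU=\varepsilon/4$ (say) and choose $\iota_\cU$ large enough that (a) any partition with step lengths $\ge\iota_\cU$ can be subdivided into Franks-admissible blocks of length in $[1,2]$ with uniformly bounded linear pieces, using the uniform estimates of Lemma~\ref{Lem:estimationscaled}, and (b) the constant $C$ is absorbed by a factor of $e^{\eta_\cU\tau(\gamma)/2}$ when $\tau(\gamma)>\iota_\cU$.

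I would then argue by contrapositive: suppose the second alternative fails, so there exist $x\in\gamma$ and an admissible partition with
$$\prod_{j=0}^{n-1}\|\psi_{t_{j+1}-t_j}|_{\cN^s(\phi_{t_j}(x))}\|>C\exp\{-\eta_\cU\tau(\gamma)\}.$$
The strategy is to produce $Z\in\cU$ for which $\gamma$ has index $i-1$ via two successive Franks perturbations. The first, Mañé-type stage is a rotational alignment: at each partition time $t_j$, apply a small rotation inside $\cN^s$ sending a unit vector that realizes $\|\psi_{t_{j+1}-t_j}|_{\cN^s}\|$ onto a unit vector that realizes $\|\psi_{t_{j+2}-t_{j+1}}|_{\cN^s}\|$. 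The crucial point is that these aligning rotations can be made Franks-small of size $\le\varepsilon/2$ precisely because each step $t_{j+1}-t_j\ge\iota_\cU$ is long (the rotation is realised by the vector-field perturbation spread over that time, and the bound $K_T$ from Lemma~\ref{Lem:uniformcontinuous}/Lemma~\ref{Lem:estimationscaled} controls how expensive such a rotation is). After this stage, the operator norm of the composed return cocycle on $\cN^s$ is essentially the product of the individual norms, so the return map has an eigenvector $v\in\cN^s(x)$ of modulus $>e^{-\eta_\cU\tau(\gamma)}$. The corresponding $\psi$-invariant line bundle $L\subset\cN^s$ along $\gamma$ is our weakly contracted direction.

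The second stage is amplification along $L$: multiply $\psi|_L$ at each Franks-admissible sub-block by $e^{(t_{j+1}-t_j)\varepsilon/2}$, leaving the orthogonal complement unchanged. Composed with the first stage this is still pointwise $\varepsilon$-small, so it is realized by some $Z\in\cU$. For $Z$, the bundle $L$ expands along $\gamma$ at rate $\ge\varepsilon/2-\eta_\cU=\varepsilon/4>0$, while $\cN^u$ stays expanding and the complement of $L$ in $\cN^s$ stays contracting (both up to the $\varepsilon$-error, which is absorbed because they were originally strongly hyperbolic). Hence $\gamma$ is hyperbolic of index $i-1$ for $Z$, giving the first alternative. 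The main obstacle is controlling the aligning rotations in stage one: one must simultaneously keep their pointwise Franks size below $\varepsilon/2$, which forces $\iota_\cU$ to be large and uses the uniform bounds on the scaled Poincaré maps from Section~\ref{Sub:flows}; a secondary bookkeeping issue is subdividing the long partition steps $[t_j,t_{j+1}]$ into Franks-admissible $[1,2]$-length blocks while preserving the norm estimates, which is again handled by Lemmas~\ref{Lem:starbounded} and~\ref{Lem:estimationscaled}.
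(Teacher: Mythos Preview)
Your proposal is correct and follows precisely the periodic-linear-systems strategy that the paper invokes; the paper itself gives no proof of this lemma, merely citing \cite{BDP03,BGV06,Man82,Wen02}, and your two-stage Franks argument (align norm-realising vectors along $\cN^s$ by rotations spread over the long sub-intervals, then amplify the resulting invariant line $L$) is exactly what those references do in this setting. The one step that deserves slightly more care than your sketch gives is ensuring the index drops by \emph{exactly} one: your justification that the complement of $L$ in $\cN^s$ ``stays contracting because it was originally strongly hyperbolic'' is not quite right, since $\cN^s$ may well contain other weakly contracted directions; in the cited references this is handled either by a spectral-radius/quotient argument (after stage one the return map on $\cN^s$ has norm $\le\prod_j\|\psi_{t_{j+1}-t_j}|_{\cN^s}\|<1$, and stage two acts trivially on the quotient $\cN^s/L$) or, more robustly, by a continuity argument along the one-parameter family of perturbed cocycles, stopping at the first moment an eigenvalue reaches the unit circle.
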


\begin{Definition}
Let $C>0$, $\eta>0$ and $T>0$. For a hyperbolic periodic orbit $\gamma$, for a $\psi_t$-invariant bundle $E\subset {\cal N}_\gamma$, we say that $\gamma$ is called \emph{$(C,\eta,T,E)$-contracting at the period (w.r.t. $\psi_t$)} if there is $m\in\NN$, and for any $x\in\gamma$, there is \emph{some} time partition
$$0=t_0<t_1<\cdots<t_n=m\tau(\gamma),$$
with $t_{i+1}-t_i\le T$ for $0\le i\le n-1$, such that
$$\prod_{i=0}^{n-1}\|\psi_{t_{i+1}-t_{i}}|_{E(\phi_{t_i}(x))}\|\le C \exp\{-\eta m\tau(\gamma)\},$$

For an $\psi_t$-invariant subbundle $F\subset {\cal N}_\gamma$, we say that $\gamma$ is called \emph{$(C,\eta,T,F)$-expanding at the period} if it is $(C,\eta,T,F)$-contracting at the period for $-X$.

\end{Definition}

\begin{Remark}

Since for a periodic orbit $\gamma$, for any $x\in\gamma$, one has $\Phi_{\tau(\gamma)}X(x)=X(\phi_{\tau(\gamma)}(x))=X(x)$, one can give the above definition by using $\psi_t^*$.

\end{Remark}

\begin{Corollary}\label{Cor:sequenceofsinks}

Given $X\in{\cal X}^1(M^d)$, we assume that $\Lambda$ is a compact invariant set and not reduced to a
critical element. If there is a sequence of vector fields $\{X_n\}$ such that
\begin{itemize}

\item $\lim_{n\to\infty}X_n=X$,

\item Each $X_n$ has a sink $\gamma_n$ such that $\lim_{n\to\infty}\gamma_n=\Lambda$.

\end{itemize}

then one has the following dichotomy:

\begin{itemize}

\item either, there is a sequence of vector fields $\{Y_n\}$ such that $\lim_{n\to\infty}d_{C^1}(X_n,Y_n)=0$, and $\gamma_n$ is a hyperbolic periodic orbit
of $Y_n$ of index $d-1$.

\item or, there are $\eta>0$ and $T>0$ such that for $n$ large enough, $\gamma_n$ is a $(1,\eta,T,{\cal N})$-contracting at the period w.r.t. $\psi_t^{Y_n}$.

\end{itemize}

\end{Corollary}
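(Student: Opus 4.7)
The plan is to invoke Lemma~\ref{Lem:dichotomyalongstable} applied to $X$ with a shrinking family of $C^1$ neighborhoods and convert its dichotomy into the one claimed. First I would check that $\tau(\gamma_n)\to\infty$: by Remark~\ref{Rem:timelength} the periods of periodic orbits of $C^1$-close vector fields are uniformly bounded below, so if $\tau(\gamma_n)$ were bounded along a subsequence, a further subsequence of the $\gamma_n$ would Hausdorff-converge to a single $X$-periodic orbit, which would then have to coincide with $\Lambda$, contradicting the hypothesis that $\Lambda$ is not a critical element. Note also that since $\gamma_n$ is a sink of $X_n$ it is hyperbolic of index $d$, so the stable bundle ${\cal N}^s$ of the $\psi^{X_n}_t$-invariant splitting on ${\cal N}_{\gamma_n}$ coincides with the whole normal bundle ${\cal N}_{\gamma_n}$.

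For each $\varepsilon>0$ I would apply Lemma~\ref{Lem:dichotomyalongstable} to $X$ with ${\cal U}_\varepsilon=B_{C^1}(X,\varepsilon)$, obtaining constants $\eta_\varepsilon,\iota_\varepsilon>0$ and a neighborhood ${\cal V}_\varepsilon\subset{\cal U}_\varepsilon$. Since $X_n\to X$ and $\tau(\gamma_n)\to\infty$, for all large $n$ the Lemma's dichotomy applies to $(X_n,\gamma_n)$: either (a$_\varepsilon$) there is $Z_n\in{\cal U}_\varepsilon$ making $\gamma_n$ a hyperbolic periodic orbit of index $d-1$ of $Z_n$, or (b$_\varepsilon$) for every partition of $[0,\tau(\gamma_n)]$ with steps $\ge\iota_\varepsilon$ the product $\prod\|\psi^{X_n}_{t_{i+1}-t_i}|_{{\cal N}(\phi_{t_i}(x))}\|$ is bounded by $Ce^{-\eta_\varepsilon\tau(\gamma_n)}$ for a uniform constant $C$.

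The argument then splits on whether option (b) is eventually uniform. If there is $\varepsilon_0>0$ such that (b$_{\varepsilon_0}$) holds for all sufficiently large $n$, I would set $T=2\iota_{\varepsilon_0}$ and $\eta=\eta_{\varepsilon_0}/2$, partition each $[0,\tau(\gamma_n)]$ into consecutive intervals of lengths in $[\iota_{\varepsilon_0},T]$ (possible once $\tau(\gamma_n)\ge\iota_{\varepsilon_0}$), and absorb $C$ into the exponential by means of $\tau(\gamma_n)\ge(2/\eta_{\varepsilon_0})\log C$, obtaining that $\gamma_n$ is $(1,\eta,T,{\cal N})$-contracting at the period with $m=1$. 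Otherwise, for every $\varepsilon>0$ infinitely many $n$ are of type (a$_\varepsilon$); a diagonal extraction with $\varepsilon_k=1/k$ produces a subsequence $n_k\to\infty$ together with perturbations $Y_{n_k}\in{\cal U}_{1/k}$ satisfying $d_{C^1}(Y_{n_k},X_{n_k})<2/k\to 0$, which realizes the first alternative (after restricting to the subsequence $\{n_k\}$).

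The main obstacle is the constant bookkeeping in the second alternative, namely converting the Lemma's estimate $Ce^{-\eta_\varepsilon\tau(\gamma)}$ valid along \emph{any} partition with steps $\ge\iota_\varepsilon$ into the definition's estimate $e^{-\eta\tau(\gamma)}$ along \emph{some} partition with steps $\le T$; this is handled uniformly once $T=2\iota_{\varepsilon_0}$, $\eta=\eta_{\varepsilon_0}/2$, and $\tau(\gamma_n)\to\infty$ are in place. I note that the first alternative tacitly permits passage to a subsequence of $\{X_n\}$, which is what the diagonal argument produces.
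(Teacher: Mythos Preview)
Your argument is correct and follows the same route as the paper's proof: observe $\tau(\gamma_n)\to\infty$, apply Lemma~\ref{Lem:dichotomyalongstable}, and read off the dichotomy with $T=2\iota_{\cal U}$ (you handle the constant $C$ and the diagonal/subsequence issue more explicitly than the paper, which simply sets $\eta=\eta_{\cal U}$). One small numerical slip: a sink periodic orbit in $M^d$ has index $d-1$, not $d$, so Lemma~\ref{Lem:dichotomyalongstable} yields a perturbation to index $d-2$; the corollary's statement itself seems to carry the same off-by-one, as confirmed by its use in dimension $3$ where the target index is $1=d-2$.
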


\begin{proof}
Note that $\lim_{n\to\infty}\tau(\gamma_n)=\infty$. If the ``either'' case is not true, then by Lemma~\ref{Lem:dichotomyalongstable}, one can get constants $\eta_{\cal U}$ and $\iota_{\cal U}$. Then the ``or'' case is true by taking $\eta=\eta_{\cal U}$ and $T=2\iota_{\cal U}$.

\end{proof}

\paragraph{Vector fields away from homoclinic tangencies: }

For any invariant set $\Lambda$ (maybe not compact) without singularities, if there are $\iota>0$ and an invariant splitting ${\cal N}_\Lambda={\cal N}^{cs}\oplus {\cal N}^{cu}$  w.r.t. $\psi_t$ satisfying $\|\psi_{\iota}|_{{\cal N}^{cs}(x)}\| \|\psi_{-\iota}|_{{\cal N}^{cu}(\phi_\iota(x))}\|\le 1/2$ for any $x\in\Lambda$, then we say that $\Lambda$ admits a $\iota$-dominated splitting w.r.t. $\psi_t$. If $\dim {\cal N}^{cs}(x)$ is independent of $x$, then it is called the \emph{index} of this dominated splitting. Note that for any linear flow defined on some linear bundle, one can define the notion of dominated splitting for that linear flow. Recall that
$${\cal HT}=\{X\in{\cal X}^1(M^d):~X~\textrm{has a homoclinic tangency.}\}.$$

By the similar arguments as diffeomorphisms and by using Franks' lemma for flows, from \cite{Wen02,Wen04}, we have
\begin{Lemma}\label{Lem:dominatedsplitting}
For any $X\in {\cal X}^1(M^d)\setminus\overline{{\cal HT}}$, there is a $C^1$ neighborhood ${\cal U}$ and
constants $C>0$, $\lambda>0$, $\delta>0$ and $\iota>0$ such that for any periodic orbit $\gamma$ of $Y\in\cal U$
with period $\pi(\gamma)>\iota$.

\begin{itemize}
\item $\Phi_{\pi(\gamma)}$
has at most two exponents in $(-\delta, \delta)$, and $\Phi_{\pi(\gamma)}$ has at
least one zero exponent, and this exponent corresponds to the flow direction.

\item There is an invariant splitting
${\cal N}_{\gamma}=G^s\oplus G^c\oplus G^u$ with respect to the linear Poincar\'e flow $\psi_t^Y$, where $G^s$
is the invariant space corresponding to the exponents less than $-\delta$, $G^c$ is the
invariant space corresponding to the exponents in $(-\delta,\delta)$ and  the dimension
of $G^c$ is zero or one, $G^u$ is the invariant space corresponding to the exponents larger than
$\delta$; moreover  for any $x\in\gamma$, for \emph{any} time partition
$$0=t_0<t_1<\cdots<t_n=\tau(\gamma),$$
verifying $t_{i+1}-t_i\ge \iota$ for $0\le i\le n-1$, one has
$$\prod_{i=0}^{n-1}\|\psi_{t_{i+1}-t_{i}}|_{G^s(\phi_{t_i}(x))}\|\le C \exp\{-\lambda \tau(\gamma)\},$$
$$\prod_{i=0}^{n-1}\|\psi_{t_{i}-t_{i+1}}|_{G^u(\phi_{t_{i+1}}(x))}\|\le C \exp\{-\lambda \tau(\gamma)\}.$$


\item If $\gamma$ is hyperbolic, and ${\cal N}_{\gamma}={\cal N}^s\oplus {\cal N}^u$ is the hyperbolic splitting with respect to $\psi_t^Y$,
then for any $x\in\gamma$ and $T>\iota$, one has
$$\|\psi_T^Y|_{{\cal N}^s(x)}\|\|\psi_{-T}^Y|_{{\cal N}^u(\phi_{T}^Y(x))}\|\le \frac{1}{2}.$$

\end{itemize}

\end{Lemma}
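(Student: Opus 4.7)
The plan is to adapt the diffeomorphism arguments of Wen (\cite{Wen02,Wen04}) to the linear Poincar\'e flow setting, using Franks' lemma for flows (Lemma~\ref{Lem:Franks}) as the main perturbative tool. The strategy is contrapositive: I will assume that one of the three conclusions fails along a periodic orbit $\gamma$ of some $Y\in\cU$ with arbitrarily long period, and then construct, by small $C^1$ perturbations, a vector field in an arbitrarily small neighborhood of $X$ that exhibits a homoclinic tangency, contradicting $X\notin\overline{\cH\cT}$.

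First I would fix a small $C^1$ neighborhood $\cU$ of $X$, choose $\cV\subset\cU$ and $\varepsilon>0$ from Franks' lemma, and then argue as follows. View $\psi_t^Y$ along a long periodic orbit $\gamma$ of $Y\in\cV$ as a periodic linear cocycle on $\cN_\gamma$. By the Bonatti-Diaz-Pujals/Ma\~n\'e machinery on periodic linear cocycles (applied to partitions of the period of length in $[1,2]$ so that Franks' lemma applies), absence of a $\iota$-dominated splitting of index $j$ on $\cN_\gamma$ for some threshold $\iota=\iota(\cU)$ allows one to $\varepsilon$-perturb the cocycle $L_i=\psi^Y_{t_{i+1}-t_i}|_{\cN}$ to merge two Lyapunov exponents on opposite sides of the putative splitting into a common modulus, and then to rotate the corresponding two-dimensional invariant subspace. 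Realizing these cocycle perturbations by Franks' lemma produces $Z\in\cU$ for which $\gamma$ persists as a periodic orbit of $Z$ whose linear Poincar\'e flow has a two-dimensional invariant subbundle along which $\psi_t^Z$ is (up to a uniform factor) a rotation.

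Given this rotating bundle, one picks a homoclinic point of $\gamma$ (obtained from a further small perturbation via the $C^1$ connecting lemma, using that $\gamma$ has nontrivial stable and unstable manifolds of complementary dimensions inside the rotating bundle) and rotates along the period to align the stable and unstable directions, thereby producing a homoclinic tangency inside $\cU$. This contradicts $X\notin\overline{\cH\cT}$, so the $\iota$-dominated splitting on $\cN_\gamma$ exists for every sufficiently long periodic orbit; setting $G^s$ to be the part with exponents less than $-\delta$, $G^u$ the part with exponents larger than $\delta$, and $G^c$ the complementary one-dimensional (or trivial) bundle of near-zero exponents, yields the splitting claimed in the second bullet. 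The exponential estimates on arbitrary partitions with gaps $\ge\iota$ then follow from standard telescoping arguments combining the uniform domination with the bound $\|\psi_\iota^Y|_{G^s}\|\cdot\|\psi_{-\iota}^Y|_{G^{c}\oplus G^{u}}\|\le 1/2$, together with the analogous estimate on $G^u$. The first bullet (at most two exponents in $(-\delta,\delta)$ for $\Phi_{\pi(\gamma)}$, with one of them zero along the flow direction) then follows because any additional small exponent would give an extra central direction in $\cN_\gamma$, allowing the same rotation-and-tangency perturbation scheme to apply, contradicting $X\notin\overline{\cH\cT}$. Finally, the third bullet is an immediate consequence of applying the dominated splitting estimate to the hyperbolic splitting $\cN_\gamma=\cN^s\oplus\cN^u$ of a hyperbolic $\gamma$, since by hyperbolicity $G^c$ is trivial and $G^s=\cN^s$, $G^u=\cN^u$.

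The main obstacle will be ensuring that the cocycle perturbations provided by Franks' lemma really do produce a \emph{geometric} homoclinic tangency of the flow, not merely a tangency of some linear subbundles; this requires using the connecting lemma to create a homoclinic orbit first and then controlling how the rotation in the normal bundle transports the unstable direction through one full lap around $\gamma$ so that it lines up with the stable direction at the homoclinic intersection. Managing the period lower bound $\iota$ large enough that Franks' lemma can be applied on $[1,2]$-partitions (cf.\ Remark~\ref{Rem:timelength}), and keeping the perturbations uniformly small in $C^1$ as the period $\pi(\gamma)\to\infty$, is the technical core of the argument but is standard for vector fields away from homoclinic tangencies.
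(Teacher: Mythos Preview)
The paper does not actually prove this lemma; it is stated as a consequence of \cite{Wen02,Wen04} via ``the similar arguments as diffeomorphisms and by using Franks' lemma for flows.'' Your outline is precisely that standard strategy, so in spirit your approach matches what the paper intends.

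There is one small gap in your reasoning for the third bullet. You claim that for a hyperbolic $\gamma$ one has $G^c$ trivial and $G^s=\cN^s$, $G^u=\cN^u$. This is not right: hyperbolicity only says the exponents are nonzero, but a nonzero exponent can still lie in $(-\delta,\delta)$, so $G^c$ may well be one-dimensional for a hyperbolic orbit. What is true is that, since $\dim G^c\le 1$, the hyperbolic splitting $\cN^s\oplus\cN^u$ must coincide with one of the two dominated splittings $G^s\oplus(G^c\oplus G^u)$ or $(G^s\oplus G^c)\oplus G^u$. But to conclude the domination estimate you need to know that \emph{both} of these coarser splittings are $\iota$-dominated, and the second bullet as stated (contraction of $G^s$, expansion of $G^u$) does not by itself give domination between $G^c$ and $G^u$, nor between $G^s$ and $G^c$. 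In Wen's argument the third bullet is established directly by the same contrapositive scheme: if the hyperbolic splitting $\cN^s\oplus\cN^u$ failed to be $\iota$-dominated, one could mix a direction of $\cN^s$ with a direction of $\cN^u$ by Franks' lemma and then manufacture a tangency. So treat the third bullet as its own application of the perturbation machinery rather than as a corollary of the second.
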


\begin{Corollary}\label{Cor:dominatedonweaktransitive}

Let $X\in{\cal X}^1(M^d)\setminus\overline{\cal HT}$. We assume that
\begin{itemize}

\item There is a sequence of vector fields $\{X_n\}$ such that $\lim_{n\to\infty}X_n=X$.

\item Each $X_n$ has a hyperbolic periodic orbit $\gamma_n$ of index $i$ such that
$\Lambda=\lim_{n\to\infty}\gamma_n$ in the Hausdorff topology.

\end{itemize}

Then
\begin{itemize}

\item ${\cal N}_{\Lambda\setminus{\rm Sing}(X)}$ admits a dominated splitting of index $i$ with respect to the
linear Poincar\'e flow $\psi_t$.

\item ${\cal N}_{\Lambda\setminus{\rm Sing}(X)}$ admits a dominated splitting of index $i$ with respect to the
scaled linear Poincar\'e flow $\psi^*_t$.

\item If one considers $\widetilde\Lambda$, then $\widetilde{\cal N}_{\widetilde\Lambda}$
admits a dominated splitting of index $i$ with respect to the flow $\widetilde\psi_t$.

\end{itemize}

\end{Corollary}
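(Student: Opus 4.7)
The plan is to apply Lemma~\ref{Lem:dominatedsplitting} to each pair $(X_n,\gamma_n)$ for large $n$ and then pass to the limit via Grassmannian compactness. Since $X\notin\overline{{\cal HT}}$, Lemma~\ref{Lem:dominatedsplitting} furnishes a $C^1$ neighborhood ${\cal U}$ of $X$ together with uniform constants $C,\lambda,\delta,\iota>0$. For all sufficiently large $n$, $X_n\in{\cal U}$; moreover, if a subsequence of $\{\tau(\gamma_n)\}$ remained bounded the $\gamma_n$'s would accumulate on a periodic orbit of $X$ and the conclusion would be immediate, so we may assume $\tau(\gamma_n)>\iota$. The third bullet of Lemma~\ref{Lem:dominatedsplitting} then gives, for each such $n$, the hyperbolic $\psi_t^{X_n}$-invariant splitting ${\cal N}_{\gamma_n}={\cal N}^s_n\oplus{\cal N}^u_n$ with $\dim{\cal N}^s_n=i$ and the uniform domination
$$\|\psi_T^{X_n}|_{{\cal N}^s_n(x)}\|\cdot\|\psi_{-T}^{X_n}|_{{\cal N}^u_n(\phi^{X_n}_T(x))}\|\le\tfrac{1}{2}\qquad\text{for }x\in\gamma_n,\ T\ge\iota.$$

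Next, I would extract the limit splitting over $\Lambda\setminus{\rm Sing}(X)$. Fix a regular point $x\in\Lambda$ and, using $\gamma_n\to\Lambda$ in Hausdorff topology, choose $x_n\in\gamma_n$ with $x_n\to x$. Since $X_n(x_n)\to X(x)\neq 0$, the normal spaces ${\cal N}^{X_n}_{x_n}$ converge to ${\cal N}^X_x$, and the $i$-planes ${\cal N}^s_n(x_n)$ sit in a compact Grassmannian. Extracting subsequences yields limits ${\cal N}^s_n(x_n)\to E^{cs}(x)$ and ${\cal N}^u_n(x_n)\to E^{cu}(x)$ of dimensions $i$ and $d-1-i$ inside ${\cal N}^X_x$. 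Because $\psi_t^{X_n}\to\psi_t^X$ uniformly on compact subsets of the regular set, the uniform $T$-domination passes to the limit. Standard uniqueness of dominated splittings of a prescribed index ensures that $E^{cs}(x)$ and $E^{cu}(x)$ do not depend on the approximating subsequence and vary continuously in $x$, proving item~1.

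Item~2 follows immediately: since $\psi_t^*=\psi_t/\|\Phi_t|_{\langle X\rangle}\|$, the scalar factor in $\|\psi_T^*|_{E^{cs}}\|$ cancels its reciprocal in $\|\psi_{-T}^*|_{E^{cu}}\|$, so the same splitting $E^{cs}\oplus E^{cu}$ is $\iota$-dominated for $\psi_t^*$. For item~3, I would lift the argument to $SM^d$. Given $u\in\widetilde\Lambda$, pick regular $y_n\in\gamma_n$ with $X_n(y_n)/|X_n(y_n)|\to u$ and extract Grassmannian limits of ${\cal N}^s_n(y_n),{\cal N}^u_n(y_n)$ inside $T_{\pi(u)}M^d$; these are subspaces of $\widetilde{\cal N}_u$ by construction. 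The extended linear Poincar\'e flow $\widetilde\psi_t={\rm proj}_2\chi_t$ is continuous on $\widetilde\Lambda$ by Section~\ref{Sub:flows}, so the (closed) domination inequality transfers across the singular fibres.

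The main delicate point, characteristic of the singular-flow setting, is the well-definedness of the limit bundles on fibres of $\widetilde\Lambda$ lying over singularities $\sigma\in{\rm Sing}(X)\cap\Lambda$: two different regular approximations $y_n\to\sigma$ might a priori yield different Grassmannian limits. This is exactly why one passes to the compactification $\widetilde\Lambda\subset SM^d$ instead of working with $\Lambda$ itself; once the unit direction $X_n(y_n)/|X_n(y_n)|$ is recorded as part of the base point, uniqueness of dominated splittings of a fixed index forces any two candidate limits on a single fibre $\widetilde{\cal N}_u$ to coincide.
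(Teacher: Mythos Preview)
Your proposal is correct and follows essentially the same route as the paper's proof: both apply Lemma~\ref{Lem:dominatedsplitting} to obtain a uniform $\iota$-dominated hyperbolic splitting on each $\gamma_n$, extract the limit splitting on $\Lambda\setminus{\rm Sing}(X)$ via subsequences (the paper cites \cite{LGW05} for the details you spell out with Grassmannian compactness and uniqueness), observe that the scalar factor $\|\Phi_t|_{\langle X\rangle}\|$ cancels in the domination inequality for $\psi_t^*$, and then pass to the sphere-bundle closure $\widetilde\Lambda$ by continuity of $\widetilde\psi_t$. Your explicit discussion of well-definedness over singular fibres is exactly the content the paper defers to \cite{LGW05}.
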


\begin{proof}
Since $X\in{\cal X}^1(M^d)\setminus{\overline {\cal HT}}$, $X_n\to X$ and $\gamma_n$ is a hyperbolic periodic orbit of $X_n$ of index $i$,
$${\cal N}_{\gamma_n}={\cal N}^{s}(\gamma_n)\oplus {\cal N}^u(\gamma_n)$$
is an $\iota$-dominated splitting of index $i$ w.r.t. $\psi_t^{X_n}$ for some uniform $\iota>0$.

For each $x\in\Lambda\setminus{\rm Sing}(X)$, by taking a subsequence if necessary, one can assume that there is $x_n\in\gamma_n$ such that $\lim_{n\to\infty}x_n=x$. After taking another subsequence, one can assume that ${\cal N}^{cs}(x)=\lim_{n\to\infty}{\cal N}^s(x_n)$ and ${\cal N}^{cu}(x)=\lim_{n\to\infty}{\cal N}^u(x_n)$.

Thus ${\cal N}_{\Lambda\setminus{\rm Sing}(X)}={\cal N}^{cs}\oplus{\cal N}^{cu}$ is an $\iota$-dominated splitting of index $i$. One can see \cite{LGW05} for more details.

Since
$$\psi_t^*(x)=\frac{\psi_t(x)}{\|\Phi_t|_{\left<X(x) \right>}\|},$$
any dominated splitting of $\psi_t$ is also a dominated splitting of $\psi_t^*$.

The dominated splitting of the linear Poincar\'e flow can be extended to the closure of its representation in the sphere bundle. See \cite{LGW05} for more details.
\end{proof}

\begin{Lemma}\label{Lem:unifromordominated}

For every $X\in{\cal X}^1(M^d)\setminus \overline{\cal HT}$, there are $\iota>0$,  $C>0$, $\eta>0$ and a $C^1$
neighborhood ${\cal U}$ of $X$ such that for any $Y\in\cal U$, if $\gamma$ is a periodic sink of $Y$ with period
$\tau(\gamma)>\iota$, then
\begin{itemize}

\item either, ${\cal N}_{\gamma}$ admits an $\iota$-dominated splitting of index $d-2$ with respect to $\psi_t^Y$.

\item or $\gamma$ is $(C,\eta,2\iota,{\cal N})$-contracting at the period w.r.t. $\psi_t^Y$.

\end{itemize}

\begin{proof}
Let $C$ and $\iota$ be as in Lemma~\ref{Lem:dominatedsplitting}. If the conclusion is not true, there exist $\eta_n\to 0$, $X_n\to X$ and a periodic sink $\gamma_n$ of $X_n$ with $\tau(\gamma_n)>\iota$, neither item 1 nor item 2 is satisfied. Then according to Franks' Lemma, after a small perturbation of $X_n$ of size $\eta_n$, we get a $Y_n$ such that $\gamma_n$ is a periodic orbit of index $d-2$. Since $Y_n\to X$, for $n$ large enough, $\psi_t^{Y_n}$ has an $\iota$-dominated splitting over $\gamma_n$ of index $d-2$, and then we get a dominated splitting for the extended linear Poincar\'e flow over the limit. But the limits of $X_n$  and $Y_n$ are the same since $X_n|_{\gamma_n}=Y_n|_{\gamma_n}$. By the continuity of dominated splitting of the extended linear Poincar\'e flow, we know that for $n$ large enough, $X_n$ has also an $\iota$-dominated splitting over $\gamma_n$ of index $d-2$, which gives a contradiction.
\end{proof}
\end{Lemma}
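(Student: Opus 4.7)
The plan is to argue by contradiction, pass to a sequence, use Franks' lemma to convert the failure of uniform contraction at the period into a hyperbolic index change, and then exploit that $X \notin \overline{\mathcal{HT}}$ via Corollary~\ref{Cor:dominatedonweaktransitive} to produce a dominated splitting that contradicts the failure of the first alternative.

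First, I would fix $\iota > 0$ and the constants furnished by Lemma~\ref{Lem:dominatedsplitting} applied at $X$, and choose the neighborhood $\mathcal{U}$ inside the one provided there. Suppose, for contradiction, that for every candidate choice of constants $C, \eta$ and every shrinking neighborhood, the dichotomy fails. Extracting diagonal sequences, one gets $X_n \to X$, $\eta_n \to 0$, and periodic sinks $\gamma_n$ of $X_n$ with $\tau(\gamma_n) > \iota$ such that $\mathcal{N}_{\gamma_n}$ carries no $\iota$-dominated splitting of index $d-2$ for $\psi_t^{X_n}$, and $\gamma_n$ is not $(1, \eta_n, 2\iota, \mathcal{N})$-contracting at the period.

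Next, because the contraction rate at the period along $\mathcal{N}_{\gamma_n}$ deteriorates to $0$, one can use periodic linear system arguments (as in the proof of Lemma~\ref{Lem:dichotomyalongstable}) together with Franks' lemma (Lemma~\ref{Lem:Franks}) to find $Y_n$ with $d_{C^1}(X_n, Y_n) \to 0$, coinciding with $X_n$ on $\gamma_n$, for which $\gamma_n$ becomes a hyperbolic periodic orbit of index $d-2$: one eigenvalue of the Poincaré return map is pushed outside the unit disk by an arbitrarily small perturbation of the normal derivatives along an appropriate partition. Since $Y_n = X_n$ on $\gamma_n$, the lifts $\widetilde{\gamma_n} \subset SM^d$ are intrinsic and do not depend on whether we regard $\gamma_n$ as an orbit of $X_n$ or of $Y_n$.

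Finally, applying Corollary~\ref{Cor:dominatedonweaktransitive} to the sequence $(Y_n, \gamma_n)$, using that $Y_n \to X \notin \overline{\mathcal{HT}}$, I would conclude that the Hausdorff limit $\widetilde{\Lambda}$ of $\widetilde{\gamma_n}$ admits an index-$(d-2)$ dominated splitting for the extended linear Poincaré flow. By continuity of dominated splittings for the extended flow and the equality $X_n|_{\gamma_n} = Y_n|_{\gamma_n}$, this splitting pulls back to an $\iota$-dominated splitting of index $d-2$ on $\mathcal{N}_{\gamma_n}$ for $\psi_t^{X_n}$ for all large $n$, contradicting the choice of $\gamma_n$.

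The delicate step I expect to be the obstacle is the second one: designing the Franks perturbation so that it (a) is of size $O(\eta_n) \to 0$, (b) genuinely flips a single eigenvalue across the unit circle (creating index $d-2$ rather than some smaller index), and (c) leaves the vector field unchanged on $\gamma_n$ so that the extended flow on $\widetilde{\gamma_n}$ is preserved and the continuity argument of the third step goes through. This is exactly where the hypothesis that $\gamma_n$ is a sink and the controlled failure of uniform contraction at the period along $\mathcal{N}$ are used in tandem.
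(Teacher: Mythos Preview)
Your proposal is correct and follows essentially the same route as the paper: contradiction via sequences $X_n\to X$, $\eta_n\to 0$, a Franks perturbation of size $\eta_n$ that raises the index of the sink $\gamma_n$ to $d-2$ while fixing $X_n$ on $\gamma_n$, the $\iota$-dominated splitting of index $d-2$ on $\gamma_n$ for $Y_n$ from Lemma~\ref{Lem:dominatedsplitting}, and then the continuity of the extended linear Poincar\'e flow together with $X_n|_{\gamma_n}=Y_n|_{\gamma_n}$ to transfer this splitting back to $X_n$. The paper's proof is terser but hits exactly these beats, including the key observation you flag about $X_n$ and $Y_n$ agreeing on $\gamma_n$.
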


\subsection{Mixed dominated splittings: from linear Poincar\'e flow to tangent flow}

For two linear normed spaces $E$ and $F$, and a linear operator $A:~E\to F$, the mini-norm $m(A)$ is defined by
$$m(A)=\inf_{v\in E,~|v=1|}|Av|.$$
We use $L(E,F)$ to denote the space of bounded linear maps from $E$ to $F$.

The following lemma concerns how we can get the dominated splitting of the tangent flow from the dominated splitting of the linear Poincar\'e flow. \cite[Lemma 5.5, Lemma 5.6]{LGW05} used this kind of ideas. Here we give a general version.

\begin{Lemma}\label{Lem:mixed}

Let $\widehat\Lambda\subset SM^d$ be a compact invariant set of $\Phi_t^I$. Suppose
\begin{itemize}

\item $\widetilde{\cal N}_{\widehat{\Lambda}}=\Delta^{cs}\oplus\Delta^{cu}$ is a dominated splitting w.r.t. $\widetilde{\psi}_t$.

\item There are $C>0$ and $\lambda>0$ such that for any $u\in\widehat{\Lambda}$, for any $t>0$, one has
$$\frac{\|\widetilde{\psi}_t|_{\Delta^{cs}(u)}\|}{\|\Phi_t(u)\|}\le C{\rm e}^{-\lambda t}.$$

\end{itemize}
Then the projection $\pi({\widehat \Lambda})$ admits a dominated splitting $T_{\pi({\widehat\Lambda})} M^d=E\oplus F$ w.r.t the tangent flow $\Phi_t$, where $\dim
E=\dim \Delta^{cs}$.

\end{Lemma}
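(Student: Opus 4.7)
The plan is to construct the splitting first over $\widehat\Lambda$, via an explicit invariant strong subbundle $F$ and an invariant complement $E$ obtained by solving a cohomological equation, and then descend to $\pi(\widehat\Lambda)$ using uniqueness of dominated splittings.

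First, take $F(u) := \Delta^{cu}(u)\oplus \langle u\rangle\subset T_{\pi(u)}M$ for $u\in\widehat\Lambda$. This bundle is $\Phi_t$-invariant: $\Phi_t\langle u\rangle = \langle\Phi_t^I u\rangle$, and for $w\in\Delta^{cu}(u)$ the decomposition $\Phi_t w = \widetilde{\psi}_t w + c_w(t)\,\Phi_t u$ lies in $\Delta^{cu}(\Phi_t^I u)\oplus\langle\Phi_t^I u\rangle = F(\Phi_t^I u)$. Moreover $\langle u\rangle\subset F(u)$ is an invariant line with quotient $F/\langle u\rangle\cong \Delta^{cu}$ on which $\Phi_t$ induces $\widetilde{\psi}_t|_{\Delta^{cu}}$.

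Next, I would construct $E$ as the graph of a linear functional $L_u:\Delta^{cs}(u)\to\RR$, namely $E(u):=\{v+L_u(v)\,u : v\in\Delta^{cs}(u)\}$. For $v\in\Delta^{cs}(u)$, write $\Phi_t v = \widetilde{\psi}_t v + c_v(t)\,\Phi_t u$ with $\widetilde{\psi}_t v\perp\Phi_t^I u$. The invariance $\Phi_t E(u)=E(\Phi_t^I u)$ translates to the cohomological equation
$$L_{\Phi_t^I u}(\widetilde{\psi}_t v) = (L_u(v)+c_v(t))\,\|\Phi_t u\|.$$
Dividing by $\|\Phi_t u\|$ and using the hypothesis $\|\widetilde{\psi}_t|_{\Delta^{cs}(u)}\|/\|\Phi_t u\|\le Ce^{-\lambda t}$ (together with boundedness of $L$, to be established a posteriori) to kill the left side as $t\to\infty$ forces the definition $L_u(v):=-\lim_{t\to\infty}c_v(t)$. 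A cocycle identity for $c_v(t+s)$ coming from the flow properties of $\Phi_t$ and $\widetilde{\psi}_t$, combined with the hypothesis, yields $|c_v(t+s)-c_v(t)|\le C'e^{-\lambda t}\|v\|$; hence the limit exists and defines a continuous, uniformly bounded linear functional $L_u$, and $\Phi_t E(u)=E(\Phi_t^I u)$ by construction.

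The dominated splitting inequality for $(E,F)$ under $\Phi_t$ then follows by estimating both sides. For $e=v+L_u(v) u\in E(u)$, orthogonality of $\widetilde{\psi}_t v$ and $\Phi_t^I u$ in $T_{\phi_t(\pi(u))}M$ gives $\|\Phi_t e\|^2 = \|\widetilde{\psi}_t v\|^2+L_{\Phi_t^I u}(\widetilde{\psi}_t v)^2 \le (1+\|L\|^2)\|\widetilde{\psi}_t|_{\Delta^{cs}(u)}\|^2\|e\|^2$. For $f=w+\alpha u\in F(u)$, a case analysis on whether $\|w\|$ or $|\alpha|$ dominates, combined with the bound $|c_w(t)|\le\|\Phi_t w\|/\|\Phi_t u\|$ used to control shear-induced cancellations in $\Phi_t f$, gives a lower bound of the form $\|\Phi_t f\|\ge c\cdot \min(\|\Phi_t u\|,m(\widetilde{\psi}_t|_{\Delta^{cu}(u)}))\|f\|$. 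The ratio $\|\Phi_t|_{E(u)}\|/m(\Phi_t|_{F(u)})$ is then controlled separately by the hypothesis (against $\|\Phi_t u\|$) and by the $\widetilde{\psi}_t$-domination (against $m(\widetilde{\psi}_t|_{\Delta^{cu}})$), yielding exponential decay. Finally, the splitting defined over $\widehat\Lambda$ descends to $\pi(\widehat\Lambda)$ by uniqueness of dominated splittings of given index along $\phi_t$-orbits: if $u_1,u_2\in\widehat\Lambda$ project to the same $x$, the two candidate splittings $(E(u_i),F(u_i))$ at $x$ must coincide. The main obstacle is the lower bound for $m(\Phi_t|_{F(u)})$: the shear between $\langle u\rangle$ and $\Delta^{cu}$ in $F(u)$ can cause catastrophic cancellations, and the case analysis must be precise enough to put both the hypothesis and the $\widetilde{\psi}_t$-domination to simultaneous use.
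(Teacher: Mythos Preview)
Your overall architecture matches the paper's: set $F(u)=\langle u\rangle\oplus\Delta^{cu}(u)$, build $E$ as an invariant graph over $\Delta^{cs}$, then verify domination. Your construction of $E$ via the limit $L_u(v)=-\lim_{t\to\infty}c_v(t)$ is a legitimate alternative to the paper's contraction-mapping argument on $\prod_u L(\Delta^{cs}(u),F(u))$; in fact it is slightly more direct, since the block structure forces the graph to take values in $\langle u\rangle$ anyway. Your remark about descending from $\widehat\Lambda$ to $\pi(\widehat\Lambda)$ by uniqueness of dominated splittings is also correct and something the paper glosses over.

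The gap is in the domination estimate. The lower bound you propose,
\[
m(\Phi_t|_{F(u)})\ \ge\ c\,\min\bigl(\|\Phi_t u\|,\,m(\widetilde\psi_t|_{\Delta^{cu}(u)})\bigr),
\]
is false in general. Take $d=3$ with $\widetilde\psi_t|_{\Delta^{cs}}=e^{-t}$, $\|\Phi_t u\|\equiv 1$, $\widetilde\psi_t|_{\Delta^{cu}}\equiv 1$, and constant shear $C\equiv K>0$ from $\Delta^{cu}$ into $\langle u\rangle$. Both hypotheses of the lemma hold, yet on $F$ one has $\Phi_t|_F=\begin{pmatrix}1&Kt\\0&1\end{pmatrix}$, so $m(\Phi_t|_F)\sim 1/t\to 0$ while your bound predicts a positive constant. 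The case analysis you sketch cannot recover: the control $|c_w(t)|\le\|\Phi_t w\|/\|\Phi_t u\|$ is vacuous here because $\|\Phi_t w\|$ itself carries the shear.

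The paper's resolution is genuinely different from your case analysis. Rather than bounding $m(\Phi_t|_F)$ against the two diagonal rates, it computes $\|\Phi_{-nT}|_{F}\|$ explicitly as an upper-triangular product and bounds the off-diagonal convolution term by using the \emph{third} rate $\|\widetilde\psi|_{\Delta^{cs}}\|$ as an intermediary: each factor $\|\Phi_{-(i+1)T}|_{\langle\cdot\rangle}\|$ is bounded by $2^{-(i+1)}/\|\widetilde\psi_{(i+1)T}|_{\Delta^{cs}}\|$ via the second hypothesis, each $\|\widetilde\psi_{(i-n)T}|_{\Delta^{cu}}\|$ by $2^{i-n}/\|\widetilde\psi_{(n-i)T}|_{\Delta^{cs}}\|$ via the first, and submultiplicativity of $\|\widetilde\psi|_{\Delta^{cs}}\|$ reassembles the pieces. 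The upshot is
\[
\frac{\|\widetilde\psi_{nT}|_{\Delta^{cs}(u)}\|}{m(\Phi_{nT}|_{F(u)})}\ \le\ C\,n\,2^{-n},
\]
which, combined with $\|\Phi_t|_{E}\|\le C'\|\widetilde\psi_t|_{\Delta^{cs}}\|$, gives domination directly. This is exactly the ``simultaneous use'' of both hypotheses you allude to at the end, but it goes through $\Delta^{cs}$ rather than through $\min(\|\Phi_t u\|,m(\widetilde\psi_t|_{\Delta^{cu}}))$, and that routing is essential.
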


\begin{proof}
For each point $u\in\widetilde{\Lambda}\subset SM^d$, the direct-sum splitting $T_{\pi(u)} M^d=\Delta^{cs}\oplus<u>\oplus
\Delta^{cu}$ is continuous w.r.t. $u$. With respect to this decomposition, the tangent flow $\Phi_T$ has the following form:

$$
\left(
\begin{array}{ccc}
 \widetilde{\psi}_{T}|_{\Delta^{cs}(u)} & 0 & 0 \\
 B(u) & \Phi_{T}|_{<u>} & C(u) \\
 0 & 0 & \widetilde{\psi}_{T}|_{\Delta^{cu}(u)}
\end{array}
\right)
$$

By the definitions of $\chi_t$ and $\widetilde{\psi}_t$ (see Subsection~\ref{Sub:flows}), one has that
$F(u)=<u>\oplus\Delta^{cu}(u)$ is an invariant sub-bundle of $\Phi_t$. Let's find another invariant
sub-bundle of $\Phi_t$.

\begin{Claim}
There is $C_1>0$ and $\lambda_1>0$ such that for any $u\in\widehat\Lambda$ and any $t\ge 0$, one has
$$\frac{\|\widetilde{\psi}_t|_{\Delta^{cs}(u)}\|}{m(\Phi_t|_{F(u)})}\le C_1{\rm e}^{-\lambda_1 t}.$$

\end{Claim}

\begin{proof}[Proof of the claim]

By enlarging $T$ if necessary, one can assume that for any $u\in{\widehat\Lambda}\subset SM^d$, one has

$$\frac{\|\widetilde{\psi}_T|_{\Delta^{cs}(u)}\|}{\|\Phi_T|_{\left<u\right>}\|}\le \frac{1}{2},~~~\frac{\|\widetilde{\psi}_T|_{\Delta^{cs}(u)}\|}{m(\widetilde{\psi}_T|_{\Delta^{cu}(u)})}\le \frac{1}{2}.$$

Since $\Phi_T$ is bounded, by the continuity of the splitting, there is $K>0$ such that $\|\Phi_T\|\le K$ and $m(\Phi_T)\ge 1/K$. Denote by

$$ D(u)=\begin{pmatrix}   \Phi_{T}|_{<u>} & C(u) \\
 0 & \widetilde{\psi}_{T}|_{\Delta^{cu}(u)}   \end{pmatrix}.$$

For any $n\in\NN$, one has

$$\Phi_{-nT}|_{F(\Phi_{nT}^I(u))}=\prod_{i=0}^{n-1}D^{-1}(\Phi_{iT}^I(\Phi_{nT}^I(u)))=D^{-n}(\Phi_{nT}^I(u)).$$

Since
$$D^n(u)= \begin{pmatrix}  \Phi_{nT}|_{\left< u \right>} & \sum_{i=0}^{n-1}\Phi_{(n-1-i)T}|_{\left<\Phi^I_{(i+1)T}(u)\right>}C(\Phi_{iT}^I(u))\widetilde{\psi}_{iT}|_{\Delta^{cu}(u)} \\ 0 & \widetilde{\psi}_{nT}|_{\Delta^{cu}(u)}   \end{pmatrix},$$
we have
$$D^{-n}((\Phi_{nT}^I(u)))= \begin{pmatrix}  \Phi_{-nT}|_{\left< \Phi_{nT}^I(u) \right>} & \sum_{i=0}^{n-1}\Phi_{(-1-i)T}|_{\left<\Phi^I_{(i+1)T}(u)\right>}C(\Phi_{iT}^I(u))\widetilde{\psi}_{(i-n)T}|_{\Delta^{cu}(\Phi_{nT}^I(u))} \\ 0 & \widetilde{\psi}_{-nT}|_{\Delta^{cu}(\Phi_{nT}^I(u))}   \end{pmatrix}.$$

This implies

\begin{eqnarray*}
\|D^{-n}|_{\Phi_{nT}^I(u)}\|&\le&\|\Phi_{-nT}|_{\left< \Phi_{nT}^I(u) \right>}\|+\|\widetilde{\psi}_{-nT}|_{\Delta^{cu}(\Phi_{nT}^I(u))}\|\\
&+&\|\sum_{i=0}^{n-1}\Phi_{(-1-i)T}|_{\left<\Phi^I_{(i+1)T}(u)\right>}C(\Phi_{iT}^I(u))\widetilde{\psi}_{(i-n)T}|_{\Delta^{cu}(\Phi_{nT}^I(u))}\|\\
&\le& 2\cdot2^{-n}\frac{1}{\|\widetilde{\psi}_{nT}|_{\Delta^{cs}(u)}\|}+K\sum_{i=0}^{n-1}\|\Phi_{(-1-i)T}|_{\left<\Phi^I_{(i+1)T}(u)\right>}\|\|\widetilde{\psi}_{(i-n)T}|_{\Delta^{cu}(\Phi_{nT}^I(u))}\|\\
&\le&\frac{2^{-n+1}}{\|\widetilde{\psi}_{nT}|_{\Delta^{cs}(u)}\|}+K\sum_{i=0}^{n-1}\frac{2^{-(i+1)}}{\|\widetilde{\psi}_{(i+1)T}|_{\Delta^{cs}(u)}\|}\frac{2^{i-n}}{\|\widetilde{\psi}_{(n-i)T}|_{\Delta^{cs}(\Phi^I_{iT}(u))}\|}\\
&\le&\frac{2^{-n+1}}{\|\widetilde{\psi}_{nT}|_{\Delta^{cs}(u)}\|}+K^2\sum_{i=0}^{n-1}\frac{2^{-n-1}}{\|\widetilde{\psi}_{iT}|_{\Delta^{cs}(u)}\|\|\widetilde{\psi}_{(n-i)T}|_{\Delta^{cs}(\Phi^I_{iT}(u))}\|}\\
&\le&\frac{2^{-n+1}}{\|\widetilde{\psi}_{nT}|_{\Delta^{cs}(u)}\|}+K^2\frac{n 2^{-n-1}}{\|\widetilde{\psi}_{nT}|_{\Delta^{cs}(u)}\|}.
\end{eqnarray*}

Thus, when $n$ large enough, one has
$$\frac{\|\widetilde{\psi}_{nT}|_{\Delta^{cs}(u)}\|}{m(\Phi_{nT}|_{F(u)})}\le\frac{1}{2}.$$

This inequality implies the claim.
\end{proof}

Now we will start to find a $\Phi_t$-invariant bundle $E(u)$ and $T'>0$ such that for any $u\in\widehat\Lambda$,
$$\frac{\|\Phi_{T'}|_{E(u)}\|}{m(\Phi_{T'}|_{F(u)})}\le \frac{1}{2}.$$

By the claim above, there is $T_0>0$ such that
$$\frac{\|\widetilde{\psi}_{T_0}|_{\Delta^{cs}(u)}\|}{m(\Phi_{T_0}|_{F(u)})}\le \frac{1}{2}.$$

Let $L(\widehat{\Lambda})=\prod_{u\in\widehat\Lambda}L(\Delta^{cs}(u),F(u))$. For each $\Pi\in L(\widehat{\Lambda})$, one can define the norm $\|\Pi\|=\sup_{u\in\widehat\Lambda}\|\Pi(u)\|$. Under this norm, one knows that $L(\widehat\Lambda)$ is a Banach space. For any $\Pi\in L(\widehat\Lambda)$, one has

$$\begin{pmatrix}  \widetilde{\psi}_{T_0}|_{\Delta^{cs}(u)} & 0 \\ B(u) & \Phi_{T_0}|_{F(u)}\end{pmatrix} \begin{pmatrix} \Delta^{cs}(u) \\ \Pi(u)\Delta^{cs}(u)     \end{pmatrix}=\begin{pmatrix}   \widetilde{\psi}_{T_0}|_{\Delta^{cs}(u)}\Delta^{cs}(u) \\ B(u)\Delta^{cs}(u)+\Phi_{T_0}|_{F(u)}\Pi(u)\Delta^{cs}(u)\end{pmatrix}.$$

Thus, if we want to find an invariant bundle w.r.t. $\Phi_t$, we need to require that
$$B(u)\Delta^{cs}(u)+\Phi_{T_0}|_{F(u)}\Pi(u)\Delta^{cs}(u)=\Pi(\Phi_{T_0}^I(u))\widetilde{\psi}_{T_0}|_{\Delta^{cs}(u)}\Delta^{cs}(u).$$

In the spirit of the above equality, one can define a map ${\cal F}:~L(\widehat\Lambda) \to L(\widehat\Lambda)$ by the following form:

$${\cal F}\Pi(u)=(\Phi_{-T_0}|_{F(\Phi_{T_0}^{I}u)})(\Pi(\Phi_{T_0}^I(u))\widetilde{\psi}_{T_0}|_{\Delta^{cs}(u)}-B(u)).$$

Given $\Pi_1,\Pi_2\in L(\widehat\Lambda)$, one has
$${\cal F}\Pi_1-{\cal F}\Pi_2=(\Phi_{-T_0}|_{F(\Phi_{T_0}^{I}u)})(\Pi_1-\Pi_2)(\widetilde{\psi}_{T_0}|_{\Delta^{cs}(u)}).$$
Thus,
$$\|{\cal F}\Pi_1-{\cal F}\Pi_2\|\le \|\Phi_{-T_0}|_{F(\Phi_{T_0}^{I}u)}\|\|\Pi_1-\Pi_2\|\|\widetilde{\psi}_{T_0}|_{\Delta^{cs}(u)}\|\le\frac{1}{2}\|\Pi_1-\Pi_2\|.$$
So, ${\cal F}$ is a contracting map. By contraction mapping principle, $\mathcal{F}$ has a unique fixed point $\Pi\in L(\widehat\Lambda)$, i.e.,
$$
B(u)\Delta^{cs}(u)+\Phi_{T_0}|_{F(u)}\Pi(u)\Delta^{cs}(u)=
\Pi(\Phi_{T_0}^I(u))\widetilde{\psi}_{T_0}|_{\Delta^{cs}(u)}\Delta^{cs}(u).
$$
As a corollary, $E=({\rm id},\Pi)\Delta^{cs}$ is an invariant bundle of $\Phi_{T_0}$.

Since $E$, $F$ and $\Delta^{cs}$ are continuous bundles w.r.t. $u\in\widehat\Lambda$, there is $L>0$, which depends on the angles between each two bundles, such that for any non-zero vector $v_E\in E$, $v^{cs}\in\Delta^{cs}$, if $v_E=({\rm id},\Pi)v^{cs}$, then
$$ |v_E|\le \frac{1}{L}|v^{cs}|. $$
Thus, for each $n$, one has
$$\|\Phi_{nT_0}|_{E(u)}\|\le \frac{1}{L}\|\widetilde{\psi}_{nT_0}|_{\Delta^{cs}(u)}\|\le \frac{1}{L}\frac{1}{2^n}m(\Phi_{nT_0}|_{F(u)}).$$

From these, we get a $\Phi_t$-invariant splitting $E\oplus F$ over $\Lambda$, which satisfies the condition of the dominated splitting.
\end{proof}

\subsection{The existence of invariant manifolds}

We assume that $\Lambda$ is a compact invariant set and ${\cal N}_{\Lambda\setminus{\rm Sing}(X)}$ admits a
dominated splitting with respect to the linear Poincar\'e flow. If $\Lambda\cap{\rm Sing}(X)=\emptyset$, then
$\Lambda$ will have plaque family (\cite{HPS77}) as in the case of diffeomorphisms. If $\Lambda\cap{\rm
Sing}(X)\neq\emptyset$, then $\Lambda$ won't have uniform size of plaque family: the plaque family is defined on
a non-compact set and the size is scaled by the norm of the vector field.

The scaled Poincar\'e sectional map ${\cal P}^*$ could be defined in a uniform neighborhood of the zero section of ${\cal
N}$. Moreover, we have uniform estimations on $D{\cal P}^*_{x,\phi_T(x)}(y)$ by Lemma~\ref{Lem:estimationscaled}. For getting plaque
families of dominated splittings, one needs the following abstract lemma.

\begin{Lemma}\label{Lem:abstractplaque}
For any $d\in{\mathbb N}, L>0, r>0$, and $\alpha>0$, there is
$\gamma_0>0, \varepsilon_0>0$ such that: for any $\gamma\le\gamma_0$, there exists $\delta>0$, if a sequence of $C^1$ diffeomorphisms
$$f_i: {\mathbb R}^d(r)\to {\mathbb R}^d,\ i\in {\mathbb Z}$$
satisfy the following properties:

\begin{enumerate}

\item $f_i(0)=0$,

\item $\sup_{i\in{\mathbb Z}}\max\{|Df_i(0)|,|Df_i^{-1}(0)|\}\le L$

\item There is a sequence of invariant decompositions ${\mathbb R}^d=E_i\oplus F_i$ with the following properties:
\begin{itemize}
\item $Df_i(0)(E_i)=E_{i+1}$, $Df_i(0)(F_i)=F_{i+1}$,

\item $\angle(E_i,F_i)>\alpha$,

\item $$\frac{\|Df_i(0)|_{E_i}\|}{m(Df_i(0)|_{F_i})}\le \frac{1}{2}.$$

\end{itemize}

\item ${\rm Lip}(f_i-Df_i(0))<\varepsilon_0$.

\end{enumerate}

Then there are two sequences of embedding maps $\phi^{cs}_i\in {\rm Emb}(E^{cs}_i(\gamma),F^{cu}_i)$ and $\phi^{cu}_i\in {\rm Emb}(E^{cu}_i(\gamma),F^{cs}_i)$ such that
\begin{itemize}

\item $\phi^{cs/cu}_i(0)=0$, $D\phi^{cs/cu}_i(0)=0$,

\item $f_iW^{cs/cu}_i(\delta)\subset W^{cs/cu}_{i+1}(\gamma)$, where $W^{cs}_i(\gamma)$ is the graph of $\phi^{cs}_i$ restricted to $E^{cs}_i(\gamma)$ and $W^{cu}_i(\gamma)$ is the graph of $\phi^{cu}_i$ restricted to $E^{cu}_i(\gamma)$.

\end{itemize}

Moreover, the invariant manifolds are continuous with respect to the sequence of $f=(f_i)$. Precisely, for two sequences $f=(f_i), g=(g_i)$, define their metric as
$$
|f-g|_{C^1}=\sum_{i=-\infty}^\infty \frac{|f_i-g_i|_{C^1}}{2^{|i|}}.
$$
Then both $W^{cs}_i(\gamma,f)$ and $W^{cu}_i(\gamma,f)$ are continuous with respect to $f$, i.e., for every $i\in{\mathbb Z}$, if $f^{(n)}\to f$, $x_n\in W^{cs}_i(\gamma,f^{(n)})$, $x_n\to x$, then $x\in W^{cs}_i(\gamma,f)$, and $T_{x_n}W^{cs}_i(\gamma,f^{(n)})\to T_x W^{cs}_i(\gamma,f)$.
\end{Lemma}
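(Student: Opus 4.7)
The plan is to construct the plaques $W^{cs}_i$, and symmetrically $W^{cu}_i$, as the unique fixed point of a graph transform operator, mimicking the classical Hirsch--Pugh--Shub plaque family theorem in its non-autonomous (sequence of maps) form. The key observation is that the domination $\|Df_i(0)|_{E_i}\|/m(Df_i(0)|_{F_i}) \le 1/2$, together with the uniform angle bound $\angle(E_i, F_i) > \alpha$, gives a hyperbolic-type contraction on graph space, while the smallness $\mathrm{Lip}(f_i - Df_i(0)) < \varepsilon_0$ controls the nonlinear error so that the graph transform remains a strict contraction.

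First I would set up the framework. Write $f_i = A_i + r_i$, where $A_i = Df_i(0)$ preserves the splitting $E_i \oplus F_i$ and $\mathrm{Lip}(r_i) < \varepsilon_0$. After choosing coordinates respecting the splitting and using the angle bound to pass between intrinsic and extrinsic norms, fix $\gamma \le \gamma_0$ and let $\mathcal{G}$ be the complete metric space of sequences $(\phi_i)_{i \in \mathbb{Z}}$ with $\phi_i : E_i(\gamma) \to F_i$, $\phi_i(0) = 0$, $\mathrm{Lip}(\phi_i) \le 1$, endowed with the sup-of-$C^0$ norm. Define the graph transform $\Gamma$ by solving, for each $v \in E_i(\gamma)$, the implicit equation $v = \pi_{E_i} f_{i-1}(u, \phi_{i-1}(u))$ and setting $\Gamma(\phi)_i(v) = \pi_{F_i} f_{i-1}(u, \phi_{i-1}(u))$. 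Existence and uniqueness of $u$ follow from invertibility of $A_{i-1}|_{E_{i-1}}$ (hypothesis 2) combined with $\varepsilon_0$ small relative to $L^{-1}$ and $\sin \alpha$. A direct estimate bounds the Lipschitz constant of $\Gamma$ by $1/2 + C(L, \alpha)\varepsilon_0$, where the $1/2$ is exactly the domination ratio; shrinking $\varepsilon_0$ makes $\Gamma$ a strict contraction. Its unique fixed point $\phi^{cs} = (\phi^{cs}_i)$ gives $W^{cs}_i(\gamma) := \mathrm{graph}(\phi^{cs}_i)$, and choosing $\delta$ small enough (depending on $\gamma$ and $L$) guarantees $f_i(W^{cs}_i(\delta)) \subset W^{cs}_{i+1}(\gamma)$. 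The plaques $W^{cu}_i$ are constructed identically by running the argument for the backward sequence $f_i^{-1}$, whose linearization satisfies the reverse domination.

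To upgrade Lipschitz regularity to $C^1$ and to pin down $D\phi^{cs}_i(0) = 0$, I would invoke the fiber contraction theorem of Hirsch--Pugh--Shub: lift $\Gamma$ to an operator on jets $(\phi, \phi')$, whose action on the derivative variable is again a contraction of rate $1/2 + C\varepsilon_0$; its unique continuous fixed point must coincide with $D\phi^{cs}$, and the equation evaluated at $v = 0$, together with the $A_i$-invariance of $E_i$, forces $D\phi^{cs}_i(0) = 0$. For continuous dependence on $f = (f_i)$, observe that $\Gamma_f$ is a uniform contraction whose symbol depends continuously on $f$ in the weighted $C^1$ metric defined in the lemma; standard continuity of fixed points of parametrized contractions, combined again with the fiber contraction argument for the derivative, yields both $C^0$ convergence of plaques and convergence of tangent planes under $f^{(n)} \to f$.

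The main obstacle is the simultaneous calibration of the four constants. They must be chosen in the order: first $\gamma_0$ (from $\alpha$ and $r$, ensuring every graph stays inside the common domain $\mathbb{R}^d(r)$); then $\varepsilon_0$ (small enough that the implicit equation for $u$ is uniformly solvable and that $1/2 + C\varepsilon_0 < 1$); and only afterward $\delta$ (small enough, given $\gamma \le \gamma_0$, that the forward image of a plaque of radius $\delta$ stays inside a plaque of radius $\gamma$). A particularly subtle point is that $\mathrm{Lip}(r_i) < \varepsilon_0$ is assumed only on $\mathbb{R}^d(r)$, so the backward solving of $v = \pi_{E_i} f_{i-1}(u, \phi_{i-1}(u))$ must not drag $u$ outside $E_{i-1}(r)$; this uniform confinement is the main technical verification behind the scheme.
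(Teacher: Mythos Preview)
Your proposal is correct and follows exactly the approach the paper indicates: the paper does not give a proof but simply states that one needs to adapt the argument of \cite[Theorem~5.5]{HPS77}, which is precisely the graph-transform/fiber-contraction scheme you outline. Your careful ordering of the constants ($\gamma_0$, then $\varepsilon_0$, then $\delta$) and the confinement check for the implicit step are the right technical points to verify in carrying this out.
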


The proof of Lemma~\ref{Lem:abstractplaque} needs to adapt the argument of \cite[Theorem 5.5]{HPS77}. We omit the proof here.

For diffeomorphisms, we know that plaque family of compact invariant set with dominated splittings exists.
For vector fields, if a compact invariant singular set has a dominated splitting w.r.t. the linear
Poincar\'e flow, we also have some similar results, but the form is changed: one should modify the size of the manifolds. Recall that $P_{x,\phi_t(x)}$ is the sectional Poincar\'e map between $N_x$ and $N_{\phi_t(x)}$.

\begin{Lemma}\label{Lem:plaquefamily}
Let $\Lambda$ be a compact invariant set of $X\in{\cal X}^1(M^d)$. We assume that $\Lambda\setminus{\rm
Sing}(X)$ admits a dominated splitting ${\cal N}_{\Lambda\setminus{\rm Sing}(X)}=\Delta^{cs}\oplus\Delta^{cu}$
of index $i$ with respect to the linear Poincar\'e flow $\psi_t$. Let $T>0$. There are continuous
maps:~$\eta^{cs}:\Lambda\setminus{\rm Sing}(X)\to {\rm Emb}^1(B^i,TM^d)$ and $\eta^{cu}:\Lambda\setminus{\rm
Sing}(X)\to {\rm Emb}^1(B^{d-1-i},TM^d)$ verifying the following properties:
\begin{enumerate}

\item $\eta^{cs}(x)(B^i(1))\subset {\cal N}_x$ and $\eta^{cu}(x)(B^{d-2-i}(1))\subset {\cal N}_x$ for any $x\in\Lambda\setminus{\rm
Sing}(X)$.

\item For any $\xi>0$, we define two sub-manifolds by $W_\xi^{cs}(x)=\exp_x(\eta^{cs}(x)(B^i(\xi)))$ and
$W_\xi^{cu}(x)=\exp_x(\eta^{cu}(x)(B^{d-2-i}(\xi)))$, then one has
\begin{itemize}
\item $T_x W^{cs}_1(x)=\Delta^{cs}(x)$ and $T_x W^{cu}_1(x)=\Delta^{cu}(x)$.

\item For any $\varepsilon>0$, there is $\delta>0$ such that for any regular point $x\in\Lambda$ and any
$t\in[0,T]$,one has $P_{x,\phi_t(x)}(W^{cs}_{\delta|X(x)|}))\subset
W^{cs}_{\varepsilon|X(\phi_t(x))|}(\phi_t(x))$ and $P_{x,\phi_t(x)}(W^{cu}_{\delta|X(x)|})\subset
W^{cu}_{\varepsilon|X(\phi_t(x))|}(\phi_t(x))$.

\end{itemize}

\end{enumerate}

\end{Lemma}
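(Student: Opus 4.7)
The plan is to apply the abstract plaque family Lemma~\ref{Lem:abstractplaque} to the scaled sectional Poincar\'e maps $\mathcal{P}^*_{x,\phi_T(x)}$, exploiting that after scaling by $|X(x)|$ the required estimates become uniform on $\Lambda\setminus{\rm Sing}(X)$ despite the presence of singularities. First I would fix a time $\tau>0$ large enough so that the given dominated splitting $\Delta^{cs}\oplus\Delta^{cu}$ is $\tau$-dominated with ratio $\le 1/2$; since $\psi_t^*$ differs from $\psi_t$ only by the scalar factor $\|\Phi_t|_{\langle X\rangle}\|^{-1}$, the same decomposition is $\tau$-dominated for $\psi_t^*$. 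The angle between $\Delta^{cs}$ and $\Delta^{cu}$ is uniformly bounded below on $\Lambda\setminus{\rm Sing}(X)$ because $\Delta^{cs}\oplus\Delta^{cu}$ extends continuously to the compact set $\widetilde\Lambda$ (as in Corollary~\ref{Cor:dominatedonweaktransitive}).

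Next, for each regular $x\in\Lambda$, I consider the bi-infinite sequence of maps $f_n:=\mathcal{P}^*_{\phi_{n\tau}(x),\phi_{(n+1)\tau}(x)}$, each viewed as a $C^1$ diffeomorphism defined on $\mathcal{N}_{\phi_{n\tau}(x)}(\beta_\tau)$. By Lemma~\ref{Lem:estimationscaled}, one has $f_n(0)=0$, $Df_n(0)=\psi^*_\tau|_{\mathcal{N}_{\phi_{n\tau}(x)}}$, the norms $\|Df_n\|$ are uniformly bounded by $K_\tau$, and $Df_n$ is uniformly continuous. In particular, after shrinking $\beta_\tau$, the Lipschitz constant of $f_n-Df_n(0)$ can be made arbitrarily small uniformly in $n$ and in the base point $x$. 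Thus all the hypotheses of Lemma~\ref{Lem:abstractplaque} are met for the sequence $(f_n)_{n\in\mathbb{Z}}$, yielding uniform-size invariant center-stable and center-unstable graphs $W^{cs}_n(\gamma),W^{cu}_n(\gamma)\subset \mathcal{N}_{\phi_{n\tau}(x)}(\beta_\tau)$ tangent respectively to $\Delta^{cs}$ and $\Delta^{cu}$ at $0$.

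I would then unscale to recover plaques in $\mathcal{N}_x$ of radius proportional to $|X(x)|$, set $\eta^{cs}(x),\eta^{cu}(x)$ to be the resulting graph embeddings, and define $W^{cs/cu}_\xi(x)=\exp_x(\eta^{cs/cu}(x)(B^\bullet(\xi)))$. The invariance property $f_n(W^{cs}_n(\delta))\subset W^{cs}_{n+1}(\gamma)$ in the scaled coordinate translates into $P_{\phi_{n\tau}(x),\phi_{(n+1)\tau}(x)}(W^{cs}_{\delta|X|})\subset W^{cs}_{\gamma|X|}$, and for intermediate times $t\in(0,T)$ the corresponding inclusion with arbitrary $\varepsilon>0$ is obtained by pushing forward along $\mathcal{P}^*_{x,\phi_t(x)}$ and invoking the uniform continuity provided by Lemma~\ref{Lem:uniformcontinuous}. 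Continuity of $\eta^{cs},\eta^{cu}$ across base points follows from the continuous dependence statement in Lemma~\ref{Lem:abstractplaque}, combined with the continuity of $\Delta^{cs}\oplus\Delta^{cu}$ on the regular part of $\Lambda$.

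The main obstacle is ensuring that every constant feeding Lemma~\ref{Lem:abstractplaque}—the bounds on $\|Df_n\|$, its modulus of continuity, the angle between the invariant bundles, and the $1/2$-ratio of the dominated splitting—is genuinely uniform as the base point approaches $\mathrm{Sing}(X)$. This is precisely the purpose of working with $\mathcal{P}^*$ and $\psi^*_t$ instead of $\mathcal{P}$ and $\psi_t$: the rescaling by $|X(x)|$ absorbs the degeneracy of the flow near singularities. Once this uniformity is secured, the argument proceeds along the classical Hirsch--Pugh--Shub lines, the only structural difference from the non-singular case being that the resulting plaques have diameter proportional to $|X(x)|$ rather than of uniform size.
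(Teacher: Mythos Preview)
Your proposal is correct and follows essentially the same approach as the paper: apply Lemma~\ref{Lem:abstractplaque} to the sequence of scaled sectional Poincar\'e maps $f_n=\mathcal{P}^*_{\phi_{n\tau}(x),\phi_{(n+1)\tau}(x)}$, using Lemma~\ref{Lem:estimationscaled} to supply the uniform bounds on $Df_n$ and its modulus of continuity, and the $\tau$-domination of $\psi_t^*$ for the $1/2$-ratio. Your write-up is in fact more detailed than the paper's own proof, which simply invokes Lemma~\ref{Lem:estimationscaled} and Lemma~\ref{Lem:abstractplaque} without spelling out the unscaling step, the angle bound via extension to $\widetilde\Lambda$, or the treatment of intermediate times.
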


\begin{proof}

We will mainly use Lemma~\ref{Lem:abstractplaque} to prove this lemma. For each point $x$, ${\cal N}_x$ is isomorphic to ${\mathbb R}^{d-1}$. Since $\Lambda\setminus{\rm Sing}(X)$ admits a dominated splitting of index $i$ w.r.t. the linear Poincar\'e flow, there is $T>0$ such that
$$\frac {\|\psi^*_T|_{\Delta^{cs}(x)}\|}{m(\psi^*_T|_{\Delta^{cu}(x)})}\le \frac{1}{2},~~~\forall x\in\Lambda\setminus{\rm Sing}(X).$$
For each $i\in{\mathbb Z}$, one take $f_i={\cal P}_{\phi_{T i}(x),\phi_{(T+1)i}(x)}^*$ on ${\cal N}_{\phi_{T i}(x)}$. By Lemma~\ref{Lem:estimationscaled}, all assumptions of Lemma~\ref{Lem:abstractplaque} are satisfied. Then by Lemma~\ref{Lem:abstractplaque}, we get the existence of  plaque family.
\end{proof}

$W^{cs}(x)$ and $W^{cu}(x)$ are called \emph{central stable plaques} and \emph{central unstable plaques}
respectively.

\begin{Corollary}\label{Cor:intersection}
Let $\Lambda$ be a compact invariant set of $X\in{\cal X}^1(M^d)$. We assume that $\Lambda\setminus{\rm
Sing}(X)$ admits a dominated splitting ${\cal N}_{\Lambda\setminus{\rm Sing}(X)}=\Delta^{cs}\oplus\Delta^{cu}$
of index $i$ with respect to the linear Poincar\'e flow $\psi_t$. Let $T>0$. Then for any $\varepsilon>0$,
there is $\delta>0$ such that for any $x,y\in\Lambda$, if $d(x,y)<\delta$, $d(x,{\rm Sing}(X))>\varepsilon$,
$d(y,{\rm Sing}(X))>\varepsilon$, then $W^{cs}(x)\cap \phi_{[-T,T]}(W^{cu}(y))\neq\emptyset$.

\end{Corollary}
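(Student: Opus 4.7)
The plan is to reduce the question to a uniform transversality argument on the compact set $K_\varepsilon := \{z \in M^d : d(z,\mathrm{Sing}(X)) \ge \varepsilon\}$, and then to invoke the implicit function theorem with constants uniform in the base point. First I would establish that on $K_\varepsilon$ everything is uniform: $|X|$ is bounded above and below by positive constants, the dominated splitting $\Delta^{cs}\oplus\Delta^{cu}$ is continuous on $\Lambda\cap K_\varepsilon$ (so both $\angle(\Delta^{cs},\Delta^{cu})$ and $\angle(\langle X\rangle, \mathcal{N})$ are bounded below), and by Lemma~\ref{Lem:plaquefamily} the maps $x\mapsto \eta^{cs}(x),\eta^{cu}(x)$ are continuous. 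Consequently one can fix a single size $\xi_0>0$ so that the plaques $W^{cs}(x):=W^{cs}_{\xi_0}(x)$ and $W^{cu}(x):=W^{cu}_{\xi_0}(x)$ vary continuously in $x\in\Lambda\cap K_\varepsilon$.

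Next I would set up the intersection problem. For $x\in\Lambda\cap K_\varepsilon$ consider
\[
\Psi_x: B^{d-1-i}(\xi_0)\times[-T,T]\to M^d, \qquad \Psi_x(v,t)=\phi_t\bigl(\exp_x(\eta^{cu}(x)(v))\bigr),
\]
whose image near $x$ is $\phi_{[-T,T]}(W^{cu}(x))$. At $(v,t)=(0,0)$ its differential spans $\Delta^{cu}(x)\oplus\langle X(x)\rangle$, a $(d-i)$-dimensional subspace transverse to $T_x W^{cs}(x)=\Delta^{cs}(x)$ and summing with it to all of $T_x M^d$. Thus the $d$-dimensional system encoding $W^{cs}(x)\cap\mathrm{Image}(\Psi_x)$ has a nondegenerate solution at $x=\Psi_x(0,0)$, and by Step~1 its Jacobian is bounded below uniformly in $x\in\Lambda\cap K_\varepsilon$.

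Finally I would perturb the base point. Replacing $W^{cu}(x)$ by $W^{cu}(y)$ for $y$ close to $x$ produces a map $\Psi_y$ which is $C^1$-close to $\Psi_x$ on $B^{d-1-i}(\xi_0)\times[-T,T]$, the closeness controlled by $d(x,y)$ together with continuity of $\eta^{cu}$, $\exp$, and the flow $\phi_t$ on $[-T,T]$. A uniform implicit function theorem then yields, for $d(x,y)<\delta$ small enough, a point in $W^{cs}(x)\cap \phi_{[-T,T]}(W^{cu}(y))$ near $x$. Compactness of $\Lambda\cap K_\varepsilon$ upgrades the pointwise thresholds into a single $\delta=\delta(\varepsilon,T)>0$. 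The main obstacle is precisely this uniformity: ensuring that both the transversality constant at the diagonal $y=x$ and the $C^1$-closeness of $\Psi_y$ to $\Psi_x$ admit a single estimate valid throughout $\Lambda\cap K_\varepsilon$. It is handled by the compactness of $K_\varepsilon$, the continuity statements in Lemma~\ref{Lem:plaquefamily}, and the uniform positive lower bound on $|X|$ on $K_\varepsilon$.
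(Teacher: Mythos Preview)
Your proposal is correct and is essentially a detailed unpacking of the paper's own one-line justification, which simply says ``the proof of this corollary bases on the uniform continuity of plaque families when points are far away from singularities.'' Your transversality-plus-implicit-function-theorem argument on the compact set $\Lambda\cap K_\varepsilon$ is exactly how one makes that sentence rigorous.
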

The proof of this corollary bases on the uniform continuity of plaque families when points are far away from singularities.

\subsection{Estimations on the size of stable/unstable manifolds}
\begin{Definition}

Let $\Lambda$ be an invariant set and $E\subset {\cal N}_{\Lambda\setminus{\rm Sing}(X)}$ an invariant
subbundle of the linear Poincar\'e flow $\psi_t$. For $C>0$, $\eta>0$ and $T>0$, $x\in\Lambda\setminus{\rm
Sing}(X)$ is called \emph{$(C,\eta,T,E)$-$\psi^*_t$-contracting} if for any partition of times
$0=t_0<t_1<\cdots<t_n<\cdots$ verifying
\begin{itemize}

\item $t_{n+1}-t_{n}\ge T$ for any $n\in\NN$ and $t_n\to\infty$ as $n\to\infty$,

\item For any $n\in\NN$,
$$\prod_{i=0}^{n-1}\|\psi_{t_{i+1-t_i}}^*|_{E(\phi_{t_i}(x))}\|\le C \mathrm{e}^{-\eta t_n}.$$
\end{itemize}

$x\in\Lambda\setminus{\rm Sing}(X)$ is called \emph{$(C,\eta,T,E)$-$\psi_t^*$-expanding} if it's
$(C,\eta,T,E)$-$\psi_t^*$-contracting for $-X$.

\end{Definition}

An increasing homeomorphism $\theta:{\mathbb R}\to{\mathbb R}$ is called a \emph{reparametrization} if $\theta(0)=0$. we meet the reparametrization problem for flows.

For any orbit ${\rm Orb}(x)$, one defines

$$
W^s({\rm Orb}(x))=\{y\in M^d, \exists \textrm{ a reparametrization }\theta~\textrm{s.t.,}~\lim_{t\to\infty}d(\phi_{\theta(t)}(y),\phi_t(x))=0\},
$$
$$
W^s({\rm Orb}(x))=\{y\in M^d, \exists \textrm{ a reparametrization }\theta~\textrm{s.t.,}~\lim_{t\to-\infty}d(\phi_{\theta(t)}(y),\phi_t(x))=0\}.
$$
%
%

\begin{Lemma}\label{Lem:intersect}
Let $\Lambda$ be a compact invariant set of $X\in{\cal X}^1(M^d)$.
Assume that $\Lambda\setminus{\rm Sing}(X)$ admits a dominated splitting ${\cal N}_{\Lambda\setminus{\rm Sing}(X)}=\Delta^{cs}\oplus\Delta^{cu}$
of index $i$ with respect to the linear Poincar\'e flow $\psi_t$. For $C>0$, $\eta>0$ and $T>0$, there is
$\delta>0$ such that
\begin{itemize}

\item For any regular point $x\in\Lambda$, if $x$ is $(C,\eta,T,\Delta^{cs})$-$\psi_t^*$-contracting, then $W^{cs}_{\delta|X(x)|}\subset W^{s}({\rm
Orb}(x))$;

\item For any regular point $x\in\Lambda$, if $x$ is $(C,\eta,T,\Delta^{cu})$-$\psi_t^*$-expanding, then $W^{cu}_{\delta|X(x)|}\subset W^{u}({\rm
Orb}(x))$.

\end{itemize}

\end{Lemma}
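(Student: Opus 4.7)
The plan is to work in the scaled normal bundle via the scaled sectional Poincar\'e maps at discrete times, establish exponential contraction of iterates on the central-stable plaque by a linearisation argument with uniformly controlled error, and then translate the estimate back to $M^d$ using the scale factor $|X(\cdot)|$.

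Fix $T$ as given and take a regular point $x\in\Lambda$ which is $(C,\eta,T,\Delta^{cs})$-$\psi_t^*$-contracting. Pick the uniform partition $t_n=nT$ (allowed by the definition), set $x_n=\phi_{nT}(x)$, and let $f_n=\mathcal{P}^*_{x_n,x_{n+1}}$ be the scaled sectional Poincar\'e maps. By Lemma~\ref{Lem:estimationscaled}, each $f_n$ is defined on ${\cal N}_{x_n}(\beta_T)$, satisfies $f_n(0)=0$, $Df_n(0)=\psi_T^*|_{{\cal N}_{x_n}}$, $\|Df_n\|\le K_T$, and $Df_n$ is uniformly continuous with modulus independent of $n$. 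Lemma~\ref{Lem:plaquefamily} then supplies an invariant central-stable plaque at each $x_n$ whose graph parametrisation over $\Delta^{cs}(x_n)$ is tangent to $\Delta^{cs}$ at $0$; in scaled coordinates this plaque has a uniform size $\gamma>0$.

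The heart of the argument is to show that for $\delta>0$ small enough (depending only on $C,\eta,T$), if $y\in W^{cs}_{\delta|X(x)|}(x)$ and $\bar{y}_n$ denotes the $\Delta^{cs}$-component in scaled coordinates of the image of $y$ under $f_{n-1}\circ\cdots\circ f_0$, then
$$|\bar{y}_n|\le C\, e^{-\eta nT/2}\,|\bar{y}_0|.$$
Using the graph parametrisation and the uniform continuity of $Df_n$, one obtains a one-step estimate
$$|\bar{y}_{n+1}|\le(a_n+\varepsilon(\delta))\,|\bar{y}_n|,\qquad a_n:=\|\psi_T^*|_{\Delta^{cs}(x_n)}\|,$$
where $\varepsilon(\delta)\to 0$ as $\delta\to 0$. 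The hypothesis bounds only the product $\prod_{i<n} a_i\le C e^{-\eta nT}$, so the error must be absorbed multiplicatively. The key observation is that Lemma~\ref{Lem:starbounded} applied to $\psi_{-T}^*$ (the inverse of $\psi_T^*$) yields a uniform upper bound on $(\psi_T^*|_{\Delta^{cs}(x_n)})^{-1}$, hence a uniform lower bound $a_n\ge 1/K'$. Consequently $\log(1+\varepsilon(\delta)/a_n)\le K'\varepsilon(\delta)$, and summing over $i<n$ gives $\prod_{i<n}(a_i+\varepsilon(\delta))\le C e^{-\eta nT/2}$ as soon as $K'\varepsilon(\delta)\le\eta T/2$; this also guarantees the iterates remain inside the plaque of radius $\gamma$ provided $C\delta\le\gamma$.

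Finally, the actual iterate $y_n\in N_{x_n}$ corresponding to $\bar{y}_n$ satisfies $d(y_n,x_n)\sim|\bar{y}_n|\cdot|X(x_n)|\to 0$, and since each $y_{n+1}$ is obtained from $y_n$ by the holonomy of the flow along an orbit arc of bounded length, all $y_n$ lie on the same flow-orbit of $y_0=y$. A routine reparametrisation then gives $y\in W^s(\orb(x))$. The expanding statement follows by applying the contracting case to $-X$. The main obstacle is step (2): the hypothesis provides contraction only for the \emph{product} of the $a_n$, so the linearisation error $\varepsilon(\delta)$ cannot simply be added to each $a_n$; the uniform lower bound on $a_n$ coming from the scaling and Lemma~\ref{Lem:starbounded} is precisely what lets the error be absorbed multiplicatively, and is what makes the argument work uniformly even when $\orb(x)$ passes close to singularities.
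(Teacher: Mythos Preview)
Your proposal is correct and follows essentially the same route as the paper's proof, which merely states that the uniform continuity of $D{\cal P}^*$ from Lemma~\ref{Lem:estimationscaled} gives a uniform linearised neighbourhood and then defers to \cite[Corollary~3.3]{PuS00}. Your write-up makes explicit the one nontrivial point hidden in that reference: because the hypothesis controls only the \emph{product} $\prod a_i$, the additive linearisation error must be converted to a multiplicative one, and the uniform lower bound $a_n\ge 1/K'$ coming from Lemma~\ref{Lem:starbounded} (applied to $\psi_{-T}^*$) is exactly what makes this work in the scaled setting, uniformly as the orbit passes near singularities.
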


\begin{proof}
We need to prove that there is $\delta>0$ such that
$$\lim_{t\to+\infty} {\rm diam }\left({\cal P}^*_{x,\phi_t(x)}(\eta^{cs}(x)(B^i(\delta)))\right)=0.$$
By the uniform continuity of $D{\cal P}^*_{x,\phi_t(x)}$ in Lemma~\ref{Lem:estimationscaled}, we have uniform linearized neighborhood of $0$ in ${\cal N}_z$ for each regular point $z$. Then the proof parallels to \cite[Corollary 3.3]{PuS00}.
\end{proof}

\begin{Corollary}
Under the assumption of Lemma~\ref{Lem:intersect} for any compact set $\Lambda_0\subset\Lambda\setminus{\rm Sing}(X)$, there is $\varepsilon>0$
such that for any $x,y\in\Lambda_0$, if
\begin{itemize}

\item $d(x,y)<\varepsilon$;

\item $x$ is $(C,\eta,T,\Delta^{cs})$-$\psi_t^*$-contracting and $y$ is
$(C,\eta,T,\Delta^{cu})$-$\psi_t^*$-expanding;

\end{itemize}

then $W^{s}({\rm Orb}(x))\cap W^{u}({\rm Orb}(y))\neq\emptyset$.

\end{Corollary}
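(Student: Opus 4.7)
The approach is to combine Lemma~\ref{Lem:intersect}, which converts $\psi_t^*$-contraction and $\psi_t^*$-expansion along the sub-bundles into plaques of size proportional to $|X|$ that lie inside the stable and unstable manifolds, with Corollary~\ref{Cor:intersection}, which provides intersections of $W^{cs}$ and $\phi_{[-T,T]}(W^{cu})$ for nearby points. Compactness of $\Lambda_0$ is what turns the relative (scaled by $|X|$) estimates of the first lemma into uniform ones.

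Since $\Lambda_0$ is a compact subset of the open set $\Lambda\setminus\Sing(X)$, there exist constants $c_0>0$ and $\varepsilon_0>0$ with
\[
|X(z)|\geq c_0 \quad\text{and}\quad d(z,\Sing(X))\geq\varepsilon_0 \qquad\text{for every } z\in\Lambda_0.
\]
Applying Lemma~\ref{Lem:intersect} with the given constants $C,\eta,T$ yields $\delta_1>0$ such that every $(C,\eta,T,\Delta^{cs})$-$\psi_t^*$-contracting point $x\in\Lambda$ satisfies $W^{cs}_{\delta_1|X(x)|}(x)\subset W^s(\orb(x))$, and symmetrically every $(C,\eta,T,\Delta^{cu})$-$\psi_t^*$-expanding point $y\in\Lambda$ satisfies $W^{cu}_{\delta_1|X(y)|}(y)\subset W^u(\orb(y))$. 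Setting $\xi:=\delta_1 c_0$, for any $x,y\in\Lambda_0$ as in the hypothesis one therefore has
\[
W^{cs}_{\xi}(x)\subset W^s(\orb(x)),\qquad W^{cu}_{\xi}(y)\subset W^u(\orb(y)).
\]

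It then suffices to produce $\varepsilon>0$ so that $W^{cs}_\xi(x)\cap \phi_{[-T,T]}(W^{cu}_\xi(y))\neq\emptyset$ whenever $x,y\in\Lambda_0$ satisfy $d(x,y)<\varepsilon$. Corollary~\ref{Cor:intersection}, applied with its $\varepsilon$ set to $\varepsilon_0$, already gives such a non-empty intersection for the \emph{unit-size} plaques $W^{cs}(x)$ and $W^{cu}(y)$, provided $d(x,y)$ is smaller than some $\delta_2>0$; the distance-to-singularities hypothesis is automatic on $\Lambda_0$. As the remark following that corollary emphasises, its proof rests on the uniform continuity of the plaque families on the region $\{z:d(z,\Sing(X))\geq\varepsilon_0\}$; repeating the same argument with target radius $\xi$ in place of $1$ furnishes $\varepsilon\in(0,\delta_2]$ such that the intersection point can in fact be chosen inside $W^{cs}_\xi(x)\cap\phi_{[-T,T]}(W^{cu}_\xi(y))$.

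Finally, given $s\in[-T,T]$ and $z\in W^{cs}_\xi(x)\cap\phi_s(W^{cu}_\xi(y))$, write $z=\phi_s(w)$ with $w\in W^{cu}_\xi(y)\subset W^u(\orb(y))$. Flow-invariance of $W^u(\orb(y))$ yields $z=\phi_s(w)\in W^u(\orb(y))$, while by construction $z\in W^{cs}_\xi(x)\subset W^s(\orb(x))$; hence $W^s(\orb(x))\cap W^u(\orb(y))\neq\emptyset$. I expect the only delicate step to be the refinement of Corollary~\ref{Cor:intersection} to the scaled plaques of radius $\xi$: this is morally immediate from the uniform-continuity argument sketched there, but it does require redoing (rather than directly citing) the proof.
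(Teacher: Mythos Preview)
Your proposal is correct and follows essentially the approach the paper has in mind: the paper states this corollary without proof, treating it as an immediate consequence of Lemma~\ref{Lem:intersect} combined with Corollary~\ref{Cor:intersection}, and your argument makes precisely that combination explicit (compactness of $\Lambda_0$ turns the $|X|$-scaled plaque sizes into uniform ones, then transversality of the plaque families away from singularities gives the intersection). Your flagged ``delicate step'' --- adapting Corollary~\ref{Cor:intersection} to plaques of radius $\xi$ rather than unit size --- is exactly the routine refinement the paper is tacitly assuming, and your treatment of it is adequate.
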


Similar to the proof of Lemma~\ref{Lem:intersect}, we have

\begin{Lemma}\label{Lem:liaoestimation}
Let $X\in{\cal X}^1(M^d)$. For any $C>0$, $\eta>0$ and $T>0$, there is $\delta=\delta(X,C,\eta,T)>0$ such that
if a regular point $x\in M^d$ is $(C,\eta,T,{\cal N})$-$\psi^*$-contracting, then
$$\lim_{t\to+\infty} {\rm diam }\left({P}_{x,\phi_t(x)}({N}_{x}(\delta|X(x|))\right)=0.$$
In other words,
$N_{x,\delta|X(x)|}\subset W^s({\rm Orb}(x))$.
\end{Lemma}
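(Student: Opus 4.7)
The natural framework is to iterate the scaled sectional Poincar\'e maps along a partition witnessing the contracting hypothesis, exploiting the uniform control given by Lemma~\ref{Lem:estimationscaled}. First I would refine the given partition (whose gaps are only required to satisfy $t_{i+1}-t_i\ge T$) so that every gap $\tau_i:=t_{i+1}-t_i$ lies in $[T,2T]$; the ``for any partition'' form of the hypothesis ensures the contracting bound $\prod_{j=0}^{n-1}\|\psi^*_{\tau_j}|_{{\cal N}(\phi_{t_j}(x))}\|\le C{\rm e}^{-\eta t_n}$ is preserved. Lemma~\ref{Lem:estimationscaled} furnishes a uniform radius $\beta>0$ (independent of the base point) on which each scaled sectional Poincar\'e map ${\cal P}^*_{\phi_{t_i}(x),\phi_{t_{i+1}}(x)}$ is defined with uniformly continuous derivative whose value at the origin is $\psi^*_{\tau_i}|_{{\cal N}(\phi_{t_i}(x))}$. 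Decomposing ${\cal P}^*_{\phi_{t_i}(x),\phi_{t_{i+1}}(x)}(y)=\psi^*_{\tau_i}(y)+R_i(y)$ with $R_i(0)=0$ and $DR_i(0)=0$, the uniform modulus of continuity yields, for each $\varepsilon>0$, a uniform $\delta_\varepsilon\in(0,\beta]$ such that $|R_i(y)|\le\varepsilon|y|$ whenever $|y|\le\delta_\varepsilon$, giving $|{\cal P}^*_{\phi_{t_i}(x),\phi_{t_{i+1}}(x)}(y)|\le(L_i+\varepsilon)|y|$ with $L_i:=\|\psi^*_{\tau_i}|_{{\cal N}(\phi_{t_i}(x))}\|$.

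\textbf{Key step: absorbing the linearization error.} The crucial observation is that the additive error $\varepsilon$ can be folded into the exponential decay because each $L_i$ is bounded \emph{below}: applying Lemma~\ref{Lem:starbounded} to $-X$, the scaled linear Poincar\'e flow $\psi^*_{-\tau_i}$ is uniformly bounded by some $C_{2T}$ on times in $[T,2T]$, whence $L_i\ge m(\psi^*_{\tau_i})\ge 1/C_{2T}$. Therefore $L_i+\varepsilon\le L_i(1+C_{2T}\varepsilon)$, and since the partition contains at most $t_n/T$ steps,
$$
\prod_{j=0}^{n-1}(L_j+\varepsilon)\le (1+C_{2T}\varepsilon)^{t_n/T}\prod_{j=0}^{n-1}L_j\le C\,{\rm e}^{-(\eta-T^{-1}\log(1+C_{2T}\varepsilon))\,t_n}.
$$
Choose $\varepsilon>0$ so small that $\eta':=\eta-T^{-1}\log(1+C_{2T}\varepsilon)>0$, and set $\delta:=\delta_\varepsilon/C$. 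For any $y_0\in{\cal N}_x(\delta)$, induction yields $|y_i|\le C|y_0|{\rm e}^{-\eta' t_i}\le\delta_\varepsilon$, so the iteration never exits the linearization domain and $|y_n|\to 0$. Rescaling via the identity $P_{x,\phi_{t_n}(x)}(y_0|X(x)|)=y_n\cdot|X(\phi_{t_n}(x))|$ and using that $|X|$ is bounded on the compact manifold $M^d$, the unscaled Poincar\'e image of $N_x(\delta|X(x)|)$ also shrinks to a point, establishing $N_x(\delta|X(x)|)\subset W^s({\rm Orb}(x))$.

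\textbf{Main obstacle.} The most delicate point is the additive-to-multiplicative conversion of the linearization error. Without a uniform lower bound on $L_i$, the product $\prod_j(L_j+\varepsilon)$ could not be dominated by $\prod_j L_j$ up to a tame factor, and one would be forced into an adapted Lyapunov-type norm along the orbit $\{\phi_{t_i}(x)\}$; such a norm exists but carries non-uniform equivalence constants that would jeopardize the independence of $\delta$ from $x$, which is precisely what the statement requires. The fact that ${\cal N}$ is the \emph{full} normal bundle (rather than a subbundle of a dominated splitting) is what makes the inverse estimate $m(\psi^*_{\tau_i})=1/\|\psi^*_{-\tau_i}\|$ available and thus makes this clean argument possible.
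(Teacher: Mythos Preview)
Your argument is correct and follows the same route the paper sketches: the paper's proof of Lemma~\ref{Lem:liaoestimation} simply points back to Lemma~\ref{Lem:intersect}, whose proof invokes the uniform continuity of $D{\cal P}^*$ from Lemma~\ref{Lem:estimationscaled} to get a uniform linearization neighborhood and then defers to \cite[Corollary~3.3]{PuS00}---which is precisely the iteration-and-error-absorption scheme you spell out. Your observation that in the full-normal-bundle case the lower bound $L_i\ge 1/C_{2T}$ (already contained in Lemma~\ref{Lem:starbounded} for negative times, so no appeal to $-X$ is needed) replaces the role of domination is exactly why the argument simplifies here.
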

%
%
%
Notice that not only $x$ may be close to a singularity, but also $\omega(x)$ may contain singularities.

\begin{Theorem}\cite[Theorem 4.1, Proposition 6.2]{Lia89}\label{Thm:etad}
Given $X\in\cX^1(M^d)$ and a hyperbolic singularity $\sigma$ of $X$, for any $C>0, \eta>0$ and $T>1$, there exists a
neighborhood $U$ of $\sigma$ such that there is no $(C,\eta,T,{\cal N})$-$\psi^*$-contracting periodic point in $U$.
\end{Theorem}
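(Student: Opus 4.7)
I plan to argue by contradiction, combining the stable disk supplied by Lemma~\ref{Lem:liaoestimation} with the hyperbolic structure at $\sigma$ to force a forbidden intersection $W^s(\gamma_n)\cap W^s(\sigma)\neq\emptyset$.

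Suppose the theorem fails: for some fixed $C,\eta,T$ there is a sequence of $(C,\eta,T,{\cal N})$-$\psi^*$-contracting periodic points $p_n\to\sigma$, with orbits $\gamma_n=\mathrm{Orb}(p_n)$ of periods $\tau_n$. Since $\sigma$ is hyperbolic, a sufficiently small neighborhood of $\sigma$ carries no recurrent trajectory, so each $\gamma_n$ must exit a fixed neighborhood of $\sigma$; in particular $\tau_n\to\infty$ and $|X(p_n)|\to 0$. Applying Lemma~\ref{Lem:liaoestimation} with the uniform constant $\delta=\delta(X,C,\eta,T)>0$, the normal disk $N_{p_n,\delta|X(p_n)|}$ is contained in $W^s(\gamma_n)$ for every $n$. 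Because $\gamma_n$ stays away from $\sigma$, no point of $W^s(\gamma_n)$ can lie in $W^s(\sigma)$, so $W^s(\gamma_n)\cap W^s(\sigma)=\emptyset$.

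The task thus reduces to showing that, for $n$ large, the disk $N_{p_n,\delta|X(p_n)|}$ meets $W^s_{\mathrm{loc}}(\sigma)$. Fix a Hartman--Grobman chart near $\sigma$ (with an adapted metric making $E^s\perp E^u$), and decompose $x=x^s+x^u\in E^s(\sigma)\oplus E^u(\sigma)$. Then
$$|X(x)|\asymp\max(|x^s|,|x^u|),\qquad d(x,W^s_{\mathrm{loc}}(\sigma))\asymp|x^u|,$$
so the ratio controlling the intersection is $|x^u|/\max(|x^s|,|x^u|)$, which must be driven to zero. Using the freedom to translate the basepoint along $\gamma_n$ (periodicity yields $\psi^*_{\tau_n}=\psi_{\tau_n}$ on ${\cal N}_{p_n}$, forcing uniformly negative Floquet exponents along ${\cal N}$, so Lemma~\ref{Lem:liaoestimation} applies with the same $\delta$ at every point of $\gamma_n$), I choose the basepoint on the stable-approach piece of the sojourn of $\gamma_n$ in the linearizing neighborhood, at the intermediate time $t_n=t_{*,n}/2$, where $t_{*,n}$ is the time to the closest-approach point to $\sigma$. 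In the linearized coordinates this yields $|p_n^s|,|p_n^u|\to 0$ and $|p_n^u|/|p_n^s|\to 0$ (both components decay as explicit powers of the closest-approach distance), hence $d(p_n,W^s_{\mathrm{loc}}(\sigma))/|X(p_n)|\to 0$. For $n$ large, the disk $N_{p_n,\delta|X(p_n)|}$ reaches $W^s_{\mathrm{loc}}(\sigma)$ in an $E^u$-direction it contains (since $X(p_n)$ is asymptotically aligned with $E^s$), giving the desired contradiction.

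The main obstacle is the second step: verifying that the $\psi^*$-contracting property survives translation along $\gamma_n$ with constants uniform in $n$, and establishing the geometric estimate in the linearized chart that relates the normal-disk reach $\delta|X(p_n)|$ to the distance $|p_n^u|$ to $W^s_{\mathrm{loc}}(\sigma)$. The former uses the periodicity as above; the latter is a direct computation from Hartman--Grobman, tracking $(|p^s(t)|,|p^u(t)|)\sim (e^{-at},U_{0,n}e^{bt})$ along the sojourn arc and checking that the intermediate-time choice $t_n=t_{*,n}/2$ drives $|p_n^u|/|p_n^s|$ to zero at the rate of $U_{0,n}^{1/2}$.
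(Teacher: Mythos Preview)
There is a genuine gap at the translation step. You infer from the uniformly negative Floquet exponents of $\gamma_n$ that Lemma~\ref{Lem:liaoestimation} applies with the same $\delta$ at every point of $\gamma_n$, but the Floquet bound controls only the spectrum of the full-period map $\psi_{\tau_n}$, not the partial products that the $(C,\eta,T,\mathcal{N})$-$\psi^*$-contracting condition demands. In fact at your chosen basepoint $q_n$ on the stable-approach arc, $X(q_n)$ is nearly aligned with $E^s(\sigma)$, so $\|\Phi_T|_{\langle X(q_n)\rangle}\|\approx e^{-aT}$, while $E^u(\sigma)\subset\mathcal{N}_{q_n}$ and $\|\Phi_T|_{E^u}\|\approx e^{bT}$; hence $\|\psi^*_T|_{\mathcal{N}_{q_n}}\|\gtrsim e^{(a+b)T}>1$, and this persists for roughly $t_{*,n}/2$ units of time. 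The product over the first $k$ steps then grows like $e^{(a+b)kT}$ with $k$ unbounded as $n\to\infty$, so $q_n$ is not $(C',\eta,T,\mathcal{N})$-$\psi^*$-contracting for any $C'$ independent of $n$, and Lemma~\ref{Lem:liaoestimation} does not furnish a uniform-size stable disk there. (A secondary issue: Hartman--Grobman is only a $C^0$ conjugacy, so the estimates $|X(x)|\asymp\max(|x^s|,|x^u|)$ and the claim that $N_{q_n}$ contains an $E^u$-direction need additional justification.)

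The paper's route avoids relocating the basepoint altogether. From the $\psi^*$-contracting hypothesis it first extracts a dominated splitting $T_\sigma M^d=E^{cs}\oplus E^{uu}$ with $\dim E^{uu}=1$ and $E^{uu}$ strong unstable --- a structural fact you neither derive nor use. Keeping the stable disk $N_{p_n}(\delta|X(p_n)|)\subset W^s(\gamma_n)$ at the \emph{given} $p_n$, Liao's ``careful estimation'' then shows $W^{uu}(\sigma)\cap W^s(\gamma_n)\neq\emptyset$ for all large $n$. The contradiction comes from $W^{uu}(\sigma)\setminus\{\sigma\}$ consisting of only two orbits, while the $\gamma_n$ are infinitely many distinct periodic orbits.
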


The proof of Theorem~\ref{Thm:etad} is not short and it is contained in \cite{Lia89}. Now we give some idea of Theorem~\ref{Thm:etad}: if Theorem~\ref{Thm:etad} is not true, then there is a sequence of $(C,\eta,T,{\cal N})$-$\psi^*$-contracting periodic points $\{p_n\}$ such that $\lim_{n\to\infty}p_n=\sigma$. This holds only if $\sigma$ is a saddle. By the property of $\{p_n\}$, we have
\begin{itemize}

\item $T_\sigma M^d$ admits a dominated splitting $T_\sigma M^d=E^{cs}\oplus E^{uu}$ w.r.t. the tangent flow, where $\dim E^{uu}=1$ and $E^{uu}$ is strong unstable.

\item There is $\delta>0$, $N_{p_n}(\delta|X(p_n)|)\subset W^s(p_n)$.

\end{itemize}

Then by a careful estimation (which is not obvious), one has for $n$ large enough, $W^{uu}(\sigma)\cap W^s(p_n)\neq\emptyset$, where $W^{uu}(\sigma)$ is the strong unstable manifold corresponding to $E^{uu}$. But $W^{uu}(\sigma)\setminus\{\sigma\}$ contains only two orbits, and $p_n$ are distinct periodic orbits. This gives us a contradiction.

\subsection{Pliss Lemma}

We use the following lemma of Pliss type to get the points which can have uniform estimations to infinity.

\begin{Lemma}\label{Lem:pliss}
Given $X\in\cX^1(M^d)$, $C>0$, $T>0$ and $\eta>0$, for any $\eta'\in(0,\eta)$, there is $N=N(C,T,\eta,\eta')>0$, such that if $\gamma$ is a periodic orbit with period $\tau(\gamma)>N$, $E\subset{\cal N}_\gamma$ is an invariant bundle w.r.t. $\psi_t$, and if $\gamma$ is $(C,\eta,T,E)$-contracting at the period w.r.t. $\psi_t$, then there is $x\in\gamma$ such that $x$ is $(1,\eta',T,E)$-$\psi_t^*$-contracting.

\end{Lemma}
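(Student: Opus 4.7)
The plan is to apply a discrete Pliss lemma to the logarithms of the scaled Poincar\'e flow norms along the given partition of $\gamma$, then transfer the resulting estimate to arbitrary partitions with gaps $\ge T$ at a suitable ``Pliss point''.

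First I would transfer the hypothesis from $\psi_t$ to $\psi_t^*$. Fix $x_0\in\gamma$ and the partition $0=t_0<t_1<\cdots<t_n=m\tau(\gamma)$ with $t_{i+1}-t_i\le T$. Since $|X(\phi_{m\tau(\gamma)}(x_0))|=|X(x_0)|$, the telescoping identity
\[
\prod_{i=0}^{n-1}\bigl\|\psi^*_{t_{i+1}-t_i}|_{E(\phi_{t_i}(x_0))}\bigr\|
=\frac{|X(x_0)|}{|X(\phi_{m\tau(\gamma)}(x_0))|}\prod_{i=0}^{n-1}\bigl\|\psi_{t_{i+1}-t_i}|_{E(\phi_{t_i}(x_0))}\bigr\|\le Ce^{-\eta m\tau(\gamma)}
\]
holds. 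Set $a_i=\log\bigl\|\psi^*_{t_{i+1}-t_i}|_{E(\phi_{t_i}(x_0))}\bigr\|$; Lemma~\ref{Lem:starbounded} gives the uniform bound $|a_i|\le \log C_T$, and $\sum_{i=0}^{n-1}a_i\le\log C-\eta m\tau(\gamma)$.

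Next, fix $\eta''\in(\eta',\eta)$ and take $N$ so large that $\tau(\gamma)>N$ forces $(\eta-\eta'')m\tau(\gamma)>\log C$. Extending $(a_i)$ periodically using the periodicity of $\gamma$, the function $G(j):=\sum_{i=0}^{j-1}a_i+\eta''t_j$ satisfies $G(j+n)-G(j)\le\log C-(\eta-\eta'')m\tau(\gamma)<0$, so $G\to-\infty$ and its maximum over $\mathbb{N}$ is attained at some $k\in[0,n)$. At such $k$, the condition $G(k+l)\le G(k)$ for all $l\ge 1$ reads
\[
\prod_{i=k}^{k+l-1}\bigl\|\psi^*_{t_{i+1}-t_i}|_{E(\phi_{t_i}(x_0))}\bigr\|\le e^{-\eta''(t_{k+l}-t_k)}.
\]
Set $x=\phi_{t_k}(x_0)\in\gamma$ and $\hat t_l=t_{k+l}-t_k$; this is the candidate Pliss point.

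Finally I would extend the estimate to arbitrary partitions. Given any $0=s_0<s_1<\cdots$ at $x$ with $s_{j+1}-s_j\ge T$ and $s_N\to\infty$, let $l_j$ be the largest index with $\hat t_{l_j}\le s_j$, so $s_j-\hat t_{l_j}\le T$ and $\hat t_{l_j+1}>s_j$. The cocycle decomposition
\[
\psi^*_{s_{j+1}-s_j}(\phi_{s_j}(x))=\psi^*_{s_{j+1}-\hat t_{l_{j+1}}}\circ\psi^*_{\hat t_{l_{j+1}}-\hat t_{l_j+1}}\circ\psi^*_{\hat t_{l_j+1}-s_j}
\]
together with Lemma~\ref{Lem:starbounded} (which bounds $\|\psi_t^*\|\le C_T$ for $|t|\le T$) and the Pliss estimate on the middle factor yields a total bound of the form $C_T^{O(n)}e^{-\eta''s_N}$; since $n\le s_N/T$, this is $\le e^{-\eta' s_N}$ provided $\eta''-\eta'>O(\log C_T/T)$, which is arranged by choosing $\eta''$ sufficiently close to $\eta$ (and then $N$ sufficiently large).

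The main obstacle is precisely this last constant absorption: the boundary mismatch between the Pliss partition and the arbitrary partition produces a factor $C_T^{O(n)}$ growing like $C_T^{O(s_N/T)}$, which must be dominated by the gap $\eta''-\eta'$ in the Pliss rate. The freedom $\eta''\in(\eta',\eta)$ together with the choice of $N$ makes this possible, and this is the reason one must strictly lose a tiny amount of rate when passing from the ``at the period'' contraction to the infinite-horizon $\psi_t^*$-contraction at a single point.
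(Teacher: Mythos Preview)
Your opening moves --- passing from $\psi_t$ to $\psi_t^*$ via $|X(\phi_{m\tau(\gamma)}(x_0))|=|X(x_0)|$, then locating a Pliss index for the sequence $a_i=\log\bigl\|\psi^*_{t_{i+1}-t_i}|_{E(\phi_{t_i}(x_0))}\bigr\|$ --- are exactly what the paper does; the paper carries out the first step and then defers the Pliss argument to \cite[Lemma~2.14]{CSY11} without giving details.

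The gap is in your final constant-absorption claim. You arrive at $C_T^{O(N)}e^{-\eta'' s_N}$ and assert this is $\le e^{-\eta' s_N}$ ``provided $\eta''-\eta'>O(\log C_T/T)$, which is arranged by choosing $\eta''$ sufficiently close to $\eta$''. This cannot be arranged: $\eta$, $\eta'$ and $T$ are all \emph{fixed} in the hypotheses, and since $\eta''<\eta$ one has $\eta''-\eta'<\eta-\eta'$; when $\eta-\eta'$ is smaller than the multiple of $\log C_T/T$ produced by your boundary corrections, no admissible $\eta''$ works. Enlarging the period threshold $N(C,T,\eta,\eta')$ does not help either, because $C_T$ depends only on $X$ and $T$, not on $\tau(\gamma)$. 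Already for $\dim E=1$ the failure is explicit: your discrete Pliss estimate yields only $\|\psi^*_t|_{E(x)}\|\le C_T\,e^{\eta''T}e^{-\eta''t}$, and asking this to be $\le e^{-\eta'T}$ at $t=s_1=T$ would force $C_T<1$, whereas $C_T\ge\|\psi^*_0\|=1$. The per-step boundary factors $C_T^{O(1)}$ are unavoidable in your decomposition; to obtain the constant $1$ in the conclusion one must select the Pliss point so that no such corrections arise --- for instance, in the one-dimensional case, by maximizing $s\mapsto\log\|\psi^*_s|_{E(x_0)}\|+\eta's$ \emph{continuously} over one period rather than over the discrete grid $\{t_i\}$.
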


\begin{proof}

Since $\gamma$ is a $(C,\eta,T,E)$-contracting at the period w.r.t. $\psi_t$, there is $m\in\NN$ and a time partition
$$0=t_0<t_1<\cdots<t_n=m\tau(\gamma),$$
with $t_{i+1}-t_i\le T$ for $0\le i\le n-1$, such that
$$\prod_{i=0}^{n-1}\|\psi_{t_{i+1}-t_{i}}|_{E(\phi_{t_i}(x))}\|\le C \exp\{-\eta m\tau(\gamma)\}.$$

Since $\Phi_{\tau(\gamma)}(X(x))=X(\phi_{\tau(\gamma)}(x))=X(x)$, the above estimation is also true for $\psi_t^*$:
$$\prod_{i=0}^{n-1}\|\psi^*_{t_{i+1}-t_{i}}|_{E(\phi_{t_i}(x))}\|\le C \exp\{-\eta m\tau(\gamma)\}.$$

When $\eta'<\eta,$ if $\tau(\gamma)$ is large enough, one can cancel the constant $C$. Following \cite[Lemma 2.14]{CSY11}, one can get the $(1,\eta',T,E)$-$\psi_t^*$-contracting point $x$.
\end{proof}

\begin{Lemma}\label{Lem:finiteunifsink}
Let $X\in{\cal X}^1(M^d)$ be Kupka-Smale. For any $C>0$, $T>0$ and $\eta>0$, $X$ can only have finitely many $(C,\eta,T,{\cal N})$-contracting periodic orbits.
\end{Lemma}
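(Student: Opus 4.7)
The plan is to argue by contradiction: suppose there are infinitely many distinct $(C,\eta,T,{\cal N})$-contracting periodic orbits $\gamma_n$. The first step is to reduce to the case where the periods $\tau(\gamma_n)$ are unbounded. If instead $\tau(\gamma_n)\le T_0$ for all $n$, then the Kupka--Smale hypothesis makes every periodic orbit hyperbolic and hence isolated; since $M^d$ is compact there can be only finitely many periodic orbits with period bounded by $T_0$, a contradiction. So we may pass to a subsequence with $\tau(\gamma_n)\to\infty$.

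Next, fix some $\eta'\in(0,\eta)$ and apply Lemma~\ref{Lem:pliss}: for all large $n$ there is a Pliss point $x_n\in\gamma_n$ that is $(1,\eta',T,{\cal N})$-$\psi_t^*$-contracting. By compactness, extract a subsequence with $x_n\to x\in M^d$. I would then split according to whether $x$ is a singularity or a regular point. If $x=\sigma\in\Sing(X)$, then Kupka--Smale forces $\sigma$ to be hyperbolic; applying Theorem~\ref{Thm:etad} to the constants $(1,\eta',T')$ with $T'=\max(T,2)$ (noting that $(1,\eta',T,\cdot)$-contracting implies $(1,\eta',T',\cdot)$-contracting, since enlarging $T$ restricts attention to coarser partitions) gives a neighborhood $U$ of $\sigma$ containing no $(1,\eta',T',{\cal N})$-$\psi^*$-contracting periodic point. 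But $x_n\in U$ for all large $n$, a contradiction.

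It remains to rule out the case that $x$ is regular. Here $|X(x_n)|$ is bounded away from $0$, so by Lemma~\ref{Lem:liaoestimation} applied to the constants $(1,\eta',T)$, there is a uniform $\delta>0$ with $N_{x_n}(\delta|X(x_n)|)\subset W^s({\rm Orb}(x_n))=W^s(\gamma_n)$, and the radii $\delta|X(x_n)|$ are uniformly bounded below. In a fixed flow box around $x$, for $m,n$ large the point $x_m$ can be projected along the flow to a point $y_m\in N_{x_n}$, and for $m$ large enough $y_m$ lies inside $N_{x_n}(\delta|X(x_n)|)\subset W^s(\gamma_n)$. Since $y_m$ is a regular point on $\gamma_m$, we get $\gamma_m\subset W^s(\gamma_n)$; a periodic orbit contained in the stable manifold of another periodic orbit must coincide with it, so $\gamma_m=\gamma_n$, contradicting the assumption that the $\gamma_n$ are distinct.

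The main technical point to be careful with is the regular-point case, where one must verify that the flow-box projection of $x_m$ indeed lands inside the scaled normal disk $N_{x_n}(\delta|X(x_n)|)$ of $x_n$; this uses only the tubular neighborhood theorem together with the uniform lower bound on $|X(x_n)|$, so no serious obstacle arises. The Pliss step together with Lemma~\ref{Lem:liaoestimation} and Theorem~\ref{Thm:etad} together dispose of the two alternatives for the limit point, completing the proof.
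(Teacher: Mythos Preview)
Your proof is correct and follows essentially the same line as the paper's: contradiction, unbounded periods, Pliss points via Lemma~\ref{Lem:pliss}, then Theorem~\ref{Thm:etad} to rule out accumulation at a singularity and Lemma~\ref{Lem:liaoestimation} to handle the regular case. The only cosmetic difference is in the regular case, where the paper observes that the uniform-size normal disks $N_{x_n}(\delta|X(x_n)|)\subset W^s(\gamma_n)$ give pairwise disjoint basins of uniform volume, contradicting finiteness of ${\rm vol}(M^d)$, rather than your flow-box overlap argument forcing $\gamma_m=\gamma_n$.
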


\begin{proof}
If the conclusion is not true, then $X$ has infinitely many distinct periodic orbits $\{\gamma_n\}$ such that
\begin{itemize}

\item $\lim_{n\to\infty}\tau(\gamma_n)=\infty$.

\item Each $\gamma_n$ is $(C,\eta,T,{\cal N})$-contracting at the period w.r.t. $\psi_t$.
\end{itemize}

By Lemma~\ref{Lem:pliss}, for each $n$ large enough, there is $x_n\in\gamma_n$ such that $x_n$ is a $(1,\eta/2,T,{\cal N})$-$\psi_t^*$-contracting point. Thus, there is $\delta=\delta(X,\eta,T)>0$ such that $N_{x_n}(\delta|X(x_n)|)$ is contained in the stable manifold of $x_n$. If $x_n$ accumulates on singularities, then one can get a contradiction by Theorem~\ref{Thm:etad}. If $x_n$ dose not accumulate on singularities, then the basin of $\gamma_n$ covers an open set with uniform size. This also gives a contradiction because the volume of $M^d$ is finite and $\{\gamma_n\}$ are distinct periodic orbits.
\end{proof}

\section{Chain recurrence and genericity}\label{Sec:generic}

\subsection{Conley theory}

A chain recurrent class is called \emph{non-trivial} if it is not reduced to a critical element;
otherwise, it is called trivial. For each hyperbolic critical element $p$ of $X$, since $\orb(p)$ has a
well-defined continuation $\orb(p_Y)$ for $Y$ close to $X$, $C(p)$ also has a well-defined continuation
$C(p_Y,Y)$.

A compact invariant set $\Lambda$ of $X$ (if it has a continuation) is called \emph{lower semi-continuous} if for any sequence of vector fields $\{X_n\}$ verifying $\lim_{n\to\infty}X_n=X$, one has
$\liminf_{n\to\infty}\Lambda_{X_n}\supset\Lambda$. A compact invariant set $\Lambda$ of $X$ (if it has a continuation)
is called \emph{upper semi-continuous} if for any sequence of vector fields $\{X_n\}$ verifying
$\lim_{n\to\infty}X_n=X$, one has $\limsup_{n\to\infty}\Lambda_{X_n}\subset\Lambda$. It is well known that the closure
of hyperbolic periodic orbits is lower semi-continuous and the chain recurrent set is upper semi-continuous.
There is a classical result saying that lower semi-continuous sets and upper semi-continuous sets are continuous for
generic vector fields.

\begin{Lemma}\label{Lem:u.s.c}
For a hyperbolic critical element $p$, $C(p)$ is upper semi-continuous. As a corollary, if $p_1$ and $p_2$ are two critical elements of
$X$ with the property $C(p_1)\cap C(p_2)=\emptyset$, then there is a neighborhood $\cal U$ of $X$ such that for any
$Y\in\cal U$, one has $C(p_{1,Y})\cap C(p_{2,Y})=\emptyset$.
\end{Lemma}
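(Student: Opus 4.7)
My plan is to prove upper semi-continuity by filtrating $C(p)$ inside arbitrarily small Conley attractor-repeller neighborhoods, using the fact that such trapping regions are $C^0$-robust.

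\textbf{Step 1: Reduce to finding small filtrating neighborhoods.} By Conley's fundamental theorem, the chain recurrence class $C(p)$ equals the intersection of all filtrating neighborhoods containing $p$, where a \emph{filtrating neighborhood} is an open set of the form $W=A\cap (M^d\setminus \overline{A'})$ with $A,A'$ attractor blocks, i.e.\ open sets satisfying $\phi_{T_0}(\overline{A})\subset A$ and $\phi_{-T_0}(\overline{A'})\subset A'$ for some $T_0>0$, with the attractor in $A'$ disjoint from $C(p)$. Given any open neighborhood $U\supset C(p)$, a standard compactness argument (the filtrating neighborhoods form a basis for the topology of chain recurrence classes) produces a filtrating neighborhood $W$ with $C(p)\subset W\subset U$.

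\textbf{Step 2: Robustness of filtrating neighborhoods.} The defining conditions $\phi_{T_0}(\overline{A})\subset A$ and $\phi_{-T_0}(\overline{A'})\subset A'$ are strict inclusions between compact sets, hence a $C^0$-open condition on the vector field. Therefore there exists a $C^1$ neighborhood $\mathcal{U}$ of $X$ such that for every $Y\in\mathcal{U}$, the sets $A$ and $A'$ are still attractor blocks for $\phi_t^Y$, so $W$ remains a filtrating neighborhood for $Y$. A key consequence of being filtrating is that any chain recurrence class of $Y$ that meets $W$ must be entirely contained in $W$ (because entering $A$ forward or leaving $M^d\setminus\overline{A'}$ backward is one-way through the boundaries).

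\textbf{Step 3: Locate $p_Y$ and conclude.} Since $p$ is a hyperbolic critical element, it has a well-defined continuation $p_Y\in W$ for $Y$ close enough to $X$ (shrinking $\mathcal{U}$ if necessary). Then $C(p_Y,Y)$ is a chain recurrence class of $Y$ meeting $W$, hence $C(p_Y,Y)\subset W\subset U$ by Step 2. Since $U$ was an arbitrary neighborhood of $C(p)$, this proves $\limsup_{Y\to X} C(p_Y,Y)\subset C(p)$, which is exactly upper semi-continuity.

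\textbf{Step 4: The corollary.} Assuming $C(p_1)\cap C(p_2)=\emptyset$, normality of $M^d$ provides disjoint open neighborhoods $U_1\supset C(p_1)$ and $U_2\supset C(p_2)$. Applying Step~3 to each of $p_1$ and $p_2$ yields a common $C^1$ neighborhood $\mathcal{U}$ on which $C(p_{i,Y},Y)\subset U_i$ for $i=1,2$; the disjointness of $U_1$ and $U_2$ then forces $C(p_{1,Y},Y)\cap C(p_{2,Y},Y)=\emptyset$. The main obstacle is Step~1, whose proof rests on Conley's characterization of chain recurrence classes via filtrating neighborhoods; once this tool is available, everything else is the soft robustness of trapping regions together with continuation of hyperbolic critical elements.
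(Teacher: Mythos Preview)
Your proof is correct and follows essentially the same approach as the paper: both rest on Conley's theory of attractor blocks and the $C^0$-robustness of the trapping condition $\phi_{T_0}(\overline{A})\subset A$. The only organizational difference is that the paper is terse on the upper semi-continuity (just citing continuity of flows) and proves the corollary directly via a single Conley attractor block separating $C(p_1)$ from $C(p_2)$, whereas you give the full filtrating-neighborhood argument for upper semi-continuity and then deduce the corollary from it plus normality; these are interchangeable packagings of the same idea.
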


\begin{proof}
The fact that $C(p)$ is upper semi-continuous because of the continuity of the flows with respect to the vector fields.

By \cite{Con78}, if we have two chain recurrent classes $C(p_1)$ and $C(p_2)$ satisfying $C(p_1)\cap C(p_2)=\emptyset$, then there is an open set $U$
such that
\begin{itemize}
\item $\phi_t(\overline{U})\subset U$ for $t>0$;

\item $C(p_1)\subset U$ and $C(p_2)\subset {\rm Int}(M\setminus\overline{U})$ or $C(p_2)\subset U$ and $C(p_1)\subset {\rm Int}(M\setminus\overline{U})$

\end{itemize}

Then by the continuity of vector fields, there is a $C^1$ neighborhood ${\cal U}$ of $X$ such that for any
$Y\in\cal U$, one has $C(p_{1,Y})\subset U$ and $C(p_{2,Y})\subset {\rm Int}(M\setminus\overline{U})$ or
$C(p_{2,Y})\subset U$ and $C(p_{1,Y})\subset {\rm Int}(M\setminus\overline{U})$. As a corollary, one has
$C(p_{1,Y})\cap C(p_{2,Y})=\emptyset$.

\end{proof}

 For each
point $x\in M^d$, one can define the strong stable manifold $W^{ss}(x)$ and the strong unstable manifold
$W^{uu}(x)$ as
$$W^{ss}(x)\triangleq\{y\in M^d:~\lim_{t\to\infty}d(\phi_t(x),\phi_t(y))=0\},$$
$$W^{uu}(x)\triangleq\{y\in M^d:~\lim_{t\to-\infty}d(\phi_t(x),\phi_t(y))=0\}.$$

But for flows, this definition is not enough in many cases. By the difference with diffeomorphisms, sometimes we
need to reparameterize the time variable. This leads us to give the definition of $W^s({\rm Orb}(x))$ and $W^u({\rm Orb}(x))$ as in Section~\ref{Sec:differentflows}.

%
%
%

By the definitions, one has that $W^s({\rm Orb}(x))$ and $W^u({\rm Orb}(x))$ are invariant sets. The proof of the following lemma is fundamental, hence omitted.

\begin{Lemma}\label{Lem:invariantinside}
For any hyperbolic critical point $p$, one has
\begin{itemize}
\item For any critical element $p$, one has $W^s({\rm Orb}(p))=\cup_{t\ge 0}\phi_t (W^{ss}(p))$ and $W^u({\rm Orb}(p))=\cup_{t\ge 0}\phi_t (W^{uu}(p))$.

\item If $C(p)$ is non-trivial, then $C(p)\cap W^s({\rm Orb}(p))\setminus{\rm
Orb}(p)\neq\emptyset$ and $C(p)\cap W^u({\rm Orb}(p))\setminus{\rm Orb}(p)\neq\emptyset$.

\end{itemize}
\end{Lemma}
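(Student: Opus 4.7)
The lemma has two parts, and I would handle them separately since they rely on rather different ideas.

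For part (1), the identity is classical once one invokes the asymptotic phase theorem for hyperbolic critical elements \cite{HPS77}. If $p$ is a hyperbolic singularity, then $\orb(p)=\{p\}$ is flow-fixed, so $\phi_t(W^{ss}(p))=W^{ss}(p)$ for every $t$ and the reparametrization $\theta$ in the definition of $W^s(\orb(p))$ is irrelevant; the identity reduces to $W^s(\orb(p))=W^{ss}(p)$. For a hyperbolic periodic $p$, the inclusion $\supset$ is immediate: if $y\in\phi_s(W^{ss}(p))=W^{ss}(\phi_s(p))$ then $d(\phi_t(y),\phi_{t+s}(p))\to 0$, so $\theta(t)=t+s$ witnesses $y\in W^s(\orb(p))$. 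Conversely, the asymptotic phase theorem assigns to any $y\in W^s(\orb(p))$ a unique $q=\phi_s(p)\in\orb(p)$ with $y\in W^{ss}(q)=\phi_s(W^{ss}(p))$. The $W^u$ version follows by reversing time.

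For part (2), the plan is to produce a single point $y\in C(p)\cap W^s(\orb(p))\setminus\orb(p)$; the unstable analogue then follows by applying the same argument to $-X$. By non-triviality pick $x\in C(p)\setminus\orb(p)$, and fix a small closed tubular neighborhood $U$ of $\orb(p)$ adapted to the hyperbolic splitting (so that $W^s_{loc}(\orb(p))$ is defined in $U$), small enough that $x\notin U$ and $d(\orb(p),\partial U)\ge 3\eta$ for some $\eta>0$. For $\e_n\to 0$, chain transitivity of $C(p)$ produces an $\e_n$-pseudo-orbit from $x$ to $p$; let $z_n$ be the first point along this pseudo-orbit after which the tail stays inside $U$. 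Because the preceding step originates outside $U$, $z_n$ lies within $O(\e_n)$ of $\partial U$, so $d(z_n,\orb(p))\ge 2\eta$ for $n$ large, and extracting a subsequence we get $z_n\to y\notin\orb(p)$.

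The heart of the argument is identifying $y\in W^s_{loc}(\orb(p))$. The forward $\e_n$-pseudo-orbit tail from $z_n$ lies in $U$ and ends at $p$; by the shadowing lemma for the hyperbolic invariant set $\orb(p)$, this tail is $\delta(\e_n)$-shadowed by a genuine orbit whose initial point is $\delta(\e_n)$-close to $W^s_{loc}(\orb(p))$, with $\delta(\e_n)\to 0$. Passing to the limit yields $y\in W^s_{loc}(\orb(p))\subset W^s(\orb(p))$. The fact that $y\in C(p)$ is then routine pseudo-orbit bookkeeping: $y\in W^s(\orb(p))$ gives an actual orbit piece from $y$ into every $\e$-neighborhood of $p$ hence an $\e$-chain $y\to p$, while the head of the $\e_n$-pseudo-orbit provides an $\e_n$-chain $p\to x\to z_n$ and substituting $y$ for $z_n$ costs only an extra $d(z_n,y)\to 0$ in the chain condition, giving $\e$-chains $p\to y$ for every $\e>0$. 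Hence $y$ and $p$ are chain-equivalent, so $y\in C(p)$.

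The only step that requires genuine work is the shadowing argument producing $y\in W^s_{loc}(\orb(p))$: one must argue that an $\e_n$-pseudo-orbit tail confined to a hyperbolic neighborhood of $\orb(p)$ and terminating near $\orb(p)$ forces the limiting initial point to have its genuine forward orbit trapped in $U$. Everything else is pure bookkeeping, which is presumably why the authors dismiss the proof as ``fundamental''.
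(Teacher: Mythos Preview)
The paper gives no proof at all for this lemma, declaring it ``fundamental, hence omitted''. So there is nothing to compare against; your sketch is the only proof on the table, and it is essentially sound. Your closing remark is exactly right: the authors regard both parts as routine.

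Part (1) via asymptotic phase is the standard argument and is fine.

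For part (2), there is one small slip. You write: ``Because the preceding step originates outside $U$, $z_n$ lies within $O(\e_n)$ of $\partial U$.'' This does not follow. The preceding pseudo-orbit vertex $w_n$ lies outside $U$, but the genuine orbit segment $\phi_{[0,t]}(w_n)$ may enter $U$ and end deep inside it before the $\e_n$-jump to $z_n$; nothing then prevents $z_n$ from being arbitrarily close to $\orb(p)$, which would kill the conclusion $y\notin\orb(p)$. The repair is immediate: instead of taking $z_n$ to be a discrete vertex of the chain, look at the concatenated orbit arcs and take $z_n$ to be the \emph{last} point along this piecewise-continuous path that lies on $\partial U$ (or, if the last departure from $U^c$ happens across a jump, the vertex immediately after that jump, which is then genuinely $\e_n$-close to $U^c$). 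With this choice $z_n\in\partial U$ up to $O(\e_n)$, the forward tail from $z_n$ --- now including the orbit arcs, not just the vertices --- lies in $\bar U$, and your shadowing-and-bookkeeping argument goes through unchanged.

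A shorter alternative for part (2), perhaps closer to what the authors would call ``fundamental'': chain classes for flows are connected, so $C(p)$ contains points $q_n\notin\orb(p)$ with $q_n\to\orb(p)$; the backward exit times of $q_n$ from an isolating neighborhood $U$ tend to $-\infty$, and any accumulation point of the backward exit points lies in $C(p)\cap\partial U$ with its entire forward orbit trapped in $\bar U$, hence in $W^s_{loc}(\orb(p))\setminus\orb(p)$. This avoids invoking the shadowing lemma.
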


For a compact invariant set $\Lambda$, one says that $\Lambda$ is \emph{Lyapunov stable} for $X$ if for any
neighborhood $U$ of $\Lambda$, there is a neighborhood $V$ of $\Lambda$ such that $\phi_t(V)\subset U$ for any $t>0$.

\begin{Lemma}\label{Lem:unstableinLyapunov}

If $\Lambda$ is Lyapunov stable, then $W^u({\rm Orb}(x))\subset\Lambda$ for each $x\in\Lambda$.
\end{Lemma}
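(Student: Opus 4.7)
The plan is to unpack the definitions and use Lyapunov stability to trap forward iterates of a backward-asymptotic point inside an arbitrary neighborhood of $\Lambda$.

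Fix $x \in \Lambda$ and $y \in W^u(\mathrm{Orb}(x))$, so there is a reparametrization $\theta$ with $d(\phi_{\theta(t)}(y), \phi_t(x)) \to 0$ as $t \to -\infty$. Let $U$ be any open neighborhood of $\Lambda$. By Lyapunov stability, choose an open neighborhood $V$ of $\Lambda$ with $\phi_s(V) \subset U$ for all $s > 0$. Since $\Lambda$ is compact, invariant, and contained in the open set $V$, the asymptotic condition forces $\phi_{\theta(t)}(y) \in V$ for all $t$ sufficiently negative; pick such a $t_0$ and set $s_0 = \theta(t_0) < 0$ (since $\theta$ is an increasing homeomorphism fixing $0$). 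Then $\phi_{s_0}(y) \in V$, so $\phi_{-s_0}(\phi_{s_0}(y)) = y \in U$ because $-s_0 > 0$.

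Since $U$ was an arbitrary neighborhood of the closed set $\Lambda$, this gives $y \in \Lambda$, completing the proof. The only place where one has to be a bit careful is the reparametrization: one has to note that $\theta(t_0) < 0$ (so that applying $\phi_{-\theta(t_0)}$ is indeed a forward time, to which Lyapunov stability applies), which is immediate from $\theta$ being an increasing homeomorphism with $\theta(0)=0$. There is no real obstacle here; the statement is essentially a formal consequence of the definitions of $W^u(\mathrm{Orb}(x))$ and Lyapunov stability.
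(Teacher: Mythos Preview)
Your proof is correct and follows essentially the same route as the paper's: take an arbitrary neighborhood $U$ of $\Lambda$, use Lyapunov stability to get $V$, use the backward asymptotic condition to place $\phi_{\theta(t_0)}(y)$ in $V$ for some $t_0<0$, and then flow forward by $-\theta(t_0)>0$ to conclude $y\in U$. Your write-up is in fact slightly more careful than the paper's in justifying why $\theta(t_0)<0$.
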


\begin{proof}
Given any $y\in W^u({\rm Orb}(x))$, for any neighborhood $U$ of $\Lambda$, since $\Lambda$ is Lyapunov stable,
there is a neighborhood $V$ of $\Lambda$ such that $\phi_t(V)\subset U$ for any $t\ge 0$. For any $y\in W^u({\rm Orb}(x))$, there is an
increasing homeomorphism $\theta:\RR\to\RR$ such that $d(\phi_t(x),\phi_{\theta(t)}(y))\to 0$ as $t\to-\infty$.
Thus there is $t_V>0$ such that $\theta(-t_V)<0$ and $\phi_{\theta(-t_V)}\in V$. By the Lyapunov stability one
has $y=\phi_{-\theta(-t_V)}(\phi_{\theta(-t_V)}(y))\in U$. By the arbitrary property of $U$, one has
$y\in\Lambda$.
\end{proof}

By the definition, if $\Lambda$ is not Lyapunov stable, then there
is a neighborhood $U_0$ of $\Lambda$ such that there is a sequence
of neighborhoods $\{V_n\}$ such that
\begin{itemize}

\item $\lim_{n\to\infty}\overline{V_n}=\Lambda$.

\item $\phi_{t_n}(V_n)\nsubseteq U$ for some $t_n>0$.

\end{itemize}

As a corollary, $\lim_{n\to\infty}t_n=\infty$ since $\Lambda$ is an
invariant set. Hence, if $\Lambda$ is not Lyapunov stable, then
there are $\{t_n\}\subset\RR$ and a sequence of points $\{x_n\}$ such
that
\begin{itemize}

\item $\lim_{n\to\infty}x_n\in\Lambda$.

\item $\lim_{n\to\infty}t_n=\infty$.

\item $\lim_{n\to\infty}\phi_{t_n}(x_n)$ exists and
$\lim_{n\to\infty}\phi_{t_n}(x_n)\notin\Lambda$.

\end{itemize}
For $x\in M^d$, one can define the \emph{chain unstable set} $W^{ch,u}(x)$ of $x$ and the \emph{chain stable
set} $W^{ch,s}(x)$ of $x$ in the following way:
\begin{eqnarray*}
W^{ch,u}(x)& = & \{y\in M^d:~\forall\varepsilon>0,~\exists ~\textrm{an}~\varepsilon-\textrm{pseudo orbit}\{x_i\}_{i=0}^n ~\textrm{s.t.}~x_0=x,~x_n=y\},\\
W^{ch,s}(x)& = & \{y\in M^d:~\forall\varepsilon>0,~\exists ~\textrm{an}~\varepsilon-\textrm{pseudo
orbit}\{x_i\}_{i=0}^n ~\textrm{s.t.}~x_0=y,~x_n=x\}.
\end{eqnarray*}

$y$ is \emph{chain attainable} from $x$ iff $y\in W^{ch,u}(x)$. For $x,y\in M^d$, one says that
they are \emph{chain bi-attainable} if $y\in W^{ch,u}(x)\cap W^{ch,s}(x)$.

\subsection{The $C^1$ connecting lemmas and ergodic closing lemma for
flows}\label{Sec:connecting}

\cite{Arn01,WeX00} gave the following extension of Hayashi's $C^1$ connecting lemma \cite{Hay97}. We will use it in Section~\ref{Sec:birthhomo}.

\begin{Lemma}\label{Lem:wenxia}
For any vector field $X\in{\cal X}^1(M^d)$, for any point $z\notin{\rm Per}(X)\cup{\rm Sing}(X)$, for any
$\varepsilon>0$, there are $L>0$ and two neighborhoods $\widetilde{W_z}\subset\ W_z$ of $z$ such that
\begin{itemize}
\item one can choose $W_z$ and $\widetilde{W_z}$ to be arbitrarily small neighborhoods of $z$,

\item for any $p$ and $q$ in $M^d$, if the positive orbit of $p$ and the negative orbit of $q$ enter
$\widetilde{W_z}$, but the orbit segments $\{\phi_t(p):0\le t\le L\}$ and $\{\phi_t(q):-L\le t\le 0\}$ don't
intersect $W_z$,
\end{itemize}
then there is $Y$ $\varepsilon$-$C^1$-close to $X$ such that
\begin{itemize}
\item $q$ is in the positive orbit of $p$ with respect to the flow $\phi_t^Y$ generated by $Y$.

\item $Y(x)=X(x)$ for any $x\in M^d\setminus W_{L,z}$, where $W_{L,z}=\bigcup_{0\le t\le L}\phi_t^X(W_z)$.
\end{itemize}

\end{Lemma}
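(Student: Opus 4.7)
The plan is to adapt Hayashi's discrete $C^1$ connecting lemma to the flow setting via a flow-box reduction, following the strategy of \cite{Arn01,WeX00}. Since $z$ is regular and non-periodic, I choose a small codimension-one disk $\Sigma\subset M^d$ through $z$ transverse to $X(z)$ and a time $\tau_0>0$ such that $(\xi,t)\mapsto\phi_t(\xi)$ embeds $\Sigma\times(-\tau_0,\tau_0)$ as a tubular flow box. I take $W_z$ and $\widetilde W_z$ to be nested flow boxes of the form $\bigcup_{|t|<\tau_0/2}\phi_t(\Sigma_0)$ and $\bigcup_{|t|<\tau_0/4}\phi_t(\widetilde\Sigma_0)$, with $\widetilde\Sigma_0\subset\Sigma_0\subset\Sigma$ concentric disks whose diameters can be made arbitrarily small.

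First, I reduce the connecting problem to the first-return dynamics on the central section. Under the tubular coordinates the flow is $C^1$-close to a trivial translation; the hypothesis that the forward orbit of $p$ and the backward orbit of $q$ eventually visit $\widetilde W_z$ yields points $\hat p,\hat q\in\widetilde\Sigma_0$ lying on the respective orbits, and the task becomes: perturb a finite composition of local Poincar\'e maps between parallel copies $\Sigma_0^{(i)}=\phi_{is}(\Sigma_0)$, $i=0,\dots,n$, so that $\hat p$ is sent to $\hat q$ under the perturbed composition. Here $s>0$ is a fixed slab thickness (say $s=\tau_0/4$) and $n$ is a large integer to be chosen.

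Second, I invoke Hayashi's discrete $C^1$ connecting lemma on this chain of local diffeomorphisms, using Pugh's ``spreading'' trick: given a sufficiently large $n$, Hayashi's lemma supplies a sequence of local perturbations, each of $C^1$-size $O(\varepsilon s)$ and supported in small disks inside $\Sigma_0$, whose composition glues the forward iterates of $\hat p$ to the backward iterates of $\hat q$. I then set $L=ns$. The hypothesis that $\{\phi_t(p):0\le t\le L\}$ and $\{\phi_t(q):-L\le t\le 0\}$ avoid $W_z$ is exactly what guarantees that the initial and terminal orbit segments we must preserve never re-enter the region where the perturbation will act, so the gluing does not destroy them.

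Third, I lift each discrete perturbation to a $C^1$-small vector-field perturbation supported in the corresponding flow-box slab $\bigcup_{is\le t\le(i+1)s}\phi_t(\Sigma_0)$. The standard realization device is: any $C^1$-size $O(\varepsilon s)$ perturbation of the Poincar\'e map between two parallel sections of flow-time separation $s$ can be produced, via a smooth time cutoff, by a vector-field perturbation of $C^1$-size $O(\varepsilon)$ supported in the slab. Summing the finitely many slab perturbations yields $Y$ with $\|Y-X\|_{C^1}<\varepsilon$, equal to $X$ outside $W_{L,z}=\bigcup_{0\le t\le L}\phi_t(W_z)$, whose flow sends $p$ forward to $q$. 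The main technical obstacle is the quantitative matching between discrete and continuous scales: to keep the vector-field perturbation below $\varepsilon$, the per-step discrete push must be small in a way that depends on both $\varepsilon$ and $s$, which in turn forces $\widetilde W_z$ to be chosen much smaller than $W_z$ by an $\varepsilon$- and $L$-dependent factor. Controlling this reduction, together with the non-uniformity of first-return times to $\Sigma_0$ for orbits of points far from $z$, is what makes the detailed proof in \cite{Arn01,WeX00} technical, but the overall flow-box strategy is exactly as outlined above.
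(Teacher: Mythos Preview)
The paper does not prove this lemma at all: it is quoted as a known result from \cite{Arn01,WeX00}, with the sentence ``[these references] gave the following extension of Hayashi's $C^1$ connecting lemma'' serving in lieu of proof. Your sketch is a reasonable outline of the flow-box strategy used in those references, so there is nothing in the paper to compare it against; for the purposes of this paper you could simply cite \cite{Arn01,WeX00} and omit the argument.
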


Ma\~n\'e's ergodic closing lemma \cite{Man82} is also useful in this paper. First we state a flow version of Ma\~n\'e's ergodic closing lemma taken from \cite{Wen96}.

\begin{Definition}
Let $X\in{\cal X}^1(M^d)$. A regular point $x\in M^d$ is called \emph{strongly closable} if for any $C^1$ neighborhood $\cal U$ of $X$, and any $\varepsilon>0$, there are $Y\in{\cal U}$ and a periodic point $y\in M^d$ of $Y$ with period $\tau(y)$ such that
\begin{itemize}

\item $X(z)=Y(z)$ for any $x\in M^d\setminus\bigcup_{t\in{\mathbb R}}\phi_t(B(x,\varepsilon))$,

\item $d(\phi_t^X(x),\phi_t^Y(y))<\varepsilon$ for each $t\in[0,\tau(y)]$.

\end{itemize}

Denote by $\Sigma(X)$ the set of strongly closable points of $X$.
\end{Definition}

The ergodic closing lemma \cite{Man82,Wen96} states:

\begin{Lemma}\label{Lem:ergodicclosing}
$\mu(\Sigma(X)\cup{\rm Sing}(X))=1$ for every $T>0$ and every $\phi_T^X$-invariant probability Borel measure $\mu$.
\end{Lemma}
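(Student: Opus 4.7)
The plan is to follow Mañé's original argument for diffeomorphisms \cite{Man82}, replacing the $C^1$ closing lemma by the flow version of Hayashi's connecting lemma (Lemma~\ref{Lem:wenxia}), in the spirit of Wen \cite{Wen96}. First, by the ergodic decomposition theorem I reduce to the case of an ergodic $\phi_T^X$-invariant probability $\mu$. Ergodicity forces an ``all or nothing'' alternative: if $\mu(\operatorname{Sing}(X))>0$ then $\mu$ is a point mass at a singularity and the conclusion is trivial, and if $\mu$ is supported on a single periodic orbit of the flow then every point of its support is trivially strongly closable with $Y=X$. Hence I may assume $\mu$ is ergodic, gives full measure to the regular set, and is not supported on a periodic orbit.

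Combining Poincaré recurrence with the Birkhoff ergodic theorem applied to $\phi_T^X$, I conclude that $\mu$-almost every point $x$ is regular, non-periodic, and returns to every one of its neighborhoods with positive frequency under the sequence $\{\phi_{nT}^X(x)\}_{n\ge 0}$. Fix such a $\mu$-typical $x$, a $C^1$ neighborhood $\mathcal U$ of $X$, and $\varepsilon>0$. My goal is to produce $Y\in\mathcal U$ and a $Y$-periodic point $y$ satisfying the two bullets in the definition of $\Sigma(X)$.

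Pick a regular non-periodic reference point $z$ on the forward orbit of $x$, at some time $T_0>0$, and apply Lemma~\ref{Lem:wenxia} at $z$ with parameter $\varepsilon$ to obtain $L>0$ and nested neighborhoods $\widetilde{W_z}\subset W_z$. Shrinking these, the tube $W_{L,z}=\bigcup_{0\le t\le L}\phi_t(W_z)$ can be arranged to lie inside $\bigcup_t\phi_t(B(x,\varepsilon))$. Using the positive-frequency recurrence of $x$, select a large integer $n$ for which $\phi_{nT}(x)\in\widetilde{W_z}$ and for which the orbit segments of length $L$ involved in the hypothesis of Lemma~\ref{Lem:wenxia} avoid $W_z$ (possible after further shrinking $W_z$ transversally to the flow and using that the set of good return times has positive density). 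Taking $p$ on the forward orbit after $\phi_{nT}(x)$ so that its forward orbit first hits $\widetilde{W_z}$ near $z$, and $q$ on the backward orbit of $x$ so that its negative orbit first hits $\widetilde{W_z}$ near $z$, Lemma~\ref{Lem:wenxia} produces $Y\in\mathcal U$ with $Y=X$ off $W_{L,z}$, and with $q$ on the positive $\phi^Y$-orbit of $p$. Concatenating this new connecting arc with the undisturbed orbit segment of $x$ closes the orbit of $x$ into a $\phi^Y$-periodic orbit $\operatorname{Orb}(y)$ with $y$ close to $x$; the shadowing bound $d(\phi_t^X(x),\phi_t^Y(y))<\varepsilon$ on $[0,\tau(y)]$ follows from continuous dependence of solutions of ODEs on the vector field, applied on the compact time interval of length $\tau(y)$, provided $Y$ and $X$ differ only in a thin tube around the original orbit segment.

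The main technical obstacle is coordinating three smallness requirements simultaneously: the return time $nT$ must be such that $\phi_{nT}(x)$ falls in the small set $\widetilde{W_z}$ and the short orbit segments required by the connecting lemma avoid the ambient $W_z$; the perturbation tube $W_{L,z}$ must be contained in $\bigcup_t\phi_t(B(x,\varepsilon))$; and the shadowing must survive over the potentially very long time interval $[0,\tau(y)]$. These are reconciled by exploiting the positive-frequency recurrence (producing enough candidate $n$'s so that the short-segment avoidance of $W_z$ is achievable once $W_z$ is chosen thin transverse to the flow) and by the $C^0$-stability of orbit segments on compact time intervals under small $C^1$ perturbations, both standard ingredients in the flow version of Mañé's ergodic closing argument.
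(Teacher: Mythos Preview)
The paper does not prove Lemma~\ref{Lem:ergodicclosing} at all; it simply quotes it as the flow version of Ma\~n\'e's ergodic closing lemma, citing \cite{Man82,Wen96}. So there is no ``paper's own proof'' to compare against, and your task was really to reproduce the argument of \cite{Wen96}.

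Your outline has the correct architecture (ergodic decomposition, the trivial singular/periodic cases, then Birkhoff positive-frequency recurrence to find good return times), but the crucial shadowing step is mis-argued. You write that
\[
d(\phi_t^X(x),\phi_t^Y(y))<\varepsilon \quad \text{for } t\in[0,\tau(y)]
\]
``follows from continuous dependence of solutions of ODEs on the vector field, applied on the compact time interval of length $\tau(y)$''. This is false: Gronwall-type estimates degrade exponentially in time, and $\tau(y)$ can be arbitrarily large, so continuous dependence gives nothing. The actual reason the shadowing holds is structural, not analytic: one arranges the perturbation so that $Y=X$ outside a thin tube around a short sub-segment of $\operatorname{Orb}(x)$, and the periodic orbit of $Y$ literally \emph{coincides} with the $X$-orbit of $x$ outside that tube. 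The shadowing then comes for free on most of $[0,\tau(y)]$, and inside the tube the bound holds because the tube itself has diameter $<\varepsilon$ transverse to the flow. Getting the tube thin enough while still being able to close requires Ma\~n\'e's density argument (choosing among the positive-density set of return times one for which the ``lift'' can be achieved in a tube of the required thinness), and this is exactly what \cite{Wen96} carries out for flows.

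There is also a secondary muddle in your choice of $p$, $q$, and $z$: with $z=\phi_{T_0}(x)$ and $p$ taken \emph{after} $\phi_{nT}(x)$, the positive orbit of $p$ need not enter $\widetilde{W_z}$ at all unless you invoke a further recurrence, and the resulting closed orbit is time-shifted relative to $\phi_t^X(x)$ by roughly $T_0$, which must still be reconciled with the shadowing requirement. These issues are fixable, but they show that Lemma~\ref{Lem:wenxia} alone, used as a black box, does not yield the ergodic closing lemma; one needs the finer perturbation control of Pugh--Ma\~n\'e as adapted in \cite{Wen96}.
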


One needs the following corollary which asserts that one can get a periodic orbit with some additional properties and preserve a compact
invariant subset in a transitive set by small perturbations simultaneously. For the applications in this paper,
one takes the compact invariant subset is the union of finitely homoclinic orbits of singularities. See Section
\ref{Sec:birthhomo}.

\begin{Corollary}\label{Cor:subsetisrobust}
Let $f:~M^d\to \mathbb R$ be a continuous function. Let $\mu$ be an ergodic measure of a flow $\phi_t$ generated by $X\in{\cal X}^1(M^d)$, which is not supported on a singularity. Assume that $\Lambda$ is a compact invariant set such that $\mu(\Lambda)=0$. Then for any $\varepsilon>0$, for any $C^1$ neighborhood ${\cal U}$ of $X$, and any neighborhood $U$ of ${\rm supp}(\mu)$, there is a periodic orbit $\gamma\subset U$ of $Y\in{\cal U}$ such that

\begin{itemize}

\item $\Lambda$ is a compact invariant set of $Y$.

\item $|\int f {\rm d}\delta_\gamma-\int f{\rm d}\mu|<\varepsilon$, where $\delta_\gamma$ is the uniform distribution measure on $\gamma$, i.e., for any continuous function $g: M^d\to \mathbb{R}$,
    $$
    \int g(x)\mathrm{d} \delta_\gamma(x) =\frac 1{\tau(\gamma)}\int_0^{\tau(\gamma)}g(\phi_t(p))\mathrm{d} t,
    $$
    where $p\in\gamma$.
\end{itemize}
\end{Corollary}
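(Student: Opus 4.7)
My plan is to combine Lemma~\ref{Lem:ergodicclosing} with the Birkhoff ergodic theorem and a careful choice of base point. Since $\mu$ is ergodic and is not supported on a singularity, $\mu$ gives zero mass to the finite set ${\rm Sing}(X)$ (an ergodic measure is either concentrated on a single orbit or nonatomic on periodic/singular orbits), so Lemma~\ref{Lem:ergodicclosing} yields $\mu(\Sigma(X)) = 1$. By Birkhoff applied to the continuous function $f$, there is a $\mu$-full-measure set $B_f$ on which $\frac{1}{T}\int_0^T f(\phi_t(x))\,dt \to \int f\,d\mu$. Finally, $\mu(\Lambda) = 0$ gives $\mu(M^d \setminus \Lambda) = 1$. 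I pick a point
$$x \in \Sigma(X) \cap B_f \cap {\rm supp}(\mu) \setminus \Lambda,$$
which is nonempty as the intersection has $\mu$-measure one. Set $\eta = d(x,\Lambda) > 0$.

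Next I fix a small parameter $\varepsilon' > 0$ satisfying: $\varepsilon' < \eta/2$ (so $B(x,\varepsilon')$ is disjoint from $\Lambda$); the $\varepsilon'$-neighborhood of ${\rm supp}(\mu)$ is contained in $U$; and $|f(p) - f(q)| < \varepsilon/3$ whenever $d(p,q) < \varepsilon'$ (by uniform continuity of $f$ on the compact $M^d$). Choose also $T_0$ with $\big|\frac{1}{T}\int_0^T f(\phi_t(x))\,dt - \int f\,d\mu\big| < \varepsilon/3$ for every $T \ge T_0$. By shrinking $\varepsilon'$ and the neighborhood $\cal U$ if necessary, one can force the period $\tau(y) \ge T_0$ of the closing orbit (short closings would correspond to arbitrarily close recurrences of $x$, which by Poincar\'e recurrence occur at arbitrarily large times).

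Applying the strong closability of $x$ with parameter $\varepsilon'$ and neighborhood $\cal U$, I obtain $Y \in {\cal U}$ and a periodic point $y$ of period $\tau := \tau(y) \ge T_0$ with $d(\phi_t^X(x), \phi_t^Y(y)) < \varepsilon'$ for $t \in [0,\tau]$. The shadowing shows $\gamma := {\rm Orb}(y)$ lies in the $\varepsilon'$-neighborhood of the $X$-orbit segment, which sits in ${\rm supp}(\mu) \subset U$, giving $\gamma \subset U$. Combining the shadowing, the uniform continuity estimate on $f$, and the Birkhoff average estimate for $x$ gives
$$\left|\int f\,d\delta_\gamma - \int f\,d\mu\right| \le \left|\tfrac{1}{\tau}\!\int_0^\tau\!\! f(\phi_t^Y y)dt - \tfrac{1}{\tau}\!\int_0^\tau\!\! f(\phi_t^X x)dt\right| + \tfrac{\varepsilon}{3} < \tfrac{\varepsilon}{3} + \tfrac{\varepsilon}{3} + \tfrac{\varepsilon}{3} = \varepsilon.$$

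The main obstacle is arranging that $\Lambda$ is still $Y$-invariant. A priori the support of $Y - X$ is only known to lie in the full flow-tube $\bigcup_t \phi_t(B(x,\varepsilon'))$, which can meet $\Lambda$ even when $x$ is far from it (e.g.\ if $\omega(x)\cap\Lambda\neq\emptyset$). To handle this, one must revisit the proof of Lemma~\ref{Lem:ergodicclosing}: the perturbation furnished via Hayashi's $C^1$ connecting lemma is supported in a small ball around a single close-return point $\phi_s(x)$ with $d(\phi_s(x),x)<\varepsilon'$, and such points lie at distance exceeding $\eta/2$ from $\Lambda$. Hence the perturbation vanishes on an open neighborhood of $\Lambda$, so $Y \equiv X$ there, and $\Lambda$ remains a compact $Y$-invariant set.
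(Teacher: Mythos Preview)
Your overall strategy is the same as the paper's: pick a typical point $x\in\Sigma(X)\cap{\rm supp}(\mu)\setminus\Lambda$, close it up by the ergodic closing lemma with a small shadowing parameter, and control the time average of $f$ via Birkhoff and uniform continuity. The paper also reduces (WLOG) to the case where $\mu$ is not supported on a periodic orbit to force $\tau(y)\to\infty$ as the shadowing parameter shrinks; your argument for this via ``short closings would correspond to arbitrarily close recurrences'' is a bit informal but amounts to the same thing.

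Where you diverge is at the very end, and there you make an unnecessary detour. You worry that the flow-tube $\bigcup_{t\in\RR}\phi_t(B(x,\varepsilon'))$ might meet $\Lambda$ and then propose to open up the proof of the ergodic closing lemma to localize the perturbation. This is not needed, and your description of that proof (a single small ball near one close-return point) is not accurate in general. The paper's observation is much simpler: since $\Lambda$ is $\phi_t$-\emph{invariant}, if $B(x,\varepsilon')\cap\Lambda=\emptyset$ then for every $t$ one has $\phi_t(B(x,\varepsilon'))\cap\Lambda=\phi_t\bigl(B(x,\varepsilon')\cap\phi_{-t}(\Lambda)\bigr)=\phi_t\bigl(B(x,\varepsilon')\cap\Lambda\bigr)=\emptyset$. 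Hence the entire flow-tube is disjoint from $\Lambda$, the support condition in the definition of strong closability already forces $Y=X$ on a neighborhood of $\Lambda$, and $\Lambda$ remains $Y$-invariant with no extra work. Once you add this one line, your argument coincides with the paper's.
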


\begin{proof}
Let $C=\max\{\sup_{x\in M^d}|f(x)|,1\}$.
Without loss of generality, one can assume that $\mu$ is not supported on a periodic orbit. Since $\mu$ is not supported on a singularity, one has $\mu(\Sigma(X))=0$. We choose $x\in\Sigma$ satisfying
$$\lim_{t\to\infty}\delta_{x,T}=\mu,$$
where $\delta_{x,T}$ is the uniform distribution measure supported on $\phi_{[0,T]}(x)$, i.e.,
for any continuous function $g: M^d\to \mathbb{R}$,
    $$
    \int g(y)\mathrm{d} \delta_{x,T}(y) =\frac 1{T}\int_0^{T}g(\phi_t(x))\mathrm{d} t.
    $$

For any $\varepsilon>0$, there is $T_0>0$ such that for any $T>T_0$, one has
$$\left|\int f{\rm d}\delta_{x,T}-\int f{\rm d}\mu\right|<\varepsilon/2.$$

By reducing $\varepsilon$ if necessary, one can assume that $B(x,\varepsilon)\cap \Lambda=\emptyset$. Since $\Lambda$ is invariant, one has that $\phi_t(B(x,\varepsilon))\cap\Lambda=\emptyset$ for any $t\in\mathbb R$. By Lemma~\ref{Lem:ergodicclosing}, there is $Y$ which has a periodic point $p$ with period $\tau(p)$ such that
\begin{itemize}

\item $d_{C^1}(X,Y)<\varepsilon$;

\item $\Lambda$ is a compact invariant set of $Y$;

\item $d(\phi_t^X(x),\phi_t^Y(p))<\varepsilon/2C$ for each $t\in[0,\tau(p)]$.

\end{itemize}

$x$ is not periodic because we assume that $\mu$ is not supported on a periodic orbit. Thus, one can assume that $\tau(p)>T_0$. Let $\gamma={\rm Orb}(p)$. Then

\begin{eqnarray*}
\left|\int f {\rm d}\delta_\gamma-\int f{\rm d}\mu\right|& \le & \left|\int f{\rm d}\delta_\gamma-\int f{\rm d}_{\delta_{x,\tau(\gamma)}}\right|+\left|\int f{\rm d}_{\delta_{x,\tau(\gamma)}}-\int f{\rm d}\mu\right|\\
& < & C\frac{\varepsilon}{2C}+\frac{\varepsilon}{2}=\varepsilon.
\end{eqnarray*}

\end{proof}

%
%
%
%
%
%
A vector field $X\in{\cal X}^r(M^d)$ is called \emph{Kupka-Smale} if every critical element is hyperbolic, and the stable manifold of any critical element intersects the unstable manifold of any other critical element transversely.  A classical generic result is: Kupka-Smale vector fields form a residual set in ${\cal X}^r(M^d)$. We need the following weak terminology:

\begin{Definition}
A vector field $X\in {\cal X}^r(M^d)$ is called \emph{weak Kupka-Smale} if every critical element of $X$ is hyperbolic.

\end{Definition}

Since Kupka-Smale is $C^r$ generic in ${\cal X}^r(M^d)$, we have that weak Kupka-Smale is also a $C^r$ generic property in ${\cal X}^r(M^d)$.

We will state a connecting lemma for pseudo orbits, which was studied in \cite{BoC04}. \cite{BoC04} studied the connecting lemma for pseudo-orbits for weak Kupka-Smale diffeomorphisms. The assumption of weak Kupka-Smale is used since
\begin{itemize}

\item By using $\lambda$-lemma, for every non-periodic point $x$ which is not in the stable/unstable manifold of a periodic point, the positive/negative iteration of $x$ will be in a topological tower.

\end{itemize}

For flows, $\lambda$-lemma is true for both hyperbolic singularities and hyperbolic periodic orbits. The orbit structure is clear near hyperbolic critical elements. So the connecting lemma for pseudo-orbits is true for vector fields.

\begin{Lemma}(\cite[Theorem 1.2]{BoC04} )\label{Lem:connecting}

Let $X\in\mathcal{X}^1(M^d)$ be a weak Kupka-Smale vector field. Given any finite set ${\hat F}$ of periodic orbits of $X$, for any neighborhood $U$ of ${\hat F}$, for any $C^1$ neighborhood $\cal U$ of $X$, and for any $x,y\in M^d\setminus U$, if $y$ is chain attainable from $x$, then there is $Y\in\cal U$ and $t>0$ such that $\phi_t^Y(x)=y$ and $Y(z)=X(z)$ for any $z\in U$.

Moreover, if $X$ is $C^r$, we can require that $Y$ is also $C^r$.
\end{Lemma}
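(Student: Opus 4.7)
The plan is to adapt Bonatti--Crovisier's argument \cite{BoC04} from diffeomorphisms to the vector field setting. The two key inputs are (i) the one-orbit $C^1$ connecting lemma for flows stated as Lemma~\ref{Lem:wenxia}, and (ii) the $\lambda$-lemma at hyperbolic critical elements, which for flows holds equally at hyperbolic singularities and at hyperbolic periodic orbits. The weak Kupka--Smale hypothesis guarantees that every critical element of $X$ is hyperbolic, so the $\lambda$-lemma is available wherever a pseudo-orbit can accumulate on critical dynamics. The existence of a $Y\in\mathcal U$ realizing $y$ as a true forward iterate of $x$ with $Y|_U=X|_U$ will be obtained by finitely many disjointly supported applications of Lemma~\ref{Lem:wenxia}, each of which absorbs one ``jump'' of the pseudo-orbit.

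First I would fix a small $\delta>0$ so that $U_\delta:=U\cup\bigcup_{c} V_c$ (where $V_c$ is a linearizing neighborhood of each hyperbolic critical element $c\notin U$) is an open set containing all recurrent critical dynamics, and so that $x,y\notin U_\delta$. Next I would choose finitely many non-critical, non-periodic points $z_1,\dots,z_N\in M^d\setminus U_\delta$ with associated neighborhoods $\widetilde W_{z_j}\subset W_{z_j}$ and constants $L_j$ from Lemma~\ref{Lem:wenxia} applied with perturbation size $\e/N$, arranged so that
\begin{enumerate}
\item the tubes $W_{L_j,z_j}=\bigcup_{0\le t\le L_j}\phi_t^X(W_{z_j})$ are pairwise disjoint and disjoint from $U$,
\item every point of $M^d\setminus U_\delta$ lies in some $\widetilde W_{z_j}$.
\end{enumerate}
Given $y$ chain-attainable from $x$, I would take an $\e'$-pseudo-orbit $\{x=x_0,x_1,\dots,x_n=y\}$ avoiding $U$, with $\e'$ so small that each jump $d(\phi_{t_i}^X(x_i),x_{i+1})<\e'$ either occurs inside some $\widetilde W_{z_j}$, or is shadowed by a genuine orbit (up to reparametrization) thanks to the $\lambda$-lemma applied inside a $V_c$. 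In the first case, a single application of Lemma~\ref{Lem:wenxia} with support in $W_{L_j,z_j}$ replaces the jump by a true $Y$-orbit segment; in the second case no perturbation is needed.

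Iterating this across the finitely many boxes, and using that their supports $W_{L_j,z_j}$ are pairwise disjoint and disjoint from $U$, the cumulative perturbation $Y$ satisfies $\|Y-X\|_{C^1}\le N\cdot(\e/N)=\e$ and $Y\equiv X$ on $U$; moreover the concatenated connections produce a genuine $Y$-orbit from $x$ to $y$ in positive time. The main obstacle is the bookkeeping near hyperbolic critical elements, especially near singularities, where the pseudo-orbit may linger arbitrarily long and where the flow has no uniform return time. Here the point is to invoke the $\lambda$-lemma in $V_c$ (valid for both singularities and periodic orbits thanks to weak Kupka--Smale) to conclude that, for $\e'$ small enough, the portion of the pseudo-orbit inside $V_c$ is automatically shadowed by a true $X$-orbit whose endpoints lie on local invariant manifolds of $c$, so no perturbation need be performed inside $U$. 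The remaining technicalities --- choosing $\e'$ smaller than all relevant Hayashi scales of the $z_j$, and ensuring that consecutive pseudo-orbit segments in different boxes do not interfere --- reduce to a finite recursion identical in spirit to the one in \cite{BoC04}, with Hayashi's diffeomorphism connecting lemma replaced by Lemma~\ref{Lem:wenxia}. The $C^r$ version follows because Lemma~\ref{Lem:wenxia} and the $\lambda$-lemma both apply in $C^r$.
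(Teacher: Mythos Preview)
The paper does not give its own proof of this lemma; it simply cites \cite{BoC04} and remarks (just before the statement) that the adaptation to flows goes through because the $\lambda$-lemma holds at both hyperbolic singularities and hyperbolic periodic orbits, so that under the weak Kupka--Smale hypothesis the orbit structure near critical elements is clear and the topological tower construction of \cite{BoC04} carries over. Your sketch identifies exactly these ingredients and follows the same strategy.

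That said, there is a genuine issue in your construction of the perturbation boxes: conditions (1) and (2) are incompatible as written. You cannot have finitely many inner boxes $\widetilde W_{z_j}$ covering \emph{every} point of $M^d\setminus U_\delta$ while the enlarged flow-tubes $W_{L_j,z_j}\supset\widetilde W_{z_j}$ remain pairwise disjoint (any point of $W_{L_j,z_j}\setminus\widetilde W_{z_j}$ would have to lie in some $\widetilde W_{z_k}$, forcing $W_{L_k,z_k}\cap W_{L_j,z_j}\neq\emptyset$). The correct object from \cite{BoC04} is a \emph{topological tower}: a finite family of pairwise disjoint perturbation boxes with the weaker property that every orbit segment of sufficiently large length (away from critical elements) must meet one of the $\widetilde W_{z_j}$. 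The weak Kupka--Smale hypothesis, via the $\lambda$-lemma, is exactly what ensures such a tower exists and that the portions of the pseudo-orbit near critical elements can be handled without perturbation. Once you replace your covering condition (2) by the tower property, the remainder of your argument is correct.
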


The statement here is a little bit different from \cite{BoC04}, but it is essentially contained there.
See \cite[Remarque 1.1]{BoC04}.

\begin{Remark}
Due to Lemma~\ref{Lem:connecting}, we can consider a weak Kupka-Smale vector field close to a generic one. See Lemma~\ref{Lem:stablepropertyofquasiattractor} and Section~\ref{Sec:birthhomo} for more details.
\end{Remark}

Sometimes, one needs to perturb a vector field to be a weak Kupka-Smale vector filed by preserving some non-transverse connection. This was used by Palis \cite{Pal88}. One can see R. Xi's master thesis \cite{Xir05} for a proof.

\begin{Theorem}\label{Thm:xiruibin}\cite{Pal88,Xir05}
For any vector field $X\in{\cal X}^r(M^d)$, any $n\in\NN$ and any hyperbolic critical elements
$\{P_1,Q_1,\cdots,P_n,Q_n\}$, if ${\rm Orb}(x_i)\subset W^s(P_i)\cap W^u(Q_i)$ is a non-transverse orbit for
$1\le i\le n$, then for any $C^r$ neighborhood $\cal U$ of $X$ there eixists $Y\in\cal U$,
\begin{itemize}
\item ${\rm Orb}(x_i)\subset W^s(P_i)\cap W^u(Q_i)$ is still a non-transverse orbit of $Y$.

\item Any other critical element of $Y$ is hyperbolic, in other words, $Y$ is weak Kupka-Smale.

\end{itemize}

\end{Theorem}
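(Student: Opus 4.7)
Set $A=\bigcup_{i=1}^n\overline{\orb(x_i)}$; this is a compact invariant set whose only critical elements are the hyperbolic orbits $P_i,Q_i$, since each $\orb(x_i)$ is a regular heteroclinic orbit with $\alpha$-limit in $\orb(Q_i)$ and $\omega$-limit in $\orb(P_i)$. The plan rests on the following key observation: if $Y\in{\cal X}^r(M^d)$ agrees with $X$ on some open neighborhood $V$ of $A$, then each non-transverse heteroclinic orbit is automatically preserved. Indeed $\orb(x_i)\subset A\subset V$ is pointwise unchanged under the flow of $Y$; $P_i,Q_i$ retain their local linearization and hence remain hyperbolic with unchanged local invariant manifolds; and because $DY=DX$ in a neighborhood of $\orb(x_i)$ the derivative cocycle along this orbit is unaltered, forcing $T_{x_i}W^s(P_{i,Y})=T_{x_i}W^s(P_i)$ and $T_{x_i}W^u(Q_{i,Y})=T_{x_i}W^u(Q_i)$. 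Hence the non-transverse intersection at $x_i$ survives for $Y$.

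The theorem is therefore reduced to producing, in any $C^r$ neighborhood $\cU$ of $X$, a weak Kupka-Smale vector field $Y$ that coincides with $X$ on some open neighborhood of $A$. First I would choose an open $V\supset A$ so small that $\overline V$ contains no critical elements of $X$ besides $\{P_i,Q_i\}$; this is possible because any critical element distinct from the $P_i,Q_i$ is an orbit disjoint from $A$ and hence lies at positive distance from $A$. I would then work inside the closed affine subspace ${\cal X}^r_V=\{Z\in{\cal X}^r(M^d):Z|_V=X|_V\}$. For every $Z\in{\cal X}^r_V$ the critical elements of $Z$ inside $V$ coincide with those of $X$ in $V$, namely the already hyperbolic $P_i,Q_i$; the remaining critical elements of $Z$ must all meet $M^d\setminus\overline V$. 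Enumerate them $\{\gamma_k\}$, say by increasing period, and for each $\gamma_k$ apply Franks' lemma (Lemma~\ref{Lem:Franks}) in a small ball centered at some $p_k\in\gamma_k\cap(M^d\setminus\overline V)$ with a perturbation of $C^r$-size less than $\varepsilon/2^k$, rendering $\gamma_k$ hyperbolic. A standard Baire-category argument inside ${\cal X}^r_V$---the set of $Z$ for which the $k$-th prescribed critical element is hyperbolic is open and dense---produces a $Y\in\cU\cap{\cal X}^r_V$ all of whose critical elements are hyperbolic.

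The main difficulty I anticipate is that non-hyperbolic critical elements of $X$ (or of intermediate perturbations) might accumulate on $A$, so that no single open $V\supset A$ escapes all of them simultaneously. In that case one interleaves the choice of $V$ with the Franks perturbations in a diagonal fashion: at stage $k$ one chooses $V_k\subset V_{k-1}$, still an open neighborhood of $A$, that excludes the $k$-th offending orbit, and performs the $k$-th perturbation of size $<\varepsilon/2^k$ supported in $M^d\setminus\overline{V_k}$. The $C^r$ limit $Y$ exists, lies in $\cU$, and agrees with $X$ on $\bigcap_k V_k$, which can be made an open neighborhood of $A$ by ensuring that each shrinking only removes material at uniformly positive distance from $A$, so that the nested intersection stabilizes above $A$. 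The secondary subtlety---that an orbit $\gamma_k$ may wind repeatedly between $V_k$ and its complement---is handled by taking the Poincar\'e cross-section in Franks' lemma inside the perturbation region $M^d\setminus\overline{V_k}$, which is possible since $\gamma_k$ is distinct from, and hence disjoint from, $A$. With this bookkeeping the key observation of the first paragraph applies to $Y$, so the non-transverse connections $\orb(x_i)\subset W^s(P_{i,Y})\cap W^u(Q_{i,Y})$ persist and $Y$ is the required weak Kupka-Smale vector field.
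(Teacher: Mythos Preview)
The paper does not supply its own proof of this theorem; it merely cites Palis \cite{Pal88} and Xi's thesis \cite{Xir05}. So there is no in-paper argument to compare against, and I evaluate your proposal on its own merits.

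Your high-level structure is right: freezing $X$ on a neighborhood $V$ of $A=\bigcup_i\overline{\orb(x_i)}$ automatically preserves the non-transverse connections, and the problem reduces to achieving weak Kupka-Smale within the affine slice ${\cal X}^r_V$. That reduction is exactly the idea behind the cited references.

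The genuine gap is your mechanism for making critical elements hyperbolic. You invoke Franks' lemma (Lemma~\ref{Lem:Franks}), but that lemma is a $C^1$ perturbation tool: it realizes an arbitrary $\varepsilon$-perturbation of the linear Poincar\'e cocycle by a vector field that is only $C^1$-close to the original. For $r\ge 2$ there is no analogous freedom---one cannot independently prescribe derivatives along a periodic orbit by a $C^r$-small perturbation---so your density step fails in the very regularity the theorem demands (and which the paper actually uses, since it later needs $C^2$ weak Kupka-Smale vector fields to apply Theorem~\ref{Thm:pujalssambarino}). A second, related problem is your Baire formulation: ``the $k$-th prescribed critical element'' is not a well-defined object across different $Z\in{\cal X}^r_V$, since periodic orbits are created and destroyed under perturbation; the correct countable decomposition is by period bound, not by an enumeration of orbits.

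The standard remedy, and what the cited references do, is to rerun the transversality-based proof of the Kupka-Smale theorem (as in Palis--de Melo \cite{PdM82}) inside ${\cal X}^r_V$: for each $T>0$ one shows that the set of $Z\in{\cal X}^r_V$ whose critical elements of period $\le T$ are all hyperbolic is open and dense, using that any such critical element not among the $P_i,Q_i$ must meet $M^d\setminus\overline V$, where local $C^r$ perturbations of the jet of $Z$ are unconstrained and suffice to achieve the required transversality. Intersecting over $T\in\NN$ gives the residual set you need. Your first paragraph then finishes the job.
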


\subsection{Generic results}\label{Sub:generic}

${\cal R}\subset{\cal X}^1(M^d)$ is called \emph{residual} if it contains a dense $G_\delta$ subsets of ${\cal X}^1(M^d)$. A property of vector fields is called \emph{$C^1$
generic} if it holds on a residual set in ${\cal X}^1(M^d)$. Sometimes we use the terminology ``for $C^1$
generic $X$'' which is equivalent to say  that ``there is a residual set ${\cal R}\subset{\cal X}^1(M^d)$ and
$X\in{\cal R}$". Being a Banach space, every residual set of ${\cal X}^1(M^d)$ is dense. Usually we can get a dense open property via a generic way.

One knows that lower semi-continuous maps and upper semi-continuous maps defined on a complete metric space are continuous on a residual set.

\begin{Lemma}\label{Lem:continuitychainclass}

For $C^1$ generic $X\in{\cal X}^1(M^d)$, for every critical element $p$, $C(p,X)$ is continuous at $X$. This means, if $\{X_n\}$ is a sequence of vector fields and $\lim_{n\to\infty}X_n=X$ in the $C^1$ topology, then $\lim_{n\to\infty}C(p_{X_n},X_n)=C(p,X)$ in the Hausdorff topology.
\end{Lemma}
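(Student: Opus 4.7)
The plan is a standard Baire category argument that combines the upper semi-continuity of $C(p,\cdot)$ from Lemma~\ref{Lem:u.s.c} with a countable parametrization of hyperbolic critical elements. First I would fix a residual set $\mathcal{R}_0\subset\mathcal{X}^1(M^d)$ of weak Kupka-Smale vector fields, and a countable dense subset $\{X_m\}_{m\ge 1}\subset\mathcal{R}_0$. The critical elements of each $X_m$ are hyperbolic hence isolated, so they can be enumerated as $\{p_{m,k}\}_{k\ge 1}$. By the implicit function theorem for hyperbolic critical elements, each $p_{m,k}$ admits a unique continuation $Y\mapsto p_{m,k}(Y)$ on some open neighborhood $\mathcal{U}_{m,k}\ni X_m$, and Lemma~\ref{Lem:u.s.c} says the induced map
$$
H_{m,k}:\mathcal{U}_{m,k}\to\mathcal{K}(M^d),\qquad Y\mapsto C(p_{m,k}(Y),Y),
$$
is upper semi-continuous, with target the Polish space of non-empty compact subsets of $M^d$ under the Hausdorff metric.

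Next I would invoke the classical fact that an upper semi-continuous map from a Baire space into a metric space is continuous on a dense $G_\delta$ subset of its domain. This provides, for each pair $(m,k)$, a residual set $\mathcal{R}_{m,k}\subset\mathcal{U}_{m,k}$ on which $H_{m,k}$ is continuous; equivalently, $\mathcal{R}_{m,k}\cup(\mathcal{X}^1(M^d)\setminus\mathcal{U}_{m,k})$ is residual in $\mathcal{X}^1(M^d)$. Setting
$$
\mathcal{R}=\mathcal{R}_0\cap\bigcap_{m,k\ge 1}\bigl(\mathcal{R}_{m,k}\cup(\mathcal{X}^1(M^d)\setminus\mathcal{U}_{m,k})\bigr),
$$
I obtain a countable intersection of residual sets, which is again residual in $\mathcal{X}^1(M^d)$.

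Finally, I would verify that $\mathcal{R}$ works. Take $X\in\mathcal{R}$ and any critical element $p$ of $X$; since $X$ is weak Kupka-Smale, $p$ is hyperbolic. By density of $\{X_m\}$ together with the implicit function theorem, there exist $m,k$ with $X\in\mathcal{U}_{m,k}$ and $p_{m,k}(X)=p$. Because $X\in\mathcal{R}_{m,k}$ by construction, $H_{m,k}$ is continuous at $X$; for any $X_n\to X$, eventually $X_n\in\mathcal{U}_{m,k}$ and uniqueness of continuation gives $p_{X_n}=p_{m,k}(X_n)$, so $C(p_{X_n},X_n)=H_{m,k}(X_n)\to H_{m,k}(X)=C(p,X)$ in the Hausdorff topology, as desired. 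The proof has essentially no real obstacle: the only auxiliary input is the textbook semicontinuity-to-continuity statement for u.s.c. maps on Baire spaces, and the one point that requires care is the bookkeeping that lets a single residual set witness continuity for \emph{every} critical element of \emph{every} vector field in it.
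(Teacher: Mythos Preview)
Your proposal is correct and is precisely the standard argument that the paper compresses into a single sentence (``The proof of this lemma just uses the upper semi-continuity of chain recurrent class''). You have simply unpacked the Baire-category mechanism behind that remark, so the approaches are essentially the same.
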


\begin{proof}
The proof of this lemma just uses the upper semi-continuity of chain recurrent class.
\end{proof}

\begin{Lemma}\label{Lem:twochainrecurrentclass}

For $C^1$ generic $X\in{\cal X}^1(M^d)$, if $p_1$ and $p_2$ are two different critical elements, and if $C(p_1)=C(p_2)$, then there is a $C^1$ neighborhood $\cal U$ of $X$ such that for any $Y\in\cal U$, one has $C(p_{1,Y},Y)=C(p_{2,Y},Y)$.

\end{Lemma}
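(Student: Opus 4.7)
The plan is to combine the upper semi-continuity of chain recurrent classes of hyperbolic critical elements (Lemma~\ref{Lem:u.s.c}) with a standard Baire-category argument based on the empty interior of the boundary of an open set.

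First, I would fix the residual set $\cR_0$ of weak Kupka-Smale vector fields, so that for $X\in\cR_0$ every critical element is hyperbolic and admits a unique continuation on a $C^1$ neighborhood of $X$. Hyperbolicity further supplies an isolating tubular (or ball) neighborhood $V$ of each critical orbit $p$ with the property that $p$ is the only orbit of $X$ entirely contained in $V$; by Grobman--Hartman and the robustness of hyperbolicity, this isolation property persists for every vector field $Y$ sufficiently $C^1$-close to $X$, with $p_Y$ in place of $p$.

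Next, I would choose a countable family $\{U_n\}_{n\in\NN}$ of open subsets of $M^d$ rich enough that any compact set $K$ can be trapped strictly between some $U_n$ and any prescribed $\e$-neighborhood of $K$ (for instance, the finite unions of open balls with rational radii centered in a countable dense set). For each ordered pair $(n,m)$ with $\overline{U_n}\cap\overline{U_m}=\emptyset$ I set
$$
\cV_{n,m}=\{X\in\cX^1(M^d)\;:\;\exists \text{ hyperbolic critical orbits } \gamma_1\subset U_n,\ \gamma_2\subset U_m \text{ of } X \text{ with } C(\gamma_1)\cap C(\gamma_2)=\emptyset\}.
$$
A direct verification shows that $\cV_{n,m}$ is $C^1$-open: if $(\gamma_1,\gamma_2)$ witnesses $X\in\cV_{n,m}$, then the continuations $\gamma_{i,Y}$ still satisfy $\gamma_{i,Y}\subset U_i$ for $Y$ close to $X$ (each $\gamma_i$ is a compact subset of an open set), and Lemma~\ref{Lem:u.s.c} gives $C(\gamma_{1,Y})\cap C(\gamma_{2,Y})=\emptyset$ for nearby $Y$. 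Hence $\partial\cV_{n,m}$ is closed and nowhere dense, and the set
$$
\cR=\cR_0\cap\bigcap_{(n,m)}\bigl(\cV_{n,m}\cup\mathrm{int}(\cX^1(M^d)\setminus\cV_{n,m})\bigr)
$$
is residual.

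Finally, given $X\in\cR$ and distinct critical elements $p_1\neq p_2$ with $C(p_1)=C(p_2)$, I would pick $U_n,U_m$ from the family strictly trapped inside the respective isolating neighborhoods and with disjoint closures. By the isolation property, the only pair of hyperbolic critical orbits of $X$ eligible as a witness for $\cV_{n,m}$ is $(p_1,p_2)$, but $C(p_1)\cap C(p_2)=C(p_1)\neq\emptyset$, so $X\notin\cV_{n,m}$. Since $X\in\cR$, $X$ in fact lies in the interior of $\cX^1(M^d)\setminus\cV_{n,m}$, producing a $C^1$ neighborhood $\cU$ of $X$ disjoint from $\cV_{n,m}$. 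Shrinking $\cU$ so that $p_{i,Y}\subset U_i$ for every $Y\in\cU$, the failure of $Y\in\cV_{n,m}$ applied to the specific pair $(p_{1,Y},p_{2,Y})$ forces $C(p_{1,Y},Y)\cap C(p_{2,Y},Y)\neq\emptyset$, which yields $C(p_{1,Y},Y)=C(p_{2,Y},Y)$. The main subtlety is the isolating-neighborhood claim -- turning the local hyperbolic structure of $p_i$ into a tube containing no other critical orbit of any nearby $Y$ -- which is immediate for singularities via the implicit function theorem, but in the periodic case requires the standard adapted-coordinate description of a hyperbolic periodic orbit together with the openness of hyperbolicity.
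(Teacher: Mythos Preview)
Your argument is correct and follows essentially the same route as the paper: both proofs set up a countable family of open ``boxes'' indexing possible locations of critical orbits, define for each pair $(n,m)$ the open set of vector fields admitting critical orbits in those boxes with disjoint chain classes (your $\cV_{n,m}$, the paper's $\cN_{n,m}$), and take the residual set where each $X$ lies either in this open set or in the interior of its complement. The paper packages the interior of the complement as a separately defined set $\cH_{n,m}$, but it is the same object; your isolating-neighborhood discussion corresponds to the paper's choice of $\cO_l,\cO_k$ in which the continuations $p_{1,Y},p_{2,Y}$ are the maximal invariant sets.
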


\begin{proof}
Let $\cal C$ be the metric space of all compact subsets of $M^d$, endowed with the Hausdorff metric. $\cal C$ is
a compact metric space. Let ${\cal B}_1,~{\cal B}_2,~\cdots,{\cal B}_n,\cdots,$ be a countable basis of $\cal
C$. Let ${\cal O}_1,~{\cal O}_2,~\cdots,{\cal O}_n,\cdots,$ be the list of finite unions of elements of the countable basis. For each $n$ and $m$, we define the sets ${\cal H}_{n,m}$ and ${\cal N}_{n,m}$ of vector fields as following:

\begin{itemize}

\item $X\in {\cal H}_{n,m}$ iff there is a neighborhood $\cal U$ of $X$ such that for any
$Y\in\cal U$, for any hyperbolic critical element $p_n\in {\cal O}_n$ and any hyperbolic critical element $p_m\in {\cal O}_m$, one has
$C(p_n)=C(p_m)$ for $Y$.

\item $X\in {\cal N}_{n,m}$
iff there is a hyperbolic critical element $p_n\in {\cal O}_n$ and a hyperbolic critical element $p_m\in {\cal O}_m$ such
that $C(p_n)\cap C(p_m)=\emptyset$. Since every chain recurrent class is upper semi continuous, one knows that
${\cal N}_{n,m}$ is open.

\end{itemize}

From the definitions, ${\cal H}_{n,m}\cup {\cal N}_{n,m}$ is open and dense in ${\cal X}^1(M^d)$. Let
$${\cal R}=\bigcap_{n,m\in\NN}({\cal H}_{n,m}\cup{\cal N}_{n,m}).$$

$\cal R$ is a residual subset. We will verify that
every $X\in\cal R$ satisfies the the conclusion of the lemma. Let
$X\in\cal R$. Thus, for each $n$ and $m$ one has $X\in {\cal
H}_{n,m}\cup{\cal N}_{n,m}$. For any two hyperbolic critical
elements $p_1$ and $p_2$, there are $l\in\NN$ and $k\in\NN$ and a
$C^1$ neighborhood $\cal U$ of $X$ such that
\begin{itemize}
\item For any $Y\in\cal U$, ${\rm Orb}_Y(p_{1,Y})$ and ${\rm Orb}_Y(p_{2,Y})$ are the maximal
compact invariant sets in ${\cal O}_l$ and ${\cal O}_k$
respectively.

\end{itemize}

If $C(p_1)=C(p_2)$ for $X$, then $X\notin {\cal N}_{k,l}$. This
implies that $X\in {\cal H}_{k,l}$. Let ${\cal U}_0={\cal U}\cap
{\cal H}_{k,l}$. For each $Y\in{\cal U}_0$, one has
\begin{itemize}
\item Since $Y\in {\cal H}_{k,l}$, there is a critical point $p_1'\in {\cal O}_l$ and a
critical point $p_2'\in {\cal O}_k$ of $Y$, one has $C(p_1')=C(p_2')$.

\item Since $Y\in\cal U$, the unique critical point in ${\cal O}_l$
is $p_{1,Y}$ and the unique critical point in ${\cal O}_k$ is $p_{2,Y}$. As a corollary, $p_{1,Y}=p_1'$ and
$p_{2,Y}=p_2'$.

\end{itemize}
Thus, one has $C(p_{1,Y})=C(p_{2,Y})$ for any $Y\in{\cal U}_0$.
\end{proof}

\begin{Lemma}\label{Lem:unstablecontained}

For $C^1$ generic $X\in{\cal X}^1(M^d)$, and any hyperbolic critical element $p$ of $X$, if
$\overline{W^u(p)}\subset C(p)$, then there is a $C^1$ neighborhood $\cal U$ of $X$ such that
$\overline{W^u(p_Y,Y)}\subset C(p_Y,Y)$.

\end{Lemma}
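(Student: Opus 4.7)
The plan is to show that the condition ``$\overline{W^u(p)}\not\subset C(p)$'' is an \emph{open} condition on $X$ over the open stratum where $p$ has a hyperbolic continuation, and then to apply the standard countable-basis residual-set trick to conclude that on a residual set the complementary closed condition is locally constant, which is exactly the stated robustness.

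First I would establish two semicontinuity properties. Lemma~\ref{Lem:u.s.c} says that $X\mapsto C(p_X,X)$ is upper semicontinuous in the Hausdorff topology; hence for every closed $K\subset M^d$ the condition $K\cap C(p_X,X)=\emptyset$ is open in $X$. Dually, $X\mapsto\overline{W^u(p_X,X)}$ is lower semicontinuous: given an open $V$ meeting $\overline{W^u(p,X)}$ at a point $q$, approximate $q$ by an iterate $\phi_t^X(q_0)$ with $q_0$ in the local strong unstable manifold of $p$ and $t\ge 0$ fixed; the $C^1$-continuous dependence of $W^{uu}_{loc}(p_Y)$ on $Y$ combined with continuity of the flow on the bounded time interval $[0,t]$ produces $q_Y\in W^u(p_Y,Y)\cap V$ for every $Y$ in a suitable $C^1$-neighborhood of $X$. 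Thus $\{X:V\cap\overline{W^u(p_X,X)}\neq\emptyset\}$ is open for every open $V\subset M^d$.

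Next I would fix a countable basis $\{B_m\}$ of open subsets of $M^d$ together with a countable family $\{\mathcal{O}_n\}$ of open subsets intended as potential isolating neighborhoods of critical orbits. Let $\cL_n$ denote the open set of $X\in\cX^1(M^d)$ for which there is a unique hyperbolic critical element $p_n(X)$ with $\orb(p_n(X))\subset\mathcal{O}_n$ and $\mathcal{O}_n$ isolating, so that the continuation $p_n(Y)$ is well-defined on $\cL_n$. For each pair $(n,m)$ set
\[
\cE_{n,m}=\{X\in\cL_n:\ B_m\cap\overline{W^u(p_n(X),X)}\neq\emptyset\ \text{and}\ \overline{B_m}\cap C(p_n(X),X)=\emptyset\},
\]
which is open by the two semicontinuities above. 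Choosing a small basis neighborhood $B_m$ around any witness point in $\overline{W^u(p_n(X))}\setminus C(p_n(X))$, which is automatically disjoint from the closed set $C(p_n(X))$ once $B_m$ is small, shows that on $\cL_n$ one has $\overline{W^u(p_n(X))}\not\subset C(p_n(X))$ if and only if $X\in\cE_n:=\bigcup_m\cE_{n,m}$. Hence the ``bad'' set $\cE_n$ is open.

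Finally I would set $\cA_n=\cE_n\cup\operatorname{Int}(\cX^1(M^d)\setminus\cE_n)$, which is open and dense because the boundary of an open set has empty interior, and define $\cR=\bigcap_n\cA_n$, a residual set. For $X\in\cR$ with a hyperbolic critical element $p$ satisfying $\overline{W^u(p)}\subset C(p)$, pick $n$ with $X\in\cL_n$ and $p=p_n(X)$; then $X\notin\cE_n$, and membership in $\cA_n$ forces $X\in\operatorname{Int}(\cX^1(M^d)\setminus\cE_n)$, yielding a $C^1$-neighborhood $\cU$ of $X$ on which $\overline{W^u(p_Y,Y)}\subset C(p_Y,Y)$ for every $Y$. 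The delicate step is verifying the lower semicontinuity of the global object $\overline{W^u(p_X,X)}$, since its definition requires iterating the local unstable manifold forward for arbitrarily long times and the continuation neighborhood needed to approximate a given witness depends on that time; a standard diagonal argument combining smooth continuation of $W^{uu}_{loc}(p_Y)$ with flow continuity over bounded intervals handles this.
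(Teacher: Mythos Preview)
Your proposal is correct and follows essentially the same route as the paper: you show the ``bad'' condition $\overline{W^u(p)}\not\subset C(p)$ is open via upper semicontinuity of $C(p)$ and lower semicontinuity of $\overline{W^u(p)}$, then run the standard countable-indexing residual argument to force the complementary condition to be locally constant. The only cosmetic difference is that the paper indexes critical orbits by open sets $\mathcal{O}_n$ in the Hausdorff space of compact subsets of $M^d$ rather than by isolating neighborhoods in $M^d$, and packages the dichotomy as $\mathcal{H}_n\cup\mathcal{N}_n$ rather than $\mathcal{E}_n\cup\operatorname{Int}(\mathcal{X}^1(M^d)\setminus\mathcal{E}_n)$; your version is in fact slightly more explicit about the lower-semicontinuity step and the use of basis sets $B_m$ to witness non-containment.
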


\begin{proof}
Let $\cal C$ be the metric space of all compact subsets of $M^d$, endowed with the Hausdorff metric. $\cal C$ is
a compact metric space. Let ${\cal B}_1,~{\cal B}_2,~\cdots,{\cal B}_n,\cdots,$ be a countable basis of $\cal
C$. Let ${\cal O}_1,~{\cal O}_2,~\cdots,{\cal O}_n,\cdots,$ be the list of finite unions of elements of the countable basis. For each $n$, one can define sets ${\cal H}_n$ and ${\cal N}_n$ as following:

\begin{itemize}

\item $X\in {\cal H}_n$ iff there is a $C^1$ neighborhood
$\cal U$ of $X$ such that for any $Y\in\cal U$, every hyperbolic critical orbit ${\rm Orb}_Y(p_Y)\in {\cal
O}_n$ of $Y$ satisfies $\overline{W^u(p_Y,Y)}\subset C(p_Y,Y)$. By definition, ${\cal H}_n$ is open.

\item $X\in{\cal N}_n$ iff $X$ has a hyperbolic critical
element $p\in{\cal O}_n$ such  that $\overline{W^u(p,X)}\nsubseteqq C(p,X)$. $\overline{W^u(p,X)}$ varies lower
semi-continuously with respect to $X$ and $C(p,X)$ varies upper semi-continuously with respect to $X$. So if
$\overline{W^u(p,X)}\nsubseteqq C(p,X)$, there is a neighborhood $\cal U$ of $X$ such that
$\overline{W^u(p_Y,Y)}\nsubseteqq C(p_Y,Y)$ for any $Y\in\cal U$. This would imply that ${\cal N}_n$ is an open set in ${\cal
X}^1(M^d)$.

\end{itemize}

It's clear that ${\cal H}_n\cup{\cal N}_n$ is open and dense in ${\cal X}^1(M^d)$. Let
$${\cal R}=\bigcap_{n\in\NN}({\cal H}_n\cup{\cal N}_n).$$

${\cal R}$ is a residual subset of ${\cal X}^1(M^d)$. Take $X\in{\cal R}$. If $p$ is a hyperbolic critical
element of $X$, then there are $n$ and a neighborhood $\cal U$ of $X$ such that for each $Y\in\cal U$, ${\rm Orb}_Y(p_Y)$ is
the unique hyperbolic critical orbit in ${\cal O}_n$. Since $\overline{W^u(p)}\subset C(p)$, one has $X\notin
{\cal N}_n$. As a corollary, $X\in{\cal H}_n$. Let ${\cal U}_0={\cal U}\cap {\cal H}_n$. For any $Y\in{\cal U}_0$,
\begin{itemize}

\item Since $Y\in{\cal H}_n$, every hyperbolic critical element $q\in{\cal
O}_n$ of $Y$, one has $\overline{W^u(q,Y)}\subset C(q,Y)$.

\item Since $Y\in\cal U$, the unique critical element in ${\cal O}_n$ is $p_Y$.

\end{itemize}

Thus, $\overline{W^u(p_Y,Y)}\subset C(p_Y,Y)$ for any $Y\in{\cal U}_0$.
\end{proof}

The following lemma claims that for $C^1$ generic vector field, Lyapunov stability is a robust property under perturbations.

\begin{Lemma}\label{Lem:stablepropertyofquasiattractor}
For $C^1$ generic $X\in{\cal X}^1(M^d)$, let $p$ be a hyperbolic critical element of $X$. If $C(p,X)$ is a
Lyapunov stable chain recurrent class, then there is a $C^1$ neighborhood $\cal U$ of $X$ such that for any weak
Kupka-Smale vector field $Y\in\cal U$, $C(p_Y,Y)$ is a Lyapunov stable chain recurrent class of $Y$.
\end{Lemma}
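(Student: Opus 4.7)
The plan is to follow the generic-argument template of Lemmas~\ref{Lem:twochainrecurrentclass} and~\ref{Lem:unstablecontained}, using as the key open criterion for non-Lyapunov-stability the contrapositive of Lemma~\ref{Lem:unstableinLyapunov}: Lyapunov stability of $C(q,X)$ forces $\overline{W^u({\rm Orb}(q))}\subset C(q,X)$, and its failure $\overline{W^u({\rm Orb}(q))}\not\subset C(q,X)$ is itself a $C^1$-open condition, because $C(q,\cdot)$ is upper semi-continuous while $\overline{W^u({\rm Orb}(q_\cdot))}$ is lower semi-continuous. Fixing a countable basis $\{{\cal O}_n\}$ for compact subsets of $M^d$ in the Hausdorff topology, I define the $C^1$-open sets
\begin{itemize}
\item ${\cal H}_n$: the set of $X$ admitting a $C^1$-neighborhood ${\cal U}$ such that every weak Kupka-Smale $Y\in{\cal U}$ and every hyperbolic critical element $q_Y$ of $Y$ with ${\rm Orb}(q_Y)\in{\cal O}_n$ satisfies $C(q_Y,Y)$ Lyapunov stable;
\item ${\cal N}_n$: the set of $X$ admitting a $C^1$-neighborhood ${\cal U}$ such that every $Y\in{\cal U}$ has a hyperbolic critical element $q_Y$ with ${\rm Orb}(q_Y)\in{\cal O}_n$ and $\overline{W^u({\rm Orb}(q_Y))}\not\subset C(q_Y,Y)$.
\end{itemize}
The desired residual set is ${\cal R}=\bigcap_n({\cal H}_n\cup{\cal N}_n)$ intersected with the generic weak Kupka-Smale residual set and with the residual sets of Lemmas~\ref{Lem:continuitychainclass} and~\ref{Lem:unstablecontained}. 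The conclusion at $X\in{\cal R}$ is extracted in the standard way: pick $n$ so that ${\cal O}_n$ isolates ${\rm Orb}(p_Y)$ uniquely on a neighborhood of $X$; by Lemma~\ref{Lem:unstableinLyapunov}, Lyapunov stability of $C(p,X)$ forces $X\notin{\cal N}_n$, hence $X\in{\cal H}_n$, which is exactly the desired robust property.

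The crux is density of ${\cal H}_n\cup{\cal N}_n$. Let ${\cal V}$ be open with ${\cal V}\cap{\cal N}_n=\emptyset$; I must produce an element of ${\cal V}\cap{\cal H}_n$. Pick a weak Kupka-Smale $Y\in{\cal V}$; if $Y\notin{\cal H}_n$, there exists a weak Kupka-Smale $Y'\in{\cal V}$ arbitrarily close to $Y$ with a hyperbolic critical element $q_{Y'}\in{\cal O}_n$ such that $C(q_{Y'},Y')$ is not Lyapunov stable. Non-Lyapunov-stability supplies a point $w$ at positive distance $\varepsilon$ from $C(q_{Y'},Y')$ that is chain attainable from $q_{Y'}$ (a small-jump pseudo-orbit from $q_{Y'}$ to a point close to the class, followed by the escape orbit). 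Since $C(q_{Y'},Y')$ is non-trivial, Lemma~\ref{Lem:invariantinside} furnishes a regular point $x\in C(q_{Y'},Y')\cap W^u({\rm Orb}(q_{Y'}))$, from which $w$ is also chain attainable. Applying Lemma~\ref{Lem:connecting} to the weak Kupka-Smale $Y'$, I obtain $Y''\in{\cal V}$ arbitrarily close to $Y'$ with $\phi^{Y''}_\tau(x)=w$ for some $\tau>0$. Lower semi-continuity of the unstable manifold gives $x''\in W^u({\rm Orb}(q_{Y''}))$ close to $x$, and continuous dependence on initial conditions yields $\phi^{Y''}_\tau(x'')\in W^u({\rm Orb}(q_{Y''}))$ close to $w$. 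By upper semi-continuity of chain recurrent classes together with $d(w,C(q_{Y'},Y'))=\varepsilon$, this point sits at positive distance from $C(q_{Y''},Y'')$ for $Y''$ close enough to $Y'$, so $\overline{W^u({\rm Orb}(q_{Y''}))}\not\subset C(q_{Y''},Y'')$. Repeating the semi-continuity argument shows this condition persists on a $C^1$-neighborhood of $Y''$, hence $Y''\in{\cal N}_n\cap{\cal V}$, contradicting ${\cal V}\cap{\cal N}_n=\emptyset$.

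The principal obstacle is precisely the transfer from a pseudo-orbit witness of non-Lyapunov-stability (which fails to be robust, because a naive escape point $z$ close to $C(q_{Y'},Y')$ may be far from $C(q_Z,Z)$ for perturbations $Z$, the class being only upper semi-continuous) to an unstable-manifold witness $\overline{W^u({\rm Orb}(q))}\not\subset C(q,\cdot)$, which becomes robust through the semi-continuity dichotomy. The weak Kupka-Smale hypothesis on $Y'$ enters essentially at exactly this step, as it is the hypothesis of the pseudo-orbit connecting lemma (Lemma~\ref{Lem:connecting}); the connecting lemma lets one root the escape orbit at a point of the persistent unstable manifold. This same necessity of working through weak Kupka-Smale perturbations is what forces the conclusion of the lemma to be stated for weak Kupka-Smale $Y$ rather than for arbitrary $Y$ in the neighborhood.
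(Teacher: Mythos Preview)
Your proof is correct and hinges on the same key mechanism as the paper's: converting the pseudo-orbit escape witnessing non-Lyapunov-stability into an unstable-manifold escape via the connecting lemma for pseudo-orbits (Lemma~\ref{Lem:connecting}) applied to a weak Kupka-Smale vector field, and then exploiting the robustness of the condition $\overline{W^u({\rm Orb}(q))}\not\subset C(q,\cdot)$ coming from the lower/upper semi-continuity dichotomy.

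However, the paper's proof is considerably more streamlined than yours. Rather than constructing a new residual set via ${\cal H}_n\cup{\cal N}_n$ and proving density, the paper simply takes ${\cal R}$ to be the residual set of Lemma~\ref{Lem:unstablecontained} and argues directly. For $X\in{\cal R}$ with $C(p,X)$ Lyapunov stable, Lemma~\ref{Lem:unstableinLyapunov} gives $\overline{W^u(p,X)}\subset C(p,X)$, and Lemma~\ref{Lem:unstablecontained} yields a neighborhood ${\cal U}$ on which $\overline{W^u(p_Y,Y)}\subset C(p_Y,Y)$ for \emph{every} $Y\in{\cal U}$. If some weak Kupka-Smale $X_0\in{\cal U}$ had $C(p_{X_0},X_0)$ not Lyapunov stable, one picks $y\in W^{ch,u}(C(p_{X_0},X_0))\setminus C(p_{X_0},X_0)$, a point $z\in W^u(p_{X_0},X_0)$ whose backward orbit stays in a small neighborhood $U$ of $p_{X_0}$, and applies Lemma~\ref{Lem:connecting} to $X_0$ keeping $U$ fixed; the resulting $Y\in{\cal U}$ has $y\in W^u(p_Y,Y)$ but $y\notin C(p_Y,Y)$ (by upper semi-continuity), contradicting $\overline{W^u(p_Y,Y)}\subset C(p_Y,Y)$. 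The point is that the robustness you encode in your ${\cal N}_n$ is already packaged inside Lemma~\ref{Lem:unstablecontained}, so there is no need to rebuild a Baire argument. Your route works but duplicates effort; a minor additional remark is that your appeal to Lemma~\ref{Lem:invariantinside} assumes $C(q_{Y'},Y')$ is non-trivial, while the trivial saddle case (where $\overline{W^u}\not\subset C$ holds immediately and robustly) should be disposed of separately.
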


\begin{proof}
Let ${\cal R}\subset {\cal X}^1(M^d)$ be the residual subset as in Lemma \ref{Lem:unstablecontained}: $X\in\cal R$ iff for any hyperbolic critical element $p$ of $X$, if $\overline{W^u(p,X)}\subset C(p,X)$, then there is a $C^1$ neighborhood ${\cal U}={\cal U}_{X,p}$ such that $\overline{W^u(p_Y,Y)}\subset C(p_Y,Y)$ for any $Y\in\cal U$. We will prove that $C(p_Y,Y)$ is Lyapunov stable chain recurrent class for each weak Kupka-Smale $Y\in\cal U$. If not, there is a weak Kupka-Smale vector field $X_0\in\cal U$ such that $C(p_{X_0},X_0)$ is not Lyapunov stable. Thus we have
\begin{itemize}

\item $\overline{W^u(p_{X_0},X_0)}\subset C(p_{X_0},X_0)$.

\item There is $y\notin C(p_{X_0},X_0)$ such that $y\in W^{ch,u}(C(p_{X_0},X_0))$.

\end{itemize}

Then, there is a $C^1$ neighborhood ${\cal U}_0\subset{\cal U}$ of $X_0$ such that for any $Y\in{\cal U}_0$,
$y\notin C(p_Y,Y)$ by the upper semi-continuity of chain recurrent classes. Choose $z\in W^u(p_{X_0},X_0)\setminus\{p_{X_0}\}$. Since $z\in C(p_{X_0},X_0)$, $y$ is chain attainable from
$z$. Choose a small neighborhood $U$ of $p_{X_0}$.

%
%
%
%
%
%
%
%

One can assume that the negative orbit of $z$ is contained in $U$. By Lemma \ref{Lem:connecting}, there is a vector field
$Y\in {\cal U}_0$ such that
\begin{itemize}
\item $Y(x)=X_0(x)$ for any $x\in U$.

\item $y$ is in the positive orbit of $z$ with respect to $\phi_t^Y$.

\end{itemize}

As a corollary, $y$ is in the unstable manifold of $p_Y$ with respect to $Y$. This fact gives a contradiction.
\end{proof}

The following lemma asserts that for a generic vector field, if the perturbed system has a periodic orbit which is $(C,\eta,d,{\cal N})$-contracting at the period, then the original generic system already have itself by relaxing the constants.

\begin{Lemma}\label{Lem:uniformgeneric}
There is a dense $G_\delta$ set ${\cal G}\subset{\cal X}^1(M^d)$ such that for any $X\in{\cal G}$, if for any two open sets $U,V\subset M^d$ wit $\overline U\subset V$, for any $C^1$ neighborhood ${\cal U}$ of $X$, and any three number $K\in\NN$, $\eta>0$ and $T>0$, if some $Y\in {\cal U}$ has a hyperbolic periodic orbit $\gamma_Y$ which is $(K,\eta,T,{\cal N})$-contracting at the period w.r.t. $\psi_t^Y$ such that $\gamma_Y\cap U\neq\emptyset$, then $X$ has a periodic orbit $\gamma$ which is $(K,\eta/2,2T,{\cal N})$-contracting at the period w.r.t. $\psi_t$ such that $\gamma\cap V\neq\emptyset$.

\end{Lemma}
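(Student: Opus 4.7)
The plan is a standard Baire-category argument: realize $\mathcal{G}$ as a countable intersection of open dense sets indexed by rational parameters and pairs of basic open sets. Fix once for all a countable basis $\{B_k\}_{k\in\NN}$ of the topology of $M^d$, and let $\mathcal{T}$ denote the countable family of tuples $\tau=(B_i,B_j,K,\eta_0,\eta_1,T_0,T_1)$ with $\overline{B_i}\subset B_j$, $K\in\NN$, and positive rationals satisfying $\eta_1<\eta_0$ and $T_0<T_1$. To each such $\tau$ I attach two open subsets of $\mathcal{X}^1(M^d)$: first, $\mathcal{N}_\tau$, those $X$ admitting a $C^1$-neighborhood $\mathcal{V}$ in which no vector field carries a periodic orbit meeting $B_i$ that is $(K,\eta_0,T_0,\mathcal{N})$-contracting at the period w.r.t.\ $\psi_t$; second, $\mathcal{H}_\tau$, the interior of the set of vector fields possessing a hyperbolic periodic orbit meeting $B_j$ that is $(K,\eta_1,T_1,\mathcal{N})$-contracting at the period. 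Then $\mathcal{G}:=\bigcap_{\tau\in\mathcal{T}}(\mathcal{H}_\tau\cup\mathcal{N}_\tau)$ is a $G_\delta$ set, and it remains to show that each $\mathcal{H}_\tau\cup\mathcal{N}_\tau$ is dense.

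For density, suppose $X_0\notin\mathcal{N}_\tau$. Then every $C^1$-neighborhood of $X_0$ contains some $Y$ carrying a periodic orbit $\gamma_Y$ that meets $B_i$ and is $(K,\eta_0,T_0,\mathcal{N})$-contracting at the period. Such $\gamma_Y$ is automatically a sink, hence admits a continuation $\gamma_Z$ for $Z$ sufficiently $C^1$-close to $Y$; this $\gamma_Z$ still meets $B_j$ because $\overline{B_i}\subset B_j$. Continuity of the linear Poincar\'e flow along $\gamma_Z$, applied to the same partition of the period, bounds the accumulated multiplicative error by $\exp(C\,d_{C^1}(Y,Z)\,m\tau(\gamma_Y))$ across the $\lesssim m\tau(\gamma_Y)/T_0$ intervals; once $d_{C^1}(Y,Z)$ is small compared to the gap $\eta_0-\eta_1$ (a size bound independent of $\tau(\gamma_Y)$), this error is absorbed by the slack in the exponent and $\gamma_Z$ remains $(K,\eta_1,T_1,\mathcal{N})$-contracting at the period. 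Therefore $Y\in\mathcal{H}_\tau$, and so $X_0\in\overline{\mathcal{H}_\tau}$.

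For the verification, take $X\in\mathcal{G}$ under the hypothesis of the lemma, interpreted as: for every $C^1$-neighborhood $\mathcal{U}$ of $X$, some $Y\in\mathcal{U}$ carries a hyperbolic periodic orbit that is $(K,\eta,T,\mathcal{N})$-contracting at the period and meets $U$. Pick witnesses $Y_n\to X$ in $C^1$ and points $p_n\in\gamma_{Y_n}\cap U$; after extracting a subsequence, $p_{n_k}\to p\in\overline{U}\subset V$. Choose basic open sets $p\in B_i$ and $\overline{B_i}\subset B_j\subset V$, together with rationals $\eta/2<\eta_1<\eta_0<\eta$ and $T<T_0<T_1<2T$. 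Because $(K,\eta,T,\mathcal{N})$-contracting is strictly stronger than $(K,\eta_0,T_0,\mathcal{N})$-contracting, a tail of $(Y_{n_k})$ witnesses $X\notin\mathcal{N}_\tau$ for the tuple $\tau=(B_i,B_j,K,\eta_0,\eta_1,T_0,T_1)$, whence $X\in\mathcal{H}_\tau$. This yields a hyperbolic periodic orbit of $X$ meeting $B_j\subset V$ that is $(K,\eta_1,T_1,\mathcal{N})$-contracting at the period, and thus $(K,\eta/2,2T,\mathcal{N})$-contracting at the period since $\eta_1\ge\eta/2$ and $T_1\le 2T$.

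The hard part is the uniform-in-period robustness invoked in the density step: the $C^1$-size sufficient to transfer $(K,\eta_0,T_0,\mathcal{N})$-contracting from $\gamma_Y$ to its continuation $\gamma_Z$ must depend only on the gaps $\eta_0-\eta_1$ and $T_1-T_0$, and never on the period $\tau(\gamma_Y)$. This is precisely why each tuple $\tau$ records two distinct pairs of constants $(\eta_0,\eta_1)$ and $(T_0,T_1)$ rather than a single pair, so that the fixed exponential slack in the contracting estimate dominates the perturbation error accumulated along arbitrarily long orbits.
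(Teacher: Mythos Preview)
Your argument is correct and follows essentially the same Baire-category scheme as the paper: define countable families $\mathcal{H}$ and $\mathcal{N}$ indexed by basic open sets and rational parameters, take $\mathcal{G}=\bigcap(\mathcal{H}\cup\mathcal{N})$, and verify. Your use of \emph{pairs} $(B_i,B_j)$, $(\eta_0,\eta_1)$, $(T_0,T_1)$ makes the robustness step explicit, whereas the paper uses single parameters together with a strict inequality in the product to force openness of $\mathcal{H}$ and then inserts the intermediate rationals only in the verification; these are equivalent bookkeepings. One small correction: the assertion that ``such $\gamma_Y$ is automatically a sink'' is not literally true---if $m\tau(\gamma_Y)\le(\log K)/\eta_0$ the contracting inequality is vacuous and $\gamma_Y$ can be any hyperbolic type (or even non-hyperbolic, given that your $\mathcal{N}_\tau$ drops the word ``hyperbolic''). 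The clean fix, matching both the lemma statement and the paper, is to put ``hyperbolic periodic orbit'' into the definition of $\mathcal{N}_\tau$; then continuation is immediate from hyperbolicity and the sink claim becomes unnecessary.
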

\begin{proof}
The proof of this lemma is an application of fundamental tricks for generic properties. Thus we just give a sketch.  Let $O_1,O_2,\cdots,O_n,\cdots$ be a topological base of $M^d$. Let $\{\eta_m\}$ and $\{d_{\ell}\}$ be the sets of positive rational numbers. For each $n\in\NN$, $K\in\NN$, $m\in\NN$ and $\ell\in\NN$, one defines:
\begin{itemize}

\item $X\in {\cal H}_{n,K,m,\ell}$ iff there is a hyperbolic periodic orbit $\gamma$ such that

\begin{itemize}

\item $\gamma\cap O_n\neq\emptyset$.

\item There is $x\in\gamma$ and a time partition $0=t_0<t_1<\cdots<t_q=\alpha\pi(\gamma)$ for some positive integer $\alpha$ satisfying $t_{i+1}-t_i\le d_\ell$ for $0\le i\le q-1$ such that
$$\prod_{i=0}^{q-1}\|\psi_{t_{i+1}-t_i}|_{{\cal N}_{\phi_{t_i}(x)}}\|< K{\rm e}^{-\eta_m \alpha\pi(\gamma)}.$$

\end{itemize}

\item $X\in {\cal N}_{n,K,m,\ell}$ iff there is a neighborhood ${\cal U}$ of $X$ such that for any $Y\in{\cal U}$, one has

\begin{itemize}

\item either $Y$ has no hyperbolic periodic orbit intersecting $O_n$;
\item Or, $Y$ has a hyperbolic periodic orbit $\gamma$ such that $\gamma\cap O_n\neq\emptyset$ and for any $x\in \gamma$ and any time partition $0=t_0<t_1<\cdots<t_q=\alpha\pi(\gamma)$ for some positive integer $\alpha$ satisfying $t_{i+1}-t_i\le d_\ell$ for $0\le i\le q-1$, one has
$$\prod_{i=0}^{q-1}\|\psi^{Y}_{t_{i+1}-t_i}|_{{\cal N}_{\phi^Y_{t_i}(x)}}\|\ge K{\rm e}^{-\eta_m \alpha\pi(\gamma)}.$$

\end{itemize}

\end{itemize}

By the definitions, ${\cal H}_{n,K,m,\ell}\cup{\cal N}_{n,K,m,\ell}$ is a dense open set in ${\cal X}^1(M^d)$. Thus,

$${\mathcal G}=\bigcap_{n,K,m,\ell\in\NN}({\cal H}_{n,K,m,\ell}\cup{\cal N}_{n,K,m,\ell})$$
is a residual subset of ${\cal X}^1(M^d)$. Now we check that every $X\in{\mathcal G}$ satisfies the properties of the lemma.

For any $\eta>0$, $T>0$, one can take a rational number $\eta_{m_0}\in (\eta/2,\eta)$ and $T_{\ell_0}\in (T, 2T)$. If there is a sequence of vector fields $\{X_n\}$ such that
\begin{itemize}

\item $\lim_{n\to\infty}X_n=X$.

\item Each $X_n$ has a hyperbolic periodic orbit $\gamma_n$ which is $(K,\eta,T,{\cal N})$-contracting at the period such that $\gamma_n\cap U\neq\emptyset$.

\end{itemize}

There is $x\in {\overline U}$ such that $x$ is an accumulating point of $\gamma_n$. Thus, there is $n_0$ such that $x\in O_{n_0}\subset V$. Since $X\in {\cal G}\subset {\cal H}_{n_0,K,m_0,\ell_0}\cup {\cal N}_{n_0,K,m_0,\ell_0}$, one has $X\in {\cal H}_{n_0,K,m_0,\ell_0}$ by the definitions. Thus $X$ has a periodic orbit in $O_{n_0}$ itself which is $(K,\eta_{m_0},T_{\ell_0},{\cal N})$-contracting at the period w.r.t. $\psi_t$ . It's $(K,\eta/2,2T,{\cal N})$-contracting at the period w.r.t. $\psi_t$ in $V$.
\end{proof}

\begin{Lemma}\label{Lem:uniformsinks}
There is a dense $G_\delta$ set ${\cal G}\subset{\cal X}^1(M^d)$ such that for any $X\in{\cal G}$ and $x\in M^d$, for any
$K\in\NN$, $\eta>0$ and $d>0$, one has
\begin{itemize}
\item either, $x$ is contained in a periodic sink which is $(K,\eta/2,2d,{\cal N})$-contracting at the period w.r.t. $\psi_t$;

\item or, there is a $C^1$ neighborhood ${\cal U}$ of $X$ and a neighborhood $U$ of $x$ such that for any $Y\in\cal U$, $Y$ has no periodic sink which is $(K,\eta,d,{\cal N})$-contracting at the period w.r.t. $\psi_t^Y$, and intersects $U$.
\end{itemize}

\end{Lemma}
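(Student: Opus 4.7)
The plan is to adapt the Baire category scheme from the proof of Lemma~\ref{Lem:uniformgeneric} just above. Fix a countable basis $\{O_n\}_{n\in\NN}$ of $M^d$ and enumerations $\{\eta_m\}_{m\in\NN}$ and $\{d_\ell\}_{\ell\in\NN}$ of the positive rationals. For each quadruple $(n,K,m,\ell)\in\NN^4$, I will introduce two open sets whose union is dense, so that their intersection (further intersected with the residual set of Kupka-Smale vector fields) yields the desired dense $G_\delta$ set $\cG$.

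Concretely, declare $X\in\cH_{n,K,m,\ell}$ iff $X$ has a hyperbolic periodic sink $\gamma$ meeting $O_n$, together with some $x\in\gamma$, some $\alpha\in\NN$, and some partition $0=t_0<t_1<\cdots<t_q=\alpha\tau(\gamma)$ with $t_{i+1}-t_i<d_\ell$, witnessing the \emph{strict} inequality
\begin{equation*}
\prod_{i=0}^{q-1}\|\psi_{t_{i+1}-t_i}|_{\cN_{\phi_{t_i}(x)}}\|<K\mathrm{e}^{-\eta_m\alpha\tau(\gamma)}.
\end{equation*}
Declare $X\in\cN_{n,K,m,\ell}$ iff some $C^1$ neighborhood $\cU$ of $X$ is such that, for every $Y\in\cU$, every hyperbolic periodic sink $\gamma_Y$ of $Y$ meeting $O_n$, every $x\in\gamma_Y$, every $\alpha\in\NN$, and every partition of $[0,\alpha\tau(\gamma_Y)]$ with steps smaller than $d_\ell$ satisfy the reversed inequality $\ge K\mathrm{e}^{-\eta_m\alpha\tau(\gamma_Y)}$. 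Then $\cH_{n,K,m,\ell}$ is open (hyperbolic sinks persist and the strict inequality survives small $C^1$ perturbations), $\cN_{n,K,m,\ell}$ is open by definition, and their union is dense: if $X\notin\cN_{n,K,m,\ell}$ then arbitrarily close $Y$ witness the $\cN$-violation and hence lie in $\cH_{n,K,m,\ell}$.

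Now take $X\in\cG$, a point $x\in M^d$, and constants $K\in\NN$, $\eta>0$, $d>0$. Pick rationals $\eta_m\in(\eta/2,\eta)$ and $d_\ell\in(d,2d)$ and a sequence of basic open sets $O_{n_k}$ shrinking to $x$. The decisive monotonicity, following directly from $\eta>\eta_m>\eta/2$ and $d<d_\ell<2d$, reads
\begin{equation*}
(K,\eta,d,\cN)\text{-contracting} \ \Longrightarrow \ \text{strict-}(K,\eta_m,d_\ell,\cN)\text{-contracting} \ \Longrightarrow \ (K,\eta/2,2d,\cN)\text{-contracting}.
\end{equation*}
If $X\in\cN_{n_k,K,m,\ell}$ for some $k$, the first implication (used in contrapositive form) forbids any $Y$ in the associated $\cU$ from possessing a $(K,\eta,d,\cN)$-contracting hyperbolic periodic sink intersecting $U:=O_{n_k}$, giving the second alternative of the conclusion. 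Otherwise $X\in\cH_{n_k,K,m,\ell}$ for every $k$, furnishing hyperbolic periodic sinks $\gamma_k$ of $X$ with $\gamma_k\cap O_{n_k}\neq\emptyset$, each (strictly) $(K,\eta_m,d_\ell,\cN)$-contracting at the period.

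The main obstacle is upgrading ``sinks meet every shrinking neighborhood of $x$'' to ``some sink contains $x$''. Here I use that $\cG$ sits inside the Kupka-Smale residual set, so Lemma~\ref{Lem:finiteunifsink} supplies only finitely many $(K,\eta_m,d_\ell,\cN)$-contracting periodic orbits of $X$. Consequently some $\gamma=\gamma_{k_0}$ must equal $\gamma_k$ for infinitely many $k$; compactness of $\gamma$ combined with $O_{n_k}\to\{x\}$ then forces $x\in\gamma$, and the second monotone implication shows $\gamma$ is $(K,\eta/2,2d,\cN)$-contracting at the period, yielding the first alternative.
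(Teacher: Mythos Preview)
Your proof is correct and follows essentially the same approach as the paper. The paper simply quotes Lemma~\ref{Lem:uniformgeneric} to obtain the dense $G_\delta$ set (rather than rebuilding the $\cH_{n,K,m,\ell}/\cN_{n,K,m,\ell}$ scheme inline, specialized to sinks, as you do), and then finishes with the same key ingredient, Lemma~\ref{Lem:finiteunifsink}; the only cosmetic difference is that the paper phrases the endgame as a contradiction (``if $x$ lies in no such sink then $\tau(\gamma_n)\to\infty$, contradicting finiteness'') whereas you apply the finiteness directly via pigeonhole.
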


\begin{proof}
Let $\cal G$ be as in Lemma~\ref{Lem:uniformgeneric}. If the conclusion of this lemma is not true, one has that
\begin{itemize}

\item $x$ is not contained in a periodic sink which is $(K,\eta/2,2d,{\cal N})$-contracting at the period w.r.t. $\psi_t$.

\item For any  $C^1$ neighborhood ${\cal U}$ of $X$ and any neighborhood $U$ of $x$, some $Y\in\cal U$ has a periodic sink, which is $(K,\eta,d,{\cal N})$-contracting at the period w.r.t. $\psi_t^{Y}$, and intersects $U$ for.
\end{itemize}

Thus, by Lemma~\ref{Lem:uniformgeneric}, for any neighborhood of $x$, $X$ itself has a periodic sink, which is $(K,\eta/2,2d,{\cal N})$-contracting at the period w.r.t. $\psi_t$. In other words, there is a sequence of periodic points $x_n$ such that
\begin{itemize}

\item $\lim_{n\to\infty}x_n=x$.

\item $x_n$ is contained in a period sink $\gamma_n$, which is $(K,\eta,d,{\cal N})$-contracting at the period for each $n$. Moreover, $\{\gamma_n\}$ are distinct.

\end{itemize}

We assert that $\tau(\gamma_n)\to\infty$ as $n\to\infty$. Otherwise, by taking a limit, $x$ would be in a periodic sink which is $(K,\eta,d,{\cal N})$-contracting at the period. Thus one can get a contradiction by Lemma~\ref{Lem:finiteunifsink}.
\end{proof}

\begin{Corollary}\label{Cor:dominatedinneighborhood}
Assume that $\dim M^3=3$. There is a residual set ${\cal G}\subset{\cal X}^1(M^3)\setminus\overline{{\cal HT}}$
and $\iota>0$ such that for any $X\in\cal G$, any $\sigma\in{\rm Sing}(X)$, there is a $C^1$ neighborhood ${\cal
U}$ of $X$ and a neighborhood $U$ of $\sigma$ such that for any periodic orbit $\gamma$ of $Y$, if $\gamma\cap
U\neq\emptyset$, then ${\cal N}_\gamma$ admits an $\iota$-dominated splitting of index 1 w.r.t. the linear
Poincar\'e flow.

\end{Corollary}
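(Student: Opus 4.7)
The plan is to combine the dichotomy for periodic sinks from Lemma~\ref{Lem:unifromordominated} with the genericity produced by Lemma~\ref{Lem:uniformsinks} in order to rule out uniformly contracting/expanding periodic orbits near a singularity, and then to handle the remaining (saddle and mixed) cases directly from Lemma~\ref{Lem:dominatedsplitting}.

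First, let ${\cal G}\subset{\cal X}^1(M^3)$ be the intersection of the weak Kupka-Smale vector fields, the residual set furnished by Lemma~\ref{Lem:uniformsinks}, and its analogue obtained by applying that lemma to $-X$ (so that both $X$ and $-X$ enjoy the generic dichotomy for sinks of perturbations). Take $X\in{\cal G}\setminus\overline{{\cal HT}}$. Since $X\notin\overline{{\cal HT}}$, Lemma~\ref{Lem:unifromordominated} applied to $X$ and to $-X$ (and Lemma~\ref{Lem:dominatedsplitting} to get a uniform $\iota$ also large enough for the hyperbolic/central splitting statement) yields a $C^1$ neighborhood ${\cal U}_0$ of $X$ and common constants $\iota>0$, $C>0$, $\eta>0$ with the property that, for any $Y\in{\cal U}_0$, every periodic sink (respectively source) of $Y$ with period larger than $\iota$ either carries an $\iota$-dominated splitting of index $d-2=1$ on its normal bundle, or is $(C,\eta,2\iota,{\cal N})$-contracting (respectively expanding) at the period. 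This $\iota$ will be the one in the statement.

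Next, fix $\sigma\in\Sing(X)$. Because $\sigma$ cannot lie on any periodic orbit of $X$, the first alternative of Lemma~\ref{Lem:uniformsinks} (applied at $x=\sigma$ with $K=\lceil C\rceil$, parameter $\eta$, and $d=2\iota$) is vacuously false, so the second alternative gives a neighborhood $U_{\rm sk}$ of $\sigma$ and a $C^1$ neighborhood ${\cal U}_{\rm sk}$ of $X$ inside which no vector field has a periodic sink $(K,\eta,2\iota,{\cal N})$-contracting at the period and meeting $U_{\rm sk}$. Applying the same argument to $-X$ produces analogous $U_{\rm sc}$ and ${\cal U}_{\rm sc}$ ruling out bad sources near $\sigma$. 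Since $\sigma$ is hyperbolic for $X$, Hartman-Grobman together with continuity of the continuation $\sigma_Y$ yields $U_{\rm sh}$ and ${\cal U}_{\rm sh}$ such that no periodic orbit of any $Y\in{\cal U}_{\rm sh}$ of period $\le \iota$ can intersect $U_{\rm sh}$. Put $U=U_{\rm sk}\cap U_{\rm sc}\cap U_{\rm sh}$ and ${\cal U}={\cal U}_0\cap{\cal U}_{\rm sk}\cap{\cal U}_{\rm sc}\cap{\cal U}_{\rm sh}$.

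For the conclusion, take $Y\in{\cal U}$ and a periodic orbit $\gamma$ of $Y$ with $\gamma\cap U\neq\emptyset$. Its period exceeds $\iota$, so Lemma~\ref{Lem:dominatedsplitting} supplies a splitting ${\cal N}_\gamma=G^s\oplus G^c\oplus G^u$ with $\dim G^c\le 1$ and $\dim{\cal N}_\gamma=2$. A case check on the triple $(\dim G^s,\dim G^c,\dim G^u)$ shows that in every configuration except $(2,0,0)$ and $(0,0,2)$ one of the groupings $G^s\oplus(G^c\oplus G^u)$ or $(G^s\oplus G^c)\oplus G^u$ is an $\iota$-dominated splitting of index $1$. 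In the two excluded configurations $\gamma$ is a sink or source, and Lemma~\ref{Lem:unifromordominated} gives the sought splitting unless $\gamma$ is $(C,\eta,2\iota,{\cal N})$-contracting/expanding at the period; but this last possibility is forbidden by the choice of $U$. The main technical nuisance will be the bookkeeping required to align the constants $\iota$, $C$, $\eta$ obtained separately from Lemmas~\ref{Lem:unifromordominated}, \ref{Lem:dominatedsplitting}, and \ref{Lem:uniformsinks} (and their $-X$ analogues) into a single uniform triple, together with the short-period argument near $\sigma$; everything else is extraction from the lemmas already proved.
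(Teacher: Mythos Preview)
Your argument is correct and follows essentially the same route as the paper: rule out uniformly contracting sinks (and sources) near $\sigma$ via Lemma~\ref{Lem:uniformsinks}, then split into the saddle case (Lemma~\ref{Lem:dominatedsplitting}) and the sink/source case (Lemma~\ref{Lem:unifromordominated}). Your explicit handling of short-period orbits via Hartman--Grobman and your bookkeeping of constants make precise what the paper leaves implicit.

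One small imprecision: your case split by $(\dim G^s,\dim G^c,\dim G^u)$ is not quite aligned with the sink/source dichotomy. In configuration $(1,1,0)$ the central exponent may be negative, so $\gamma$ is a \emph{sink}, and Lemma~\ref{Lem:dominatedsplitting} as stated does not directly assert that $G^s\oplus G^c$ is $\iota$-dominated (the third bullet only treats the hyperbolic splitting, which here is trivial). The same issue arises for sources in $(0,1,1)$. The fix is immediate: simply organise the dichotomy by index rather than by configuration --- if $\mathrm{ind}(\gamma)=1$ use the third bullet of Lemma~\ref{Lem:dominatedsplitting}, and if $\gamma$ is a sink or source (whatever the configuration) invoke Lemma~\ref{Lem:unifromordominated} together with your choice of $U$. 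This is exactly what the paper does, and your argument already contains all the ingredients.
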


\begin{proof}
If ${\rm ind}(\gamma)=1$, then it is done by Lemma~\ref{Lem:dominatedsplitting}. Thus, one can assume that $\gamma$ is a sink or source. Without loss of generality, we assume that it is a sink.
By Lemma~\ref{Lem:unifromordominated}, if $\gamma$ dose not admits an $\iota$-dominated splitting for some $\iota$, then there are $C>0$, $\eta>0$ and $T>0$ such that $\gamma$ is $(C,\eta,T,{\cal N})$-contracting at the period w.r.t. $\psi_t$. Since $\sigma$ is a singularity, not a periodic point, by Lemma~\ref{Lem:uniformsinks}, one can get a contradiction.
\end{proof}

We would like to list some other generic results we need in this paper.
\begin{Lemma}\label{Lem:generic}
There is a dense $G_\delta$ set ${\cal G}\subset{\cal X}^1(M^d)$ such that for each $X\in{\cal G}$, one has
\begin{enumerate}


\item For any non-trivial chain recurrent class $C(\sigma)$, where $\sigma$
is a hyperbolic singularity of index $d-1$, then every separatrix of $W^u(\sigma)$ is dense in $C(\sigma)$. Especially, $C(\sigma)$ is transitive.

\item Let $i\in[0,\dim M-1]$. If there is a sequence of vector fields $\{X_n\}$ such that
\begin{itemize}

\item $\lim_{n\to\infty}X_n=X$,

\item each $X_n$ has a hyperbolic periodic orbits $\gamma_{X_n}$ of index $i$ such that $\lim_{n\to\infty}\gamma_{X_n}=\Lambda$,

\end{itemize}

Then there is a sequence of  hyperbolic periodic orbits $\gamma_n$ of index $i$ \emph{of $X$} such that
$\lim_{n\to\infty}\gamma_n=\Lambda$.

\item There exists a neighborhood $\cal U$ of $X$ such that for any $Y\in \cal U$, $Y$ has only finitely many singularities. Moreover, for every singularity $\sigma$ of $Y$, the eigenvalues $\lambda_1, \lambda_2, \cdots, \lambda_d$ of $DY(\sigma)$ satisfy:
    $$
    {\rm Re}(\lambda_i)+{\rm Re}(\lambda_j)\not=0,
    $$
    for any $1\le i, j\le d$.

\item For any hyperbolic periodic orbit $P$ of $X$, then $C(P)=H(P)$, where $H(P)$ is the homoclinic class of $P$.

\item Every non-trivial chain transitive set is the limit of a sequence of periodic orbits in the Hausdorff topology.

\item $X$ is Kupka-Smale.
\end{enumerate}

\end{Lemma}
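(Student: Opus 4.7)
The plan is to construct, for each of the six properties, a residual subset of $\mathcal{X}^1(M^d)$ on which that property holds, and then intersect them. Property (6) (Kupka-Smale) is classical, and property (3) reduces to the classical fact that hyperbolicity of all singularities is $C^1$-generic (which already forces finiteness on the compact manifold $M^d$), supplemented by an arbitrarily small localized perturbation at each singularity to achieve the non-resonance $\mathrm{Re}(\lambda_i)+\mathrm{Re}(\lambda_j)\neq 0$; this condition is robust and dense for each pair $(i,j)$, and can be enforced uniformly for every singularity in a $C^1$ neighborhood of $X$ by finitely many such perturbations.

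The heart of the argument will be (4) and (5), both $C^1$-generic consequences of the pseudo-orbit connecting lemma (Lemma~\ref{Lem:connecting}) in the Bonatti-Crovisier style. For (4), I fix a countable basis $\{\mathcal{U}_m\}$ of $M^d$, and for each hyperbolic periodic orbit $P$ (drawn from a dense countable family of persistent continuations), let $\mathcal{A}_{m,P}$ be the set of $X$ for which either $C(P,X)\cap\mathcal{U}_m=\emptyset$ or a transverse homoclinic intersection of $P$ meets $\mathcal{U}_m$. This set is open by upper semicontinuity of $C(P)$ and dense: after reducing to a weak Kupka-Smale approximation via Theorem~\ref{Thm:xiruibin}, Lemma~\ref{Lem:connecting} lets me chain-connect any point of $C(P)\cap\mathcal{U}_m$ back to $P$ by an actual orbit, which together with Hayashi-type arguments produces a transverse intersection in $\mathcal{U}_m$. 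Intersection over all $m$ and $P$ yields $C(P)=H(P)$ on a residual set. Property (5) follows by the same scheme: for each $\mathcal{U}_m$, force every non-trivial chain transitive piece meeting $\mathcal{U}_m$ to contain a periodic orbit lying in $\mathcal{U}_m$ by chain-connecting two points of the piece through $\mathcal{U}_m$ to manufacture a periodic loop.

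Property (2) will then follow from (5) combined with index control. The hypothesized sequence $\gamma_{X_n}\to\Lambda$ of index-$i$ orbits makes $\Lambda$ a chain transitive set of $X$ by upper semicontinuity of the chain recurrent set, so (5) supplies hyperbolic periodic orbits of $X$ converging to $\Lambda$ in Hausdorff distance; to pin the index to $i$, I would use the continuity-of-dominated-splittings statement of Corollary~\ref{Cor:dominatedonweaktransitive} together with Franks' Lemma~\ref{Lem:Franks} to rule out alternative indices for the approximating orbits of $X$. For (1), Lemma~\ref{Lem:invariantinside} places each separatrix of $W^u(\sigma)\setminus\{\sigma\}$ inside $C(\sigma)$; for each basis element $\mathcal{U}_m$ of $M^d$, the set of $X$ for which either $C(\sigma)\cap\mathcal{U}_m=\emptyset$ or the chosen separatrix enters $\mathcal{U}_m$ is open (upper semicontinuity) and dense (by applying Lemma~\ref{Lem:connecting} to a long orbit segment on the separatrix and an arbitrary point of $C(\sigma)\cap\mathcal{U}_m$, after reducing to weak Kupka-Smale via Theorem~\ref{Thm:xiruibin}). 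Intersecting over all $m$ and over the finitely many separatrices forces density.

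The main obstacle I foresee is ensuring in items (1) and (2) that the perturbations invoked through Lemma~\ref{Lem:connecting} preserve the singularity $\sigma$, its chosen unstable separatrix, and the index of the approximating periodic orbits. This will be handled by the locality built into Lemma~\ref{Lem:connecting} (perturbations are supported in $\bigcup_{0\le t\le L}\phi_t(W_z)$ for an arbitrarily small $W_z$ disjoint from the objects one wishes to freeze), combined with the semicontinuity statements of Lemma~\ref{Lem:u.s.c} and Lemma~\ref{Lem:continuitychainclass}, and, where dominated splittings are invoked to control indices, the robustness supplied by Lemma~\ref{Lem:dominatedsplitting} away from $\overline{\mathcal{HT}}$.
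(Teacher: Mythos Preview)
The paper does not give a proof here; it supplies a remark that cites references for each item (items~1 and~4 from \cite{BoC04}, item~2 from \cite{Wen04}, item~5 from \cite{Cro06}, items~3 and~6 classical). Your sketch for items~1, 3, 4, 6 is in line with those references.

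There is a genuine gap in your treatment of item~2. You propose to obtain periodic orbits of $X$ near $\Lambda$ from item~(5) and then pin the index to $i$ using Corollary~\ref{Cor:dominatedonweaktransitive} and Franks' lemma. But Corollary~\ref{Cor:dominatedonweaktransitive} assumes $X\in\mathcal{X}^1(M^d)\setminus\overline{\mathcal{HT}}$, whereas Lemma~\ref{Lem:generic} is stated on all of $\mathcal{X}^1(M^d)$; away from that hypothesis there is no dominated splitting to invoke, and the periodic orbits supplied by (5) may well have the wrong index. The standard route (as in \cite{Wen04}) bypasses (5) entirely: take a countable basis $\{\mathcal{B}_n\}$ of the Hausdorff metric on compact subsets of $M^d$, and for each $n$ and each index $i$ let $\mathcal{H}_{n,i}$ be the set of $X$ possessing a hyperbolic periodic orbit of index $i$ lying (as a compact set) in $\mathcal{B}_n$, and $\mathcal{N}_{n,i}$ the set of $X$ with a $C^1$ neighborhood in which no vector field has such an orbit. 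Then $\mathcal{H}_{n,i}\cup\mathcal{N}_{n,i}$ is open and dense, and on the residual intersection the conclusion of (2) follows directly: if $X_k\to X$ with index-$i$ orbits $\gamma_{X_k}\to\Lambda$, then for every Hausdorff neighborhood $\mathcal{B}_n\ni\Lambda$ one has $X\notin\mathcal{N}_{n,i}$, hence $X\in\mathcal{H}_{n,i}$.

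Your sketch of item~5 also undershoots the target. Producing, for each basic open set $\mathcal{U}_m$ meeting a chain transitive set $\Lambda$, a periodic orbit of $X$ through $\mathcal{U}_m$ is weaker than producing a \emph{single} periodic orbit that is Hausdorff-close to all of $\Lambda$, which is what (5) asserts. The argument in \cite{Cro06} closes up one long $\varepsilon$-pseudo-orbit that $\varepsilon$-shadows the whole of $\Lambda$ into a single true periodic orbit; this requires the global form of the pseudo-orbit connecting lemma together with a covering/bookkeeping argument, not just pointwise connections.
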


\begin{Remark}
Item 1 is a corollary of the connecting lemma for pseudo-orbits \cite{BoC04}. There is no explicit version like this. \cite[Section 4]{MoP03} gave some ideas about the proof of Item 1 without using of the terminology of chain recurrence. Item 2 is fundamental, one can see \cite{Wen04} for instance. Item 3 is fundamental. It is true because generic $X$ can only have finitely many singularities. Moreover, the eigenvalues of the singularities have some continuous property. Item 4 is also a result in \cite{BoC04}. Item 5 is the main result in \cite{Cro06}. Item 6 is the classical Kupka-Smale theorem.
\end{Remark}

{\bf Let ${\cal G}_0$ be a dense $G_\delta$ set of ${\cal X}^1(M^d)$ such that $X\in{\cal G}_0$ iff $X$ satisfies all generic properties in this subsection.}

\section{Reduce the proof into technical results}\label{Sec:reduction}

Now we proof Theorem~\ref{Thm:main} under the condition that Theorem~\ref{Thm:homoclinicclass} is true.

\begin{proof}[Proof of Theorem~\ref{Thm:main}] Now we give the proof of Theorem A by assuming the result of Theorem
~\ref{Thm:homoclinicclass}. Suppose on the contrary that ${\cal
X}^1(M^3)\setminus\overline{{\cal MS}\cup {\cal HS}}$ is not empty. Choose $C^1$ generic $X\in{\cal
X}^1(M^3)\setminus\overline{{\cal MS}\cup {\cal HS}}$. Since every homoclinic tangency of a hyperbolic periodic orbit can be perturbed to be a transverse homoclinic intersection by an arbitrarily small perturbation, we have that $X$ is far away from ones with a homoclinic tangency.
Thus, by Theorem~\ref{Thm:homoclinicclass}, every chain recurrent class is a homoclinic class. Since we assume that Theorem~\ref{Thm:main} is not true, one has that every chain recurrent class is trivial: it is reduced to be a critical element. If there are finitely many chain recurrent classes, then we have that $X$ is Morse-Smale. Thus, one can assume that $X$ has infinitely many chain recurrent classes, and each chain recurrent class is a hyperbolic critical element. It is known that $C^1$ generic vector field can only have finitely many singularities since it is Kupka-Smale. Thus $X$ has countably many distinct hyperbolic periodic orbits $\{\gamma_n\}$. By taking a subsequence if necessary, we can assume that $\{\gamma_n\}$ converges in the Hausdorff topology. Let $\Lambda$ be the limit. Then, $\Lambda$ is chain transitive, which implies that $\Lambda$ is contained in a chain recurrent class. Because we know that every chain recurrent class of $X$ is a hyperbolic critical element, $\Lambda$ is a hyperbolic critical element. This cannot happen because hyperbolic critical elements are locally maximal.
\end{proof}

To prove Theorem~\ref{Thm:homoclinicclass}, we need to prove the following Theorem~\ref{Thm:dichotomysingularclass} and \ref{Thm:dominationimplyperiodic}.

\begin{Theorem}\label{Thm:dichotomysingularclass}
For $C^1$ generic $X\in {\cal X}^1(M^3)\setminus\overline{\cal HT}$, for the non-trivial chain recurrent class $C(\sigma)$ of some singularity $\sigma$, if $C(\sigma)$ is not a homoclinic class, then $C(\sigma)$ admits a dominated splitting $T_{C(\sigma)}M^3=E\oplus F$ w.r.t. the tangent flow.
\end{Theorem}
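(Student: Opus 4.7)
The plan is to first establish a dominated splitting on the extended normal bundle over $\widetilde{C(\sigma)}$ by approximating $C(\sigma)$ from outside by hyperbolic periodic orbits, and then promote this splitting to a dominated splitting on the tangent bundle via Lemma~\ref{Lem:mixed}. By item~5 of Lemma~\ref{Lem:generic}, the non-trivial chain transitive set $C(\sigma)$ is the Hausdorff limit of a sequence $\{\gamma_n\}$ of hyperbolic periodic orbits of $X$. Item~4 of Lemma~\ref{Lem:generic} gives $C(P)=H(P)$ for every hyperbolic periodic orbit $P$; together with the hypothesis that $C(\sigma)$ is not a homoclinic class this prevents $C(\sigma)$ from containing a periodic orbit, so in particular $\tau(\gamma_n)\to\infty$. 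Since $\sigma\in C(\sigma)$, for $n$ large $\gamma_n$ meets any prescribed neighborhood of $\sigma$, and Corollary~\ref{Cor:dominatedinneighborhood} then supplies a uniform $\iota>0$ and an $\iota$-dominated splitting ${\cal N}_{\gamma_n}=\Delta^{cs}_n\oplus\Delta^{cu}_n$ of index $1$ with respect to $\psi_t$. Passing to a subsequence and to the limit inside the sphere bundle, as in Corollary~\ref{Cor:dominatedonweaktransitive}, yields an $\iota$-dominated splitting $\widetilde{{\cal N}}_{\widetilde{C(\sigma)}}=\Delta^{cs}\oplus\Delta^{cu}$ of index~$1$ with respect to $\widetilde{\psi}_t$.

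To invoke Lemma~\ref{Lem:mixed}, the remaining task is to verify the scaled estimate $\|\widetilde{\psi}_t|_{\Delta^{cs}(u)}\|/\|\Phi_t(u)\|\le C{\rm e}^{-\lambda t}$ uniformly on $\widetilde{C(\sigma)}$. I would first establish it along each $\gamma_n$ in the equivalent form $\|\psi^*_t|_{\Delta^{cs}_n}\|\le C{\rm e}^{-\lambda t}$, exploiting the fact that on a periodic orbit the flow direction carries zero exponent, so $\psi^*_t$ and $\psi_t$ realize the same rates on $\Delta^{cs}_n$. Because $X\notin\overline{{\cal HT}}$, Lemma~\ref{Lem:dominatedsplitting} provides on each $\gamma_n$ of period $>\iota$ a natural splitting ${\cal N}_{\gamma_n}=G^s_n\oplus G^c_n\oplus G^u_n$ with uniform exponential rates on $G^s_n$ and $G^u_n$ and with $\dim G^c_n\le 1$. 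After restricting to a subsequence of constant type I would identify $\Delta^{cs}_n$ with $G^s_n$: this is automatic for saddles and for sinks with a non-degenerate weak direction; the case of a uniformly ${\cal N}$-contracting sink accumulating on $\sigma$ is excluded by Lemma~\ref{Lem:uniformsinks} combined with Lemma~\ref{Lem:unifromordominated}; and the source cases are dealt with by running the same argument for $-X$, which produces the dominated tangent splitting with the roles of $E$ and $F$ swapped. Taking limits of these uniform scaled bounds inside the compactification of $\psi^*_t$ then delivers the required estimate on $\widetilde{C(\sigma)}$.

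With the uniform scaled contraction in hand, Lemma~\ref{Lem:mixed} produces the dominated splitting $T_{C(\sigma)}M^3=E\oplus F$ with $\dim E=\dim\Delta^{cs}=1$, and the theorem follows. The main obstacle is the second step: the approximating $\gamma_n$ may a priori be of mixed sink, saddle or source type, and one must use the dichotomies of Lemmas~\ref{Lem:unifromordominated} and~\ref{Lem:uniformsinks} at $\sigma$ (together with their $-X$ counterparts) to restrict to a subsequence along which $\Delta^{cs}_n$ coincides with the genuinely contracting sub-bundle $G^s_n$ of Lemma~\ref{Lem:dominatedsplitting}. The sphere-bundle compactification and the scaled linear Poincar\'e flow $\psi^*_t$ developed in Section~\ref{Sec:differentflows} are exactly what allow the uniform estimates on the regular orbits to extend continuously across the singular fibre over $\sigma$, so that Lemma~\ref{Lem:mixed} can be applied.
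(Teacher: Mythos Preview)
Your outline has a genuine gap in the second step, where you claim that along each $\gamma_n$ the scaled estimate $\|\psi^*_t|_{\Delta^{cs}_n}\|\le C{\rm e}^{-\lambda t}$ follows from the $\psi_t$--contraction of Lemma~\ref{Lem:dominatedsplitting} because ``on a periodic orbit the flow direction carries zero exponent''. This is only an \emph{asymptotic} statement: over the full period $\|\Phi_{\tau(\gamma_n)}|_{\langle X\rangle}\|=1$, but for intermediate $t$ one has $\|\Phi_t|_{\langle X(x)\rangle}\|=|X(\phi_t(x))|/|X(x)|$, which oscillates without any uniform control once $\gamma_n$ passes close to $\sigma$. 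Consequently the constant in your scaled bound blows up with $n$, and nothing survives in the limit at the singular fibre of $\widetilde{C(\sigma)}$. In fact at a direction $v\in E^{ss}(\sigma)$ the ratio $\|\widetilde\psi_t|_{\Delta^{cs}(v)}\|/\|\Phi_t(v)\|$ grows exponentially, so without first knowing that $\widetilde{C(\sigma)}\cap T_\sigma M^3\subset E^{cu}(\sigma)$ (equivalently $W^{ss}(\sigma)\cap C(\sigma)=\{\sigma\}$) the hypothesis of Lemma~\ref{Lem:mixed} cannot hold; your argument never establishes this.

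The paper's route is essentially different. It first proves (Proposition~\ref{Pro:farfromtangency}) that every singularity in $C(\sigma)$ is Lorenz-like with $W^{ss}(\rho)\cap C(\sigma)=\{\rho\}$; this forces $\widetilde{C(\sigma)}\cap T_\sigma M^3\subset E^{cu}(\sigma)$ and gives the scaled bound \emph{at the singular directions} for free. The scaled bound at regular directions is then obtained not by passing estimates from $\gamma_n$ to the limit, but by a contradiction argument inside $\widetilde{C(\sigma)}$ itself (Proposition~\ref{Pro:mixingdominated}): assuming failure, Liao's sifting lemma (Lemma~\ref{Lem:Liaosifting}) manufactures $(\eta,T)$--$\psi^*_t$--quasi-hyperbolic arcs with endpoints away from ${\rm Sing}(X)$, Liao's shadowing lemma (Theorem~\ref{Thm:Liaoshadowing}) closes them into periodic orbits of index~$1$ with arbitrarily small stable exponent, and these are pairwise homoclinically related and accumulate on $C(\sigma)$, contradicting that $C(\sigma)$ is not a homoclinic class. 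The hypothesis that $\omega(x)$ meets ${\rm Sing}(X)$ for every $x\in C(\sigma)$ (which you never invoke) is precisely what makes the sifting lemma applicable. Only after this dichotomy is resolved does Lemma~\ref{Lem:mixed} get applied.
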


\begin{Theorem}\label{Thm:dominationimplyperiodic}
For $C^1$ generic $X\in {\cal X}^1(M^3)$, for any non-trivial chain recurrent class $C(\sigma)$ of some singularity $\sigma$, if $C(\sigma)$ admits a dominated splitting $T_{C(\sigma)}M^3=E\oplus F$ w.r.t. the tangent flow, then $C(\sigma)$ is a homoclinic class.
\end{Theorem}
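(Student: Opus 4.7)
The plan is to argue by contradiction. Suppose there exist $X$ in the residual set ${\cal G}_0$ of Subsection~\ref{Sub:generic}, a singularity $\sigma$ with $C(\sigma)$ nontrivial, and a dominated splitting $T_{C(\sigma)}M^3=E\oplus F$ w.r.t.\ the tangent flow, yet $C(\sigma)$ is not a homoclinic class. By item~4 of Lemma~\ref{Lem:generic}, $C(P)=H(P)$ for every hyperbolic periodic orbit $P$ of $X$, so the hypothesis forces $C(\sigma)$ to contain no periodic orbit. Replacing $X$ by $-X$ if necessary, I first reduce to the case where $C(\sigma)$ is Lyapunov stable; the remaining case, in which neither $X$ nor $-X$ makes $C(\sigma)$ Lyapunov stable, must be handled separately and should be ruled out by generic arguments showing that every non-trivial chain recurrent class containing a singularity is either Lyapunov stable or Lyapunov stable for $-X$.

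Assuming $C(\sigma)$ Lyapunov stable with a dominated splitting and a singularity, the Lorenz-like analysis of Section~\ref{Sec:lorenz-like} should upgrade $E\oplus F$ to a partially hyperbolic splitting $E^{ss}\oplus E^{cu}$ with $\dim E^{ss}=1$, $\dim E^{cu}=2$ (the flow direction belonging to $E^{cu}$), and every singularity in $C(\sigma)$ Lorenz-like of index~$2$. By the joint work with Bonatti~\cite{BGY11}, under exactly these hypotheses the presence of a periodic orbit in $C(\sigma)$ forces $C(\sigma)$ to be singular hyperbolic; conversely, a singular hyperbolic attractor on $M^3$ always contains periodic orbits, produced by iterating return maps on a singular cross-section where $E^{cu}$ is uniformly area-expanding. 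Combined with the no-periodic-orbit assumption, this shows $E^{cu}$ fails to be uniformly area-expanding over $C(\sigma)$, producing orbit segments of unbounded length along which $|\det \Phi_{-t}|_{E^{cu}}|$ is not small.

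I would then exploit this failure of area-expansion through a chain of perturbations. Using the ergodic closing lemma in the form of Corollary~\ref{Cor:subsetisrobust}, such orbit segments are shadowed by periodic orbits of an arbitrarily $C^1$-close vector field $Y_1$, while preserving any prescribed compact invariant subset. A Franks-type perturbation (Lemma~\ref{Lem:Franks}) applied along the area-contracting portion turns one such periodic orbit into a sink of a further perturbation $Y_2$. Passing to a weak Kupka-Smale approximation via Theorem~\ref{Thm:xiruibin} and then applying the Bonatti-Crovisier connecting lemma for pseudo-orbits (Lemma~\ref{Lem:connecting}), one arranges that the continuation $C(\sigma_{Y_3})$ meets the closure of the basin of this sink; points arbitrarily close to $C(\sigma_{Y_3})$ then have forward orbits falling into a sink disjoint from $C(\sigma_{Y_3})$, so $C(\sigma_{Y_3})$ is not Lyapunov stable for $Y_3$. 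This directly contradicts Lemma~\ref{Lem:stablepropertyofquasiattractor}.

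The main obstacle is the multi-step perturbation. The central difficulty, flagged by the authors in the introduction, is that central unstable curves in singular cross-sections are cut by the local stable manifolds of the Lorenz-like singularities, so their images under return maps fragment into disconnected arcs; one must control the lengths, orientations, and positions of these fragments relative to the stable foliation precisely enough that (a) some fragment genuinely contracts under the Franks perturbation to yield the sink, and (b) the connecting-lemma step does not simultaneously destroy the Lyapunov stable continuation needed to invoke Lemma~\ref{Lem:stablepropertyofquasiattractor}. Carrying out this cutting-and-pasting uniformly at every scale, via the scaled sectional Poincar\'e map ${\cal P}^*$ and the Liao-type estimates of Section~\ref{Sec:differentflows}, is the technical heart of the argument.
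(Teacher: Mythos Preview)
Your overall architecture matches the paper's: assume no periodic orbit, upgrade to partial hyperbolicity with $E^{ss}\oplus E^{cu}$, observe that $E^{cu}$ cannot be uniformly area-expanding (else $C(\sigma)$ would be singular hyperbolic and hence contain periodic orbits by Morales--Pacifico), use the ergodic closing lemma plus Franks to manufacture a sink for a nearby vector field, and derive a contradiction with the robust Lyapunov stability of $C(\sigma_Y)$ from Lemma~\ref{Lem:stablepropertyofquasiattractor}. Two small corrections: the reduction to the Lyapunov stable case does not need a separate generic dichotomy---Theorem~\ref{Thm:dominatedtopartial} (from \cite{BGY11}) already forces ${\rm ind}(\sigma)=2$ exactly when $\dim E=1$ and $E$ contracts, so replacing $X$ by $-X$ according to the index suffices, and then item~1 of Lemma~\ref{Lem:generic} gives Lyapunov stability directly. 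Likewise the upgrade from dominated to partially hyperbolic is Theorem~\ref{Thm:dominatedtopartial}, not Section~\ref{Sec:lorenz-like}.

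The genuine gap is your step from ``create a sink near $C(\sigma_Y)$'' to ``$C(\sigma_{Y_3})\cap\overline{B(\gamma)}\neq\emptyset$''. You propose to arrange this via Lemma~\ref{Lem:connecting}, but the connecting lemma for pseudo-orbits connects two points by a true orbit; it does not let you push the chain class onto the boundary of an attracting basin without risking collapse of the class into the sink or destruction of the sink itself. The paper's mechanism is quite different and is the real content of Section~\ref{Sec:birthhomo}. One first builds a robust singular cross-section system $(\Sigma,F)$ (Proposition~\ref{Pro:crosssectionforpseudoLorenz} and Corollary~\ref{Cor:crosssectionforpseudoLorenz}), then introduces the key combinatorial device: maximize, over nearby $C^2$ weak Kupka--Smale vector fields, the number $n$ of homoclinic orbits of singularities contained in $C(\sigma_Y)$, and work inside the set ${\cal M}_n$ where this maximum is attained. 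This maximality is what drives everything: it forces every singularity in the transitive ${\cal N}$-set $\Lambda_Y$ to already possess a homoclinic orbit (Lemma~\ref{Lem:homoclinicintransitive}), and forbids any singularity from having two (Proposition~\ref{Pro:notwohomoclinic}). The sink is then created from the measure $\mu_Y$ of Corollary~\ref{Cor:measure}, placed at a specific location relative to the homoclinic orbit in the cross-section, and a careful left/right and orientation analysis of the return map $F$ on narrow strips $h_i^+((0,\beta)\times(-1,1))$ shows that their forward iterates must meet a stable leaf of the sink (Proposition~\ref{Pro:generalresult}), yielding $\overline{B(\gamma)}\cap C(\sigma_Z)\neq\emptyset$ directly. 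The alternative branches of this orientation analysis produce an extra homoclinic connection, contradicting the maximality of $n$.

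In short, the missing idea is the ${\cal M}_n$ device and the accompanying cross-section combinatorics of Propositions~\ref{Pro:generalresult}--\ref{Pro:notwohomoclinic}; your appeal to Lemma~\ref{Lem:connecting} and the Liao estimates of Section~\ref{Sec:differentflows} does not supply a substitute. The Liao machinery is used in Section~\ref{Sec:lorenz-like} for Theorem~\ref{Thm:dichotomysingularclass}, not here.
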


In general, we give the definition of singular hyperbolic sets as the following:\footnote{We postpone to give the definition because we want the introduction to be easier to read.}
\begin{Definition}
Let $\Lambda$ be a compact invariant set of $X\in{\cal X}^1(M^3)$, $E\subset T_\Lambda M^3$ a two dimensional invariant sub-bundle, we say that $E$ is \emph{area-contracting}, if there are $C>0$, $\lambda>0$ such that for any $x\in\Lambda$ and any $t>0$, $|{\rm det} \Phi_t|_{E(x)}|\le C\mathrm{e}^{-\lambda t}$; we say that $E$ is \emph{area-expanding} if it is area-contracting for $-X$.

A compact invariant set $\Lambda$ of $X\in{\cal X}^1(M^3)$ is called \emph{singular hyperbolic}, if
\begin{itemize}

\item either, $\Lambda$ admits a partially hyperbolic splitting $T_\Lambda M^3=E^{ss}\oplus E^{cu}$, where $\dim E^{ss}=1$, $E^{ss}$ is contracting and $E^{cu}$ is area-expanding;

\item or, $\Lambda$ admits a partially hyperbolic splitting $T_\Lambda M^3=E^{cs}\oplus E^{uu}$, where $\dim E^{uu}=1$, $E^{uu}$ is expanding and $E^{cs}$ is area-contracting.
\end{itemize}

\end{Definition}

Note that in the above definition, we don't require $\Lambda$ contains a singularity or not. So every non-singular hyperbolic set is singular hyperbolic.

We also needs the following two results from \cite{BGY11}.
\begin{Lemma}\label{Lem:dominatedwithoutsingularity}
For $C^1$ generic $X\in{\cal X}^1(M^3)$, if $\Lambda$ is a non-singular chain transitive set and admits a dominated splitting ${\cal N}_\Lambda=\Delta^s\oplus\Delta^u$ with respect to $\psi_t$, then $\Lambda$ is hyperbolic.
\end{Lemma}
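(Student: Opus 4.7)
The plan is to argue by contradiction: suppose the dominated splitting $\mathcal{N}_\Lambda=\Delta^s\oplus\Delta^u$ is not hyperbolic, say $\Delta^s$ is not uniformly $\psi_t$-contracted (the case when $\Delta^u$ is not uniformly expanded reduces to this by replacing $X$ with $-X$). Since $\Lambda\cap\mathrm{Sing}(X)=\emptyset$, the norm $|X|$ is bounded above and below on a neighborhood $U$ of $\Lambda$, so $\psi_t$ and $\psi_t^*$ agree up to bounded multiplicative factors, and all uniform estimates from Section~\ref{Sec:differentflows}, in particular the uniformly sized central stable/unstable plaques of Lemma~\ref{Lem:plaquefamily}, hold on $\Lambda$. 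Moreover, by Corollary~\ref{Cor:dominatedonweaktransitive} the dominated splitting extends continuously to a neighborhood of $\Lambda$ in the space of regular points.

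Next, I would apply a Liao--Ma\~n\'e style subadditive/minimax argument to the continuous cocycle $t\mapsto\log\|\psi_t|_{\Delta^s(x)}\|$. Non-uniform contraction, together with compactness of $\Lambda$ and of the space of $\phi_t$-invariant probabilities, produces an ergodic $\phi_t$-invariant probability $\mu$ supported in $\Lambda$ whose $\Delta^s$-Lyapunov exponent $\lambda^s(\mu)$ is $\geq 0$. Since $\mu$ gives no mass to singularities, the ergodic closing lemma (Lemma~\ref{Lem:ergodicclosing}) combined with Corollary~\ref{Cor:subsetisrobust} yields, for every $\varepsilon>0$ and every $C^1$-neighborhood $\mathcal{U}$ of $X$, a vector field $Y\in\mathcal{U}$ carrying a hyperbolic periodic orbit $\tilde\gamma$ arbitrarily Hausdorff-close to $\mathrm{supp}(\mu)\subset\Lambda$, whose time-averaged Lyapunov exponent along the continuation of $\Delta^s$ is within $\varepsilon$ of $\lambda^s(\mu)$, hence at least $-\varepsilon$. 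Then Franks' lemma (Lemma~\ref{Lem:Franks}) produces $Z\in\mathcal{U}$ for which $\tilde\gamma$ is hyperbolic of index $\mathrm{ind}(\Delta^s)-1$: one perturbs the linear Poincar\'e cocycle along $\tilde\gamma$ inside $\Delta^s$ to flip the near-zero exponent into a positive one, while the dominated direction $\Delta^u$ remains uniformly expanded by domination.

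By Lemma~\ref{Lem:generic}(2) applied to $X\in\mathcal{G}_0$, such periodic orbits of index $\mathrm{ind}(\Delta^s)-1$ appearing in arbitrary $C^1$-perturbations and arbitrarily close to $\Lambda$ in the Hausdorff metric force $X$ itself to have periodic orbits of that index Hausdorff-accumulating on $\Lambda$. On the other hand, Lemma~\ref{Lem:generic}(5) together with Corollary~\ref{Cor:dominatedonweaktransitive} provides periodic orbits of $X$ of index $\mathrm{ind}(\Delta^s)$ accumulating on $\Lambda$; any such sequence inherits the dominated splitting and so confirms that dimension in the limit. The two distinct limit indices give the extended linear Poincar\'e flow on $\widetilde{\mathcal{N}}_{\widetilde\Lambda}$ two dominated splittings of different indices, contradicting uniqueness of dominated splittings of prescribed index. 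The main obstacle is the Franks-lemma step: one must verify that the weak contraction in $\Delta^s$ can truly be flipped to expansion within the prescribed $C^1$ neighborhood while preserving the domination and the uniform expansion of $\Delta^u$; this relies on the robustness of the extended dominated splitting on a neighborhood of $\Lambda$ granted by the non-singularity assumption.
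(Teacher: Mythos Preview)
The paper does not prove this lemma itself; it is quoted from \cite{BGY11}, so there is no in-paper proof to compare against. I evaluate your proposal on its own merits.

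Your opening is the right one: assuming $\Delta^s$ is not uniformly contracted, you obtain an ergodic measure $\mu$ on $\Lambda$ with $\lambda^s(\mu)\ge 0$, close it (Lemma~\ref{Lem:ergodicclosing}, Corollary~\ref{Cor:subsetisrobust}) to a periodic orbit with nearly nonnegative $\Delta^s$-exponent, flip its index to $0$ by Franks' lemma, and then invoke Lemma~\ref{Lem:generic}(2) to get genuine sources of $X$ Hausdorff-accumulating on $\mathrm{supp}(\mu)\subset\Lambda$. All of this is fine.

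The gap is in your endgame, and it is a real one. First, a source carries only the trivial hyperbolic splitting $0\oplus\mathcal{N}$ on its normal bundle, so a sequence of sources does \emph{not} induce any nontrivial dominated splitting of index $0$ on $\widetilde{\mathcal{N}}_{\widetilde\Lambda}$; there is nothing to pass to the limit. Second, even if you could manufacture a second dominated splitting of a different index, that is not a contradiction: uniqueness of dominated splittings is a statement about splittings of a \emph{fixed} index, and an invariant set may carry a whole flag of dominated splittings of several indices. Your appeal to Lemma~\ref{Lem:generic}(5) together with Corollary~\ref{Cor:dominatedonweaktransitive} to secure index-$1$ orbits is also unjustified: Lemma~\ref{Lem:generic}(5) gives approximating periodic orbits with no index control, and Corollary~\ref{Cor:dominatedonweaktransitive} runs in the opposite direction (from orbits of index $i$ to a splitting of index $i$), so you cannot read off the index of the approximating orbits from the splitting on $\Lambda$. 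What actually closes the argument (cf.\ \cite{BGY11}, and the ``$C$ contains no singularities'' paragraph of the proof of Theorem~\ref{Thm:homoclinicclass} in this paper) is the dichotomy of Corollary~\ref{Cor:sequenceofsinks} applied to $-X$ combined with Lemma~\ref{Lem:finiteunifsink}: either the sources are uniformly $(\eta,T,\mathcal{N})$-expanding at the period, and then a Kupka--Smale field has only finitely many of them, or their weak exponent tends to zero and one must feed this into a Liao selecting/shadowing argument rather than a ``two incompatible splittings'' claim.
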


\begin{Theorem}\label{Thm:partialtosingular}
For $C^1$ generic $X\in {\cal X}^1(M^3)$, if the chain recurrent class $C(\sigma)$ of a singularity $\sigma$ contains {\bf a periodic orbit} and
admits a dominated splitting $T_{C(\sigma)}M=E\oplus F$ with respect to $\Phi_t$,
then $C(\sigma)$ is singular hyperbolic. Consequently, $C(\sigma)$ is an attractor or repeller according to
the index of $\sigma$ equals to $2$ or $1$.
\end{Theorem}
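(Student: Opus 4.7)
The plan is to extract the singular hyperbolic structure on $C(\sigma)$ by first pinning down the linear data of $X$ at $\sigma$ imposed by the dominated splitting, and then spreading this structure to the whole class via periodic orbits that accumulate $C(\sigma)$ in the Hausdorff topology. Since $C(\sigma)$ contains a periodic orbit $p$, Lemma~\ref{Lem:generic}(4) gives $C(\sigma)=H(p)$, and Lemma~\ref{Lem:generic}(5) produces a sequence $\{\gamma_n\}$ of hyperbolic periodic orbits of $X$ itself with $\gamma_n\to C(\sigma)$ in the Hausdorff metric and of a common index $i\in\{1,2\}$ after extraction. Write $\dim E=e$ and $\dim F=3-e$; up to replacing $X$ by $-X$ we may assume $e=1$, and the goal becomes to show that $E$ is contracting, $F$ is area-expanding, and $\ind(\sigma)=2$, so that $C(\sigma)$ is an attractor.

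First I would analyze the linear data at $\sigma$. By the generic no-resonance condition of Lemma~\ref{Lem:generic}(3), $DX(\sigma)$ is hyperbolic with pairwise nonzero sums of real parts of eigenvalues. The invariance of the one-dimensional $E(\sigma)$ under $DX(\sigma)$ together with the domination $E\oplus F$ forces $E(\sigma)$ to be the eigendirection with the smallest real part. If $\ind(\sigma)=1$ then $F(\sigma)$ coincides with $E^u(\sigma)$ and is trivially area-expanding; if $\ind(\sigma)=2$ then $E(\sigma)=E^{ss}(\sigma)$ is the strong stable line, $F(\sigma)$ is the center-unstable plane spanned by the weak stable and the unstable eigenvectors, and area-expansion of $F$ at $\sigma$ is equivalent to the Lorenz-like inequality $\lambda_2+\lambda_3>0$ on the two eigenvalues living in $F(\sigma)$.

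Next I would spread the structure to the whole class using the approximating periodic orbits. Each $\gamma_n$ inherits a dominated splitting ${\cal N}_{\gamma_n}={\cal N}^{cs}_n\oplus{\cal N}^{cu}_n$ from $E\oplus F$ by projecting along $\langle X\rangle$, with a uniform scale of domination provided by Lemma~\ref{Lem:dominatedsplitting}. Applying Lemma~\ref{Lem:dichotomyalongstable} to $\{\gamma_n\}$ shows that either some $\gamma_n$ can be perturbed via Lemma~\ref{Lem:Franks} to a periodic orbit of different index---which is forbidden by Lemma~\ref{Lem:twochainrecurrentclass} at the generic $X$---or each ${\cal N}^{cs}_n$ is uniformly exponentially contracted by $\psi_t$ at a common rate. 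Passing to the limit with Corollary~\ref{Cor:dominatedonweaktransitive} yields uniform $\psi_t$-contraction on the center-stable subbundle over $C(\sigma)\setminus\Sing(X)$, and Lemma~\ref{Lem:mixed} lifts this to uniform $\Phi_t$-contraction of $E$ throughout $C(\sigma)$. The analogous argument for $-X$ promotes the uniform $\psi_t$-expansion of ${\cal N}^{cu}_n$ to area-expansion of $F$ away from $\sigma$, and continuity together with the Lorenz-like inequality extend it across $\sigma$.

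The main obstacle is the Lorenz-like inequality at $\sigma$ in the $\ind(\sigma)=2$ subcase, or equivalently ruling out the configuration $e=1,\ \ind(\sigma)=1$, which would produce a singular hyperbolic structure with a singularity of the wrong type for a class containing an index-$1$ periodic orbit. I would argue by contradiction: failure of the inequality allows, via a Franks' lemma perturbation along a $\gamma_n$ whose orbit spends a long time near $\sigma$, the creation of a periodic sink close to $C(\sigma)$; by Lemma~\ref{Lem:uniformsinks} such sinks persist in a full neighborhood, and by Lemma~\ref{Lem:stablepropertyofquasiattractor} together with the pseudo-orbit connecting Lemma~\ref{Lem:connecting} they must lie in the continuation of the Lyapunov-stable piece of $C(\sigma)$, contradicting that $C(\sigma)$ is non-trivial with a hyperbolic periodic orbit of index $1$. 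The delicate part is to control the perturbation so as to preserve the uniform scale of domination from the previous step while genuinely forcing the sink, and then to transport the contradiction through the generic continuity properties of Section~\ref{Sub:generic}.
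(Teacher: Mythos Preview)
The paper does not prove this theorem: it is quoted as one of ``the following two results from \cite{BGY11}'' and used as a black box. So there is no in-paper argument to compare your proposal against; what you have written is an attempt to reconstruct the Bonatti--Gan--Yang proof.

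Your sketch has real gaps. The most basic missing ingredient is the fact that $X(x)\in F(x)$ for every regular $x\in C(\sigma)$ when $\dim E=1$. This is what links the dominated splitting for $\Phi_t$ to the normal-bundle picture, and it is exactly the hypothesis you need before you can invoke Lemma~\ref{Lem:mixed} or transfer $\psi_t$-estimates on ${\cal N}^{cs}$ back to $\Phi_t$-estimates on $E$. You silently assume it when you say ``each $\gamma_n$ inherits a dominated splitting ${\cal N}_{\gamma_n}={\cal N}^{cs}_n\oplus{\cal N}^{cu}_n$ from $E\oplus F$ by projecting along $\langle X\rangle$,'' but a priori the flow direction could lie in $E$, not $F$; ruling that out near a singularity is a genuine step (compare the end of the proof of Proposition~\ref{Pro:mixingdominated}, where this is argued and used).

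Second, your use of the dichotomy Lemma~\ref{Lem:dichotomyalongstable} does not close. You assert that the ``either'' branch (perturbing some $\gamma_n$ to a different index) is ``forbidden by Lemma~\ref{Lem:twochainrecurrentclass},'' but that lemma only says that the relation $C(p_1)=C(p_2)$ is robust for generic $X$; it says nothing about the index of a perturbed periodic orbit of a nearby $Y$. Nothing in the hypotheses of Theorem~\ref{Thm:partialtosingular} places $X$ far from tangencies, so you cannot appeal to Lemma~\ref{Lem:dominatedsplitting} either. In \cite{BGY11} the contraction of $E$ is obtained differently: once $X\subset F$, one controls $\|\Phi_t|_E\|/\|\Phi_t|_{\langle X\rangle}\|$ via the domination and then uses an ergodic/recurrence argument (as in the second Claim in the proof of Proposition~\ref{Pro:mixingdominated}) rather than a Franks-type index dichotomy.

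Finally, your treatment of area-expansion of $F$ and of the index of $\sigma$ is too loose. The ``analogous argument for $-X$'' is not symmetric, since $\dim F=2$; one really needs to control $|\det\Phi_t|_F|$, and this is where the hypothesis that $C(\sigma)$ \emph{contains a periodic orbit} is actually used in \cite{BGY11} (to rule out neutral area behaviour along $F$ via the ergodic closing lemma). Your last paragraph conflates two separate issues---the Lorenz-like inequality $\lambda_2+\lambda_3>0$ at $\sigma$ and the exclusion of $\ind(\sigma)=1$ when $\dim E=1$---and the proposed contradiction through Lemma~\ref{Lem:uniformsinks} and Lemma~\ref{Lem:stablepropertyofquasiattractor} is not set up: you have not produced sinks that are $(C,\eta,T,{\cal N})$-contracting in the sense those lemmas require.
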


\begin{proof}[Proof of Theorem~\ref{Thm:homoclinicclass}]

If Theorem~\ref{Thm:homoclinicclass} is not true, then there is a $C^1$ generic $X\in{\cal X}^1(M^3)\setminus\overline{\cal HT}$ and a chain recurrent class $\cal C$ of $X$ such that $\cal C$ is not a homoclinic class. Now we have two cases:

\paragraph{$C$ contains no singularities.} Since $\cal C$ is chain transitive, there is a sequence of periodic orbits $\{\gamma_n\}$ such that $\gamma_n\to \cal C$ as $n\to\infty$ in the Hausdorff metric. Since $C$ is not reduced to a periodic orbit, one can assume that $\{\gamma_n\}$ are distinct periodic orbits and $\tau(\gamma_n)\to\infty$.
By Corollary~\ref{Cor:sequenceofsinks}, if we cannot perturb $\gamma_n$ of hyperbolic periodic orbit of index 1 for $n$ large enough, then there are constants $C>0$, $T>0$, $\eta>0$ such that
$\gamma_n$ are $(C,\eta,T,{\cal N})$-contracting at the period w.r.t. $\psi_t$ for $X$ or $-X$. Then by Lemma~\ref{Lem:finiteunifsink}, one can get a contradiction.


Thus, one can assume that the index of every $\gamma_n$ is $1$. From this, we have that $C$ admits a dominated splitting ${\cal N}_{C}=\Delta^{cs}\oplus \Delta^{cu}$ of index $1$ w.r.t. $\psi_t$. By Lemma~\ref{Lem:dominatedwithoutsingularity}, we have that $C$ is hyperbolic. This fact shows that $C$ is a homoclinic class, which gives a contradiction.

\paragraph{$C$ contains a singularity $\sigma$.} We assume that $C$ is non-trivial. Since $C$ is not a homoclinic class, by Theorem~\ref{Thm:dichotomysingularclass}, $C$ admits a dominated splitting $T_C M^3=E\oplus F$ w.r.t. the tangent flow $\Phi_t$.  Then by Theorem~\ref{Thm:dominationimplyperiodic}, $C$ is a homoclnic class, which gives a contradiction.

\end{proof}

\begin{proof}[Proof of Theorem~\ref{Thm:dominationtosingular}]
For $C^1$ generic $X\in{\cal X}(M^3)$, if $C(\sigma)$ admits a dominated splitting w.r.t. the tangent flow, then by by Theorem~\ref{Thm:dominationimplyperiodic}, $C$ is a homoclnic class. By using Theorem~\ref{Thm:partialtosingular}, $C(\sigma)$ is singular hyperbolic.
\end{proof}

\subsection{Proof of Theorem~\ref{Thm:dichotomysingularclass}}

The proof of Theorem~\ref{Thm:dichotomysingularclass} can be divided into the following two propositions:
\begin{Proposition}\label{Pro:farfromtangency}
There is a dense ${G}_\delta$ set ${\cal G}\in{\cal X}^1(M^3)\setminus\overline{\cal HT}$ such that for any
$X\in{\cal G}$, if $\sigma$ is a hyperbolic saddle of $X$ and $C(\sigma)$ is Lyapunov stable, then  every
singularity $\rho\in C(\sigma)$ is Lorenz-like, i.e., the eigenvalues $\lambda_1, \lambda_2, \lambda_3$ of
$DX(\rho)$ satisfy:
    $$
    \lambda_1<\lambda_2<0<-\lambda_2<\lambda_3.
    $$
Moreover, there is a $C^1$ neighborhood ${\cal U}_X$ of $X$ such that for any $Y\in{\cal U}_X$ and any $\rho\in
C(\sigma)\cap{\rm Sing}(X)$, one has $\rho_Y\in C(\sigma_Y,Y)$ and $W^{ss}(\rho_Y)\cap C(\sigma_Y,Y)=\{\rho_Y\}$.
\end{Proposition}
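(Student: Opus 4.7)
My plan is to assemble a residual set ${\cal G}\subset{\cal X}^1(M^3)\setminus\overline{\cal HT}$ by intersecting ${\cal G}_0$ from Lemma~\ref{Lem:generic} with the residual sets of Lemma~\ref{Lem:stablepropertyofquasiattractor}, Lemma~\ref{Lem:unstablecontained}, and Corollary~\ref{Cor:dominatedinneighborhood}. Fix $X\in{\cal G}$ with saddle $\sigma$ whose class $C(\sigma)$ is Lyapunov stable, and let $\rho\in C(\sigma)\cap{\rm Sing}(X)$. The class $C(\sigma)$ is non-trivial: if $C(\sigma)=\{\sigma\}$, then Lemma~\ref{Lem:unstableinLyapunov} forces $W^u(\sigma)\subseteq\{\sigma\}$, contradicting that $\sigma$ is a saddle. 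Moreover, $\rho$ cannot be a sink or source, since those form isolated chain recurrent classes and $C(\sigma)$ is non-trivial; so $\rho$ is a saddle of $X$.

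The core step is to establish the Lorenz-like eigenvalue structure at $\rho$. By Lemma~\ref{Lem:generic}(5) combined with Lemma~\ref{Lem:connecting}, $C(\sigma)$ is the Hausdorff limit of hyperbolic periodic orbits $\gamma_n$ of $X$ accumulating on $\rho$ (using $\overline{W^u(\rho)}\subset C(\sigma)$ from Lemma~\ref{Lem:unstableinLyapunov}). By Corollary~\ref{Cor:dominatedinneighborhood}, once $\gamma_n$ enters a fixed neighborhood $U_\rho$ of $\rho$, its normal bundle carries an $\iota$-dominated splitting of index $1$ for $\psi_t$ with uniform $\iota>0$. Passing to the limit via Corollary~\ref{Cor:dominatedonweaktransitive} produces an $\iota$-dominated splitting $\widetilde{\cal N}|_{\widetilde{C(\sigma)}}=\widetilde\Delta^{cs}\oplus\widetilde\Delta^{cu}$ of index $1$ for the extended linear Poincar\'e flow $\widetilde\psi_t$. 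Because $\overline{W^u(\rho)}\subset C(\sigma)$, unit tangents along the unstable separatrices of $\rho$ limiting to unit eigenvectors of $DX(\rho)$ all lie in $S_\rho M\cap\widetilde{C(\sigma)}$. For a $DX(\rho)$-eigenvector $u$ with eigenvalue $\mu$, the eigenvalues of $\widetilde\psi_t|_{\widetilde{\cal N}_u}$ are precisely $\lambda_j-\mu$ as $\lambda_j$ ranges over the other eigenvalues of $DX(\rho)$. Requiring an $\iota$-dominated splitting of index $1$ to hold consistently at every such direction $u$ (uniformly in $t$) forces the eigenvalues of $DX(\rho)$ to be real with exactly one positive, so $\rho$ has index $2$, and further imposes $-\lambda_2<\lambda_3$. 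This eigenvalue bookkeeping---excluding both complex eigenvalues and the index-$1$ alternative without creating contradictions along other limit directions---is the main obstacle.

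The robustness of the first conclusion is then immediate: the Lorenz-like inequalities are open in the $C^1$ topology, and the singularities of $X$ vary continuously (Lemma~\ref{Lem:generic}(3)), so $\rho_Y$ remains Lorenz-like for every $Y$ in a $C^1$ neighborhood ${\cal U}_X$. Lemma~\ref{Lem:unstablecontained} also yields $\overline{W^u(\rho_Y)}\subset C(\sigma_Y,Y)$ throughout ${\cal U}_X$, so in particular $\rho_Y\in C(\sigma_Y,Y)$.

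Finally, for $W^{ss}(\rho_Y)\cap C(\sigma_Y,Y)=\{\rho_Y\}$, I would argue by contradiction. Suppose $x\in W^{ss}(\rho_Y)\setminus\{\rho_Y\}$ lies in $C(\sigma_Y,Y)$. Then $\phi_t^Y(x)\to\rho_Y$ tangent to the $\lambda_1$-eigendirection, so $X(\phi_t^Y(x))/|X(\phi_t^Y(x))|\to\pm u^{ss}\in\widetilde{C(\sigma_Y,Y)}\cap S_{\rho_Y}M$. Using Lemma~\ref{Lem:stablepropertyofquasiattractor} (Lyapunov stability of $C(\sigma_Y,Y)$ persists for weak Kupka-Smale $Y\in{\cal U}_X$) together with the connecting lemma (Lemma~\ref{Lem:wenxia}) applied along the orbit of $x$, I plan to perturb $Y$ to connect $x$ either to a hyperbolic periodic orbit inside $C(\sigma_Y,Y)$---producing a homoclinic tangency by a further small $C^1$ perturbation, contradicting $X\notin\overline{\cal HT}$---or to create a new sink inside $C(\sigma_Y,Y)$, which would destroy its Lyapunov stability. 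Controlling this last perturbation so that the sought obstruction is actually triggered is the most delicate aspect, and the required techniques closely mirror those developed in Section~\ref{Sec:birthhomo}.
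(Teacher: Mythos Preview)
Your eigenvalue bookkeeping cannot establish the key Lorenz-like inequality $-\lambda_2<\lambda_3$. At a limit direction $u\in E^u(\rho)\cap S_\rho M$ the extended linear Poincar\'e flow $\widetilde\psi_t$ acts on $\widetilde{\cal N}_u$ with eigenvalues $e^{\lambda_1 t}$ and $e^{\lambda_2 t}$ (not $\lambda_j-\mu$ as you wrote; the projection does not rescale when the eigenvectors are orthogonal). An index-$1$ dominated splitting there only gives $\lambda_1<\lambda_2$. At a direction $v\in E^{cs}(\rho)$ coming from $W^s(\rho)\cap C(\sigma)$, the same reasoning gives $\lambda_1<\lambda_3$. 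Neither of these, nor any combination, yields $\lambda_2+\lambda_3>0$. The paper obtains this inequality by an entirely different mechanism (Theorem~\ref{Thm:Lorenz-like}): assume $\lambda_2+\lambda_3<0$, use the connecting lemma to create a homoclinic orbit $\Gamma$ of $\rho$, approximate $\Gamma\cup\{\rho\}$ by periodic orbits $\gamma_n$ of nearby vector fields, and then---because the unique ergodic measure on $\Gamma\cup\{\rho\}$ sits on $\rho$ where the center-unstable area contracts---show the $\gamma_n$ are $(C,\eta,T,{\cal N})$-contracting sinks accumulating on $\rho$, contradicting Lemma~\ref{Lem:uniformsinks}.

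Your plan for $W^{ss}(\rho_Y)\cap C(\sigma_Y,Y)=\{\rho_Y\}$ is also off target. The contradiction the paper exploits (Lemma~\ref{Lem:strongstableoutside}) is geometric rather than bifurcation-theoretic: if some $x\in W^{ss}(\rho)\cap C(\sigma)\setminus\{\rho\}$ exists, one connects to produce a \emph{strong} homoclinic orbit $\Gamma\subset W^{ss}(\rho)\cap W^u(\rho)$, then approximates by periodic orbits $\gamma_n$ lying (near $\rho$) in the plane $E^{ss}\oplus E^u$. Along $\widetilde{\Gamma\cup\{\rho\}}$ the dominated bundle $\Delta^{cs}$ must equal $E^{ss}$ at directions in $E^u$ but must equal $E^{cs}$ at directions in $E^{ss}$, and this incompatibility (analyzed as in \cite[Lemma~4.3]{LGW05}) is the contradiction. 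Your proposed route---perturbing to a tangency or a sink that ``destroys Lyapunov stability''---does not isolate any such obstruction, and creating a sink inside a Lyapunov stable class is not in itself contradictory. Note also that in the paper the $W^{ss}$ statement is proved \emph{first} and then used (via Corollary~\ref{Cor:uniqueindexofsingularity}) to force ${\rm ind}(\rho)=2$; your ordering, deducing the index from the splitting alone, does not go through for the same reason as above.
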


\begin{Proposition}\label{Pro:mixingdominated}
Let $\Lambda$ be a compact invariant set of $X\in{\cal X}^1(M^d)$
verifying the following properties:

\begin{itemize}
\item $\Lambda\setminus{\rm Sing}(X)$ admits an index $i$ dominated splitting
${\cal N}_{\Lambda\setminus{\rm Sing}(X)}=\Delta^{cs}\oplus\Delta^{cu}$ in the normal bundle w.r.t. the linear Poincar\'e flow $\psi_t$.

\item Every singularity $\sigma\in\Lambda$ is hyperbolic and ${\rm ind}(\sigma)>i$. Moreover, $T_\sigma M^d$
admits a partially hyperbolic splitting $T_{\sigma}M^d=E^{ss}\oplus E^{cu}$ with respect to the tangent flow,
where $\dim E^{ss}=i$ and for the corresponding strong stable manifolds $W^{ss}(\sigma)$, one has
$W^{ss}(\sigma)\cap \Lambda=\{\sigma\}$.

\item For every $x\in\Lambda$, one has $\omega(x)\cap {\rm Sing}(X)\neq\emptyset$.

\end{itemize}

Then one has
\begin{itemize}
\item either $\Lambda$ admits a partially hyperbolic splitting $T_\Lambda M^d=E^{ss}\oplus F$ with respect to the
tangent flow $\Phi_t$, where $\dim E^{ss}=i$,

\item or, there is a sequence of hyperbolic periodic orbits $\{\gamma_n\}_{n\in\NN}$ of index $i$ such that
\begin{itemize}

\item $\tau(\gamma_n)\to\infty$,

\item $H(\gamma_n)\cap\Lambda\neq\emptyset$ for each $n\in\NN$,

\item There is $T>0$ such that for $x_n\in\gamma_n$,
$$\lim_{n\to\infty}\frac{1}{[\tau(\gamma_n)/T]}\sum_{i=0}^{[\tau(\gamma_n)/T]-1}\log\|\psi_T|_{{\cal N}^s(x_n)}\|=0.$$
\end{itemize}
\end{itemize}

\end{Proposition}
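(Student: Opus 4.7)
The plan is to transfer the dominated splitting from the normal bundle to the tangent bundle via Lemma~\ref{Lem:mixed}, treating singularities through the compactified flow on $\widetilde{\Lambda}\subset SM^d$, and to produce the periodic orbits of alternative (b) when this transfer obstruction appears.

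First I would lift $\Delta^{cs}\oplus\Delta^{cu}$ to a continuous $\widetilde{\psi}_t$-invariant dominated splitting of $\widetilde{\mathcal{N}}_{\widetilde{\Lambda}}$, using the same closure-and-continuity argument as in Corollary~\ref{Cor:dominatedonweaktransitive}. Over a singularity $\sigma$, hypothesis (2) together with $W^{ss}(\sigma)\cap\Lambda=\{\sigma\}$ forces every direction $u\in\widetilde{\Lambda}|_\sigma$ to lie in $E^{cu}(\sigma)$: a limit point of $X(x_n)/|X(x_n)|$ for $x_n\to\sigma$ in $\Lambda$ with a component in $E^{ss}(\sigma)$ would place the $x_n$ on the local strong stable manifold $W^{ss}(\sigma)$. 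Uniqueness of the index-$i$ dominated splitting then identifies $\Delta^{cs}(u)$ with the projection of $E^{ss}(\sigma)$ into $\widetilde{\mathcal{N}}_u$ along $\langle u\rangle$, and the partial hyperbolicity of $\sigma$ gives uniform exponential decay of the ratio $R(u,t):=\|\widetilde{\psi}_t|_{\Delta^{cs}(u)}\|/\|\Phi_t(u)\|$ (which coincides with $\|\psi_t^*|_{\Delta^{cs}(\pi(u))}\|$ at regular points) on directions over $\mathrm{Sing}(X)\cap\Lambda$, at a rate equal to the partial-hyperbolicity gap of the singularities.

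I then dichotomize on whether the uniform exponential decay of $R$ propagates to all of $\widetilde{\Lambda}$. In the affirmative case Lemma~\ref{Lem:mixed} produces an invariant dominated splitting $T_\Lambda M^d=E\oplus F$ with $\dim E=i$, and its contraction-mapping construction gives $\|\Phi_t|_E\|\le C\|\widetilde{\psi}_t|_{\Delta^{cs}}\|$. Hypothesis (3) forces every orbit in $\Lambda$ to return to any prescribed neighborhood of $\mathrm{Sing}(X)\cap\Lambda$ with controlled return times, so combining the singular decay rate of $R$ with Kingman's subadditive ergodic theorem applied to $\log\|\widetilde{\psi}_T|_{\Delta^{cs}}\|$ upgrades $R$-decay to uniform contraction of $\widetilde{\psi}_t|_{\Delta^{cs}}$ itself, and hence of $\Phi_t|_E$. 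Setting $E^{ss}:=E$ yields alternative (a). If instead the uniform decay of $R$ fails, I extract sequences $u_n\in\widetilde{\Lambda}$ and $t_n\to\infty$ with $\tfrac{1}{t_n}\log R(u_n,t_n)\to 0$ and take a weak$^*$ limit $\mu$ of the empirical measures along $\phi_{[0,t_n]}(\pi(u_n))$; by the preceding paragraph $\mu$ cannot concentrate on $\mathrm{Sing}(X)$, so $\mu(\Sigma(X))=1$ by the ergodic closing lemma (Lemma~\ref{Lem:ergodicclosing}). Picking a $\mu$-generic strongly closable regular point and choosing Pliss times along its orbit where $\Delta^{cs}$ genuinely contracts and $\Delta^{cu}$ genuinely expands (available from the normal-bundle domination), I close the orbit via Liao's shadowing lemma to a hyperbolic periodic orbit $\gamma_n$ of $X$ itself whose hyperbolic splitting is the restriction of $\Delta^{cs}\oplus\Delta^{cu}$, forcing $\mathrm{ind}(\gamma_n)=i$; the empirical measure of $\gamma_n$ then approximates $\mu$, so the time-$T$ average of $\log\|\psi_T|_{\mathcal{N}^s}\|$ over $\gamma_n$ tends to $0$. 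The $\lambda$-lemma applied to the invariant manifolds of $\gamma_n$ together with its proximity to orbits in $\Lambda$ produces the required point in $H(\gamma_n)\cap\Lambda$.

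I expect the main obstacle to be this last step. The ergodic closing lemma as stated perturbs $X$, so recovering a closed orbit of $X$ itself demands the finer Liao shadowing lemma, whose quasi-hyperbolic hypothesis requires uniform separation of the shadowed orbit segments from $\mathrm{Sing}(X)$ together with definite contraction and expansion rates on $\Delta^{cs}$ and $\Delta^{cu}$. Since hypothesis (3) forces the orbits I am trying to close to re-enter arbitrarily small neighborhoods of $\mathrm{Sing}(X)$ infinitely often, I will need to split each orbit segment into excursions near singularities (handled via the local coordinates of Lemma~\ref{Lem:domainofPoincare} and the partial hyperbolicity of $\sigma$) and returns to a compact regular subset of $\Lambda$, then make a Pliss-type selection of closing times that preserves the near-zero stable Lyapunov average. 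Arranging $H(\gamma_n)\cap\Lambda\neq\emptyset$, rather than $\gamma_n$ merely lying in a neighborhood of $\Lambda$, is a secondary but related technical difficulty.
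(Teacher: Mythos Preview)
Your overall architecture matches the paper's: lift to $\widetilde{\Lambda}$, observe that over singularities $\Delta^{cs}$ coincides with $E^{ss}$ so the ratio $R(u,t)=\|\widetilde{\psi}_t|_{\Delta^{cs}(u)}\|/\|\Phi_t(u)\|$ decays uniformly there, dichotomize on uniform decay of $R$, and invoke Lemma~\ref{Lem:mixed} in the good case. The contraction of $E$ in case (a) is done in the paper by a direct compactness argument (Lemma~\ref{Lem:compact}) rather than Kingman, but your route would also work.

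The genuine gap is in case (b). The paper does \emph{not} pass through an invariant measure or the ergodic closing lemma; it applies \emph{Liao's sifting lemma} (Lemma~\ref{Lem:Liaosifting}) to the cocycle $\widetilde{\xi}(v)=\log\|\widetilde{\psi}_{T^*}|_{\Delta^{cs}(v)}\|-\log\|\Phi_{T^*}(v)\|$ on the compactified flow $\Phi_t^I|_{\widetilde{\Lambda}}$. The hypotheses of the sifting lemma are exactly what you have: a point $b$ where partial sums of $\widetilde{\xi}$ stay nonnegative (from the failure of uniform $R$-decay), and in every $\omega$-limit set a point where partial sums are uniformly $\le -n\log 4$ (from hypothesis (3) and the singular estimate). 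The sifting lemma then outputs, for arbitrary $k$, consecutive orbit segments whose \emph{endpoints} lie in the level set $\widetilde{\Lambda}_{\lambda_2}=\{\widetilde{\xi}\circ\Phi^I_{-T^*}\ge\lambda_2\}$, which is uniformly bounded away from $\mathrm{Sing}(X)$, and along which the two-sided quasi-hyperbolic estimates for $\psi_t^*$ hold automatically (forward contraction on $\Delta^{cs}$ from the first inequality, backward expansion on $\Delta^{cu}$ from the second plus domination). A pigeonhole on these $k$ endpoints then feeds directly into Liao's shadowing (Theorem~\ref{Thm:Liaoshadowing}) for $X$ itself. Your worry about the orbit re-entering neighborhoods of $\mathrm{Sing}(X)$ is misplaced: the $\psi_t^*$-version of Liao's shadowing requires only the \emph{head and tail} of the arc to be far from singularities, and the scaled flow absorbs the interior excursions; you do not need to split into excursions by hand. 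A standalone Pliss selection would give only one-sided estimates and no control on where the Pliss times land relative to $\mathrm{Sing}(X)$, so it does not replace the sifting lemma here.

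For $H(\gamma_n)\cap\Lambda\neq\emptyset$, the paper's mechanism is also different from your $\lambda$-lemma suggestion: the shadowing periodic orbits $P_{1/q}$ inherit uniform $(1,\eta',T',\Delta^{cs})$-contraction and $(1,\eta',T',\Delta^{cu})$-expansion at points $p_{1/q}$ bounded away from $\mathrm{Sing}(X)$, so Lemma~\ref{Lem:intersect} gives uniform-size stable and unstable manifolds there; since the $p_{1/q}$ accumulate, the $P_{1/q}$ are pairwise homoclinically related, and since $\limsup P_{1/q}\subset\Lambda$, the common homoclinic class meets $\Lambda$.
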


Now one can give a proof of Theorem~\ref{Thm:dichotomysingularclass} by Proposition~\ref{Pro:farfromtangency} and
Proposition~\ref{Pro:mixingdominated}.

\begin{proof}[Proof of Theorem~\ref{Thm:dichotomysingularclass}]
Since $\dim M^3=3$, without loss generality, one can assume that
${\rm ind}(\sigma)=2$. Otherwise, one considers $-X$. By Lemma~\ref{Lem:generic}, $C(\sigma)$ is Lyapunov
stable. By Proposition~\ref{Pro:farfromtangency}, every singularity $\rho$ in $C(\sigma)$ is Lorenz-like and
$W^{ss}(\rho)\cap C(\sigma)=\{\rho\}$. Since $X\in {\cal X}^1(M^3)\setminus\overline{{\cal HT}}$,
one has
\begin{itemize}

\item The normal bundle of $C(\sigma)\setminus{\rm Sing}(X)$ admits a dominated splitting with respect to the
linear Poincar\'e flow (See more details from Corollary~\ref{Cor:dominatedinneighborhood}).

\item $C(\sigma)$ contains no periodic points. As a corollary, for every regular point $x\in C(\sigma)$,
$\omega(x)$ contains a singularity. Otherwise, if $\omega(x)$ contains no singularities, then by Lemma~\ref{Lem:dominatedwithoutsingularity}, $\omega(x)$ is hyperbolic. Then one can get a periodic orbit in $C(\sigma)$ by the shadowing lemma, which is a contradiction.
\end{itemize}

By Proposition~\ref{Pro:mixingdominated}, either $C(\sigma)$ admits a partially hyperbolic splitting, or $C(\sigma)\cap H(\gamma)\neq\emptyset$ for some hyperbolic saddle $\gamma$. But the fact that $C(\sigma)\cap H(\gamma)\neq\emptyset$ gives a contradiction.
\end{proof}

\subsection{The proof of Theorem~\ref{Thm:dominationimplyperiodic}}

The proof of Theorem~\ref{Thm:dominationimplyperiodic} involves more notations and definitions. It's not easy to state the precise results here. But we can give some rough idea here.

\begin{Proposition}\label{Pro:roughcrosssection}
For $C^1$ generic $X\in{\cal X}^1(M^3)$, for a Lyapunov stable chain recurrent class $C(\sigma)$, if $C(\sigma)$ contains no periodic orbits and $C(\sigma)$ admits a partially hyperbolic splitting $T_{C(\sigma)}M^3=E^{ss}\oplus E^{cu}$ with $\dim E^{ss}=1$, then $C(\sigma)$ admits a cross section system $(\Sigma,F)$. Moreover, this cross section system $(\Sigma,F)$ has some continuous properties w.r.t. $X$.

\end{Proposition}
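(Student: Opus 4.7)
The plan is to construct $\Sigma$ as a finite disjoint union of two kinds of $2$-dimensional sections --- \emph{singular} cross sections placed near each Lorenz-like singularity of $C(\sigma)$, and \emph{regular} flow-box sections covering the compact regular part of the class --- and to take $F$ to be the associated first return map. As a preliminary step I would extend $E^{ss}$ to a $\Phi_t$-invariant continuous line bundle on a small neighborhood of $C(\sigma)$ and, via the plaque family machinery of Lemma~\ref{Lem:plaquefamily}, produce a one-dimensional strong stable lamination ${\cF}^{ss}$ whose leaves are forward invariant and uniformly exponentially contracted by the flow. Each piece of $\Sigma$ will be foliated by traces of ${\cF}^{ss}$, so $F$ will automatically contract this subfoliation.

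By Proposition~\ref{Pro:farfromtangency}, each $\rho\in C(\sigma)\cap\Sing(X)$ is Lorenz-like with eigenvalues $\lambda_1<\lambda_2<0<-\lambda_2<\lambda_3$, and $W^{ss}(\rho)\cap C(\sigma)=\{\rho\}$. Working in linearizing coordinates near $\rho$, I would take $\Sigma_\rho$ to be a pair of small rectangles sitting transverse to the flow, one on each side of the weak stable manifold $W^s_{\mathrm{loc}}(\rho)$, each foliated by segments of ${\cF}^{ss}$, with a boundary curve contained in $W^s_{\mathrm{loc}}(\rho)$. The hypothesis $W^{ss}(\rho)\cap C(\sigma)=\{\rho\}$ is exactly what ensures that every orbit of $C(\sigma)$ approaching $\rho$ enters the linearizing box through $W^s_{\mathrm{loc}}(\rho)\setminus W^{ss}_{\mathrm{loc}}(\rho)$ and hence meets $\Sigma_\rho$ cleanly before coming close to $\rho$.

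After removing small neighborhoods $U_\rho$ of the finitely many singularities, the remainder of $C(\sigma)$ is compact and regular; I would cover it by finitely many flow boxes and extract from each a regular cross section $\Sigma_j$, a small disk tangent to $E^{cu}$ foliated by ${\cF}^{ss}$. Setting $\Sigma=\bigsqcup_\rho\Sigma_\rho\sqcup\bigsqcup_j\Sigma_j$, the Poincar\'e return map $F$ is well defined on the open set of points whose next intersection with $\Sigma$ does not lie on $\bigcup_\rho W^s_{\mathrm{loc}}(\rho)$. Because $C(\sigma)$ is chain transitive, contains no periodic orbit, and --- by Lemma~\ref{Lem:dominatedwithoutsingularity} applied to the $\omega$-limit of a regular point together with the shadowing lemma --- has every regular $\omega$-limit meeting $\Sing(X)$, this domain is dense in $\Sigma\cap C(\sigma)$ and every orbit of $C(\sigma)$ visits $\Sigma$ infinitely often.

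For the continuity clause, Lemma~\ref{Lem:generic} gives continuous dependence of the singularities and their eigenvalue data, Proposition~\ref{Pro:farfromtangency} gives a $C^1$ neighborhood ${\cal U}_X$ in which the conditions $\rho_Y\in C(\sigma_Y,Y)$ and $W^{ss}(\rho_Y)\cap C(\sigma_Y,Y)=\{\rho_Y\}$ persist, and the plaque families of Lemma~\ref{Lem:plaquefamily} vary continuously with $Y$; the same rectangles and flow-box disks therefore serve as a cross section system for every $Y\in{\cal U}_X$, with $F_Y\to F$ uniformly on compact subsets of the common domain. The main obstacle, and the reason the conclusion is only ``rough'', is the cut locus $\Sigma\cap\bigcup_\rho W^s_{\mathrm{loc}}(\rho)$: along it the return time blows up and $F$ is discontinuous, so one must argue carefully that the quotient dynamics modulo ${\cF}^{ss}$ still provides a faithful picture of $C(\sigma)$ and that this picture depends continuously on $Y$. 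It is precisely here that the structural assumptions of the proposition --- Lyapunov stability, absence of periodic orbits, and Lorenz-like singularities with $W^{ss}\cap C(\sigma)=\{\rho\}$ --- are essential.
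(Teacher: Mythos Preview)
Your construction deviates from the paper's in an important architectural way. The paper builds $\Sigma$ entirely out of \emph{singular} cross-sections near the Lorenz-like singularities and uses \emph{no} regular flow-box sections whatsoever. The reason this suffices is precisely the observation you invoke only in passing: since $C(\sigma)$ has no periodic orbits, every regular $\omega$-limit meets $\Sing(X)$, so the forward orbit of any regular point of $C(\sigma)$ eventually crosses one of the singular sections. Your extra regular sections are not incorrect, but they add bookkeeping and, more to the point, they do not match the paper's definition of a cross-section system, which is $\Sigma=\bigcup_i(S_i^+\cup S_i^-)$ with each $S_i^\pm$ a singular cross-section.

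Your placement of the singular sections is also off. In the paper each $S_i^\pm$ is a rectangle with $W^s_{\mathrm{loc}}(\sigma_i)$ passing through its \emph{interior} (as the curve $\ell=h(\{0\}\times(-1,1))$), not lying on its boundary; the pair $S_i^+,S_i^-$ is separated by the plane $E^{ss}(\sigma_i)\oplus E^u(\sigma_i)$, i.e.\ one section for each component of $W^s_{\mathrm{loc}}\setminus W^{ss}_{\mathrm{loc}}$, not one for each side of $W^s_{\mathrm{loc}}$. This geometry matters for the later left/right analysis of the return map near $\ell$.

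Finally, you omit a step the paper handles with care: ensuring $\partial\Sigma\cap C(\sigma)=\emptyset$. The paper shows that for each $i,\pm$ the set of $z\in(-\beta,\beta)$ with $h_i^\pm(\{z\}\times(-1,1))\cap C(\sigma)=\emptyset$ is open and dense, using that for generic $X$ the class $C(\sigma)$ is Hausdorff-accumulated by periodic orbits which, by hypothesis, lie outside $C(\sigma)$; their strong stable leaves then force gaps in the horizontal projection of $C(\sigma)\cap S_i^\pm$. This is exactly where the genericity and the absence of periodic orbits in $C(\sigma)$ are exploited directly, and it is what lets one choose the vertical boundaries of $\Sigma$ to miss $C(\sigma)$ --- a point you do not address for either your singular or your regular pieces.
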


The precise definition of cross section system could be seen in Section~\ref{Sec:birthhomo}.

\begin{Proposition}\label{Pro:roughsinkbasin}

For $C^1$ generic $X\in{\cal X}^1(M^3)$ and $\sigma\in\mathrm{Sing}(X)$, if the chain recurrent class $C(\sigma)$ is Lyapunov stable, contains no periodic orbits and admits a partially hyperbolic splitting $T_{C(\sigma)}M^3=E^{ss}\oplus E^{cu}$ with $\dim E^{ss}=1$, then for any $C^1$ neighborhood $\cal U$ of $X$, there is a weak Kupka-Smale vector field $Y\in\cal U$ such that $Y$ has a periodic sink $\gamma$, and $C(\sigma_Y)\cap \overline{B(\gamma)}\neq\emptyset$, where $B(\gamma)$ is the basin of $\gamma$ and $\overline{B(\gamma)}$ is the closure of $B(\gamma)$.

\end{Proposition}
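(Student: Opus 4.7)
The plan is to exploit the absence of periodic orbits in $C(\sigma)$ to deduce that the partial hyperbolicity is not singular hyperbolic, and then to convert the resulting failure of area-expansion in $E^{cu}$ into a genuine periodic sink of a nearby vector field whose basin closure meets the continuation of $C(\sigma)$. The cross-section system $(\Sigma,F)$ given by Proposition~\ref{Pro:roughcrosssection} reduces the analysis along $E^{cu}$ essentially to a one-dimensional return-map setting, cut by the local stable manifolds of the singularities sitting in $C(\sigma)$.

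First I would show that $E^{cu}$ cannot be uniformly area-expanding on $C(\sigma)$. Otherwise $C(\sigma)$ would be a singular hyperbolic attractor, since it is Lyapunov stable and partially hyperbolic with $\dim E^{ss}=1$; but every singular hyperbolic attractor of a three-dimensional vector field contains a periodic orbit, contradicting the hypothesis. The continuity of the partially hyperbolic splitting makes this failure robust, so for every $Y$ in some $C^1$ neighborhood ${\cal V}\subset{\cal U}$ of $X$, the class $C(\sigma_Y)$ still fails to be area-expanding along its $E^{cu}$.

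Next, by a Pliss-type argument on the cross-section (compare Lemma~\ref{Lem:pliss}), using that $E^{cu}\cap T\Sigma$ is one-dimensional, I would obtain an ergodic invariant measure $\mu$ on $C(\sigma)$ with non-positive central-unstable Lyapunov exponent and a $\mu$-typical point $y$ whose forward $F$-returns satisfy $\prod_{i=0}^{n-1}\|DF|_{\Delta^{cu}(F^i y)}\|\le 1$ for every $n\ge 0$. Applying Ma\~n\'e's ergodic closing lemma (Lemma~\ref{Lem:ergodicclosing} together with Corollary~\ref{Cor:subsetisrobust}) produces a vector field $Y_1\in{\cal V}$ with a periodic orbit that shadows $y$ through many returns, inheriting both the strong stable contraction and the central-unstable non-expansion. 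Then Franks' lemma (Lemma~\ref{Lem:Franks}) allows me to multiply the central-unstable derivative along this periodic orbit by a factor slightly less than $1$, producing a hyperbolic periodic sink $\gamma$ of some $Y_2\in{\cal U}$. By construction $\gamma$ lies arbitrarily close to $C(\sigma_{Y_2})$, and because $B(\gamma)$ is open and contains $\gamma$, one has $\overline{B(\gamma)}\cap C(\sigma_{Y_2})\neq\emptyset$. A final arbitrarily small perturbation via Theorem~\ref{Thm:xiruibin} turns $Y_2$ into a weak Kupka-Smale vector field $Y\in{\cal U}$ without destroying the sink (hyperbolic sinks are robust) or the intersection.

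The hard part, as emphasized in the paper's introduction, is the cross-section analysis in the second step: central-unstable curves on $\Sigma$ are cut by the stable manifolds of the singularities, so $F$ behaves like a one-dimensional endomorphism with singularities, and a central-unstable curve fragments under iteration. Guaranteeing that the Pliss point $y$ has a forward $F$-orbit which stays away from the cutting locus long enough for the non-expansion to be realized, and quantifying its proximity to $C(\sigma)$ well enough for Ma\~n\'e's lemma plus Franks' lemma to produce a truly nearby periodic sink rather than one that drifts off, requires careful use of Liao's scaling estimates (Lemmas~\ref{Lem:estimationscaled} and~\ref{Lem:intersect}) near the singular locus and of the continuity properties of the cross-section system asserted at the end of Proposition~\ref{Pro:roughcrosssection}.
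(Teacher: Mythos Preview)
Your first two steps are fine, and in fact the paper does essentially the same thing to manufacture a sink: find an ergodic measure $\mu_Y$ supported on a minimal non-singular-hyperbolic set $\Lambda_Y\subset C(\sigma_Y)$ with $\int\log|\det\Phi_t|_{E^{cu}}|\,d\mu_Y\le 0$ (Corollary~\ref{Cor:measure}), close a typical orbit via the ergodic closing lemma, and push the central exponent negative with Franks' lemma. This is the content of the first Claim inside the proof of Proposition~\ref{Pro:generalresult}.

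The genuine gap is in your sentence ``By construction $\gamma$ lies arbitrarily close to $C(\sigma_{Y_2})$, and because $B(\gamma)$ is open and contains $\gamma$, one has $\overline{B(\gamma)}\cap C(\sigma_{Y_2})\neq\emptyset$.'' This inference is false. A sink is its own chain recurrent class, so $\gamma\cap C(\sigma_{Y_2})=\emptyset$; proximity of $\gamma$ to $C(\sigma_{Y_2})$ says nothing about whether $\overline{B(\gamma)}$ reaches $C(\sigma_{Y_2})$. The basin $B(\gamma)$ is an open neighborhood of $\gamma$, but a priori it could be an extremely thin tube disjoint from $C(\sigma_{Y_2})$. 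Establishing $\overline{B(\gamma)}\cap C(\sigma_{Y_2})\neq\emptyset$ is not a consequence of closing-plus-Franks; it is the entire content of the proposition and occupies most of Section~\ref{Sec:birthhomo}.

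What the paper actually does to force this intersection is qualitatively different from anything in your sketch. One first passes to a $C^2$ weak Kupka-Smale $Y$ that \emph{maximizes} the number of homoclinic connections of singularities in $C(\sigma_Y)$ (the set ${\cal M}_n$), and works inside a transitive ${\cal N}$-set $\Lambda_Y$. The sink $\gamma$ is produced from $\mu_Y$ so that $\gamma$ meets a specific half $S_i^{+,r}$ of a singular cross-section, near the last return of a homoclinic orbit $\Gamma$ of $\sigma_{i,Y}$ to $\ell_i^+$ (first Claim in Proposition~\ref{Pro:generalresult}). Then one uses the Lorenz-like expansion of the return map near $\ell_i^+$: the forward $F_Z$-iterates of the thin strip $h_i^+((0,\beta)\times(-1,1))$ grow horizontally until they either hit a stable leaf of $\gamma$ (putting the strip in $B(\gamma)$) or cross some $\ell_j^\pm$; a combinatorial case analysis of how iterates of the right-direction $R$ along $\Gamma$ compare with the stable leaves of $\gamma$ shows that in every case the strip eventually lands in $B(\gamma)$. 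Letting $\beta\to 0$ forces $\overline{B(\gamma)}$ to contain $\ell_i^+\subset W^s(\sigma_{i,Z})\subset C(\sigma_Z)$. The surrounding Propositions~\ref{Pro:righttoleft}--\ref{Pro:notwohomoclinic} and the final contradiction argument are needed to guarantee that the hypotheses of Proposition~\ref{Pro:generalresult} (in particular the one-sided accumulation of typical points of $\mu_Y$) are always met for some $Y\in{\cal M}_n$. None of this structure is present in your outline; the Pliss/Liao estimates you mention are not what bridges the gap.
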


The proof of Proposition~\ref{Pro:roughsinkbasin} needs to use the cross section system constructed in Proposition~\ref{Pro:roughcrosssection}

Proposition~\ref{Pro:roughsinkbasin} could imply Theorem~\ref{Thm:dominationimplyperiodic}: Lemma~\ref{Lem:stablepropertyofquasiattractor} implies for every weak Kupka-Smale vector field $Y$ close to $X$, $C(\sigma_Y)$ is Lyapunov stable for $Y$; By the definitions, Lyapunov stable set cannot intersect the closure of the basin of some sink when it is not a sink itself.

%
%
%
%
%
%
\section{Partial hyperbolicity: the proof of Theorem \ref{Thm:dichotomysingularclass}}\label{Sec:lorenz-like}

\subsection{Lorenz-like singularities}

\begin{Lemma}\label{Lem:strongstablesingularity}
Let $\Lambda$ be a non-trivial chain transitive set of $X\in{\cal X}^1(M^d)$. We assume
\begin{itemize}

\item Every singularity in $\Lambda$ is hyperbolic.

\item ${\cal N}_{\Lambda\setminus{\rm Sing}(X)}$ admits a dominated splitting of index $i$ w.r.t the
linear Poincar\'e flow $\psi_t$.
\end{itemize}

Then for every hyperbolic singularity $\sigma\in\Lambda$ with ${\rm ind}(\sigma)>i$, $T_\sigma M^d$ admits a dominated splitting $T_\sigma M^d=E^{ss}\oplus E^{cu}$ with respect to the tangent flow $\Phi_t$, where $\dim E^{ss}=i$.
\end{Lemma}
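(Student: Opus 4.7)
The plan is to lift the dominated splitting from the normal bundle at regular points up to the tangent space at $\sigma$ via the mixed dominated splitting transfer of Lemma~\ref{Lem:mixed}. Since $\Lambda$ is non-trivial and chain transitive, the singularity $\sigma$ is accumulated by regular orbits of $\Lambda$, so the fiber $\widehat{\Lambda}_\sigma:=\widetilde{\Lambda}\cap\pi^{-1}(\sigma)\subset S_\sigma M^d$ is non-empty, compact, and (since $\sigma$ is a fixed point of $\phi_t$) invariant under the unit tangent flow $\Phi_t^I$. First I would extend the given splitting ${\cal N}_{\Lambda\setminus\Sing(X)} = \Delta^{cs}\oplus\Delta^{cu}$ to a dominated splitting $\widetilde{\cal N}_{\widetilde{\Lambda}}=\widetilde{\Delta}^{cs}\oplus\widetilde{\Delta}^{cu}$ of the extended normal bundle with respect to $\widetilde{\psi}_t$, exactly as in Corollary~\ref{Cor:dominatedonweaktransitive}: one takes Grassmannian limits $\Delta^{cs/cu}(x_n)\to\widetilde{\Delta}^{cs/cu}(u)$ along regular sequences $x_n\in\Lambda$ with $x_n\to\sigma$ and $X(x_n)/|X(x_n)|\to u\in\widehat{\Lambda}_\sigma$, using that $\widetilde{\psi}_t$ is continuous on the compact bundle.

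Next I would apply Lemma~\ref{Lem:mixed} with $\widehat{\Lambda}:=\widehat{\Lambda}_\sigma$. The only nontrivial hypothesis to check is the comparison estimate
\[
\frac{\|\widetilde{\psi}_t|_{\widetilde{\Delta}^{cs}(u)}\|}{\|\Phi_t(u)\|}\le Ce^{-\lambda t},\qquad u\in\widehat{\Lambda}_\sigma,\ t\ge 0.
\]
To verify it I would exploit the hyperbolic splitting $T_\sigma M^d=E^s_\sigma\oplus E^u_\sigma$ with $\dim E^s_\sigma=\ind(\sigma)>i$. Because $\widetilde{\Delta}^{cs}(u)$ has dimension $i$ and is $\widetilde{\psi}_t$-invariantly dominated by $\widetilde{\Delta}^{cu}(u)$, a spectral comparison forces $\widetilde{\Delta}^{cs}(u)$ to fit inside the $i$ fastest-contracting directions of $A:=DX(\sigma)$, while the unit vector $u\in S_\sigma M^d$ carries an exponent of $\Phi_t$ that is either non-negative (if $u$ has any component in $E^u_\sigma$) or else at most as negative as the $\ind(\sigma)$-th eigenvalue of $A$; since there remain at least $\ind(\sigma)-i\ge 1$ stable directions lying between the $i$ fastest stable ones and the exponent of $u$, this strict gap yields the required exponential decay, uniformly on $\widehat{\Lambda}_\sigma$ by compactness.

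Granted the estimate, Lemma~\ref{Lem:mixed} produces a dominated splitting $T_{\pi(\widehat{\Lambda}_\sigma)}M^d=E\oplus F$ with respect to $\Phi_t$, with $\dim E=\dim\widetilde{\Delta}^{cs}=i$. Since $\pi(\widehat{\Lambda}_\sigma)=\{\sigma\}$, this is precisely the desired splitting $T_\sigma M^d=E^{ss}\oplus E^{cu}$ of index $i$.

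The hard part will be the verification of the ratio estimate, specifically ruling out that some $u\in\widehat{\Lambda}_\sigma$ happens to sit in a stable direction where $|\Phi_t(u)|$ decays as fast as, or faster than, $\|\widetilde{\psi}_t|_{\widetilde{\Delta}^{cs}(u)}\|$. I would handle this by a minimality/compactness argument on the $\Phi_t^I$-action restricted to $\widehat{\Lambda}_\sigma$: if the ratio estimate failed, one could pass to a $\Phi_t^I$-minimal subset in the $\omega$-limit and extract, from the continuous Grassmannian extension of $\widetilde{\Delta}^{cs}$, a $\widetilde{\psi}_t$-invariant subbundle whose dimension, together with the strict domination, would contradict the hyperbolic spectral gap $\mathrm{Re}\,\lambda_{\ind(\sigma)}(A)<0<\mathrm{Re}\,\lambda_{\ind(\sigma)+1}(A)$ together with the assumption $\ind(\sigma)>i$.
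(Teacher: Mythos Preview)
Your overall plan---extend the dominated splitting to $\widetilde{\cal N}_{\widetilde\Lambda}$ and invoke Lemma~\ref{Lem:mixed}---is exactly the paper's. The gap is in your choice $\widehat\Lambda=\widehat\Lambda_\sigma$ and the attempted verification of the ratio hypothesis there. Your spectral comparison asserts that $\widetilde\Delta^{cs}(u)$ sits in the $i$ fastest-contracting eigendirections and that the exponent of $u$ is no worse than $\lambda_{\ind(\sigma)}$; both claims fail once $u$ is itself one of those fast directions. Since $\widehat\Lambda_\sigma$ is $\Phi_t^I$-invariant and meets $E^s(\sigma)$ (because $W^s(\sigma)\cap\Lambda\setminus\{\sigma\}\neq\emptyset$), backward iteration can force it to contain a strongly contracting eigendirection, say $e_1$ with eigenvalue $\lambda_1$. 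At $u=e_1$ one has $\widetilde{\cal N}_{e_1}=e_1^\perp$, the index-$i$ dominated splitting there must be $\widetilde\Delta^{cs}(e_1)=\mathrm{span}(e_2,\dots,e_{i+1})$, and
\[
\frac{\|\widetilde\psi_t|_{\widetilde\Delta^{cs}(e_1)}\|}{\|\Phi_t(e_1)\|}\ \sim\ e^{(\lambda_{i+1}-\lambda_1)t}\ \longrightarrow\ \infty.
\]
Your minimality fallback does not repair this: nothing contradictory is happening at $e_1$; Lemma~\ref{Lem:mixed} simply cannot be applied on all of $\widehat\Lambda_\sigma$.

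The paper sidesteps the issue by working on a \emph{smaller} compact $\Phi_t^I$-invariant set lying entirely in $E^u(\sigma)$. Chain transitivity gives $W^u(\sigma)\cap\Lambda\setminus\{\sigma\}\neq\emptyset$, hence some $v\in E^u(\sigma)\cap\widetilde\Lambda$; one takes $\widehat\Lambda=\alpha(v)\subset E^u(\sigma)\cap S_\sigma M^d$. After arranging $E^s(\sigma)\perp E^u(\sigma)$, for each $u\in\alpha(v)$ the normal space splits as $\widetilde{\cal N}_u=E^s(\sigma)\oplus(E^u(\sigma)\cap u^\perp)$, which is itself a $\widetilde\psi_t$-dominated splitting of index $\ind(\sigma)>i$; nesting of dominated splittings then forces $\widetilde\Delta^{cs}(u)\subset E^s(\sigma)$. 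Now the ratio estimate is immediate---$\widetilde\Delta^{cs}$ contracts while $u\in E^u$ expands---and Lemma~\ref{Lem:mixed} applied to $\alpha(v)$ produces the desired dominated splitting $T_\sigma M^d=E^{ss}\oplus E^{cu}$ of index $i$.
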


\begin{proof}

For a hyperbolic singularity $\sigma\in\Lambda$, since $\Lambda$ is a non-trivial chain transitive set, one has
$W^{s}(\sigma)\cap\Lambda\setminus\{\sigma\}\neq\emptyset$ and
$W^{u}(\sigma)\cap\Lambda\setminus\{\sigma\}\neq\emptyset$. One can see \cite[Lemma 2.6]{BGY11} for more details about the proof. Recall the definition of $\widetilde{\Lambda}$: the
lift of $\Lambda$ in the sphere bundle as in Subsection~\ref{Sub:flows}. One has that there is $v\in E^u(\sigma)\cap\widetilde{\Lambda}$.

Since ${\cal N}_{\Lambda\setminus{\rm Sing}(X)}$ admits a dominated splitting of index $i$ with respect to the
linear Poincar\'e flow, $\widetilde{\cal N}_{\widetilde{\Lambda}}$ admits a dominated splitting of index $i$ with respect
to the extended
linear Poincar\'e flow $\widetilde{\psi}_t$. We will consider the negative limit set $\alpha(v)$ with
respect to the flow $\Phi^I_t$.

By changing the Remainnain metric in a small neighborhood of $\sigma$, without loss of generality, one can assume
$E^s(\sigma)\perp E^{u}(\sigma)$. Thus,
\begin{itemize}

\item ${\widetilde {\cal N}}_{\alpha(v)}$ admits a dominated splitting $\Delta^{cs}\oplus \Delta^{cu}$ of index $i$ w.r.t $\widetilde{\psi}_t$ since ${\cal N}_{\Lambda\setminus{\rm Sing}(X)}$ admits a dominated splitting of index $i$ with respect to $\psi_t$.

\item The hyperbolic splitting on $T_\sigma M^d$ implies that: ${\widetilde {\cal N}}_{\alpha(v)}$ admits a dominated splitting of index $E^s\oplus F$ of index $\dim
E^s$ for some $F$ w.r.t. $\widetilde{\psi_t}$, since $\sigma$ is hyperbolic and $E^s(\sigma)\perp E^u(\sigma)$.
\end{itemize}

Thus by the properties of dominated splittings, one has $\Delta^{cs}(\alpha(v))\subset E^s(\sigma)$. Thus,
there are $C>0$ and $\lambda>0$ such that for any $u\in E^u\cap S_\sigma M^d$ and a splitting
$\Delta^{cs}\oplus\Delta^{cu}={\widetilde {\cal N}}_{(\sigma,u)}$ with the following property:
\begin{itemize}
\item $$\frac{\|\widetilde{\psi}_t|_{\Delta^{cs}(u)}\|} {\|\Phi_t|_{<u>}\|}\le C\mathrm{e}^{-\lambda t},~~~\forall t\ge 0.$$

\item $$\frac{\|\widetilde{\psi}_t|_{\Delta^{cs}(u)}\|}{m(\widetilde{\psi}_t|_{\Delta^{cu}(u)})}\le C\mathrm{e}^{-\lambda t},~~~\forall t\ge 0.$$

\end{itemize}

Then by Lemma~\ref{Lem:mixed}, we know that $\sigma$ admits a dominated splitting $T_{\sigma}M^d=E\oplus F$ w.r.t. the tangent flow $\Phi_t$, where $\dim E=i$. Since ${\rm ind}\sigma>i$, one can get the splitting as required.
\end{proof}

\begin{Lemma}\label{Lem:strongstableoutside}
There is a residual subset ${\cal G}\subset {\cal X}^1(M^3)\setminus\overline{\cal HT}$ such that for any
hyperbolic singularity $\sigma$ of index $2$ of $X\in{\cal G}$, if $C(\sigma)$ is non-trivial, then $T_\sigma M^3$ admits a dominated splitting $T_\sigma
M^3=E^{ss}\oplus F$ w.r.t. the tangent flow $\Phi_t$, where $\dim E^{ss}=1$,
$E^{ss}$ is contracting, and $W^{ss}(\sigma)\cap C(\sigma)=\{\sigma\}$.

Similarly, if ${\rm ind}(\sigma)=1$ and $C(\sigma)$ is non-trivial, then $W^{uu}(\sigma)\cap
C(\sigma)=\{\sigma\}$.
\end{Lemma}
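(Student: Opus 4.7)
The first conclusion, the existence of the splitting $T_\sigma M^3=E^{ss}\oplus F$, will come directly from the machinery already developed. Since $X$ is away from homoclinic tangencies and $C(\sigma)$ is non-trivial, item~5 of Lemma~\ref{Lem:generic} together with item~2 produces a sequence of hyperbolic periodic orbits $\gamma_n$ of $X$ converging to a subset of $C(\sigma)$; by Corollary~\ref{Cor:dominatedinneighborhood} (applied to those $\gamma_n$ that visit a small neighborhood of $\sigma$) the orbits $\gamma_n$ carry a uniform $\iota$--dominated splitting of index $1$ w.r.t.\ $\psi_t$, so Corollary~\ref{Cor:dominatedonweaktransitive} gives an index--$1$ dominated splitting for $\psi_t$ on $\mathcal{N}_{C(\sigma)\setminus\mathrm{Sing}(X)}$. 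Since $\mathrm{ind}(\sigma)=2>1$, Lemma~\ref{Lem:strongstablesingularity} applies and yields $T_\sigma M^3=E^{ss}\oplus F$ with $\dim E^{ss}=1$; because $\sigma$ has index $2$ and $E^{ss}$ is dominated by $F$, the bundle $E^{ss}$ is the strong stable direction, hence contracting.

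For the second conclusion $W^{ss}(\sigma)\cap C(\sigma)=\{\sigma\}$, I will enforce it by cutting out a residual set. Fix countable bases $\{O_n\}$, $\{O_m\}$ of $M^3$. For each pair $(n,m)$, set
\[
\mathcal{H}_{n,m}=\{X:\exists\,\mathcal{U}\ni X\ \text{such that for all}\ Y\in\mathcal{U}, \ \text{every hyperbolic singularity}\ \sigma_Y\in O_n\ \text{of index}\ 2\ \text{has}\ W^{ss}(\sigma_Y)\cap C(\sigma_Y)\cap O_m=\{\sigma_Y\}\cap O_m\},
\]
\[
\mathcal{N}_{n,m}=\{X:\exists\,\mathcal{U}\ni X\ \text{such that for all}\ Y\in\mathcal{U},\ W^{ss}(\sigma_Y)\cap C(\sigma_Y)\cap O_m\setminus\{\sigma_Y\}\neq\emptyset\}.
\]
Both sets are open by construction; the local strong stable manifold tangent to $E^{ss}$, produced in the first part, varies continuously with $X$, and $C(\sigma_Y)$ is upper semi-continuous, so each of the two properties in fact is stable once it occurs robustly. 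Taking $\mathcal{G}$ to be the intersection over all $(n,m)$ of $\mathcal{H}_{n,m}\cup\mathcal{N}_{n,m}$ will yield a residual set in which the conclusion holds, \emph{provided} I can show that $\mathcal{H}_{n,m}\cup\mathcal{N}_{n,m}$ is dense in $\mathcal{X}^1(M^3)\setminus\overline{\mathcal{HT}}$.

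The density step is the main obstacle. Suppose $X$ lies outside both sets. Then $X$ can be approximated by vector fields $Y$ admitting a regular point $x_Y\in W^{ss}(\sigma_Y)\cap C(\sigma_Y)\cap O_m$ without the intersection being robust. For each such $Y$, using item~1 of Lemma~\ref{Lem:generic} (every separatrix of $W^u(\sigma)$ is dense in $C(\sigma)$ for generic vector fields, and this approximately transfers), the unstable separatrix of $\sigma_Y$ accumulates on $x_Y$, hence on $W^{ss}(\sigma_Y)$. I will then invoke Hayashi-type connecting lemma (Lemma~\ref{Lem:wenxia}) at a regular point of $W^u(\sigma_Y)$ close to $W^{ss}(\sigma_Y)$ to produce, by a further $C^1$--small perturbation $Z$, an actual saddle-connection $\Gamma\subset W^u(\sigma_Z)\cap W^{ss}(\sigma_Z)\setminus\{\sigma_Z\}$. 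Because $\Gamma$ runs asymptotically tangent to $E^{ss}$, its unfolding sits inside $W^s(\sigma_Z)$ in a degenerate way. The final step is to produce a hyperbolic periodic orbit near $\Gamma\cup\{\sigma_Z\}$ (via Liao's shadowing applied to the plaque families of Lemma~\ref{Lem:plaquefamily} that extend through $\sigma_Z$, respecting the index--$1$ dominated splitting) and exhibit a homoclinic tangency of that periodic orbit by a further arbitrarily small perturbation. This contradicts $Z$ (hence $X$) being far from $\overline{\mathcal{HT}}$.

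The symmetric statement for $\mathrm{ind}(\sigma)=1$ is obtained by applying everything above to $-X$, since $\mathcal{HT}$ and the dominated splitting condition are both invariant under reversing time. The residual set $\mathcal{G}$ is finally taken to be the intersection of $\mathcal{G}_0$ from Section~\ref{Sub:generic} with the residual set constructed in steps above, and with its time-reversed analogue. The single delicate point, the unfolding of the strong-stable saddle-connection into a periodic tangency, is where the ``uniform estimation on vector fields away from homoclinic tangencies'' advertised in the introduction must be used; it is this geometric perturbation argument that I expect to be the only substantial ingredient beyond the already-proven lemmas.
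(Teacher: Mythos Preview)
Your argument for the existence of the splitting $T_\sigma M^3=E^{ss}\oplus F$ is correct and coincides with the paper's.

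For the disjointness $W^{ss}(\sigma)\cap C(\sigma)=\{\sigma\}$, your opening moves also match the paper: argue by contradiction, use that $W^u(\sigma)$ is dense in $C(\sigma)$ (item~1 of Lemma~\ref{Lem:generic}), and apply the connecting lemma (Lemma~\ref{Lem:wenxia}) to obtain $Y$ close to $X$ with a \emph{strong} homoclinic connection $\Gamma\subset W^u(\sigma_Y)\cap W^{ss}(\sigma_Y)$. From here the two approaches diverge.

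You propose to shadow $\Gamma$ by periodic orbits and then perturb to a homoclinic tangency, contradicting $X\notin\overline{\cal HT}$. This final step is the gap. The strong connection is a homoclinic orbit of a \emph{singularity}, and there is no evident mechanism by which periodic orbits shadowing $\Gamma\cup\{\sigma\}$ acquire tangent invariant manifolds; the phrase ``its unfolding sits inside $W^s(\sigma_Z)$ in a degenerate way'' does not isolate a concrete obstruction to transversality. You yourself flag this as the one delicate point, but it is in fact the entire content of the argument.

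The paper avoids this altogether: the contradiction is purely linear-algebraic and requires no further perturbation toward $\overline{\cal HT}$. After linearizing $Y$ near $\sigma_Y$ so that $E^{ss},E^{cs},E^u$ are mutually orthogonal, one perturbs slightly so that nearby periodic orbits $\gamma_n$ lie in the plane $P=E^{ss}\oplus E^u$ near $\sigma_Y$. By Corollary~\ref{Cor:dominatedinneighborhood} each $\gamma_n$ carries an $\iota$-dominated splitting of index~$1$, hence the extended linear Poincar\'e flow $\widetilde\psi_t$ on $\widetilde{\Gamma\cup\sigma}$ inherits a continuous dominated splitting $\Delta^{cs}\oplus\Delta^{cu}$. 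Over $\sigma$ this lift contains both an outgoing direction $v^u\in E^u$ and an incoming direction $v^{ss}\in E^{ss}$, and computing $\widetilde\psi_t$ at each fiber forces $\Delta^{cs}(v^u)=E^{ss}$ but $\Delta^{cs}(v^{ss})=E^{cs}$. Following the orbit near $\sigma$ (which lies in $P$), the $\widetilde\psi_t$-invariance of $\Delta^{cs}$ makes these assignments incompatible; this is precisely the analysis of \cite[Lemma~4.3]{LGW05}.

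Two further remarks. First, your Baire scaffold $\mathcal H_{n,m}\cup\mathcal N_{n,m}$ is unnecessary: the paper works directly with $X$ in the fixed residual set $\mathcal G_0$ of Subsection~\ref{Sub:generic} and derives a contradiction for any such $X$ with $W^{ss}(\sigma)\cap C(\sigma)\neq\{\sigma\}$. Second, your sets $\mathcal H_{n,m},\mathcal N_{n,m}$ are not obviously open as written, since $C(\sigma_Y)$ is only upper semi-continuous; the robustness you assert would itself need justification.
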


\begin{proof}
Assume that $X$ satisfies all generic properties in \S 3.
We focus on the case of ${\rm ind}(\sigma)=2$. By Lemma~\ref{Lem:generic}, there is a sequence of periodic orbit $\gamma_n$ such that $\lim_{n\to\infty}\gamma_n=C(\sigma)$ in the Hausdorff topology. By Corollary~\ref{Cor:dominatedinneighborhood}, there is $\iota>0$ such that each $\gamma_n$ admits an $\iota$-dominated splitting in ${\cal N}_{\gamma_n}$ w.r.t. the linear Poincar\'e flow. Then by Corollary~\ref{Cor:dominatedonweaktransitive}, ${\cal
N}_{C(\sigma)\setminus{\rm Sing}(X)}$ admits a dominated splitting of index $1$ with respect to the linear
Poincar\'e flow $\psi_t$. As a corollary of Lemma~\ref{Lem:strongstablesingularity}, $T_\sigma M$ has dominated splitting $T_{\sigma}M=E^{ss}\oplus F$ w.r.t. the tangent flow $\Phi_t$. Thus what we need to prove now is $W^{ss}(\sigma)\cap
C(\sigma)=\{\sigma\}$. We will prove this by absurd. If this is not true, there is $x_0\in W^{ss}(\sigma)\cap
C(\sigma)\setminus\{\sigma\}$. One also notice that $C(\sigma)$ is Lyapunov stable since ${\rm ind}(\sigma)=2$ by Lemma~\ref{Lem:generic}.
Hence $W^u(\sigma)\subset C(\sigma)$ and $W^u(\sigma)$ is dense in $C(\sigma)$. By changing the Riemainning metric in a small neighborhood, we may assume that $E^{ss}(\sigma)$,
$E^{cs}(\sigma)$ and $E^u(\sigma)$ are mutually orthogonal, where $E^{cs}(\sigma)=E^s(\sigma)\cap F$.

Thus, by using the $C^1$ connecting lemma (Lemma~\ref{Lem:wenxia}), there is an arbitrarily small perturbation
$Y$ of $X$ such that

\begin{itemize}
\item $Y$ has a strong connection with respect to $\sigma$: there is $y\in M^3$ such that $y\in
W^{ss}(\sigma)\cap W^u(\sigma)\setminus\{\sigma\}$.

\item $Y(x)=X(x)$ if $x$ is in a small neighborhood of $\sigma$.

\end{itemize}

By an extra small perturbation, one can assume that $Y$ has the following extra properties:

\begin{itemize}

\item $Y$ is linear in a small neighborhood of $\sigma$ in some local chart;

\item $E^{ss}(\sigma,Y)$, $E^{cs}(\sigma,Y)$ and $E^u(\sigma,Y)$ are still mutually orthogonal.
\end{itemize}

Let $P$ be the plane spanned by $E^{ss}$ and $E^u$ in the local chart. One has that $P$ is locally
invariant: there is a neighborhood $O_1$ of $\sigma$ such that for any $x\in P\cap O_1$, if
$\phi_{[0,t]}^Y(x)\in O_1$, then $\phi_t^Y(x)\in P$. Now for $Y$, by an extra perturbation, there is a sequence
of vector fields $Y_n$ and  a smaller neighborhood $O_2$ of $\sigma$ such that
\begin{itemize}
\item $\lim_{n\to\infty}Y_n=Y$;

\item For each $n$, $Y_n=Y$ in $O_2$;

\item $Y_n$ has a periodic orbit $\gamma_n$ such that $\gamma_n\cap O_2\subset P$
 and $\lim_{n\to\infty}\gamma_n={\rm Orb}(y,Y)$.
\end{itemize}

By Corollary~\ref{Cor:dominatedinneighborhood},  one has

\begin{itemize}

\item There are $\iota=\iota(X)$ such that ${\cal N}_{\gamma_n}$ has an $\iota$-dominated splitting
$\Delta^{cs}\oplus \Delta^{cu}$ of index 1 with respect to the linear Poincar\'e flow $\psi_t^{Y_n}$.

\end{itemize}

Thus, ${\cal N}_{{\rm Orb}(y,Y)}$ admits an $\iota$-dominated splitting w.r.t. $\psi_t^Y$. Let $\Gamma_Y={\rm
Orb}(y,Y)\cup\sigma_Y$. It is a compact invariant set. Over the lift $\widetilde{\Gamma_Y}$ (see
Subsection~\ref{Sub:flows}), there exists an dominated splitting ${\widetilde {\cal N}}_{{\widetilde \Gamma_Y}}=\Delta^{cs}\oplus \Delta^{cu}$ w.r.t. $\widetilde{\psi}_t$ such that
\begin{itemize}
\item For $v^{u}\in E^u(\sigma)\cap {\widetilde \Gamma_Y}$, one has $\Delta^{cs}(v^u)=E^{ss}$
and $\Delta^{cu}(v^u)=E^{cs}$.
\item For $v^{ss}\in E^{ss}(\sigma)\cap {\widetilde \Gamma_Y}$, one has $\Delta^{cs}(v^{ss})=E^{cs}$
and $\Delta^{cu}(v^{ss})=E^{u}$.
\end{itemize}

Then by following the analysis in the proof of Lemma 4.3 in \cite[page 255-256]{LGW05}, we will get a contradiction.
\end{proof}

\begin{Corollary}\label{Cor:uniqueindexofsingularity}

There is a residual subset ${\cal G}\subset {\cal X}^1(M^3)\setminus\overline{\cal HT}$ such that if a chain
recurrent class contains singularities, then all the singularities in the chain recurrent class have the same
index.

\end{Corollary}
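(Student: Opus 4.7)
The plan is to derive a contradiction from the coexistence of singularities of different indices in one nontrivial chain recurrent class by applying Lemma~\ref{Lem:strongstableoutside} to both $X$ and $-X$. The involution $X\mapsto -X$ is a homeomorphism of ${\cal X}^1(M^3)$ that preserves the complement of $\overline{\cal HT}$, so I would take ${\cal G}$ to be the intersection of the residual set of Lemma~\ref{Lem:strongstableoutside}, its image under $X\mapsto -X$, and the generic set ${\cal G}_0$ from Section~\ref{Sec:generic}.

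Fix $X\in{\cal G}$ and let $C$ be a chain recurrent class of $X$ containing a singularity. The trivial case $C=\{\sigma\}$ is vacuous, so assume $C$ is nontrivial. Every singularity is hyperbolic since $X$ is Kupka--Smale, and a hyperbolic sink or source is generically an entire chain recurrent class; hence the indices of singularities in $C$ lie in $\{1,2\}$. Suppose for contradiction that $C$ contains $\sigma_1$ of index $2$ and $\sigma_2$ of index $1$. Then $C(\sigma_1,X)=C(\sigma_2,X)=C$, and the same equalities hold for $-X$ because chain recurrence is invariant under time reversal.

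Since $\sigma_1$ is hyperbolic of index $d-1=2$ and its chain class is nontrivial, genericity forces $C$ to be Lyapunov stable for $X$ (the same use of Lemma~\ref{Lem:generic} as in the proof of Theorem~\ref{Thm:dichotomysingularclass}). Applying Lemma~\ref{Lem:strongstableoutside} to $-X$ at $\sigma_2$ (which has index $2$ for $-X$, with $C(\sigma_2,-X)=C$ nontrivial) produces a partially hyperbolic splitting $T_{\sigma_2}M^3=E^{cs}\oplus E^{uu}$, a one-dimensional strong unstable manifold $W^{uu}_X(\sigma_2)=W^{ss}_{-X}(\sigma_2)$, and the identity $W^{uu}_X(\sigma_2)\cap C=\{\sigma_2\}$. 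On the other hand, Lyapunov stability and Lemma~\ref{Lem:unstableinLyapunov} give
$$
W^{uu}_X(\sigma_2)\subset W^u(\mathrm{Orb}(\sigma_2))\subset C,
$$
so that the one-dimensional manifold $W^{uu}_X(\sigma_2)$ must reduce to the point $\sigma_2$, a contradiction. The only genuinely subtle ingredient is the Lyapunov stability of $C(\sigma_1)$, which rests on the connecting lemma for pseudo-orbits from \cite{BoC04} and is packaged into ${\cal G}_0$; every other step is a symmetric application of Lemma~\ref{Lem:strongstableoutside} together with Lemma~\ref{Lem:unstableinLyapunov}.
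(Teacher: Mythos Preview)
Your proof is correct and follows essentially the same route as the paper: obtain Lyapunov stability of $C$ from the presence of an index~$2$ singularity via Lemma~\ref{Lem:generic}, then use Lemma~\ref{Lem:strongstableoutside} on the index~$1$ singularity to get $W^{uu}\cap C=\{\sigma\}$, and derive a contradiction from $W^{uu}\subset C$ by Lemma~\ref{Lem:unstableinLyapunov}. The only cosmetic difference is that you invoke Lemma~\ref{Lem:strongstableoutside} through the involution $X\mapsto -X$, whereas the paper simply quotes the ``Similarly, if ${\rm ind}(\sigma)=1$\dots'' clause of that lemma directly; your detour is unnecessary but harmless.
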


\begin{proof}
Let ${\cal G}={\cal G}_0\setminus\overline{\cal HT}$, where ${\cal G}_0$ is as in the end of Subsection~\ref{Sub:generic}. We will
prove this corollary by absurd. If it's not true, then there is $X\in{\cal G}$ and a chain recurrent class $C$
of $X$ such that $C$ contains singularities of different indices. Thus, one can assume that $C$ contains two
singularities $\sigma_1$ and $\sigma_2$ satisfying ${\rm ind}(\sigma_1)=1$ and ${\rm ind}(\sigma_2)=2$. Since by
Lemma~\ref{Lem:strongstableoutside}, $T_{\sigma_1}M^3=E^{cs}\oplus E^{uu}$ is a dominated splitting w.r.t.
$\Phi_t$, where $\dim E^{uu}=1$, and for the corresponding strong unstable manifold $W^{uu}(\sigma_1)\cap
C=\{\sigma_1\}$. By Lemma~\ref{Lem:generic}, $C$ is Lyapunov stable since $C$ contains $\sigma_2$. As a
corollary, one has $W^{uu}(\sigma_1)\subset C$. This gives us a contradiction.
\end{proof}

\begin{Corollary}\label{Cor:robuststongstable}

There is a residual subset ${\cal G}\subset {\cal X}^1(M^3)\setminus\overline{\cal HT}$ such that for any
hyperbolic singularity $\sigma$ of index $2$ of $X\in{\cal G}$, there is a $C^1$ neighborhood $\cal U$ of
$X$ such that for any $Y\in{\cal U}$ and for any singularity $\rho\in C(\sigma)$, one has
\begin{itemize}

\item ${\rm ind}(\rho)=2$ and $\rho_Y\in C(\sigma_Y)$.

\item $T_\rho M^3=E^{ss}\oplus E^{cu}$ is a dominated splitting w.r.t. $\Phi_t$, where $\dim E^{ss}=1$.

\item For the corresponding stable manifolds of $E^{ss}$, one has $W^{ss}(\rho_Y,Y)\cap C(\sigma_Y)=\{\rho_Y\}$

\end{itemize}

\end{Corollary}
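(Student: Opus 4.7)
The proof proposal is to package together the generic facts already established and then invoke upper semi-continuity plus a flow-along-$W^{ss}$ trick for the last (non-obvious) clause.

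Let $\cal G$ be the intersection, inside ${\cal X}^1(M^3)\setminus\overline{\cal HT}$, of the residual sets furnished by Lemma~\ref{Lem:strongstableoutside}, Corollary~\ref{Cor:uniqueindexofsingularity}, Lemma~\ref{Lem:twochainrecurrentclass}, Lemma~\ref{Lem:continuitychainclass} and the generic set ${\cal G}_0$ of Subsection~\ref{Sub:generic}. Fix $X\in{\cal G}$ and a hyperbolic singularity $\sigma$ of index $2$. By generic property (3) of Lemma~\ref{Lem:generic}, $X$ has only finitely many singularities and each stays hyperbolic on a $C^1$ neighborhood; in particular $\rho_Y$ is well defined and has $\operatorname{ind}(\rho_Y)=\operatorname{ind}(\rho)$ for $Y$ close to $X$. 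If $C(\sigma)$ is trivial the statement is essentially empty, so assume $C(\sigma)$ is non-trivial. Then Corollary~\ref{Cor:uniqueindexofsingularity} forces every $\rho\in C(\sigma)\cap\operatorname{Sing}(X)$ to have index $2$, and Lemma~\ref{Lem:strongstableoutside} provides the dominated splitting $T_\rho M^3=E^{ss}\oplus E^{cu}$ together with $W^{ss}(\rho)\cap C(\sigma)=\{\rho\}$. The dominated splitting, being a hyperbolic eigenspace datum, persists to $T_{\rho_Y}M^3$ for $Y$ $C^1$-close to $X$.

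For the inclusion $\rho_Y\in C(\sigma_Y,Y)$: since $C(\sigma)=C(\rho)$ in the chain recurrent class, Lemma~\ref{Lem:twochainrecurrentclass} gives a $C^1$ neighborhood ${\cal U}_\rho$ of $X$ on which $C(\sigma_Y,Y)=C(\rho_Y,Y)$. Because $\operatorname{Sing}(X)\cap C(\sigma)$ is finite, intersecting the ${\cal U}_\rho$'s with the neighborhood of hyperbolic persistence of singularities yields a single $C^1$ neighborhood ${\cal U}={\cal U}_X$ handling all $\rho\in C(\sigma)\cap\operatorname{Sing}(X)$ simultaneously.

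The remaining and main point is the robust equality $W^{ss}(\rho_Y,Y)\cap C(\sigma_Y,Y)=\{\rho_Y\}$. Suppose for contradiction that there exist $Y_n\to X$ in ${\cal U}$ and $x_n\in W^{ss}(\rho_{Y_n},Y_n)\cap C(\sigma_{Y_n},Y_n)\setminus\{\rho_{Y_n}\}$. Fix $\varepsilon>0$ smaller than the uniform size of the local strong stable manifolds of $\rho_Y$ for $Y\in{\cal U}$. Since $E^{ss}$ is one-dimensional and strongly contracting, the curve $W^{ss}(\rho_{Y_n},Y_n)\setminus\{\rho_{Y_n}\}$ is foliated by flow orbits converging to $\rho_{Y_n}$, so we may choose $t_n\in\RR$ such that $y_n:=\phi^{Y_n}_{t_n}(x_n)$ lies on $W^{ss}(\rho_{Y_n},Y_n)$ at distance exactly $\varepsilon$ from $\rho_{Y_n}$. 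Flow invariance of chain recurrent classes keeps $y_n\in C(\sigma_{Y_n},Y_n)$. Passing to a subsequence, $y_n\to y^*$ with $d(y^*,\rho)=\varepsilon$, and the $C^1$ convergence of local strong stable manifolds of hyperbolic continuations gives $y^*\in W^{ss}(\rho,X)$; upper semi-continuity of the chain recurrent class (applied at $X\in{\cal G}$ via Lemma~\ref{Lem:continuitychainclass}) yields $y^*\in C(\sigma,X)$. This contradicts $W^{ss}(\rho)\cap C(\sigma)=\{\rho\}$, completing the argument.

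The hard part is precisely this last robustness step: the difficulty is that a priori the ``bad'' intersection point $x_n$ could drift toward $\rho_{Y_n}$ and evaporate in the limit; the trick of reparametrizing along the one-dimensional $W^{ss}$ to fix the distance at $\varepsilon$, combined with upper semi-continuity of $C(\sigma_Y)$ at generic $X$, is what converts the pointwise statement of Lemma~\ref{Lem:strongstableoutside} into the desired neighborhood statement.
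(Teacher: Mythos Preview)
Your proof is correct and is precisely the detailed unpacking of the paper's one-line justification (``this is true just because $X$ is $C^1$ generic and the continuous property of the local strong stable manifolds''); indeed, the paper itself spells out essentially the same contradiction argument---flow the bad intersection into $W^{ss}_{loc}$, pass to a limit using continuity of local strong stable manifolds and upper semi-continuity of $C(\sigma_Y)$---later in the proof of Proposition~\ref{Pro:continuitypseudoLorenz}. Two minor remarks: (i) your phrase ``foliated by flow orbits'' is slightly misleading since the one-dimensional $W^{ss}\setminus\{\rho\}$ \emph{is} two orbits rather than being foliated by them, but the argument is unaffected; (ii) plain upper semi-continuity (Lemma~\ref{Lem:u.s.c}) already suffices in the last step, so invoking the stronger continuity of Lemma~\ref{Lem:continuitychainclass} is harmless overkill.
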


\begin{proof}

This is true just because that $X$ is $C^1$ generic and the continuous property of the local strong stable manifolds.
\end{proof}

Furthermore, we have

\begin{Theorem}\label{Thm:Lorenz-like}
For a generic $X\in \cX^1(M^3)\setminus\overline{\cal HT}$, and a hyperbolic singularity $\sigma$ of index 2, if
$C(\sigma)$ is non trivial, then $\sigma$ is Lorenz-like for $X$, i.e., the eigenvalues $\lambda_1, \lambda_2,
\lambda_3$ of $DX(\sigma)$ are all real and satisfy
    $$
    \lambda_1<\lambda_2<0<-\lambda_2<\lambda_3. \hspace{5cm} (*)
    $$
\end{Theorem}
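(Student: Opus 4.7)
My plan is to apply Lemma~\ref{Lem:strongstableoutside} together with an eigenvalue dimension count to reduce the statement to the single inequality $\lambda_2+\lambda_3>0$, and then to rule out its negation by constructing, via an arbitrarily small $C^1$ perturbation, a uniformly $\psi^*$-contracting periodic orbit accumulating on $\sigma$, which is forbidden by Theorem~\ref{Thm:etad}.

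\emph{Reduction.} Lemma~\ref{Lem:strongstableoutside} produces a dominated splitting $T_\sigma M^3=E^{ss}\oplus F$ with $\dim E^{ss}=1$ and $E^{ss}$ contracting, yielding one real eigenvalue $\lambda_1<0$. If $DX(\sigma)|_F$ had a non-real complex conjugate pair of eigenvalues with real part $\alpha$, hyperbolicity would force $\alpha\neq 0$, and either sign would push $\mathrm{ind}(\sigma)$ to $1$ or $3$, contradicting $\mathrm{ind}(\sigma)=2$; hence the two eigenvalues of $DX(\sigma)|_F$ are real, and the index count yields one negative $\lambda_2$ and one positive $\lambda_3$. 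The domination $E^{ss}\prec F$ gives $\lambda_1<\lambda_2$, so three of the four Lorenz-like inequalities in $(*)$ are established and only $\lambda_2+\lambda_3>0$ remains.

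\emph{Contradiction.} Suppose $\lambda_2+\lambda_3\le 0$; item~3 of Lemma~\ref{Lem:generic} excludes equality, so $\lambda_2+\lambda_3<0$. By item~1 of Lemma~\ref{Lem:generic} the class $C(\sigma)$ is Lyapunov stable with $W^u(\sigma)$ dense in it, so some orbit in $W^u(\sigma)\setminus\{\sigma\}$ returns arbitrarily close to $\sigma$. I close up such a returning orbit via Lemma~\ref{Lem:wenxia} into a periodic orbit $\gamma_Y$ of some $Y$ arbitrarily $C^1$-close to $X$, with $\gamma_Y$ spending a very long time $\tau$ inside a linearising chart of $\sigma_Y$ and a uniformly bounded time in transit outside. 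Using the dominated splitting of the normal bundle over $C(\sigma)\setminus\mathrm{Sing}(X)$ from Corollary~\ref{Cor:dominatedonweaktransitive}, the uniform estimates on the scaled sectional Poincar\'e map from Lemma~\ref{Lem:estimationscaled}, and the area-contraction in $F$ at $\sigma_Y$ forced by $\lambda_2+\lambda_3<0$, I verify that $\gamma_Y$ is $(K,\eta,T,\mathcal{N})$-contracting at the period with constants $K,\eta,T$ independent of $\tau$ and $Y$. Lemma~\ref{Lem:uniformgeneric} then produces in $X$ itself $(K,\eta/2,2T,\mathcal{N})$-contracting periodic orbits in every neighbourhood of $\sigma$; Pliss' lemma (Lemma~\ref{Lem:pliss}) extracts from these $(1,\eta',T',\mathcal{N})$-$\psi_t^*$-contracting periodic points accumulating on $\sigma$, directly contradicting Theorem~\ref{Thm:etad}.

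\emph{Main obstacle.} The hardest technical point is the verification that $\gamma_Y$ is uniformly contracting at the period: the linearised segment contributes strong normal contraction at explicit rates expressible through the eigenvalues of $DX(\sigma_Y)$, but the bounded-length transit outside the chart of $\sigma_Y$ could in principle reintroduce expansion at the operator-norm level, and it is precisely the strict inequality $\lambda_2+\lambda_3<0$ (rather than the weaker $\lambda_2<0$) that makes the area-contraction near $\sigma_Y$ dominate the transit contribution uniformly in $\tau$ and in the perturbation $Y$; bookkeeping this requires the uniform continuity of $D\mathcal{P}^*$ from Lemma~\ref{Lem:estimationscaled} together with a Franks-type perturbation argument (Lemma~\ref{Lem:Franks}) to align the normal bases along the transit, and is the heart of the argument.
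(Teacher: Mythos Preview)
Your strategy matches the paper's: reduce via Lemma~\ref{Lem:strongstableoutside} to the single inequality $\lambda_2+\lambda_3>0$, assume the contrary, manufacture periodic orbits of nearby vector fields shadowing a homoclinic loop of $\sigma$ that are uniformly $(\cdot,\cdot,\cdot,\mathcal N)$-contracting at the period, and contradict Lemma~\ref{Lem:uniformsinks} (which packages exactly your Lemma~\ref{Lem:uniformgeneric}\,/\,Lemma~\ref{Lem:pliss}\,/\,Theorem~\ref{Thm:etad} chain). Two small corrections are worth noting. First, Lemma~\ref{Lem:wenxia} does not directly close an orbit on $W^u(\sigma)$ into a periodic one; the paper therefore proceeds in two steps, first connecting $W^u(\sigma)$ to $W^s(\sigma)$ to obtain a homoclinic orbit $\Gamma$ for some $Y$, and then taking $Y_n\to Y$ with periodic orbits $\gamma_n\to\Gamma\cup\{\sigma\}$. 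Second, the verification that $\gamma_n$ is uniformly contracting at the period is not done via Franks' lemma (which is neither needed nor suited to ``aligning bases''): instead one passes to the extended linear Poincar\'e flow over $\widetilde{\Gamma\cup\sigma}$, checks that $\Delta^{cs}(v)=E^{ss}(\sigma)$ for every $v\in T_\sigma M^3\cap\widetilde{\Gamma\cup\sigma}$, and applies Lemma~\ref{Lem:mixed}; the domination then forces $\|\psi_{t_{i+1}-t_i}|_{\mathcal N}\|$ to be governed by the one-dimensional $\Delta^{cu}$ direction, whose product over the period picks up the rate $\lambda_2+\lambda_3<0$ since $\gamma_n$ spends almost all its time near $\sigma$.
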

\begin{proof}
First by Lemma~\ref{Lem:strongstableoutside}, for the three eigenvalues $\lambda_1,
\lambda_2, \lambda_3$ of $DX(\sigma)$, they are all real and
$$ \lambda_1<\lambda_2<0<\lambda_3.$$
What's left is to prove that $\lambda_2+\lambda_3>0$. The three corresponding eigenspaces are denoted by
$E^{ss}(\sigma)$, $E^{cs}(\sigma)$ and $E^{u}(\sigma)$. By changing the Riemannian metric in a small
neighborhood of $\sigma$, we can assume that they are mutually orthogonal. We will prove this by absurd, i.e.,
we assume that $\lambda_2+\lambda_3\le 0$. Since $X$ is $C^1$ generic, by Lemma~\ref{Lem:generic} one has
$\lambda_2+\lambda_3<0$. Moreover,

\begin{itemize}
\item $W^u(\sigma)\subset C(\sigma)$ and $W^u(\sigma)$ is dense in $C(\sigma)$ by Lemma~\ref{Lem:generic}.

\item $W^{s}(\sigma)\cap C(\sigma)\setminus\{\sigma\}\neq\emptyset$ since $C(\sigma)$ is non-trivial.

\end{itemize}

By using Lemma~\ref{Lem:wenxia} (the $C^1$ connecting lemma), for any $C^1$ neighborhood $\cal U$ of
$X$, there is $Y\in\cal U$ such that

\begin{itemize}
\item $Y$ has a homoclinic orbit $\Gamma$ associated to $\sigma_Y$.

\item For the three eigenvalues $\lambda_{1}^Y<\lambda_2^Y<0<\lambda_3^Y$ of $DY(\sigma_Y)$,
one still has $\lambda_2^Y+\lambda_3^Y<0$.

\item $W^{ss}(\sigma_Y)\cap C(\sigma_Y)=\{\sigma_Y\}$ by Corollary~\ref{Cor:robuststongstable}.

\end{itemize}

By simple perturbations, there is a sequence of vector fields $Y_n$ such that
\begin{itemize}

\item $\lim_{n\to\infty}Y_n=Y$.

\item Each $Y_n$ has a hyperbolic periodic orbit $\gamma_n$ such that $\lim_{n\to\infty}\gamma_n=\Gamma\cup\sigma$.

\end{itemize}

Under our assumption, $\{\gamma_n\}$ could be sinks or saddles. Let $C>0$, $\eta>0$, $\iota>0$ be as in Lemma~\ref{Lem:unifromordominated}. By Corollary~\ref{Cor:dominatedinneighborhood}
${\cal N}_{\gamma_n}$ admits an $\iota$-dominated splitting of index $1$ with respect to $\psi_t^{Y_n}$ for $n$ large enough.


 As a corollary, ${\cal
N}_{\Gamma}$ admits an $\iota$-dominated splitting w.r.t. $\psi_t^Y$. Thus, $\widetilde{\cal N}_{{\widetilde {\Gamma\cup\sigma}}}$
admits an $\iota$-dominated splitting ${\widetilde{\cal N}_{{\widetilde {\Gamma\cup\sigma}}}}=\Delta^{cs}\oplus\Delta^{cu}$ w.r.t.
$\widetilde{\psi}_t^Y$.

\begin{Claim}
For every $v\in T_\sigma M^3\cap {\widetilde {(\Gamma\cup\sigma)}}$, one has
$\Delta^{cs}(v)=E^{ss}(\sigma)$.

\end{Claim}

\begin{proof}
For each $v\in T_\sigma M^3$, if $v\in{\widetilde {\Gamma\cup\sigma}}$, then $v\in E^{cs}(\sigma)$ or $v\in
E^{u}(\sigma)$. Since $E^{ss}(\sigma)$, $E^{cs}(\sigma)$ and $E^u(\sigma)$ are mutually orthogonal, one has
\begin{itemize}
\item if $v\in E^{cs}(\sigma)$, since $\widetilde{\cal N}_{v}=E^{ss}(\sigma)\oplus E^u(\sigma)$ is a dominated
splitting w.r.t. $\widetilde{\psi}_t$, one has $\Delta^{cs}(v)=E^{ss}(\sigma)$;

\item if $v\in E^{u}(\sigma)$, since $\widetilde{\cal N}_{v}=E^{ss}(\sigma)\oplus E^{cs}(\sigma)$ is a dominated
splitting w.r.t. $\widetilde{\psi}_t$, one has $\Delta^{cs}(v)=E^{ss}(\sigma)$.
\end{itemize}

\end{proof}

Since the unique ergodic measure is supported on $\{\sigma\}$ for $\phi_t|_{\Gamma\cup\sigma}$, one has that
there are $C>0$ and $\lambda>0$ such that for any $t>0$ and any $(x,v)\in \widetilde{\Gamma\cup\sigma}$,
$$\frac{\|\widetilde{\psi}_t|_{\Delta^{cs}(x,v)}\|}{\|\Phi_t|_{<v>}\|}\le Ce^{-\lambda t}.$$
By Lemma~\ref{Lem:mixed}, $\Gamma\cup\sigma$ admits a dominated splitting $T_{\Gamma\cup\sigma}M^3=E\oplus F$
w.r.t. the tangent flow $\Phi_t$, where $\dim E=1$. Thus $E(\sigma)=E^{ss}(\sigma)$ by the uniqueness of
dominated splittings. Since the unique ergodic measure is supported on $\{\sigma\}$, one has that $E$ is
uniformly contracting. Thus $\gamma_n$ is also $(C,\eta,d,{\cal N})$-contracting at the period w.r.t. $\psi_t^{Y_n}$ for some
$C>0$, $d>0$ and $\eta\in(0,-(\lambda_2+\lambda_3))$ which depends on $X$ since $\gamma_n$ stays in a small neighborhood of the singularity for most time. This also gives a contradiction by Lemma~\ref{Lem:uniformsinks}.

\end{proof}

\subsection{Liao's shadowing lemma for $\psi_t^*$ and Liao's sifting lemma}

\begin{Definition}
Let $\eta>0$ and $T>0$. For any $x\in M^d\setminus\Sing(X)$ and $T_0>T$, the orbit arc $\phi_{[0,T_0]}(x)$
is called $(\eta,T)$-$\psi_t^*$-quasi hyperbolic with respect to a direct sum splitting $\cN_x=E(x)\oplus
F(x)$ if there is a partition
$$
0=t_0<t_1<\cdots<t_l=T,\quad t_{i+1}-t_i\le T,
$$
such that for $k=0, 1, \cdots, l-1$, we have
\begin{eqnarray*}
\prod_{i=0}^{k-1}\|\psi^*_{t_{i+1}-t_i}|_{\psi_{t_i}(E(x))}\|&\le& {\rm e}^{-\eta t_k},\\
\prod_{i=k}^{l-1}m(\psi^*_{t_{i+1}-t_i}|_{\psi_{t_i}(F(x))})&\ge& {\rm e}^{\eta (t_l-t_k)},\\
\frac{\|\psi^*_{t_{k+1}-t_k}|_{\psi_{t_k}(E(x))}\|}{m(\psi^*_{t_{k+1}-t_k}|_{ \psi_{t_k}(F(x))})}&\le& {\rm
e}^{-\eta(t_{k+1}-t_k)}.
\end{eqnarray*}

\end{Definition}

\begin{Remark}
The third inequality is usually satisfied in an invariant set with a $T^*$-dominated splitting in the normal
bundle with respect to the linear Poincar\'e flow.

\end{Remark}

Note that this definition is similar to the usual quasi hyperbolic orbit arc for linear Poincar\'e flow, while
the only difference is that now we use the scaled linear Poincar\'e flow $\psi_t^*$ instead of linear
Poincar\'e flow $\psi_t$. Let $d_T$ be the metric in $TM^d$ induced by the Riemanian metric. For $x,y\in M^d$
and two linear subspaces $E(x)$ and $F(y)$, one defines
$$\tilde{d}(E(x),F(y))=\sup_{\{u\in E(x),|u|=1\}}\inf_{v\in F(y),|v|=1}\{d_T(u,v)\}.$$

\begin{Theorem}\label{Thm:Liaoshadowing}
Let $X\in\cX^1(M^d)$, $\Lambda\subset M^d\setminus\Sing(X)$ be a compact set, and $\eta>0, T>1$. For any
$\varepsilon>0$ there exists $\delta>0$, such that for any $(\eta,T)$-$\psi_t^*$-quasi hyperbolic orbit arc
$\phi_{[0,T]}(x)$ with respect to some direct sum splitting $\cN_x=E(x)\oplus F(x)$ satisfying $x,
\phi_T(x)\in\Lambda$ and $\tilde d(E(x), \psi_T(E(x)))\le\delta$, there exists a point $p\in M^d$ and a $C^1$
strictly increasing function $\theta:[0,T]\to\RR$ such that

\begin{itemize}

\item $\theta(0)=0$ and $1-\varepsilon<\theta'(t)<1+\varepsilon$;

\item $p$ is periodic: $\phi_{\theta(T)}(p)=p$;

\item
$d(\phi_t(x),\phi_{\theta(t)}(p))\le\e|X(\phi_t(x))|,\quad t\in[0,T].$

\item There is a direct-sum splitting $\cN_p=E(p)\oplus F(p)$ such that $\psi^*_{\theta(T)}(E(p))=E(p)$,
$\psi^*_{\theta(T)}(F(p))=F(p)$, and for any $t\in[0,T]$,
$$\tilde{d}(\psi^*_t(E(x)),\psi^*_{\theta(t)}(E(p)))\le\varepsilon,$$
$$\tilde{d}(\psi^*_t(F(x)),\psi^*_{\theta(t)}(F(p)))\le\varepsilon.$$

\end{itemize}
\end{Theorem}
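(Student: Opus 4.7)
The plan is to reduce the flow-version shadowing problem to a standard hyperbolic pseudo-orbit shadowing problem for the sequence of scaled sectional Poincar\'e maps $\mathcal{P}^*_i = \mathcal{P}^*_{\phi_{t_i}(x),\phi_{t_{i+1}}(x)}$ coming from the given partition. The whole point of introducing $\psi_t^*$ and the scaled sectional Poincar\'e map is that, by Lemma~\ref{Lem:estimationscaled}, each $\mathcal{P}^*_i$ is defined on the uniform disk $\mathcal{N}_{\phi_{t_i}(x)}(\beta_T)$, its derivative at $0$ is exactly $\psi^*_{t_{i+1}-t_i}|_{\mathcal{N}_{\phi_{t_i}(x)}}$, its differential is uniformly bounded by $K_T$, and is uniformly continuous on that disk; these bounds are independent of how close $\phi_{t_i}(x)$ is to $\mathrm{Sing}(X)$. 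Thus, in the rescaled coordinates, the quasi-hyperbolicity hypothesis becomes exactly the hypothesis of a standard finite-time hyperbolic pseudo-orbit: on $E(x)$ we have total contraction $\le e^{-\eta T}$, on $F(x)$ total expansion $\ge e^{\eta T}$, and the one-step domination is built into the definition.

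First I would apply the standard graph transform argument at the \emph{linear} level to straighten the splitting: using the third (domination) inequality, a cone-field / contraction-mapping argument produces, for each index $i$, invariant cone fields $\mathcal{C}^E_i, \mathcal{C}^F_i$ in $\mathcal{N}_{\phi_{t_i}(x)}$ that coincide with $\psi_{t_i}(E(x)), \psi_{t_i}(F(x))$ up to an error controlled by $\delta$ (provided $\varepsilon_0$ in the statement is small enough). Next, using the uniform $C^1$ control from Lemma~\ref{Lem:estimationscaled}, the nonlinear map $\mathcal{P}^*_i$ is $C^1$-close to its linearization $\psi^*_{t_{i+1}-t_i}$ on $\mathcal{N}_{\phi_{t_i}(x)}(\beta_T)$, so the stable and unstable graph transforms (of graphs of maps $\sigma^s : E \to F$ and $\sigma^u : F \to E$ of Lipschitz norm controlled by the cone fields) are contractions in the natural $C^0$-metric. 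This produces local scaled stable and unstable ``manifolds'' through every point of the finite orbit, of uniform size $\beta_T$, tangent at $0$ to the respective subbundles up to an error going to $0$ with $\delta$.

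Then I would close up the pseudo-orbit. The hypothesis $\tilde d(E(x),\psi_T(E(x)))\le \delta$ and $x,\phi_T(x)\in \Lambda$ (compact, hence $|X|$ bounded from below and above on $\Lambda$, and $|X(\phi_T(x))|$ comparable to $|X(x)|$ whenever $\phi_T(x)$ is close enough to $x$) guarantees that, after pulling $\phi_T(x)$ back to a neighborhood of $x$ by the local holonomy, the scaled stable manifold of $x$ and the image under $\mathcal{P}^*_{0,T}$ of the scaled unstable manifold of $x$ meet transversally in a single point $q\in \mathcal{N}_x(\beta_T)$. Unscaling, $\exp_x(|X(x)|\, q)$ is the desired periodic seed: its forward orbit under the flow returns to its starting cross section after time $\theta(T)$ close to $T$, where the reparametrization $\theta$ arises from the implicit-function definition of the sectional first-return time in Lemma~\ref{Lem:domainofPoincare}; since the scaling in the statement of the target shadowing estimate is exactly $|X(\phi_t(x))|$, the estimate $d(\phi_t(x),\phi_{\theta(t)}(p))\le \varepsilon |X(\phi_t(x))|$ follows directly from the fact that $q$ lives at distance $\le \varepsilon \beta_T$ in the scaled normal disk and is transported by $\mathcal{P}^*_i$. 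The invariant splitting $\mathcal{N}_p = E(p)\oplus F(p)$ and its closeness to $E(x)\oplus F(x)$ is read off from the stable/unstable cone fields at $p$, which by construction are exponentially close to the corresponding cone fields at the reference orbit.

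The main obstacle is bookkeeping the scaling: one has to verify throughout that every estimate that would ordinarily be uniform in Euclidean scale is instead uniform in the $|X|$-scaled sense, so that errors measured in the normal disk $\mathcal{N}_x(\beta_T)$ of radius $\beta_T$ (independent of $|X(x)|$) correspond after $\exp_x(|X(x)|\cdot)$ to errors of the desired form $\varepsilon |X(\phi_t(x))|$ in $M^d$. The linchpin for this is Lemma~\ref{Lem:estimationscaled}, which provides all the uniform $C^1$-estimates on $\mathcal{P}^*$; once that is in hand, the graph transform, transverse intersection, and closing steps are formally parallel to the diffeomorphism case, with the compactness $\Lambda\subset M^d\setminus\mathrm{Sing}(X)$ used exclusively at the endpoints $x$ and $\phi_T(x)$ to guarantee that $|X|$ is bounded below there and that the local first-return map of the true flow is well defined.
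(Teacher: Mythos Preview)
The paper does not supply its own proof of this theorem: it is stated as a version of Liao's shadowing lemma (see the subsection title and the surrounding discussion attributing the idea to Liao \cite{Lia89}), and immediately afterwards the paper only remarks that the essential feature is that merely the head and tail of the arc need to lie in the compact set $\Lambda\subset M^d\setminus\Sing(X)$. So there is nothing to compare your argument against line by line.

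That said, your outline is the correct and standard route, and in particular it identifies precisely the mechanism the paper has been building towards: Lemma~\ref{Lem:estimationscaled} guarantees that the scaled sectional Poincar\'e maps $\mathcal P^*_{\phi_{t_i}(x),\phi_{t_{i+1}}(x)}$ are defined on disks of \emph{uniform} radius $\beta_T$, have derivative $\psi^*_{t_{i+1}-t_i}$ at the origin, and are uniformly $C^1$-continuous there, independently of how close $\phi_{t_i}(x)$ is to $\Sing(X)$. Once this is in hand, the $(\eta,T)$-$\psi^*_t$-quasi-hyperbolicity becomes exactly the hypothesis of a finite quasi-hyperbolic string for a sequence of uniformly $C^1$ local diffeomorphisms of $\RR^{d-1}$, and the graph-transform / cone-field closing argument you describe goes through verbatim; the compactness of $\Lambda$ is used only at the two endpoints to bound $|X|$ from below and to set up the holonomy identifying $\cN_{\phi_T(x)}$ with $\cN_x$.

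Two small points worth tightening. First, in the closing step you want the intersection of the \emph{local stable graph at the initial section} $\cN_x$ with the \emph{holonomy-transported local unstable graph from} $\cN_{\phi_T(x)}$; your phrasing ``the image under $\mathcal P^*_{0,T}$ of the scaled unstable manifold of $x$'' is slightly off, since that image lives in $\cN_{\phi_T(x)}$, not $\cN_x$. The hypothesis $\tilde d(E(x),\psi_T(E(x)))\le\delta$ (which, by the definition of $\tilde d$ via $d_T$ on $TM^d$, controls both the base-point distance $d(x,\phi_T(x))$ and the angle between $E(x)$ and $\psi_T(E(x))$) is exactly what makes this holonomy a small $C^1$ perturbation of the identity, so transversality persists. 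Second, the reparametrization bound $|\theta'(t)-1|<\varepsilon$ needs the uniform continuity of the hitting-time function $\tau(t,y)$ from the proof of Lemma~\ref{Lem:domainofPoincare}, not just its existence; you should cite that explicitly when you write the argument out.
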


\begin{Remark}
If we consider an invariant set with a dominated splitting in the normal bundle w.r.t. the linear Poincar\'e
flow, we can replace $\tilde d(E(x), \psi_T(E(x)))\le\delta$ by $d(x,\phi_T(x))<\delta$.

\end{Remark}
Note that in this version of shadowing lemma, we only need that the head and tail of orbit arc are far from singularities, while other part of the orbit arc can approximate singularities. This enables us to deal with some problems where regular orbits approximate singularities.

We also need Liao's sifting lemma \cite{Lia80,Lia81}, whose aim is to find quasi-hyperbolic orbit segments. One can see \cite{Wen08} for a proof.
\begin{Lemma}\label{Lem:Liaosifting}

Let $\phi_t:\Lambda\to\Lambda$ be a continuous flow on a compact metric space $\Lambda$ and  $f:~\Lambda\to\RR$  a
continuous function. Let $\eta_2>\eta_1>0$ and $T>0$. Assume that
\begin{itemize}
\item there is $b\in\Lambda$ such that for any $n\in\NN$,
$$\sum_{i=0}^{n-1}f(\phi_{iT}(b))\ge 0;$$

\item for any $c\in\Lambda$ verifying for any $n\in\NN$,
$$\sum_{i=0}^{n-1}f(\phi_{iT}(c))\ge -n\eta_1,$$
there is $g\in\omega(c)$ such that for any $n\in\NN$,
$$\sum_{i=0}^{n-1}f(\phi_{iT}(g))\le -n\eta_2.$$
\end{itemize}

Then for any $\eta_3,\eta_4$ verifying $\eta_2>\eta_3>\eta_4>\eta_1$, for any $k\in\NN$, there is $y$ in the
positive orbit of $x$ and integers $0=n_0<n_1<\cdots<n_k$ such that for each integer $i\in[0,k-1]$, for any
integer $m\in[1,n_{i+1}-n_i]$ one has
$$\sum_{j=0}^{m-1}f(\phi_{jT}(\phi_{n_i T}(y)))\le -m\eta_4,$$

$$\sum_{j=m-1}^{n_{i+1}-n_i-1}f(\phi_{jT}(\phi_{(n_i+m-1)T}))\ge -(n_{i+1}-n_i-m+1)\eta_3.$$

\end{Lemma}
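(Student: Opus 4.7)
The strategy is to pass to the discrete system $g = \phi_T:\Lambda\to\Lambda$ and work with the Birkhoff sums $S_n(x) = \sum_{i=0}^{n-1} f(g^i(x))$. The hypotheses become (H1) $S_n(b) \ge 0$ for all $n$, and (H2) whenever a point $c$ satisfies $S_n(c) \ge -n\eta_1$ for all $n$, some $h \in \omega(c)$ satisfies $S_n(h) \le -n\eta_2$ for all $n$. The conclusion, restated, asks for $y$ on the forward orbit of $b$ (the $x$ in the statement is evidently a typo for $b$) and times $0 = n_0 < n_1 < \cdots < n_k$ so that in each block of length $L_i = n_{i+1} - n_i$, setting $y_i = g^{n_i}(y)$, one has the uniform prefix drop $S_m(y_i) \le -m\eta_4$ for $m \in \{1,\ldots,L_i\}$ and the Pliss-type suffix recovery $S_{L_i}(y_i) - S_{m-1}(y_i) \ge -(L_i-m+1)\eta_3$ for $m \in \{1,\ldots,L_i\}$. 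The key tool is the discrete Pliss lemma together with its ``stopping'' dual: given average rate bounds on a Birkhoff sum, one can shift the start (resp.\ end) to obtain uniform prefix (resp.\ suffix) one-sided bounds at a slightly worse rate.

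\textbf{Single block (base case).} Since $S_n(b) \ge 0 \ge -n\eta_1$, hypothesis (H2) gives $h_1 \in \omega(b)$ with $S_n(h_1) \le -n\eta_2$. Fix a rate $\eta' \in (\eta_3,\eta_2)$ and a large $N_1$. By uniform continuity of $S_{N_1}$, there is $t_1 \in \NN$ with $g^{t_1}(b)$ so close to $h_1$ that $S_{N_1}(g^{t_1}(b)) \le -N_1 \eta'$. Applying the forward Pliss lemma with target rate $\eta_4 < \eta'$, find a shift $m_1 \in [1,N_1]$ such that, setting $y := g^{t_1+m_1-1}(b)$, the prefix-drop condition $S_m(y) \le -m\eta_4$ holds for all $m$ up to some length $L_1 \le N_1 - m_1 + 1$. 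For the suffix-recovery condition, exploit (H1): the non-negativity $S_{t_1 + m_1 - 1 + L_1}(b) \ge 0$ combined with a universal bound on the initial segment $S_{t_1+m_1-1}(b)$ forces the aggregate drop over $[0,L_1]$ from $y$ to be controlled, and the dual Pliss argument at rate $\eta_3 > \eta'$ lets one choose the endpoint $L_1$ so that every suffix sum from $g^{m-1}(y)$ to $g^{L_1-1}(y)$ is bounded below by $-(L_1-m+1)\eta_3$. Set $n_0 = 0$ and $n_1 = L_1$.

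\textbf{Iteration and main obstacle.} To obtain $k$ blocks, inductively after building the $i$-th block consider the point $y_i' = g^{n_i}(y)$. If its forward Birkhoff sums satisfy $S_n(y_i') \ge -n\eta_1$ for all $n$, apply (H2) again to produce $h_{i+1} \in \omega(y_i') \subset \omega(b)$ with $S_n(h_{i+1}) \le -n\eta_2$, and repeat the single-block construction at a shift $t_{i+1}$ chosen strictly larger than $n_i$; otherwise some $n$ already has $S_n(y_i') < -n\eta_1$, and the forward Pliss lemma applies directly without invoking (H2). In either case we produce the next block $[n_i,n_{i+1}]$ and iterate $k$ times. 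The main technical obstacle is running the forward and dual Pliss arguments inside a single block \emph{simultaneously}: the prefix condition demands a steep drop at the start, while the suffix condition forbids continued fast drops at the tail. These are in tension but compatible precisely because $\eta_4 < \eta_3$. Choosing each block endpoint requires carefully interleaving the two Pliss applications and tightly monitoring the net Birkhoff balance along the orbit of $b$, which is exactly what (H1) is designed to control.
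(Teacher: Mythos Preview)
The paper does not actually prove this lemma: immediately before the statement it writes ``One can see \cite{Wen08} for a proof'' and gives no argument of its own. So there is no in-paper proof to compare against; the relevant benchmark is the standard proof of Liao's sifting lemma as presented in \cite{Wen08} (or Liao's originals \cite{Lia80,Lia81}).

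Your overall plan---discretize to $g=\phi_T$, exploit the oscillation of Birkhoff sums between the level $0$ (forced by (H1) at $b$) and the level $-\eta_2$ (forced by (H2) at points of $\omega(b)$), and extract blocks via Pliss-type selection---is the correct skeleton. The iteration scheme for producing $k$ blocks is also fine in outline.

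However, the single-block construction has a real gap. You write that the suffix-recovery condition follows because ``the non-negativity $S_{t_1+m_1-1+L_1}(b)\ge 0$ combined with a universal bound on the initial segment $S_{t_1+m_1-1}(b)$ forces the aggregate drop over $[0,L_1]$ from $y$ to be controlled.'' But there is no universal bound on $S_{t_1+m_1-1}(b)$: hypothesis (H1) only gives a \emph{lower} bound $S_n(b)\ge 0$, and $S_{t_1+m_1-1}(b)$ may be arbitrarily large positive. In that case $S_{L_1}(y)=S_{t_1+m_1-1+L_1}(b)-S_{t_1+m_1-1}(b)$ can be arbitrarily negative, and no suffix control follows. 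So the two Pliss applications cannot simply be run independently and glued.

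The standard argument handles this by selecting the block endpoints \emph{simultaneously} through the adjusted sums. One studies $n\mapsto S_n(b)+n\eta_3$ (which tends to $+\infty$ by (H1) but has long descending stretches near the shadowed $h\in\omega(b)$, since $\eta_3<\eta_2$) and $n\mapsto S_n(b)+n\eta_4$. The block end $n_{i+1}$ is taken where the $\eta_3$-adjusted sum attains a new record high after a descent; this automatically yields the suffix bound with rate $\eta_3$. The block start $n_i$ is then the last time before $n_{i+1}$ where the $\eta_4$-adjusted sum was at a record low, which yields the prefix bound with rate $\eta_4$. The inequality $\eta_4<\eta_3$ guarantees $n_i<n_{i+1}$. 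This coupled extremum selection is the ``sifting'' step proper and is what your sketch is missing; once you insert it, the rest of your plan goes through.
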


We need the following fundamental lemma to prove hyperbolicity for compact sets.

\begin{Lemma}\label{Lem:compact}
Let $\phi_t:\Lambda\to\Lambda$ be a continuous flow on a compact metric space $\Lambda$ and  $f:~\Lambda\to\RR$  a
continuous function.  Given $T>0$, if for any $x\in\Lambda$, there is $n_x\in\NN$ such that
$$\sum_{i=0}^{n_x-1}f(\phi_{i T}(x))<0,$$
then there are $C\ge 0$ and $\lambda<0$ such that for any $x\in\Lambda$ and any $n\in\NN$, one has
$$\sum_{i=0}^{n-1}f(\phi_{i T}(x))\le C+n\lambda.$$
\end{Lemma}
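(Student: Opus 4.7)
The plan is to promote the pointwise hypothesis to a uniform one by compactness, and then to concatenate finitely many ``negative'' blocks to produce a linear upper bound. Write $S_n(x) = \sum_{i=0}^{n-1} f(\phi_{iT}(x))$.

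First I would observe that for each $x \in \Lambda$, after choosing $n_x \in \NN$ with $S_{n_x}(x) < 0$, the continuity of $f$ and of $\phi_{iT}$ makes $y \mapsto S_{n_x}(y)$ continuous on $\Lambda$. Hence there exist an open neighborhood $U_x$ of $x$ and $\delta_x > 0$ with $S_{n_x}(y) \le -\delta_x$ for all $y \in U_x$. Compactness of $\Lambda$ gives a finite subcover $U_{x_1}, \ldots, U_{x_k}$; set $N = \max_j n_{x_j}$ and $\delta = \min_j \delta_{x_j} > 0$. Then for every $y \in \Lambda$ one can pick an integer $n(y) \in \{1, 2, \ldots, N\}$ with $S_{n(y)}(y) \le -\delta$.

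Next I would iterate this construction: starting from any $x \in \Lambda$, set $m_0 = 0$ and $m_{l+1} = m_l + n(\phi_{m_l T}(x))$, so each block satisfies $S_{m_{l+1}}(x) - S_{m_l}(x) \le -\delta$ while $1 \le m_{l+1} - m_l \le N$. Given $n \in \NN$, let $L$ be the largest index with $m_L \le n$. Summing the block inequalities gives $S_{m_L}(x) \le -L\delta$, and the remainder contributes at most $(n - m_L)\sup_\Lambda |f| \le N \sup_\Lambda |f|$. Since $m_{L+1} \le (L+1)N$ and $n < m_{L+1}$, we have $L \ge n/N - 1$, so
\[
S_n(x) \;\le\; -\delta L + N \sup_\Lambda |f| \;\le\; -\frac{\delta}{N}\, n \,+\, \bigl(\delta + N \sup_\Lambda |f|\bigr).
\]
Taking $\lambda = -\delta/N < 0$ and $C = \delta + N \sup_\Lambda |f| \ge 0$ yields the desired inequality.

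There is no serious obstacle here: the only step requiring care is extracting the uniform lower bound $\delta > 0$ on the negativity and the uniform upper bound $N$ on the hitting time from the finite subcover, so that the block concatenation is well-defined. After that, the linear upper bound follows from elementary bookkeeping and the trivial bound $|f| \le \sup_\Lambda |f|$ on the (bounded-length) tail.
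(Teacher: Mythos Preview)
Your argument is correct: the compactness step yielding uniform $N$ and $\delta$, followed by the block concatenation and the tail estimate, is exactly the standard way to prove this lemma, and your bookkeeping with $L \ge n/N - 1$ is clean. The paper itself does not supply a proof here---it simply labels Lemma~\ref{Lem:compact} a ``fundamental lemma'' and moves on---so there is nothing to compare against, but your write-up would serve perfectly well as the omitted proof.
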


\subsection{Proof of Proposition~\ref{Pro:mixingdominated}}

Now we can give the proof of Proposition~\ref{Pro:mixingdominated}.
\begin{proof}[Proof of Proposition~\ref{Pro:mixingdominated}]
For proving this proposition, one assumes that the second case of the conclusion cannot occur.

By changing the Riemannian metric in a small neighborhood of singularities, one can assume that
$E^{ss}(\sigma)\perp E^{cu}(\sigma)$ for any singularity $\sigma\in\Lambda$. There is $T^*>0$ such that
\begin{itemize}

\item For any $\sigma\in\Lambda$ and any unit vector $v\in E^{cu}(\sigma)$, one has
$$\frac{\|\Phi_{T^*}|_{E^{ss}}\|}{|\Phi_{T^*}(v)|}\le \frac{1}{4}.$$

\item ${\cal N}_{\Lambda\setminus{\rm Sing}(X)}=\Delta^{cs}\oplus\Delta^{cu}$ is a $T^*$-dominated splitting w.r.t. $\psi_t$.

\end{itemize}

We consider $\widetilde{\Lambda}$: the lift of $\Lambda$ in the sphere bundle as in Subsection~\ref{Sub:flows}.
By considering the dynamics of $\chi_t$, one has ${\widetilde {\cal N}}_{\widetilde{\Lambda}}=\widetilde{\Delta^{cs}}\oplus
\widetilde{\Delta^{cu}}$ is a $T^*$-dominated splitting of index $i$ w.r.t. $\widetilde{\psi}_t={\rm
proj}_2(\chi_t)$ verifying the following property: $\Delta^{cs}(x)=\widetilde{\Delta^{cs}}(X(x)/|X(x)|)$ and
$\Delta^{cu}(x)=\widetilde{\Delta^{cu}}(X(x)/|X(x)|)$ for any regular point $x\in\Lambda$.

Since $W^{ss}(\sigma)\cap \Lambda=\{\sigma\}$, one has if $v\in \widetilde{\Lambda}\cap T_\sigma M^d$, then $v\in
E^{cu}(\sigma)$. On $\widetilde{\Lambda}$, one defines the function $\widetilde{\xi}$ by
\begin{eqnarray*}
\widetilde{\xi}:~\widetilde{\Lambda}& \to & {\mathbb R},\\
                   v&\mapsto& \log\|\widetilde{\psi}_{T^*}|_{\widetilde{\Delta^{cs}}(v)}\|-\log\|\Phi_{T^*}(v)\|.
\end{eqnarray*}

Since $\widetilde{\Delta}^{cs}$ is a continuous bundle, $\widetilde{\xi}$ is a continuous function.

On $\Lambda\setminus{\rm Sing}(X)$, one can define the function $\xi$ by
\begin{eqnarray*}
\xi:~\Lambda\setminus{\rm Sing}(X) & \to & {\mathbb R},\\
          {x} &  \mapsto & \log\|\psi_{T^*}|_{\Delta^{cs}(x)}\|-\log\|\Phi_{T^*}|_{<X(x)>}\|.
\end{eqnarray*}

By the definitions, for every regular point $x\in\Lambda$, $\xi(x)=\tilde{\xi}(X(x)/|X(x)|)$.
$\widetilde{\xi}$ is defined on a \emph{compact} set and $\xi$ is defined on a \emph{non-compact} set.

\begin{Claim}
There is $C>1$ and $0<\lambda<1$ such that for any $v\in\widetilde{\Lambda}$ and $n\in\NN$, one has
$$\frac{\|\widetilde{\psi}_{nT^*}|_{\widetilde{\Delta^{cs}}(v)}\|}{|\Phi_{nT^*}(v)|}\le C\lambda^n.$$
\end{Claim}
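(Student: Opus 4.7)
The plan is to reduce the claimed uniform exponential estimate to a pointwise Birkhoff condition via Lemma~\ref{Lem:compact}, and then argue by contradiction: if the pointwise condition fails, Liao's sifting lemma combined with Liao's shadowing theorem for $\psi_t^*$ would manufacture a sequence of hyperbolic periodic orbits realising the second alternative of Proposition~\ref{Pro:mixingdominated}, against the standing hypothesis (of this proof) that the second alternative fails. Multiplicativity along the cocycle gives
\[\log\frac{\|\widetilde{\psi}_{nT^*}|_{\widetilde{\Delta^{cs}}(v)}\|}{|\Phi_{nT^*}(v)|}\le \sum_{i=0}^{n-1}\widetilde{\xi}\bigl(\Phi^I_{iT^*}(v)\bigr),\]
so producing $C'\ge 0$ and $\lambda'<0$ with this right-hand side bounded above by $C'+n\lambda'$ uniformly on $\widetilde{\Lambda}$ suffices. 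Applying Lemma~\ref{Lem:compact} to the continuous function $\widetilde{\xi}$ and the homeomorphism $\Phi^I_{T^*}$ on the compact space $\widetilde{\Lambda}$, this reduces to
\[(\ast)\qquad\forall v\in\widetilde{\Lambda},~\exists n_v\in\NN\textrm{ with }\sum_{i=0}^{n_v-1}\widetilde{\xi}(\Phi^I_{iT^*}(v))<0.\]

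Suppose $(\ast)$ fails at some $v_0\in\widetilde{\Lambda}$, so $\sum_{i=0}^{n-1}\widetilde{\xi}(\Phi^I_{iT^*}(v_0))\ge 0$ for every $n$. I would apply Liao's sifting lemma (Lemma~\ref{Lem:Liaosifting}) to $(\Phi^I_{T^*},\widetilde{\xi})$ on $\widetilde{\Lambda}$, taking $b=v_0$ for hypothesis~(1). Hypothesis~(2) is the delicate point: given $c\in\widetilde{\Lambda}$ with $\sum\ge-n\eta_1$, either $\pi(c)\in\Sing(X)$, in which case $c\in E^{cu}(\sigma)$ and $\widetilde{\Delta^{cs}}(c)=E^{ss}(\sigma)$ by the uniqueness of dominated splittings (exactly as in Lemma~\ref{Lem:strongstablesingularity}), so $\widetilde{\xi}\le-\log 4$ along the orbit of $c$ and the premise is violated once $\eta_1$ is small; or $\pi(c)$ is regular, in which case the hypothesis $\omega(\pi(c))\cap\Sing(X)\neq\emptyset$ forces $\omega(c)\cap T_\sigma M^d\neq\emptyset$ for some singularity $\sigma\in\Lambda$, and the input $W^{ss}(\sigma)\cap\Lambda=\{\sigma\}$ pins this intersection inside $E^{cu}(\sigma)$. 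Following the forward $\Phi^I$-action within $E^{cu}(\sigma)$, iterates normalise toward a $\Phi^I_{T^*}$-fixed direction in $E^u(\sigma)$, or else stay in the one-dimensional $E^{cs}(\sigma)$; either way there is $g\in\omega(c)$ fixed by $\Phi^I_{T^*}$ with $\widetilde{\xi}(g)\le-\eta_2$ for a uniform $\eta_2>0$ supplied by the hyperbolic eigenvalues at the (finitely many) singularities, giving $\sum_{i=0}^{n-1}\widetilde{\xi}(\Phi^I_{iT^*}(g))\le-n\eta_2$ for every $n$.

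Liao's sifting now yields, for any $\eta_3,\eta_4$ with $\eta_2>\eta_3>\eta_4>\eta_1$, arbitrarily long $(\eta_4,T^*)$-quasi-hyperbolic segments for $\psi_t^*$ with respect to the splitting $\Delta^{cs}\oplus\Delta^{cu}$ along the projected orbit of $v_0$ in $\Lambda\setminus\Sing(X)$. Feeding these into Liao's shadowing theorem (Theorem~\ref{Thm:Liaoshadowing}) produces hyperbolic periodic orbits $\gamma_n$ of $X$ of index $i=\dim\Delta^{cs}$, with $\tau(\gamma_n)\to\infty$ (since the segments can be taken arbitrarily long), $H(\gamma_n)\cap\Lambda\neq\emptyset$ (by the closeness of shadowing), and averaged stable exponent $\frac{1}{[\tau(\gamma_n)/T]}\sum\log\|\psi_T|_{\cN^s(x_n)}\|$ tending to $0$ upon letting $\eta_1\downarrow 0$ and diagonalising. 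These orbits realise the second alternative of Proposition~\ref{Pro:mixingdominated}, the sought contradiction; hence $(\ast)$ holds and the claim follows. The chief obstacle is verifying hypothesis~(2) of sifting in the required uniform sense, namely finding $g\in\omega(c)$ whose \emph{every} Birkhoff sum (not merely the asymptotic average) is bounded by $-n\eta_2$; this is only achievable by landing at a $\Phi^I_{T^*}$-fixed direction inside the linearised dynamics at a singularity, and thus depends critically on the geometric pinning $\widetilde{\Lambda}\cap T_\sigma M^d\subset E^{cu}(\sigma)$ provided by $W^{ss}(\sigma)\cap\Lambda=\{\sigma\}$.
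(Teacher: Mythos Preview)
Your approach is correct and matches the paper's: reduce via Lemma~\ref{Lem:compact}, assume failure to obtain a point $b$ with all partial sums nonnegative, verify the hypotheses of Liao's sifting lemma, and then shadow to produce the periodic orbits realising the second alternative of Proposition~\ref{Pro:mixingdominated}.

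Two comments. First, your ``chief obstacle'' dissolves more easily than you suggest. You do not need a $\Phi^I_{T^*}$-fixed direction: \emph{any} $g\in\omega(c)\cap T_\sigma M^d$ works. Since $W^{ss}(\sigma)\cap\Lambda=\{\sigma\}$ forces $g\in E^{cu}(\sigma)$, and $E^{cu}(\sigma)$ is $\Phi^I_{T^*}$-invariant, every iterate $\Phi^I_{jT^*}(g)$ remains a unit vector in $E^{cu}(\sigma)$; the choice of $T^*$ then gives $\widetilde{\xi}\le-\log 4$ uniformly on $E^{cu}(\sigma)\cap S_\sigma M^d$, so $\sum_{j=0}^{n-1}\widetilde{\xi}(\Phi^I_{jT^*}(g))\le -n\log 4$ for every $n$. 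This is exactly how the paper handles hypothesis~(2), taking $\eta_2$ anything below $\log 4$; no eigendirection analysis is needed.

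Second, ``$H(\gamma_n)\cap\Lambda\neq\emptyset$ by the closeness of shadowing'' is not quite sufficient: a periodic orbit arbitrarily close to $\Lambda$ may still have trivial homoclinic class disjoint from $\Lambda$. The paper closes this gap by noting that the shadowed periodic orbits $P_{1/q}$ carry points $p_{1/q}$, accumulating away from $\Sing(X)$, that are uniformly $(1,\eta',T',\Delta^{cs})$-$\psi_t^*$-contracting and $(1,\eta',T',\Delta^{cu})$-$\psi_t^*$-expanding; Lemma~\ref{Lem:intersect} then makes the $P_{1/q}$ pairwise homoclinically related, so the homoclinic class of any one contains all the others and hence meets their accumulation set inside $\Lambda$.
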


\begin{proof}[Proof of the Claim]
The claim is equivalent to the following statement: There are $C>1$ and
$0<\lambda<1$ such that for any $v\in\widetilde{\Lambda}$ and $n\in\NN$,
$$\sum_{i=0}^{n-1}\widetilde{\xi}(\Phi^I_{i T^*}(v))\le \log C+ n\log\lambda.$$

If the claim is not true, by Lemma~\ref{Lem:compact}, for any $n\in\NN$, there is $v_n\in\widetilde{\Lambda}$ such
that for any integer $\ell\in[1,n]$, one has
$$\sum_{j=0}^{\ell-1}\widetilde{\xi}(\Phi^I_{i T^*}(v_n))\ge 0.$$
Let $b\in\widetilde{\Lambda}$ be an accumulation point of $\{v_n\}$. Then for any $n\in\NN$,
$$\sum_{i=0}^{n-1}\tilde{\xi}(\Phi^I_{i T^*}(b))\ge 0.$$
Since we assume that $E^{ss}(\sigma)\perp E^{cu}(\sigma)$, one has for any $v\in T_\sigma M^d\cap \widetilde{\Lambda}$, $\widetilde{\Delta^{cs}}(v)=E^{ss}(\sigma)$ and $v\in E^{cu}$. As
a corollary, one has for any $n\in\NN$,
$$\sum_{i=0}^{n-1}\tilde{\xi}(\Phi^I_{i T^*}(v))\le -n\log4.$$
For every point $x\in\Lambda$, by assumptions, $\omega(x)$ contains a hyperbolic
singularity. Thus for its lift $\widetilde{\Lambda}$, for any point $c\in\widetilde{\Lambda}$, there is a
singularity $\sigma\in\Lambda$ and a unit vector $v\in E^{cu}(\sigma)$ such that $v\in\omega(c)$. Thus
for the function $\tilde{\xi}$ and the flow $\phi_t^I$, the conditions of Lemma~\ref{Lem:Liaosifting} (Liao's
sifting lemma) are satisfied.

For any four numbers $\lambda_1$, $\lambda_2$, $\lambda_3$ and $\lambda_4$ satisfying
$-\log4/2<\lambda_1<\lambda_2<\lambda_3<\lambda_4<0$. Let
$$\widetilde{\Lambda}_{\lambda_2}=\{v\in\widetilde{\Lambda}:~\tilde{\xi}(\Phi_{-T^*}^I(v))\ge \lambda_2\}.$$
Since there are only finite many singularities, there is $\varepsilon_0=\varepsilon_0(\lambda_2)>0$, such that for any singularity $\sigma\in\Lambda$, and any unit vector $v\in
T_\sigma M^d\cap\widetilde{\Lambda}$, $d_T(v,\widetilde{\Lambda}_{\lambda_2})\ge \varepsilon_0$. By Lemma~\ref{Lem:Liaosifting}, for any $k\in\NN$, there is
$u$ in the positive orbit of $b$ and integers $0=n_0<n_1<\cdots<n_k$ such that for each integer
$\ell\in[0,k-1]$, for any integer $m\in[1,n_{\ell+1}-n_\ell]$ one has
$$\sum_{j=0}^{m-1}\tilde{\xi}(\Phi_{jT^*}^I(\Phi_{n_i T^*}^I(u)))\le m\lambda_3,$$

$$\sum_{j=m-1}^{n_{\ell+1}-n_\ell-1}\tilde{\xi}(\Phi^I_{jT^*}(\Phi^I_{(n_\ell+m-1)T^*}(u)))\ge (n_{\ell+1}-n_\ell-m+1)\lambda_2.$$

Thus $\Phi^I_{n_\ell T^*}(y)\in \widetilde{\Lambda}_{\lambda_2}$ for $1\le \ell\le k$. Assume that
$b=X(x)/|X(x)|$ and $y=\Phi_{t_0}^I(b)$. Thus,
$$d(\phi_{n_i T^*}(\phi_{t_0}(x)),{\rm Sing}(X))\ge\varepsilon_0.$$
For any $\varepsilon>0$ small enough such that: for any regular point $\beta\in\Lambda$, for any point $\theta\in
B(\beta,\varepsilon|X(\beta)|)$, for any $T'\in[(1-\varepsilon)T^*,(1+\varepsilon)T^*]$, for any two subspaces
$G(\beta)\subset {\cal N}_\beta$ and $G(\theta)\subset{\cal N}_\theta$ satisfying $\tilde{d}(G(\beta),G(\theta))<\varepsilon$, one has
$$\lambda_1-\lambda_2\le\log\|\psi_{T^*}^*|_{G(\beta)}\|-\log\|\psi_{T'}^*|_{G(\theta)}\|\le\lambda_4-\lambda_3.$$
$$\max\{\frac{\|\psi_{T'}|_{G(\theta)}\|}{\|\psi_{T^*}|_{G(\beta)}\|},\frac{m(\psi_{T^*}|_{G(\beta)})}{m(\psi_{T'}|_{G(\theta)})}\}\le \frac{\sqrt 2}{2}.$$

For this $\varepsilon>0$, there is $\delta=\delta(\varepsilon)$ as in Theorem~\ref{Thm:Liaoshadowing} (Liao's
shadowing). There is $k_\delta\in\NN$ such that for any $k_\delta$ points
$\{x_1,x_2,\cdots,x_{k_{\delta}}\}\subset\widetilde{\Lambda}_{\lambda_2}$, there are $1\le i_1<i_2\le k_\delta$ such that
$d(x_{i_1},x_{i_2})<\delta$. For this $k_\delta$, there are $n_1<n_2<\cdots<n_{k_\delta}$ and a point $y'\in {\rm
Orb}^+(x)$ such that for the function $\xi$ and $0\le \ell\le k_\delta-1$, one has

$$\sum_{j=0}^{m-1}{\xi}(\phi_{jT^*}(\phi_{n_\ell T^*}(y')))\le m\lambda_3,$$

$$\sum_{j=m-1}^{n_{\ell+1}-n_\ell-1}{\xi}(\phi_{jT^*}(\phi_{(n_\ell+m-1)T^*}(y')))\ge (n_{\ell+1}-n_\ell-m+1)\lambda_2.$$

Let $y_\ell=\phi_{n_\ell T^*}(y')$. By the dominated properties, one has for each $y_\ell$, for any integer
$m\in[1,n_{i+1}-n_i]$, one has
$$\prod_{j=0}^{m-1}\|\psi_{T^*}^*|_{\Delta^{cs}}(\phi_{jT^*}(y_\ell))\|\le e^{m\lambda_3},$$

$$\prod_{j=m-1}^{n_{\ell+1}-n_\ell-1}\|\psi_{-T^*}^*|_{\Delta^{cu}(\phi_{j+1}(y_\ell))}\|\le (\frac{1}{4})^{n_{\ell+1}-n_\ell-m}e^{-(n_{\ell+1}-n_\ell-m)\lambda_2}.$$

Let $\eta=\min\{-\lambda_3,\log4+\lambda_2\}$. One has that
\begin{itemize}
\item $\phi_{[0,(n_{\ell+1}-n_\ell)T^*]}(y_\ell)$ is an $(\eta,T^*)$-$\psi_t^*$-quasi hyperbolic strings.

\item $d(y_i,{\rm Sing}(X))>\varepsilon$.

\end{itemize}

By the choice of $k_\delta$, there are $y_\alpha$ and $y_\beta$ such that $d(y_\alpha,y_\beta)<\delta$. Thus by
Theorem~\ref{Thm:Liaoshadowing}, the orbit segment from $y_\alpha$ to $y_\beta$ can be shadowed: there is a
periodic orbit $P_\varepsilon$ with period $\tau(P_\varepsilon)$ and $p_\varepsilon\in P_\varepsilon$ and a
monotone increasing function $\theta_\varepsilon(t)$ such that
\begin{itemize}

\item $d(\phi_{\theta_\varepsilon(t)}(p_\varepsilon),\phi_t(y_\alpha))<\varepsilon|X(\phi_t(y_\alpha))|$ for any $0\le t\le
(n_\beta-n_\alpha)T^*$.

\item $1-\varepsilon\le \theta_\varepsilon'(t)\le 1+\varepsilon$ and $\theta_\varepsilon((n_\beta-n_\alpha)T^*)=\tau(P_\varepsilon)$.

\item There is a direct-sum splitting $\cN_{p_\varepsilon}=E(p_\e)\oplus F(p_\e)$ such that $\psi^*_{\theta((n_\beta-n_\alpha)T^*)}(E(p_\e))=E(p_\e)$,
$\psi^*_{\theta((n_\beta-n_\alpha)T^*)}(F(p_\e))=F(p_\e)$, and for any $t\in[0,(n_\beta-n_\alpha)T^*]$,
$$\tilde{d}(\psi^*_t(E(y_\alpha)),\psi^*_{\theta(t)}(E(p_\e)))\le\varepsilon,$$
$$\tilde{d}(\psi^*_t(F(y_\alpha)),\psi^*_{\theta(t)}(F(p_\e)))\le\varepsilon.$$

\end{itemize}

Let $\eta'=\min\{-\lambda_4,\log2+\lambda_1\}$ and $T'=(1+\varepsilon)T$. By the choosing of $\varepsilon$, one has that $P_\varepsilon$
is an $(\eta',T')$-$\psi_t^*$-quasi hyperbolic string. Take a sequence $\{1/q\}_{q\in\NN}$. For each $q$,
there is a periodic orbit $P_{1/q}$ with period $\tau(P_{1/q})$ such that
\begin{itemize}

\item $\limsup_{q\to\infty}P_{1/q}\subset\Lambda$.

\item There is $p_{1/q}\in P_{1/q}$ such that $d(p_{1/q},{\rm Sing}(X))\ge\varepsilon_0$, $p_{1/q}$ is $(1,\eta',T',E)$-$\psi_t^*$-contracting
and $(1,\eta',T',F)$-$\psi_t^*$-expanding.

\item $P_{1/q}$ admits a $T^*$-dominated splitting w.r.t. $\psi_t^*$.
\end{itemize}

Without loss of generality, one can assume that $\{p_{1/q}\}_{q\in\NN}$ converges. By Lemma~\ref{Lem:intersect}\footnote{This is the part that we need the existence of central plaques as in Section 2.},
for any two $w,m\in\NN$ large enough, one has
$$W^{s}(P_{1/w})\pitchfork W^u(P_{1/m})\neq\emptyset,~~~W^{u}(P_{1/w})\pitchfork W^s(P_{1/m})\neq\emptyset.$$
In other words, they are homoclinically related. Thus for $w$ large enough, one has $\Lambda\cap
H(P_{1/w})\neq\emptyset$. Moreover one has
$$\frac{1}{[\tau(P_{1/w})/T']}\sum_{i=0}^{[\tau(P_{1/w})/T']-1}\log\|\psi_{T'}|_{{\cal N}^s(p_{1/w})}\|\ge\lambda_1.$$

By the arbitrary property of $\lambda_1$, we have the second case.
\end{proof}

By Lemma~\ref{Lem:mixed}, and the claim above, one has $\Lambda$ admits a dominated splitting $T_\Lambda
M=E\oplus F$ w.r.t. the tangent flow, where $\dim E=i$ and $X(x)\subset F(x)$ for any regular point
$x\in\Lambda$. We assume that it is a $T$-dominated splitting. We define the function $f:~\Lambda\to \RR$ by
$f(x)=\log\|\Phi_T|_{E(x)}\|$.

\begin{Claim}

$$\liminf_{n\to\infty}\frac{1}{n}\sum_{\ell=0}^{n-1}f(\phi_{\ell T}(x))<0,~~~\forall x\in\Lambda.$$
\end{Claim}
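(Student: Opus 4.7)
The plan is to prove this claim by contradiction, producing a sequence of hyperbolic periodic orbits of index $i$ whose existence was excluded at the start of the proof of Proposition~\ref{Pro:mixingdominated}, via Liao's sifting lemma followed by Liao's shadowing lemma, in the same spirit as the proof of the preceding Claim.

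\textbf{Uniform negativity at singularities.} For every hyperbolic singularity $\sigma\in\Lambda$, uniqueness of the index-$i$ dominated splitting at $\sigma$ forces $E(\sigma)=E^{ss}(\sigma)$, and since $\ind(\sigma)>i$ the restriction $DX(\sigma)|_{E^{ss}(\sigma)}$ has only negative eigenvalues, so $f(\sigma)<0$. As $X$ has only finitely many singularities, $\eta_0:=\tfrac12\min\{|f(\sigma)|:\sigma\in\Sing(X)\cap\Lambda\}$ is positive; by continuity of $f$, on a small neighborhood $V$ of $\Sing(X)\cap\Lambda$ one has $f<-\eta_0$.

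\textbf{Liao's sifting.} Assume the claim fails; Lemma~\ref{Lem:compact} in its contrapositive form yields some $x_0\in\Lambda$ with $\sum_{\ell=0}^{n-1}f(\phi_{\ell T}(x_0))\ge 0$ for every $n$. Both hypotheses of Lemma~\ref{Lem:Liaosifting} for the discrete system $(\Lambda,\phi_T,f)$ hold with $b=x_0$ and $\eta_2=2\eta_0$: for any $c\in\Lambda$, the assumption $\omega(c)\cap\Sing(X)\neq\emptyset$ together with the $\phi_T$-invariance of singularities produces $g=\sigma\in\omega_{\phi_T}(c)$ with $\sum_{\ell=0}^{n-1}f(\phi_{\ell T}(g))=nf(\sigma)\le-2n\eta_0$. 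Fixing $\eta_2>\eta_3>\eta_4>\eta_1>0$ with $\eta_3<\eta_0$, for each $k$ Lemma~\ref{Lem:Liaosifting} gives $y_k$ in the positive orbit of $x_0$ and integers $0=n_0<\cdots<n_k$ so that the two Pliss-type estimates hold along each segment. Applying the tail estimate with $m=n_{\ell+1}-n_\ell$ forces $f(\phi_{-T}(y_\ell))\ge-\eta_3>-\eta_0$, hence $\phi_{-T}(y_\ell)\notin V$, so the Pliss base-points $y_\ell:=\phi_{n_\ell T}(y_k)$ all lie in a fixed compact set $K\subset\Lambda$ bounded away from $\Sing(X)$.

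\textbf{Shadowing.} Transporting the Pliss estimates to the normal bundle via the graph-of-$\Pi$ identification of $E$ with $\Delta^{cs}$ furnished in the proof of Lemma~\ref{Lem:mixed}, and combining with the $T$-domination of $\Delta^{cs}\oplus\Delta^{cu}$ w.r.t.\ $\psi_t$, each arc $\phi_{[0,(n_{\ell+1}-n_\ell)T]}(y_\ell)$ becomes $(\eta,T)$-$\psi_t^*$-quasi-hyperbolic with respect to $\mathcal{N}=\Delta^{cs}\oplus\Delta^{cu}$ for some $\eta=\eta(\eta_3,\eta_4)>0$. Since all $y_\ell\in K$, pigeon-hole yields $\alpha<\beta$ with $d(y_\alpha,y_\beta)$ below the shadowing constant of Theorem~\ref{Thm:Liaoshadowing}, producing a hyperbolic periodic orbit $P_k$ of index $i$ that shadows the arc from $y_\alpha$ to $y_\beta$, has period $\approx(n_\beta-n_\alpha)T$, and whose average of $\log\|\psi_T|_{\mathcal{N}^s}\|$ lies in $[-\eta_3,-\eta_4]$. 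Running the argument with $\eta_1,\eta_3,\eta_4\to 0$ yields a sequence with $\tau(P_k)\to\infty$ and zero limiting average exponent along $\mathcal{N}^s$; a homoclinic-linking argument via Lemma~\ref{Lem:intersect} (as in the endgame of the preceding Claim's proof) provides $H(P_k)\cap\Lambda\neq\emptyset$. This is precisely the ``or'' case of Proposition~\ref{Pro:mixingdominated}, excluded by the standing assumption---contradiction.

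\textbf{Main obstacle.} The technical heart lies in verifying the three quasi-hyperbolicity inequalities on the normal bundle: the passage from $\psi_t$ to $\psi_t^*$ telescopes to the factor $|X(y_\ell)|/|X(\phi_{mT}(y_\ell))|$, which blows up whenever the intermediate point $\phi_{mT}(y_\ell)$ approaches a singularity even though $y_\ell$ and $y_{\ell+1}$ remain in $K$. Balancing this blow-up against the strong negativity of $f$ near singularities (inherited via Pliss) and the uniform lower bound of $|X|$ on $K$, through a carefully chosen partition of each arc into pieces of length at most $T$, is the delicate bookkeeping that parallels the treatment in the preceding Claim.
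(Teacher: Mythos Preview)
Your strategy takes a much longer route than the paper's, and the ``Main obstacle'' you flag is a genuine gap that you have not resolved. The paper's proof uses neither sifting nor shadowing: the key observation is that for every \emph{regular} $z\in\Lambda$ one has $X(z)\in F(z)$, so the $T$-domination of $E\oplus F$ gives
\[
f_0(z):=\log\|\Phi_T|_{E(z)}\|-\log\|\Phi_T|_{\langle X(z)\rangle}\|\le -\log 2.
\]
Then $\sum_{\ell=0}^{k-1}f(\phi_{\ell T}(x_0))=\sum_{\ell=0}^{k-1} f_0(\phi_{\ell T}(x_0))+\bigl(\log|X(\phi_{kT}(x_0))|-\log|X(x_0)|\bigr)$, the first sum is $\le -k\log 2$, and the telescoped boundary term is bounded along any subsequence of times where the orbit returns near a fixed regular point $a\in\omega(x)$. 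Dividing by $k$ gives $\liminf\le-\log 2<0$. (If $\omega(x)\subset\Sing(X)$ the claim is immediate since $E=E^{ss}$ there.)

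Your approach instead tries to sift with $f=\log\|\Phi_T|_E\|$ and then transport the resulting Pliss estimates to the normal bundle. But $\psi^*$-quasi-hyperbolicity needs control of $\log\|\psi_T^*|_{\Delta^{cs}}\|\approx f-\log\|\Phi_T|_{\langle X\rangle}\|$, and the correction term blows up when intermediate points approach singularities---exactly the obstacle you identify. Ironically, the one estimate that would tame this blow-up is precisely $f_0\le-\log 2$ above, and once you notice it the elementary argument is immediate and the sifting/shadowing machinery becomes superfluous. There is also a smaller gap at the start: the negation of the Claim gives only $\liminf\ge 0$ for some $x$, which is not what the contrapositive of Lemma~\ref{Lem:compact} yields; producing a point with \emph{all} partial sums nonnegative requires an extra Pliss-type selection that you have not supplied.
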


\begin{proof}
For every point $x\in\Lambda$, there are two cases: either $\omega(x)\subset{\rm Sing}(X)$, or $\omega(x)$
contains a regular point $a\in\Lambda$. In the first case, since every singularity in $\Lambda$ admits a
partially hyperbolic splitting, one has that the claim is true. Now we consider the second case: $\omega(x)$
contains a regular point $a$. We fix a neighborhood $U_a$ of $a$ such that for any $z,y\in U_a$, one has
$$\frac{1}{2}\le\frac{|X(z)|}{|X(y)|}\le 2.$$
Let $f_0(z)=\log\|\Phi_T|_{E(z)}\|-\log\|\Phi_T|_{<X(z)>}\|$ for every regular point $z\in\Lambda$. Since
$T_\Lambda M^d=E\oplus F$ is a dominated splitting and $X(z)\subset F(z)$ for every regular point $z\in\Lambda$,
one has $f_0(z)\le-\log2$ for any regular point $z$. There is $x_0\in{\rm Orb}^+(x)$ such that $x_0\in U_a$.
Since $a\in\omega(x)$, there is a sequence of times $\{t_n\}_{n\in\NN}$ such that
$\lim_{n\to\infty}\phi_{t_n}(x_0)=a$ and $\lim_{n\to\infty}t_n=\infty$.

For $n$ large enough, we assume that $t_n=kT+t$, where $t\in[0,T]$. Thus we have
$$\frac{1}{k}\sum_{\ell=0}^{k-1}f(\phi_{\ell T}(x_0))\le -\log2+\frac{1}{k}(\log\|\Phi_T|_{<X(\phi_{kT}(x_0))>}\|-\log\|\Phi_T|_{X(x_0)}\|).$$
Since $k\to\infty$ as $n\to\infty$, one has
$$\liminf_{n\to\infty}\frac{1}{n}\sum_{\ell=0}^{n-1}f(\phi_{\ell T}(x_0))<0.$$
Thus the same inequality holds for $x$.
\end{proof}

By the above claim, one has $\sum_{\ell=0}^{n-1}f(\phi_{\ell T}(x))<0$ for any $x\in\Lambda.$ By
Lemma~\ref{Lem:compact}, we have that $E$ is uniformly contracting. This ends the proof of this proposition.
\end{proof}

\section{Perturbations in partially hyperbolic quasi attractor}\label{Sec:birthhomo}
In this section, we will manage to prove Theorem~\ref{Thm:dominationimplyperiodic}. To prove Theorem~~\ref{Thm:dominationimplyperiodic}, we notice that \cite[Theorem B]{BGY11} proved that\footnote{The statement is a little bit stronger than \cite{BGY11}. But the proof is contained there.}
\begin{Theorem}\label{Thm:dominatedtopartial}
For $C^1$ generic $X\in {\cal X}^1(M^3)$, for the non-trivial chain recurrent class $C(\sigma)$ of some singularity $\sigma$, if $C(\sigma)$ admits a dominated splitting $T_{C(\sigma)}M^3=E\oplus F$ w.r.t. the tangent flow, then $C(\sigma)$ admits a partially hyperbolic splitting; more precisely, ${\rm ind}(\sigma)=2$ iff $\dim E=1$ and $E$ is contracting.

\end{Theorem}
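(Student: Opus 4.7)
The plan is to promote the given dominated splitting $T_{C(\sigma)}M^3=E\oplus F$ to a partially hyperbolic splitting by analyzing its restriction at the singularity $\sigma$ and then spreading the hyperbolic behaviour at $\sigma$ to the whole class by an ergodic/Liao-type argument. The proof follows the template of \cite{BGY11} and closely resembles the reasoning of Proposition~\ref{Pro:mixingdominated} and Theorem~\ref{Thm:Lorenz-like} above.

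After possibly replacing $X$ by $-X$, I would reduce to the case $\mathrm{ind}(\sigma)=2$ and aim to show that $\dim E=1$ with $E$ uniformly contracting. First I would determine how the splitting restricts at $\sigma$. Since $X$ is $C^1$-generic, Lemma~\ref{Lem:generic} gives hyperbolicity and the non-resonance of the eigenvalues of $DX(\sigma)$; a $\Phi_t$-invariant dominated splitting at a hyperbolic rest point must be a sum of generalized real eigenspaces, so in the three-dimensional setting there are only two possibilities. Either $\dim E=1$ and $E(\sigma)$ is the strong stable eigenspace $E^{ss}(\sigma)$ (forcing both stable eigenvalues to be real, $\lambda_1<\lambda_2<0$), or $\dim E=2$, $E(\sigma)=E^s(\sigma)$ and $F(\sigma)=E^u(\sigma)$; the second case is the setup for $\mathrm{ind}(\sigma)=1$ once $-X$ is considered, so the two implications of the iff follow once the first case is settled. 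By Corollary~\ref{Cor:uniqueindexofsingularity} this combinatorial picture propagates: every singularity in $C(\sigma)$ has the same index and the splitting at it has the same form.

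To promote the eigenvalue inequality at $\sigma$ into uniform contraction of $E$ on all of $C(\sigma)$, I would analyze the continuous cocycle $f(x)=\log\|\Phi_T|_{E(x)}\|$ for a fixed large $T$. By Lemma~\ref{Lem:compact}, if $E$ fails to be uniformly contracting then there exists $x\in C(\sigma)$ whose forward Birkhoff sums of $f$ are all non-negative; any weak-$*$ accumulation point of the corresponding empirical measures is then a $\Phi_T$-invariant probability $\mu$ on $C(\sigma)$ with $\int f\,d\mu\ge 0$. Since $f(\sigma)=T\lambda_1<0$, no ergodic component of $\mu$ can be supported only at singularities, so the ergodic closing lemma (Lemma~\ref{Lem:ergodicclosing}) together with Corollary~\ref{Cor:subsetisrobust} produces, in arbitrarily small $C^1$-perturbations of $X$, periodic orbits $\gamma_n$ Hausdorff-close to $C(\sigma)$ whose $f$-averages approach $\int f\,d\mu\ge 0$. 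Transferring to $X$ itself via Lemma~\ref{Lem:uniformgeneric} yields a sequence of periodic orbits of $X$ near $C(\sigma)$ with a non-contracting Lyapunov exponent along $E$. The dichotomy of Corollary~\ref{Cor:sequenceofsinks} forces these orbits to be either hyperbolic saddles (contradicting the global dominated splitting, because along $\gamma_n$ the bundle $E$ would have both a non-contracting exponent and the strong stable eigenvalue $\lambda_1<0$ at the singularity in the Hausdorff limit) or sinks (excluded by Lemma~\ref{Lem:uniformsinks} combined with the hyperbolicity of $\sigma$ and the continuity of chain classes). This contradiction yields uniform contraction of $E$ and hence the partially hyperbolic splitting $E^{ss}\oplus E^{cu}$.

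The hardest step will be that last transfer: the ergodic closing lemma only creates the non-contracting periodic orbit in a $C^1$-perturbation of $X$, and one must combine the $C^1$-generic continuity of chain recurrent classes (Lemma~\ref{Lem:continuitychainclass}) with the quantitative genericity of Lemmas~\ref{Lem:uniformgeneric}--\ref{Lem:uniformsinks} to land the orbit inside $C(\sigma,X)$ itself while keeping control of the Birkhoff averages along $E$. A secondary technical difficulty is that the regular-point normal-bundle picture of Section~\ref{Sec:differentflows} degenerates near $\sigma$, so the argument must operate directly with the tangent-flow cocycle $f$ and with the compactifications $\widetilde{\Lambda}$ and $\widetilde{\mathcal N}$ introduced in Subsection~\ref{Sub:flows} when extracting weak-$*$ limits of the empirical measures and when tracking the continuous bundle $E$ along regular orbits that converge to $\sigma$.
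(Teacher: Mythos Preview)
The paper does not give its own proof of this theorem; immediately after stating it the authors write that it is \cite[Theorem~B]{BGY11} (with the footnote that the proof is contained there). So there is no in-paper argument to compare against, only the citation.

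Your overall plan---identify $E(\sigma)=E^{ss}(\sigma)$, then show $E$ contracts by a measure/closing argument---is the right flavor and is indeed the template of \cite{BGY11}, but the contradiction you describe does not close. After producing an ergodic $\mu$ with $\int\log\|\Phi_T|_E\|\,d\mu\ge 0$ and closing it to periodic orbits $\gamma_n$ with nearly non-negative $E$-exponent, you invoke Corollary~\ref{Cor:sequenceofsinks} and Lemma~\ref{Lem:uniformsinks}. Both results are about \emph{sinks} and about contraction of the \emph{full} normal bundle; nothing in your construction forces $\gamma_n$ to be sinks, and Lemma~\ref{Lem:uniformgeneric} transfers a quantitative contraction property, not the non-contraction you want. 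The parenthetical contradiction you offer for the saddle case (``$E$ would have both a non-contracting exponent and the strong stable eigenvalue $\lambda_1<0$ at the singularity in the Hausdorff limit'') is not a contradiction either: the $E$-exponent along $\gamma_n$ is a time average, and there is no reason $\gamma_n$ or $\mathrm{supp}(\mu)$ passes near $\sigma$ at all. You also do not actually rule out the case $\mathrm{ind}(\sigma)=2$ with $\dim E=2$; saying it ``is the setup for $\mathrm{ind}(\sigma)=1$ once $-X$ is considered'' does not establish the stated iff.

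The argument that \cite{BGY11} uses---and which is visible in this paper at the end of the proof of Proposition~\ref{Pro:mixingdominated} (the last two Claims)---bypasses this detour entirely. One first shows that $X(x)\in F(x)$ for every regular $x\in C(\sigma)$; this is the step that genuinely uses the structure of the class (indices of the singularities, $W^{ss}(\sigma)\cap C(\sigma)=\{\sigma\}$, Lyapunov stability). Once $X\subset F$, the domination gives $\log\|\Phi_T|_{E(x)}\|-\log\|\Phi_T|_{\langle X(x)\rangle}\|\le -\log 2$ pointwise, and the telescoping recurrence argument of the final Claim (splitting into the cases $\omega(x)\subset\mathrm{Sing}(X)$ versus $\omega(x)$ containing a regular point) together with Lemma~\ref{Lem:compact} yields uniform contraction of $E$. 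That is the missing mechanism in your sketch.
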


Thus, what we need to prove is

\begin{Theorem}\label{Thm:partialtoperiodic}

For $C^1$ generic $X\in {\cal X}^1(M^3)$, for the non-trivial chain recurrent class $C(\sigma)$ of some singularity $\sigma$, if $C(\sigma)$ admits a dominated splitting $T_{C(\sigma)}M^3=E^s\oplus F$ w.r.t. the tangent flow, where $E^s$ is one-dimensional and contracting, then $C(\sigma)$ is a homoclinic class.

\end{Theorem}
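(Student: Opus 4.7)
The plan is to argue by contradiction, reducing the theorem to the two propositions stated at the end of the excerpt (Proposition~\ref{Pro:roughcrosssection} and Proposition~\ref{Pro:roughsinkbasin}), and then to invoke the incompatibility between Lyapunov stability and meeting the closure of a sink's basin. So suppose $X$ lies in the generic set ${\cal G}_0$ of Subsection~\ref{Sub:generic} and that $C(\sigma)$ admits a dominated splitting $T_{C(\sigma)}M^3=E^s\oplus F$ with $E^s$ one-dimensional and contracting, but $C(\sigma)$ is not a homoclinic class. First I would check that $C(\sigma)$ contains no periodic orbit: if it contained a hyperbolic periodic orbit $p$, then by item~4 of Lemma~\ref{Lem:generic} we would have $C(\sigma)=C(p)=H(p)$, contradicting the assumption. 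Since $E^s$ is contracting and $\dim E^s=1$, the splitting is in fact partially hyperbolic of the form $E^{ss}\oplus E^{cu}$, and from ${\rm ind}(\sigma)=2$ (forced by the dimension of $E^s$ together with the existence of the dominated splitting through $\sigma$) together with item~1 of Lemma~\ref{Lem:generic}, $C(\sigma)$ is Lyapunov stable.

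The hypotheses of Proposition~\ref{Pro:roughsinkbasin} are therefore satisfied: $X$ is generic, $C(\sigma)$ is Lyapunov stable, contains no periodic orbits, and admits a partially hyperbolic splitting $E^{ss}\oplus E^{cu}$ with $\dim E^{ss}=1$. Applying that proposition, for every $C^1$ neighborhood ${\cal U}$ of $X$ there exists a weak Kupka--Smale vector field $Y\in{\cal U}$ together with a periodic sink $\gamma_Y$ of $Y$ such that
$$
C(\sigma_Y,Y)\cap \overline{B(\gamma_Y)}\neq\emptyset,
$$
where $B(\gamma_Y)$ denotes the basin of $\gamma_Y$.

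Now I would extract a contradiction from Lyapunov stability. Since $X\in{\cal G}_0$ is generic and $C(\sigma)$ is Lyapunov stable, Lemma~\ref{Lem:stablepropertyofquasiattractor} provides a $C^1$ neighborhood ${\cal V}$ of $X$ such that for every weak Kupka--Smale $Y\in{\cal V}$, $C(\sigma_Y,Y)$ is still a Lyapunov stable chain recurrent class of $Y$. Shrinking ${\cal U}$ inside ${\cal V}$, the sink $\gamma_Y$ produced above satisfies $\gamma_Y\notin C(\sigma_Y,Y)$ (otherwise $C(\sigma_Y,Y)$ would contain a periodic orbit, and by the generic item~4 it would be the homoclinic class of $\gamma_Y$, hence by upper semicontinuity $C(\sigma)$ itself would be a homoclinic class, contradicting the working hypothesis for nearby $X$; alternatively one notes that a Lyapunov stable class coinciding with a sink would be a point, precluding the non-trivial class being accumulated). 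Pick a point $z\in C(\sigma_Y,Y)\cap\overline{B(\gamma_Y)}$ and a neighborhood $U$ of $C(\sigma_Y,Y)$ disjoint from $\gamma_Y$. By Lyapunov stability, there is a smaller neighborhood $V\subset U$ of $C(\sigma_Y,Y)$ with $\phi^Y_t(V)\subset U$ for all $t\ge 0$. Since $z\in\overline{B(\gamma_Y)}$, arbitrarily close to $z$ there are points $w\in V$ whose forward orbit converges to $\gamma_Y$; but then $\phi^Y_t(w)\in U$ for all $t\ge 0$, while $\phi^Y_t(w)\to\gamma_Y\not\subset U$, a contradiction.

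The main obstacle in this plan lies entirely inside the two black-box propositions. Proposition~\ref{Pro:roughcrosssection} requires constructing a singular cross-section system $(\Sigma,F)$ for a partially hyperbolic Lyapunov stable class that contains Lorenz-like singularities (so that central unstable arcs get cut by local stable manifolds of singularities); the delicate point is that the return map is only piecewise defined and central unstable curves become disconnected after iteration, so one must keep careful control of both the geometry of the sections near the singularities and its continuity in $X$. Proposition~\ref{Pro:roughsinkbasin} then uses this cross-section system to produce, after a finite sequence of $C^1$ perturbations, a periodic sink whose basin closure touches the continuation of $C(\sigma)$; this is the genuine analytic heart of the argument, analogous to the one-dimensional endomorphism situation described in the introduction, and will occupy the bulk of Section~\ref{Sec:birthhomo}. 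Granted those two propositions, the deduction above of Theorem~\ref{Thm:partialtoperiodic} is essentially a soft contradiction from generic Lyapunov stability.
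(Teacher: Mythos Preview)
Your proposal is correct and follows exactly the paper's reduction: assume by contradiction that $C(\sigma)$ is not a homoclinic class (hence contains no periodic orbit), observe it is partially hyperbolic and Lyapunov stable, invoke Proposition~\ref{Pro:roughsinkbasin} to produce a weak Kupka--Smale $Y$ with a sink whose basin closure meets $C(\sigma_Y,Y)$, and contradict Lemma~\ref{Lem:stablepropertyofquasiattractor}. One minor simplification: to see $\gamma_Y\notin C(\sigma_Y,Y)$ you need not appeal to item~4 of Lemma~\ref{Lem:generic} (which applies only to generic vector fields, not to $Y$); it suffices to note that a periodic sink is its own chain recurrent class, while $\sigma_Y\in C(\sigma_Y,Y)$ is a singularity.
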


\subsection{Cross sections of partially hyperbolic Lyapunov stable chain recurrent classes}

Our aim in this subsection is to prove Proposition~\ref{Pro:roughcrosssection}. We also need some other lemmas to get an open set of vector fields, which guarantee that we can do a perturbation in that open set.

For a singularity $\sigma$ of $X\in{\cal X}^1(M^3)$, recall that $\sigma$ is \emph{Lorenz-like} if the eigenvalues $\lambda_1,\lambda_2,\lambda_3$ of
$DX(\sigma)$ satisfy
$$\lambda_1<\lambda_2<0<-\lambda_2<\lambda_3.$$

Since we not only need to consider $C(\sigma)$, but also its continuation, we give the notion of \emph{pseudo-Lorenz} to characterize its properties.

\begin{Definition}
A compact invariant set $\Lambda$ of $X\in{\cal X}^1(M^3)$ is
\emph{pseudo-Lorenz} if
\begin{itemize}

\item $\Lambda$ is Lyapunov stable.

\item $\Lambda$ has a partially hyperbolic splitting $T_\Lambda M=E^{ss}\oplus E^{cu}$ with respect to $\Phi_t$, where $\dim E^{ss}=1$.

\item Every singularity in $\Lambda$ is Lorenz-like.

\item For every singularity $\sigma\in\Lambda$,
$W^{ss}(\sigma)\cap \Lambda=\{\sigma\}$.

\end{itemize}
\end{Definition}

This notion is a generalization of singular hyperbolic transitive attractor \cite{MPP04}.

\begin{Proposition}\label{Pro:continuitypseudoLorenz}
For $C^1$ generic $X\in{\cal X}^1(M^3)$ and $\sigma\in {\rm Sing}(X)$, if $C(\sigma)$ is pseudo-Lorenz, then there is a $C^1$ neighborhood
$\cal U$ of $X$ such that for any $Y\in\cal U$,
\begin{itemize}

\item $C(\sigma_Y,Y)$ admits a partially hyperbolic splitting $T_{C(\sigma_Y,Y)}M^3=E^{ss}\oplus E^{cu}$, where $\dim
E^{ss}=1$.

\item Every singularity in $C(\sigma_Y,Y)$ is the continuation of some singularity in $C(\sigma)$, and Lorenz-like.

\item For every singularity $\rho\in C(\sigma_Y,Y)$, one has $W^{ss}(\rho)\cap C(\sigma_Y,Y)=\{\rho\}$.

\end{itemize}

Moreover, if $Y$ is weak Kupka-Smale, then $C(\sigma_Y,Y)$ is Lyapunov stable and hence it is pseudo-Lorenz.

\end{Proposition}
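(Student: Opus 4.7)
The plan is to establish the three bulleted properties as $C^1$-open conditions at $X$, and then obtain the ``moreover'' clause directly from Lemma~\ref{Lem:stablepropertyofquasiattractor}. The tools are upper semi-continuity of $C(\sigma)$ (Lemma~\ref{Lem:u.s.c}), the finiteness of singularities together with continuous dependence of their eigenvalues (item 3 of Lemma~\ref{Lem:generic}), and persistence of dominated structures under $C^1$ perturbation.

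First I fix a small neighborhood $V$ of $C(\sigma)$ containing no singularity of $X$ outside $C(\sigma)\cap\Sing(X)$, and by Lemma~\ref{Lem:u.s.c} choose a $C^1$ neighborhood $\cU_1$ of $X$ with $C(\sigma_Y,Y)\subset V$ for all $Y\in\cU_1$. Every singularity of $Y$ in $C(\sigma_Y,Y)$ is then the continuation $\rho_Y$ of some $\rho\in C(\sigma)\cap\Sing(X)$, and since the Lorenz-like eigenvalue inequalities are open, after shrinking $\cU_1$ every such $\rho_Y$ stays Lorenz-like. For partial hyperbolicity, the splitting $E^{ss}\oplus E^{cu}$ on $C(\sigma)$ extends to an invariant dominated cone field on $V$; for $Y$ $C^1$-close to $X$, the same cone field is $\Phi_t^Y$-invariant with uniform domination constants, so the $\Phi_t^Y$-invariant compact set $C(\sigma_Y,Y)\subset V$ carries a partially hyperbolic splitting $E^{ss}_Y\oplus E^{cu}_Y$ of the same index, whose bundles depend continuously on $Y$.

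The crucial step is the third bullet. By flow-invariance it suffices to show $W^{ss}_{\mathrm{loc}}(\rho_Y,Y)\cap C(\sigma_Y,Y)=\{\rho_Y\}$ for $Y$ in a small neighborhood of $X$ and every $\rho_Y\in C(\sigma_Y,Y)\cap\Sing(Y)$. Suppose not; there exist $Y_n\to X$ and $x_n\in W^{ss}_{\mathrm{loc}}(\rho_{Y_n},Y_n)\cap C(\sigma_{Y_n},Y_n)\setminus\{\rho_{Y_n}\}$. By continuous dependence of local strong stable manifolds and upper semi-continuity of $C(\sigma)$, any accumulation point of $(x_n)$ lies in $W^{ss}_{\mathrm{loc}}(\rho)\cap C(\sigma)=\{\rho\}$, so $x_n\to\rho$. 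Because $W^{ss}_{\mathrm{loc}}$ is one-dimensional and flow-invariant, $Y_n(x_n)/|Y_n(x_n)|$ lies in $T_{x_n}W^{ss}_{\mathrm{loc}}(\rho_{Y_n},Y_n)$ and therefore converges to a unit vector of $E^{ss}(\rho)$. On the other hand, the pseudo-Lorenz hypothesis combined with the arguments behind Lemma~\ref{Lem:strongstableoutside} yields the inclusion $\widetilde{C(\sigma)}\cap T_\rho M^3\subset E^{cu}(\rho)$: a vector in $\widetilde{C(\sigma)}\cap E^{ss}(\rho)$ would arise as a limit of normalized flow directions of regular $y_k\to\rho$ in $C(\sigma)$, forcing $y_k$ onto $W^{ss}_{\mathrm{loc}}(\rho)$ for large $k$ by the local strong-stable invariant manifold theorem, contradicting $W^{ss}_{\mathrm{loc}}(\rho)\cap C(\sigma)=\{\rho\}$. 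Applying the same inclusion to $\widetilde{C(\sigma_{Y_n},Y_n)}$ via upper semi-continuity of the sphere-bundle lift, the limit direction of $Y_n(x_n)/|Y_n(x_n)|$ must also lie in $E^{cu}(\rho)$, giving $E^{ss}(\rho)\cap E^{cu}(\rho)\neq\{0\}$, a contradiction.

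The main obstacle will be controlling the sphere-bundle lift $\widetilde{C(\sigma_Y,Y)}$ uniformly as $Y$ varies, in order to make the containment $\widetilde{C(\sigma_Y,Y)}\cap T_{\rho_Y}M^3\subset E^{cu}_Y(\rho_Y)$ robust; this ultimately rests on the continuous dependence of both the partially hyperbolic splitting and the local strong stable manifold at $\rho_Y$ on $Y$. Once these three bullets are established for $Y\in\cU\subset\cU_1$, the ``moreover'' clause is immediate: for weak Kupka-Smale $Y\in\cU$, Lemma~\ref{Lem:stablepropertyofquasiattractor} provides Lyapunov stability of $C(\sigma_Y,Y)$, and together with the three bullets this gives that $C(\sigma_Y,Y)$ is pseudo-Lorenz.
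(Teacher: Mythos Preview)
Your handling of the first two bullets and the ``moreover'' clause matches the paper (upper semi-continuity of $C(\sigma_Y)$, persistence of the partially hyperbolic splitting, continuity of eigenvalues, and Lemma~\ref{Lem:stablepropertyofquasiattractor}); the difference lies entirely in the third bullet.

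There your argument is salvageable but the written justification has a gap. You claim that if $v=\lim X(y_k)/|X(y_k)|\in E^{ss}(\rho)$ with regular $y_k\to\rho$ in $C(\sigma)$, then $y_k\in W^{ss}_{\mathrm{loc}}(\rho)$ for large $k$ ``by the local strong-stable invariant manifold theorem''. No standard invariant-manifold statement gives this implication: a point can approach $\rho$ with normalized flow direction tending to $E^{ss}$ without lying on $W^{ss}_{\mathrm{loc}}$. The inclusion $\widetilde{C(\sigma)}\cap T_\rho M^3\subset E^{cu}(\rho)$ is nevertheless true, and the correct reason is the one you only hint at in your final paragraph: the partially hyperbolic splitting forces $X(y)\in E^{cu}(y)$ for every regular $y\in C(\sigma)$ (a nonzero $E^{ss}$-component of $X(y)$ would make $|\Phi_{-t}X(y)|=|X(\phi_{-t}(y))|$ unbounded), and continuity of $E^{cu}$ then gives $v\in E^{cu}(\rho)$. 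The same observation replaces your ``upper semi-continuity of the sphere-bundle lift'' for $Y_n$: from $Y_n(x_n)\in E^{cu}_{Y_n}(x_n)$ and joint continuity of $(Y,z)\mapsto E^{cu}_Y(z)$ the limit direction lies in $E^{cu}(\rho)$, and your contradiction with $E^{ss}(\rho)$ goes through.

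The paper bypasses all direction analysis with a much shorter invariance argument. If $x_n\in W^{ss}_{\mathrm{loc}}(\rho_{Y_n})\cap C(\sigma_{Y_n})\setminus\{\rho_{Y_n}\}$, then since $C(\sigma_{Y_n})$ is closed and flow-invariant and the backward flow on $W^{ss}_{\mathrm{loc}}$ moves points away from $\rho_{Y_n}$, an entire branch of $W^{ss}_{\mathrm{loc}}(\rho_{Y_n})$ lies in $C(\sigma_{Y_n})$. Letting $n\to\infty$, continuity of local strong stable manifolds together with upper semi-continuity of $C(\sigma_Y)$ put a full branch of $W^{ss}_{\mathrm{loc}}(\rho)$ inside $C(\sigma)$, contradicting the pseudo-Lorenz hypothesis. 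This uses only invariance of $C(\sigma_Y)$, not the partially hyperbolic splitting on it, and avoids the sphere-bundle machinery entirely.
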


\begin{proof}
Assume that $X$ satisfies the generic properties in Subsection~\ref{Sub:generic}. We will
prove that such an $X$ satisfies the proposition.

Since $X$ is Kupka-Smale, $X$ has only finitely many hyperbolic singularities $\{\sigma_i\}_{i=1}^m$. Moreover, there is a neighborhood ${\cal U}_0$ such that for any $Y\in{\cal
U}_0$, ${\rm Sing}(Y)=\{\sigma_{i,Y}\}_{i=1}^m$, where $\sigma_{i,Y}$ is the continuations of $\sigma_i$. By Lemma \ref{Lem:twochainrecurrentclass}, there is a $C^1$ neighborhood ${\cal U}_1\subset{\cal U}_0$ such that for any $Y\in{\cal U}_1$, such that for any $1\le i,j\le m$, we have
$C(\sigma_{i,Y},Y)=C(\sigma_{j,Y},Y)$ iff $C(\sigma_i,X)=C(\sigma_j,X)$.

Given a pseudo-Lorenz set $C(\sigma)$ of $X$, we assume that $C(\sigma)\cap{\rm Sing}(X)=\{\sigma_1,\sigma_2,\cdots,\sigma_k\}$. Since the eigenvalues of $\sigma_{i,Y}$ vary continuously with $Y$ (\cite[2.18 Proposition]{PdM82}) and $C(\sigma)$ is pseudo-Lorenz, there exists a neighborhood ${\cal U}_2\subset {\cal U}_1$ such that for any $Y\in{\cal U}_2$, the index of every singularity in $C(\sigma_Y,Y)$ is 2, $C(\sigma_Y,Y)\cap{\rm Sing}(Y)=\{\sigma_{1,Y},\sigma_{2,Y},\cdots,\sigma_{k,Y}\}$ and every singularity in $C(\sigma_Y,Y)$ is Lorenz-like. Since $C(\sigma_Y, Y)$ is upper-continuous with $Y$, by shrinking ${\cal U}_2$, we have that for every $Y\in{\cal U}_2$, $C(\sigma_Y,Y)$ has a partially hyperbolic splitting with respect to the tangent flow $\Phi_{t}^Y$:
$$
T_{C(\sigma_Y,Y)}M=E^{ss}_Y\oplus E^{cu}_Y,
$$
such that $\dim E^{ss}_Y=1$ and $E^{ss}_Y$ is uniformly contracting.

We have that, by shrinking ${\cal U}_2$ if necessary, for any $Y\in{\cal U}_2$, $W^{ss}_{loc}(\sigma_{i,Y})\cap C(\sigma_Y,Y)=\{\sigma_{i,Y}\}$. In fact, suppose on the contrary that there exists a sequence $Y_n\to X$ such that $W^{ss}_{loc}(\sigma_{i,Y_n})\cap C(\sigma_{Y_n},Y_n)\not=\{\sigma_{i,Y_n}\}$. Then one branch of $W^{ss}_{loc}(\sigma_{i,Y_n})$ is contained in
$\{\sigma_{i,Y_n}\}$. By the continuity of local strong stable manifolds of singularities, letting $n\to \infty$, we get that one branch of $W^{ss}_{loc}(\sigma_{i})$ is also contained in $C(\sigma)$, which is a contradiction.

Finally, according to Lemma
\ref{Lem:stablepropertyofquasiattractor}, $C(\sigma_Y,Y)$ is Lyapunov stable if $Y$ is weak Kupka-Smale by
shrinking ${\cal U}_2$ again if necessary.
\end{proof}

For $C^1$ generic $X\in{\cal X}^1(M^3)$, it is proved that every Lyapunov stable chain recurrent class $C$ is a quasi attractor by \cite{BoC04}, i.e., there exists a sequence of neighborhoods $U_n$ of $C$ such that $\cap_{n\ge 1} U_n=C$ and $\phi_1({\overline U}_n)\subset {\rm Int}(U_n)$. Especially there is an arbitrarily small neighborhood $U$ of $C(\sigma)$ such that $\phi_1(\overline{U})\subset {\rm Int}(U)$. As a consequence, there are $\varepsilon_0>0$, a $C^1$ neighborhood $\cal U$ of $X$ such that for any $Y\in\cal U$ and any $x\in U$, the strong stable manifold $W^{ss}_{\varepsilon_0}(x, Y)$ exists (e.g., see \cite[page 289]{BDV05}).

\begin{Definition} For $Z\in{\cal X}^1(M^3)$, $S$ is called a \emph{cross-section} of $Z$ if
\begin{itemize}

\item $S$ is a $C^1$ surface which is homeomorphic to $(-1,1)^2$.

\item $\measuredangle(T_x S, <Z(x)>)>\pi/4$, $\forall x\in S$.

\end{itemize}
\end{Definition}
\begin{Definition}
Let $\sigma$ be a Lorenz-like singularity. $S$ is called a
\emph{singular cross-section associated to $\sigma$} if $S$ is a cross section and the following conditions are satisfied:
\begin{enumerate}

\item There is a homeomorphism $h=h(x,y):[-1,1]^2\to\overline{S}$ such that
$h((-1,1)^2)=S$.

\item $\displaystyle\frac{\partial h}{\partial y}$ exists and is continuous on $[-1,1]^2$.

\item $S\cap W^s_{loc}(\sigma)=h(\{0\}\times (-1,1))$.

\end{enumerate}

\end{Definition}
In fact, $(h, [-1,1]^2)$ (or $h$ for short) is a coordinate system of the surface $S$.

Denote by $\ell=S\cap W^s_{loc}(\sigma)$ and $S\setminus \ell=S^l\cup S^r$, where $S^l=h((-1,0)\times (-1,1))$ and $S^r=h((0,1)\times
(-1,1))$ (see Figure \ref{cross-section}). $S^l$ is called \emph{the left side} of $S$ and $S^r$
is called \emph{the right side} of $S$.

\begin{figure}[h]
\centering
\includegraphics[width=0.60\textwidth]{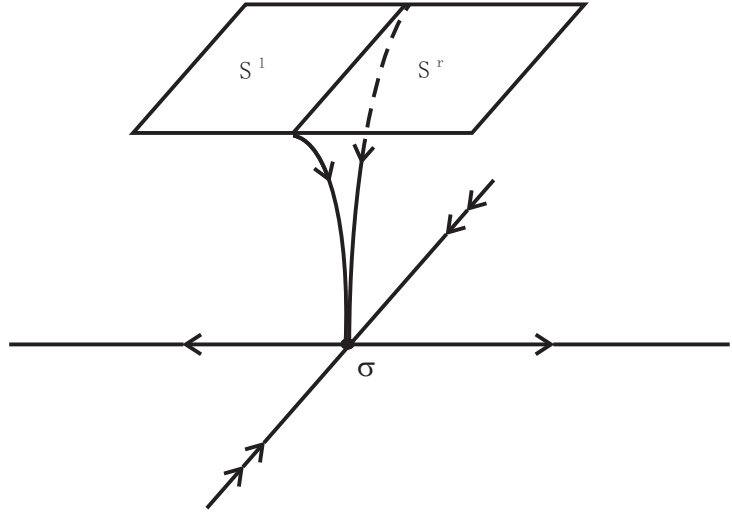}
\caption{Cross-section}\label{cross-section}
\end{figure}

For any $p,q\in S$, one can define the \emph{horizontal distance} $L^h_S(p,q)$ and \emph{vertical distance}
$L^v_S(p,q)$ of $p,q$: $L^h_S(p,q)=|x_p-x_q|$ and $L^v_S(p,q)=|y_p-y_q|$, where
$h(x_p,y_p)=p$ and $h(x_q,y_q)=q$.

Usually, for every Lorenz like singularity $\rho$, we will take two singular cross-sections $S^+, S^-$, which are on the opposite sides of
$E^{ss}(\rho)\oplus E^u(\rho)$.

For $X\in{\cal X}^1(M^3)$, let $\Lambda$ be a pseudo-Lorenz compact invariant set of $X$. Denote that $\Lambda\cap{\rm Sing}(X)=\{\sigma_1,\cdots,\sigma_k\}$ and
$$
\Sigma=\bigcup_{1\le i\le k}(S_i^+\cup S_i^-),
$$
where $S_i^+$ and $S_i^-$ are two singular cross-sections associated to $\sigma_i$. For each $S_i^\pm$, let
$h_i^\pm:~[-1,1]^2\to\overline{S_i^\pm}$ be the homeomorphism in the definition of singular cross-section.
For every $p\in \Sigma$, there exists a unique coordinate system $h_i^\pm$ for some $i\in\{1,2,\cdots,k\}$ and $\pm\in\{+,-\}$. We denote $h_i^\pm$ by $h_p$. Note that for every $p,p'\in\overline{S_i^\pm}$, $h_p=h_{p'}$.

For each $p\in\Sigma$, if $\{t>0:\phi_t(p)\in\Sigma\}\neq\emptyset$, we can define the first return time
$t_p=\min\{t>0:\phi_t(p)\in\Sigma\}$ and define $F(p)=\phi_{t_p}(p)$. $F: {\rm Dom}(F)\to \Sigma$ is called the \emph{first return map}
associated to $\Sigma$ and $\Lambda$, where
$$
{\rm Dom}(F)=\{p\in\Sigma: \exists t>0 {\rm\ s. t. \ }\phi_t(p)\in\Sigma\}.
$$

If $h_p(x,y)=p$ and $F(p)=q$, denote by
$$
{\widetilde F}(x,y)={\widetilde F}_{p,q}(x,y) =h_q^{-1}\circ F\circ h_p(x,y).
$$
Once $F^n(p)=q_n$ is well-defined, denote by
\begin{eqnarray*}
{\widetilde F}^n(x,y)&=&h_{q_n}^{-1}\circ F^n\circ h_p(x,y),\\
&=& {\widetilde F}_{q_{n-1},q_n}\circ{\widetilde F}_{q_{n-2},q_{n-1}}\circ\cdots \circ {\widetilde F}_{p,q_{1}}(x,y),
\end{eqnarray*}
where $q_j=F^j(p)$, for $j=1, 2, \cdots, n$ and $h_p(x,y)=p$.

Since $S_i^\pm$ is transverse to $X$, ${\rm Dom}(F)$ is open and $t_x$ is upper semi-continuous with $x$.

\begin{Definition}

For $X\in{\cal X}^1(M^3)$, and a pseudo-Lorenz compact invariant set $\Lambda$ of $X$, assume that $\Lambda\cap{\rm Sing}(X)=\{\sigma_1,\cdots,\sigma_k\}$. In the notations above, $(\Sigma, F)$ is a \emph{cross-section system} of $\Lambda$ if the following conditions are satisfied:
\begin{enumerate}

\item $\partial \Sigma\cap \Lambda=\emptyset$, where $\partial \Sigma$ is the boundary of $\Sigma$.

\item For each $p\in\Sigma$, there is $\varepsilon>0$ such that $W^{ss}_{loc}(\phi_{(-\varepsilon,\varepsilon)}(p))\cap\Sigma\subset h_p(\{x\}\times (-1,1))$, where $h_p(x,y)=p$.

\item For each $1\le i\le k$, and $x\in W^s_{loc}(\sigma_i)\cap\Lambda\setminus\{\sigma_i\}$, there exists $t\in{\mathbb R}$ such that $\phi_t(x)\in\ell_i^\pm$ and if $t>0$, then
    $\phi_{[0,t]}(x)\subset W^s_{loc}(\sigma_i)$ and if $t<0$,$\phi_{[t,0]}(x)\subset W^s_{loc}(\sigma_i)$.

\item If $p\in {\rm Dom}(F)$, then $t_r$ is continuous for $r\in h_p(\{x_p\}\times (-1,1))$\footnote{This will imply some adapted property: the iteration of each stable leaf under the first return map is totally contained in the cross section.}.

\item For any $x\in\Lambda\setminus \bigcup_{1\le i\le k}W^s_{loc}(\sigma_i)$, the positive orbit of $x$ will intersect $\Sigma$. Especially,
    $$
    \Lambda\cap \Sigma\setminus \bigcup_{1\le i\le k}W^s_{loc}(\sigma_i)\subset {\rm Dom}(F).
    $$

\end{enumerate}

\end{Definition}

According to the uniform contracting of $E^{ss}$, we have
\begin{Lemma}
Let $(\Sigma,F)$ be a cross-section system of a pseudo-Lorenz set $\Lambda$. Then,
there are $C\ge 1$ and $\lambda\in(0,1)$ such that for any $p\in \Sigma$ with $h_p(x,y)=p$, for any $n\in\NN$, if $F^n(p)$ is well-defined, then
$$
\left|\frac{\partial {\widetilde F}^n}{\partial y}(x,y)\right|\le C\lambda^n.
$$\qed
\end{Lemma}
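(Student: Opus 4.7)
The plan is to exploit condition~(2) of the cross-section system, which forces the vertical direction in every chart to lie in the two-plane $E^{ss}\oplus\langle X\rangle$; the one-dimensional intersection of this plane with the section then makes the $\partial_y$-action of $F$ transparent, reducing the estimate to the uniform contraction of $\Phi_t$ along $E^{ss}$. To set up constants, I would extend the partially hyperbolic splitting $T_\Lambda M^3 = E^{ss}\oplus E^{cu}$ (with slightly worse but still uniform contraction rates) to a neighborhood $U\supset\Lambda$, obtaining $C_0\ge 1$ and $\mu>0$ with $\|\Phi_t|_{E^{ss}(z)}\|\le C_0\,e^{-\mu t}$ for every $z\in U$ and $t\ge 0$; since the sections $S_i^\pm$ sit in arbitrarily small neighborhoods of the singularities of $\Lambda$, one arranges $\overline\Sigma\subset U$.

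First I would check that $F$ preserves the vertical foliation on $\Sigma$. If $p,p'\in\Sigma$ share the same $x$-coordinate in a chart $h_p$, then $p'\in h_p(\{x\}\times(-1,1))\subset W^{ss}(\phi_{(-\varepsilon,\varepsilon)}(p))$, and since $W^{ss}$ is forward-invariant under the flow, $F(p')$ lies in $W^{ss}$ of a small flow-neighborhood of $F(p)$; condition~(2) applied at $F(p)$ then places $F(p')$ on the vertical line through $F(p)$. Thus in chart coordinates $\widetilde F(x,y)=(\widetilde F_1(x),\widetilde F_2(x,y))$, and hence $\partial_y\widetilde F^n$ has the form $(0,\beta_n)^T$ for some scalar $\beta_n$ that I would now identify.

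Next I would compute $\beta_n$ directly at the time scale $T_n$ rather than telescoping single steps, to avoid accumulating distortion constants. Write $\partial h_p/\partial y(x,y)=a(p)\,e^{ss}(p)+b(p)\,X(p)$ for a continuous unit $E^{ss}$-field $e^{ss}$ and continuous coefficients $a,b$ on the compact set $\overline\Sigma$; the coefficient $a$ is uniformly bounded away from zero because $\partial h_p/\partial y$ is nonvanishing, continuous, and transverse to $X$. By the chain rule, with $T_n=\sum_{j=0}^{n-1} t_{q_j}$ and $q_n=F^n(p)$,
\begin{equation*}
\partial_y(\phi_{T_n}\circ h_p)(x,y) = a(p)\,\|\Phi_{T_n}|_{E^{ss}(p)}\|\,e^{ss}(q_n) + \bigl(b(p)+\partial_y T_n\bigr)\,X(q_n).
\end{equation*}
This vector lies in $E^{ss}(q_n)\oplus\langle X(q_n)\rangle$ and is also tangent to $\Sigma$; the one-dimensional intersection of these two $2$-planes is spanned by $\partial h_{q_n}/\partial y$, so the vector equals $\beta_n\,\partial h_{q_n}/\partial y$ with $\beta_n = a(p)\,\|\Phi_{T_n}|_{E^{ss}(p)}\|/a(q_n)$, and applying $Dh_{q_n}^{-1}$ confirms $\partial_y\widetilde F^n(x,y)=(0,\beta_n)^T$.

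Finally I would prove a uniform lower bound $t_p\ge T_0>0$ on the first return time: since the finite family of compact surfaces $\overline{S_i^\pm}$ is everywhere transverse to $X$ with angle greater than $\pi/4$, a standard flow-box argument covering $\overline\Sigma$ by tubular neighborhoods supplies such a $T_0$ on ${\rm Dom}(F)$. Combining $T_n\ge nT_0$ with the extended contraction bound gives
\begin{equation*}
\Bigl|\frac{\partial\widetilde F^n}{\partial y}(x,y)\Bigr| = |\beta_n| \le \frac{\sup_{\overline\Sigma}|a|}{\inf_{\overline\Sigma}|a|}\,C_0\,e^{-\mu T_n} \le C\lambda^n,
\end{equation*}
with $C=\bigl(\sup_{\overline\Sigma}|a|/\inf_{\overline\Sigma}|a|\bigr)\,C_0\ge 1$ and $\lambda=e^{-\mu T_0}\in(0,1)$, completing the proof. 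The main obstacle I anticipate is the bookkeeping that justifies applying the $E^{ss}$-contraction to orbits through $\Sigma$ which need not themselves lie in $\Lambda$, and in particular checking that the extended splitting and the continuous functions $a,b$ remain uniformly controlled on $\overline\Sigma$; once these uniformities are in hand, the rest is a single-step algebraic identity that absorbs the entire long-time bound into the stable exponent.
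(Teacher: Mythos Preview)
Your proposal is correct and is precisely the argument the paper alludes to: the paper offers no proof beyond the single sentence ``According to the uniform contracting of $E^{ss}$, we have'' before stating the lemma, and you have carefully unpacked that sentence. Your identification of the vertical direction with the line $T\Sigma\cap(E^{ss}\oplus\langle X\rangle)$ via condition~(2), the direct computation of $\beta_n=a(p)\|\Phi_{T_n}|_{E^{ss}(p)}\|/a(q_n)$, and the combination with a uniform lower bound $T_0$ on the first-return time is exactly the intended route; the only cosmetic points are that you should read off $\partial_y\widetilde F^n=(0,\beta_n)^T$ by differentiating $h_{q_n}(g(x),f(x,y))=F^n\circ h_p(x,y)$ in $y$ (rather than invoking $Dh_{q_n}^{-1}$, which need not exist since $h$ is only assumed $C^1$ in $y$), and that the nonvanishing of $\partial_y h$ (hence of $a$) is implicit in the paper's construction of the vertical foliation as a $C^1$ family of regular strong-stable curves.
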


\begin{Remark}
In the above definition, the local stable manifolds of the singularities also have the exponential contracting property with respect to the flow $\phi_t$. Thus,
$$
\bigcup_{1\le i\le
k}\bigcup_{\pm\in\{+,-\}}\bigcup_{x\in(-1,1)}h_i^{\pm}(\{x\}\times(-1,1))
$$
could be regarded as a stable foliation of the first return map.

\end{Remark}

One can define the horizontal distance $L^h$ and the vertical distance $L^v$ of any two points $x,y$ in $\Sigma$
in the following way:
\begin{itemize}

\item If $x,y$ are in a same singular cross-section $S\subset\Sigma$, then $L^h(x,y)=L^h_{S}(x,y)$ and
$L^v(x,y)=L^v_{S}(x,y)$.

\item If $x,y$ are in different singular cross-sections, then $L^h(x,y)=\infty$ and
$L^v(x,y)=\infty$.

\end{itemize}

For any set $\Gamma\subset\Sigma$, one can define the horizontal diameter $D^h(\Gamma)$ and the vertical
diameter $D^v(\Gamma)$ by:
$$D^h(\Gamma)=\sup_{x,y\in\Gamma}\{L^h(x,y)\},~~~D^v(\Gamma)=\sup_{x,y\in\Gamma}\{L^v(x,y)\}.$$

Morales-Pacifico \cite{MoP03} proved the following theorem:

\begin{Theorem}\label{Thm:moralespacifico}

For $C^1$ generic $X\in{\cal X}^1(M^3)$, if $C(\sigma)$ is a singular hyperbolic Lyapunov stable chain recurrent class, then $C(\sigma)$ is an attractor. As a corollary, $C(\sigma)$ contains periodic orbits.

\end{Theorem}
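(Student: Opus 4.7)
The plan is to establish that the topological basin $W^s(C(\sigma)) = \{x : \omega(x) \subset C(\sigma)\}$ contains an open neighborhood of $C(\sigma)$; combined with Lyapunov stability (which for generic $X$ produces a decreasing sequence of trapping neighborhoods $U_n$ with $\bigcap_n U_n = C(\sigma)$), this upgrades the quasi-attractor property to the attractor property. The singular hyperbolic splitting $E^{ss}\oplus E^{cu}$ extends, by continuity of dominated splittings, to a neighborhood $V$ of $C(\sigma)$, and Hirsch-Pugh-Shub yields a continuous one-dimensional strong stable foliation $\cF^{ss}$ on $V$ that is exponentially contracted by $\phi_t$ for $t>0$. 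Any $x\in W^{ss}_\varepsilon(y)$ with $y\in C(\sigma)$ satisfies $d(\phi_t(x),\phi_t(y))\to 0$, so $\omega(x)=\omega(y)\subset C(\sigma)$; hence the strong stable saturation $\cF^{ss}(C(\sigma)):=\bigcup_{y\in C(\sigma)}W^{ss}_\varepsilon(y)$ is contained in $W^s(C(\sigma))$.

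The heart of the argument is to show that $\cF^{ss}(C(\sigma))$ contains an open neighborhood of $C(\sigma)$. I would invoke the singular cross-section system $(\Sigma,F)$ of Proposition~\ref{Pro:roughcrosssection} and quotient $\Sigma$ by the trace of $\cF^{ss}$ to obtain a one-dimensional return map $\overline F$ on an interval $I$. Because $E^{cu}$ is area-expanding while $E^{ss}$ is uniformly contracted, $\overline F$ is uniformly expanding where defined. Lyapunov stability forces $W^u(\sigma)\subset C(\sigma)$ (Lemma~\ref{Lem:unstableinLyapunov}), and $W^u(\sigma)\cap\Sigma$ produces arcs whose $\cF^{ss}$-projections are non-degenerate subintervals of $I$. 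Invariance of $C(\sigma)\cap\Sigma$ under $F$ then forces the projection $\pi(C(\sigma)\cap\Sigma)$ to be closed and $\overline F$-invariant; the uniform expansion of $\overline F$ together with the seed arc from $W^u(\sigma)$ forces this closed invariant set to contain an open subinterval, by a classical expanding-dynamics argument (iterations of the seed arc grow and cover). Hence $C(\sigma)\cap\Sigma$ is $\cF^{ss}$-saturated over an open set, and flowing this openness along the orbits (using the partial hyperbolicity to carry the foliation) yields an open neighborhood of $C(\sigma)$ contained in $\cF^{ss}(C(\sigma))$.

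Combining the two steps, for $n$ large $U_n\subset \cF^{ss}(C(\sigma))\subset W^s(C(\sigma))$, so the maximal invariant set $\bigcap_{t\ge 0}\phi_t(U_n)$ is a compact invariant subset of the basin, which forces it to equal $C(\sigma)$; thus $C(\sigma)$ is an attractor. For the corollary, the uniformly expanding map $\overline F$ on the open invariant subset of $I$ has dense periodic points; lifting a periodic point of $\overline F$ to a periodic point of $F$ by the contraction mapping principle along strong stable fibers produces a periodic orbit of the flow inside $C(\sigma)$. The main obstacle is the openness step in the quotient: a priori $\pi(C(\sigma)\cap\Sigma)$ could be a Cantor-like $\overline F$-invariant set, and ruling this out requires a delicate combination of the non-degenerate seed arc from $W^u(\sigma)\subset C(\sigma)$, the Lorenz-like eigenvalue structure at each singularity (Theorem~\ref{Thm:Lorenz-like}) which controls how orbits in $C(\sigma)$ re-enter the cross-sections near singularities, and the adapted-cross-section property (item~4 in the definition of cross-section system) that guarantees entire strong stable leaves survive each return.
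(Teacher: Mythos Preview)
The paper does not prove Theorem~\ref{Thm:moralespacifico}; it is quoted as a result of Morales--Pacifico \cite{MoP03} and used as a black box (in particular in the proof of Lemma~\ref{Lem:notsingularhyperbolic}). So there is no in-paper proof to compare against; what can be assessed is whether your sketch stands on its own and whether it is logically consistent with the paper's architecture.

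There is a structural problem with invoking Proposition~\ref{Pro:roughcrosssection} (equivalently Proposition~\ref{Pro:crosssectionforpseudoLorenz}). That proposition carries the explicit hypothesis that $C(\sigma)$ \emph{contains no periodic orbits}; this hypothesis is used in its proof to guarantee that every forward orbit eventually hits a singular cross-section (the ``domain of $F$'' step). You are trying to conclude that $C(\sigma)$ \emph{does} contain periodic orbits, so you cannot invoke that proposition directly, and you do not set the argument up as a contradiction. Moreover, in the paper's logical flow Proposition~\ref{Pro:crosssectionforpseudoLorenz} sits downstream of Theorem~\ref{Thm:moralespacifico} (via Lemma~\ref{Lem:notsingularhyperbolic} and the definition of ${\cal U}^{**}$), so citing it here is circular with respect to the paper's development. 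If you want a cross-section system in the singular hyperbolic setting you must build it independently, which is essentially what \cite{MoP03} does.

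Beyond that, the substantive mathematical gap is exactly the one you flag at the end: showing that the quotient set $\pi(C(\sigma)\cap\Sigma)$ has nonempty interior. Your ``seed arc plus uniform expansion'' heuristic is suggestive but not a proof: a uniformly expanding interval map with discontinuities (the singular lines $\ell_i^\pm$ create them) can have Cantor invariant sets, and the image of the seed arc from $W^u(\sigma)$ is cut at every passage near a singularity. Turning this into an openness statement is precisely the content of the Morales--Pacifico argument and requires their analysis of how returns near singularities reassemble; your sketch names the ingredients but does not supply the mechanism. In short: right overall shape, but the cross-section input is mis-cited and the decisive step is left unresolved.
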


\begin{Lemma}\label{Lem:notsingularhyperbolic}
For $C^1$ generic $X\in{\cal X}^1(M^3)$ and $\sigma\in{\rm Sing}(X)$, if $C(\sigma)$ is pseudo-Lorenz, then
\begin{itemize}
\item either, $C(\sigma)$ is a homoclinic class;

\item or, there is a $C^1$ neighborhood ${\cal U}_0$
of $X$ such that $C(\sigma_Y)$ is not singular hyperbolic for any $Y\in{\cal U}_0$.

\end{itemize}

\end{Lemma}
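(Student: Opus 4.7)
The plan is to argue by contradiction: assume $C(\sigma,X)$ is not a homoclinic class, and that in every $C^1$-neighborhood of $X$ there is some $Y$ with $C(\sigma_Y,Y)$ singular hyperbolic. The goal is to show that $C(\sigma,X)$ itself is singular hyperbolic, apply Theorem~\ref{Thm:moralespacifico} (Morales--Pacifico) to produce a periodic orbit in $C(\sigma,X)$, and then invoke item 4 of Lemma~\ref{Lem:generic} to identify $C(\sigma,X)$ with a homoclinic class, contradicting the assumption.

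First I would upgrade the approximating sequence. Pick $Y_n\to X$ with $C(\sigma_{Y_n},Y_n)$ singular hyperbolic; by intersecting with the residual set of weak Kupka--Smale vector fields and with the residual set on which Theorem~\ref{Thm:moralespacifico} applies, one may assume each $Y_n$ is weak Kupka--Smale and generic. Then Proposition~\ref{Pro:continuitypseudoLorenz} gives that $C(\sigma_{Y_n},Y_n)$ is pseudo-Lorenz (in particular Lyapunov stable), and Theorem~\ref{Thm:moralespacifico} upgrades it to a singular hyperbolic attractor.

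Second, I would transfer the singular hyperbolic structure from $Y_n$ to $X$ using a common filtrating neighborhood. Because $C(\sigma,X)$ is Lyapunov stable, there exists $U$ with $\phi_t^X(\overline U)\subset\Int(U)$ for all $t>0$; by continuity of the flow in the $C^1$-topology, $U$ is also filtrating for $Y_n$ when $n$ is large, so $C(\sigma_{Y_n},Y_n)=\bigcap_{t\ge 0}\phi_t^{Y_n}(U)$ (it is an attractor). The partially hyperbolic splitting and area-expansion estimates of a singular hyperbolic attractor extend robustly to a neighborhood with constants depending only on the extended bundle geometry in $U$, not on $n$. Passing to the $C^1$-limit, $\Lambda_X=\bigcap_{t\ge 0}\phi_t^X(U)$ inherits a partially hyperbolic splitting $E^{ss}\oplus E^{cu}$ with area-expanding $E^{cu}$. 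By generic continuity of the chain recurrent class at $X$ (Lemma~\ref{Lem:continuitychainclass}), $C(\sigma,X)=\lim_n C(\sigma_{Y_n},Y_n)\subset \Lambda_X$, so $C(\sigma,X)$ is singular hyperbolic as an invariant subset.

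Finally I would close the contradiction. Since $X$ is $C^1$-generic and $C(\sigma,X)$ is a singular hyperbolic Lyapunov stable chain recurrent class, Theorem~\ref{Thm:moralespacifico} produces a periodic orbit $\gamma\subset C(\sigma,X)$. Then by item 4 of Lemma~\ref{Lem:generic}, $C(\sigma,X)=C(\gamma,X)=H(\gamma,X)$ is a homoclinic class, contradicting the assumption. The main obstacle will be the second step: verifying that the area-expansion constants of $E^{cu}_n$ on $C(\sigma_{Y_n},Y_n)$ are genuinely uniform in $n$ so that they survive the limit. This is what distinguishes the dichotomy, and it rests on robustness of singular hyperbolic attractors inside a fixed trapping region together with the Lyapunov stability forced by pseudo-Lorenz.
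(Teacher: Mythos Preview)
Your overall strategy matches the paper's: assume the second alternative fails, show $C(\sigma,X)$ is itself singular hyperbolic, and invoke Theorem~\ref{Thm:moralespacifico}. But your route to ``$C(\sigma,X)$ is singular hyperbolic'' is longer than necessary and leaves open exactly the obstacle you flag.

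The paper's proof is a two-line observation: the condition ``$C(\sigma_Y,Y)$ is singular hyperbolic'' is \emph{open} in $Y$. Indeed, if $C(\sigma_Y,Y)$ is singular hyperbolic, the splitting and area-expansion extend to a neighborhood of $C(\sigma_Y,Y)$ in $M^3$, and by upper semi-continuity of chain classes every $C(\sigma_Z,Z)$ for $Z$ near $Y$ sits inside that neighborhood and inherits the structure. Once this set $\mathcal S$ is open, its boundary is nowhere dense, so a generic $X$ lies either in $\mathcal S$ (first alternative, via Morales--Pacifico) or in the interior of its complement (second alternative). No limits, no uniformity of constants.

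You actually use openness of $\mathcal S$ implicitly in your Step~1 when you replace each $Y_n$ by a nearby generic vector field while keeping singular hyperbolicity---that replacement is only legitimate because $\mathcal S$ is open. Having established openness, you then discard it and attempt a direct limit in Step~3, which is where the uniformity-of-constants problem appears. That problem is genuine: the area-expansion rates $\lambda_n$ on $C(\sigma_{Y_n},Y_n)$ could in principle degenerate as $n\to\infty$, and nothing in your common-filtrating-region argument prevents this. The paper sidesteps the issue entirely by using openness plus genericity instead of a limit. Your Steps~1 and~2 (upgrading $Y_n$ to generic attractors) are therefore unnecessary detours; the application of Theorem~\ref{Thm:moralespacifico} should happen once, to $X$ itself, after you know $X\in\mathcal S$.
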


\begin{proof}
If for any neighborhood $\cal U$ of $X$, there is $Y\in\cal U$ such that $C(\sigma_Y)$ is singular hyperbolic,
then by the upper semi-continuity of chain recurrent classes, there is a $C^1$ neighborhood ${\cal
U}_Y\subset{\cal U}$ of $Y$ such that for every $Z\in{\cal U}_Y$, $C(\sigma_Z)$ is singular hyperbolic. This implies that for
$X$, we have $C(\sigma)$ is already singular hyperbolic.

Thus, Theorem~\ref{Thm:moralespacifico} implies the first case is true. The fact ends the proof.
\end{proof}

For the study of non-hyperbolic set, Liao \cite{Lia81} and Ma\~n\'e \cite{Man85} gave the notion of minimally
non-hyperbolic set: for any non-hyperbolic set $\Lambda$, there is a compact invariant set
$\Lambda_0\subset\Lambda$ such that every proper compact invariant subset of $\Lambda_0$ is hyperbolic. The
proof is based on Zorn's Lemma. To understand the dynamics of a compact invariant set which is not singular
hyperbolic, sometimes we need to use similar idea.

\begin{Definition}
A nonempty compact invariant set $\Lambda$
is called a \emph{$\cal N$-set}, if $\Lambda$ is not singular hyperbolic, and every proper
compact invariant set in $\Lambda$ is singular hyperbolic.
\end{Definition}

As in the case of minimally non-hyperbolic set, by
Zorn's lemma, for any compact invariant set $\Lambda$, if $\Lambda$ is not singular hyperbolic, then $\Lambda$
contains a $\cal N$-set.

\begin{Lemma}\label{Lem:transitiveinclass}

For $C^1$ generic $X\in{\cal X}^1(M^3)$ and $\sigma\in{\rm Sing}(X)$, if $C(\sigma)$ is pseudo-Lorenz, and not a homoclinic class,
then there is a $C^1$ neighborhood ${\cal U}_0$ of $X$ such that for every $Y\in{\cal U}_0$, $C(\sigma_Y)$ contains a
transitive $\cal N$-set $\Lambda_Y$ such that $\Lambda_Y\nsubseteq{\rm Sing}(Y)$.
\end{Lemma}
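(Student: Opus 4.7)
The plan is to combine Lemma~\ref{Lem:notsingularhyperbolic} with a Zorn's lemma extraction and then a propagation argument for singular hyperbolicity. First I would use Lemma~\ref{Lem:notsingularhyperbolic} to obtain a $C^1$ neighborhood ${\cal U}_0$ of $X$ such that, for every $Y\in {\cal U}_0$, the chain recurrent class $C(\sigma_Y)$ fails to be singular hyperbolic. By shrinking ${\cal U}_0$ using Proposition~\ref{Pro:continuitypseudoLorenz}, I may also assume that $C(\sigma_Y)$ carries the partially hyperbolic splitting $T_{C(\sigma_Y)}M^3=E^{ss}\oplus E^{cu}$ with $\dim E^{ss}=1$, $E^{ss}$ contracting, every singularity Lorenz-like of index $2$, and $W^{ss}(\rho_Y)\cap C(\sigma_Y)=\{\rho_Y\}$ for each $\rho_Y\in C(\sigma_Y)\cap \Sing(Y)$. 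In particular, any compact invariant subset of $C(\sigma_Y)$ inherits a continuous invariant splitting of the same index from the ambient class.

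Fix $Y\in{\cal U}_0$ and consider the family $\mathcal{F}_Y$ of all compact $\phi_t^Y$-invariant subsets of $C(\sigma_Y)$ that are not singular hyperbolic, partially ordered by reverse inclusion. The family is nonempty since $C(\sigma_Y)\in\mathcal{F}_Y$. A key observation is that the intersection of a descending chain in $\mathcal{F}_Y$ is again non-singular-hyperbolic: if the intersection were singular hyperbolic, then, using the fixed continuous splitting supplied by the ambient $C(\sigma_Y)$, the uniform area-expansion and contraction constants would persist in a Hausdorff neighborhood and hence in some tail of the chain. Zorn's lemma then yields a minimal element $\Lambda_Y\in\mathcal{F}_Y$, which is by definition a $\mathcal N$-set. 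Moreover, $\Lambda_Y\nsubseteq \Sing(Y)$, since any compact invariant subset of $\Sing(Y)$ is a finite union of hyperbolic singularities and hence singular hyperbolic.

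The main obstacle is proving $\Lambda_Y$ is transitive. My plan is to argue in two stages. First, I would show $\Lambda_Y$ is chain transitive for the restricted flow $\phi_t^Y|_{\Lambda_Y}$. Let $\Omega$ be the internal chain recurrent set of $\phi_t^Y|_{\Lambda_Y}$. If $\Omega\subsetneq \Lambda_Y$, then $\Omega$ is singular hyperbolic by minimality of $\Lambda_Y$; propagating the uniform estimates along orbits via the continuous splitting $E^{ss}\oplus E^{cu}$ shared with the ambient class (a standard extension principle for dominated/partially hyperbolic structures from the non-wandering or chain recurrent set to the whole invariant set) forces $\Lambda_Y$ itself to be singular hyperbolic, contradicting $\Lambda_Y\in\mathcal{F}_Y$; hence $\Omega=\Lambda_Y$. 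A second application of the same propagation rules out decomposition of $\Lambda_Y$ into several chain classes, since each such class would be a proper, hence singular hyperbolic, compact invariant subset, and a disjoint union of finitely many singular hyperbolic classes with compatible splittings would again be singular hyperbolic. Hence $\Lambda_Y$ is chain transitive.

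Finally, to upgrade chain transitivity to topological transitivity, I would combine the genericity hypothesis on $X$ (in particular item~5 of Lemma~\ref{Lem:generic}, which asserts every nontrivial chain transitive set of $X$ is the Hausdorff limit of periodic orbits) with the continuity of $C(\sigma_Y)$ for $Y$ in a residual subset of ${\cal U}_0$, together with the Birkhoff recurrence and the partially hyperbolic structure: in $C(\sigma_Y)$ the minimal sets contained in $\Lambda_Y$ cannot be strictly smaller (else they would be singular hyperbolic and, being minimal, support a recurrent orbit whose closure would be a proper invariant subset of $\Lambda_Y$ of the required structure, contradicting minimality of $\Lambda_Y$ in $\mathcal{F}_Y$ once propagation is applied). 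The hardest step by far is the propagation principle: guaranteeing that singular hyperbolicity of a proper compact invariant subset of $\Lambda_Y$ extends to all of $\Lambda_Y$ with uniform constants, since area-expansion along $E^{cu}$ is an asymptotic property and transit times between components of the chain recurrent set may be unbounded; this is where the already-fixed continuous splitting on the ambient class $C(\sigma_Y)$ and the Lorenz-like structure near singularities are essential to keep the constants under control.
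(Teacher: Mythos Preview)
Your setup is correct and matches the paper: combine Lemma~\ref{Lem:notsingularhyperbolic} and Proposition~\ref{Pro:continuitypseudoLorenz} to get ${\cal U}_0$, then extract an $\cal N$-set $\Lambda_Y\subset C(\sigma_Y)$ by Zorn's lemma, and note $\Lambda_Y\nsubseteq\Sing(Y)$ because finite sets of hyperbolic singularities are singular hyperbolic.

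The genuine gap is in your transitivity argument. Your plan is to first establish chain transitivity and then upgrade to topological transitivity, but the upgrade step does not go through. Item~5 of Lemma~\ref{Lem:generic} and the other generic tools apply only to the fixed generic $X$, not to an arbitrary $Y\in{\cal U}_0$, so you cannot invoke them for $\Lambda_Y$. Your minimal-set argument is circular: you assume a minimal subset $M\subsetneq\Lambda_Y$ is singular hyperbolic and then say this ``contradicts minimality of $\Lambda_Y$ in $\mathcal F_Y$ once propagation is applied'', but that is precisely the global conclusion you are trying to reach, and it does not distinguish transitivity from mere chain transitivity. In general a chain transitive compact invariant set need not be topologically transitive, and nothing in the partially hyperbolic structure supplies the missing shadowing.

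The paper bypasses chain transitivity entirely with a one-line observation you are missing: if $\Lambda_Y$ is not transitive, then \emph{for every} $x\in\Lambda_Y$ the $\alpha$-limit set $\alpha(x)$ is a proper compact invariant subset of $\Lambda_Y$, hence singular hyperbolic by minimality of the $\cal N$-set. Since $\Lambda_Y$ already carries the splitting $E^{ss}\oplus E^{cu}$ inherited from $C(\sigma_Y)$, singular hyperbolicity of $\alpha(x)$ means $E^{cu}$ is area-expanding there, so $\lim_{t\to\infty}|\det\Phi^Y_{-t}|_{E^{cu}(x)}|=0$ for every $x\in\Lambda_Y$. Now the compactness lemma (Lemma~\ref{Lem:compact}) applied to $f(x)=\log|\det\Phi^Y_{-T}|_{E^{cu}(x)}|$ yields a uniform exponential estimate, i.e.\ $E^{cu}$ is area-expanding on all of $\Lambda_Y$, so $\Lambda_Y$ is singular hyperbolic --- contradicting $\Lambda_Y\in\mathcal F_Y$. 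This is exactly the ``propagation principle'' you flagged as the hardest step, but it is short and direct once you use $\alpha$-limits pointwise rather than the internal chain recurrent set, and it gives topological transitivity immediately without any detour.
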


\begin{proof}
By Lemma~\ref{Lem:notsingularhyperbolic} and
Lemma~\ref{Pro:continuitypseudoLorenz}, there is a neighborhood ${\cal U}_0$ of $X$ such that for any $Y\in{\cal
U}_0$, one has
\begin{itemize}

\item $C(\sigma_Y,Y)$ is not singular hyperbolic.

\item $C(\sigma_Y,Y)$ admits a partially hyperbolic splitting $T_{C(\sigma_Y,Y)}M^3=E^{ss}\oplus E^{cu}$, where $\dim E^{ss}=1$,
and every singularity in $C(\sigma_Y,Y)$ is Lorenz-like.

\end{itemize}

For each $Y\in{\cal U}_0$, $C(\sigma_Y)$ is not singular hyperbolic. Hence, there exists a $\cal N$-set $\Lambda_Y\subset C(\sigma_Y)$.

We will prove that $\Lambda_Y$ is transitive. If not, for every $x\in\Lambda_Y$, $\alpha(x)$ is a proper subset
of $\Lambda_Y$. As a consequence, $\alpha(x)$ is singular hyperbolic for every $x\in\Lambda_Y$. We have already
known that $\Lambda_Y$ admits a partially hyperbolic splitting $T_{\Lambda_Y}M=E^{ss}\oplus E^{cu}$ since
$C(\sigma_Y)$ is pseudo-Lorenz. If $\alpha(x)$ is singular hyperbolic, then $\lim_{t\to\infty}|{\rm Det}
\Phi^Y_{-t}|_{E^{cu}(x)}|=0$. Since every $x\in\Lambda_Y$ has this property, by a compact argument (e.g., see Lemma \ref{Lem:compact}), one can prove that $\Lambda_Y$ is singular hyperbolic. This contradicts to the assumption that $\Lambda_Y$ is an $\cal N$-set.
\end{proof}

\begin{Corollary}\label{Cor:measure}
Under the assumption of Lemma~\ref{Lem:transitiveinclass}, for every ${\cal N}$-set $\Lambda_Y$, there is an ergodic measure $\mu_Y$ of $\phi_t^Y$ such that the support of $\mu_Y$ is $\Lambda_Y$, and for any $t>0$, one has

$$\int \log|{\rm Det}\Phi_t|_{E^{cu}(x)}|{\rm d}\mu_Y\le 0.$$

\end{Corollary}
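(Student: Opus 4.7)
The plan is to find a flow-ergodic measure $\mu_Y$ on $\Lambda_Y$ with $\int J_T\,{\rm d}\mu_Y\le 0$ for some $T>0$, where $J_t(x)=\log|{\rm Det}\,\Phi_t^Y|_{E^{cu}(x)}|$; the full-support statement $\supp(\mu_Y)=\Lambda_Y$ will then be a free consequence of the $\cN$-set property of $\Lambda_Y$. Two preliminary observations: first, $E^{cu}$ is continuous on $C(\sigma_Y)$ by Proposition~\ref{Pro:continuitypseudoLorenz}, so $J_T$ is a continuous function on the compact set $\Lambda_Y$; second, the cocycle identity $J_{s+t}(x)=J_t(\phi_s^Y x)+J_s(x)$ together with flow-invariance of $\mu$ shows that $t\mapsto\int J_t\,{\rm d}\mu$ is additive and continuous, hence linear in $t$, so the sign of $\int J_t\,{\rm d}\mu$ does not depend on the choice of $t>0$.

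Suppose for contradiction that every flow-ergodic measure $\mu$ supported in $\Lambda_Y$ satisfies $\int J_1\,{\rm d}\mu>0$. By ergodic decomposition this propagates to every flow-invariant probability measure on $\Lambda_Y$, and by weak-$*$ compactness together with continuity of $\mu\mapsto\int J_1\,{\rm d}\mu$, one obtains a uniform lower bound $a:=\inf_\mu\int J_1\,{\rm d}\mu>0$. For any $x\in\Lambda_Y$, the Cesaro averages $\mu_t^x=\frac{1}{t}\int_0^t\delta_{\phi_s^Y(x)}\,{\rm d}s$ have every weak-$*$ limit flow-invariant in $\Lambda_Y$, so $\liminf_{t\to\infty}\int J_1\,{\rm d}\mu_t^x\ge a$. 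Applying the cocycle identity to the integrand and telescoping, one computes
$$
\int J_1\,{\rm d}\mu_t^x=\frac{1}{t}\int_t^{t+1}J_u(x)\,{\rm d}u-\frac{1}{t}\int_0^1 J_u(x)\,{\rm d}u=\frac{J_t(x)}{t}+O(1/t),
$$
where the $O(1/t)$ term is uniform in $x$ since $\Lambda_Y$ is compact and $J_r$ is bounded on $r\in[0,1]$. Hence $\liminf_{n\to\infty}\frac{1}{n}J_n(x)\ge a>0$ for every $x\in\Lambda_Y$, so each $x$ admits some $n_x\in\NN$ with $J_{n_x}(x)=\sum_{k=0}^{n_x-1}J_1(\phi_k^Y(x))>0$. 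Lemma~\ref{Lem:compact} applied with $f=-J_1$ and $T=1$ then yields constants $C\ge 0$ and $\lambda>0$ such that $J_n(x)\ge -C+n\lambda$ for every $x\in\Lambda_Y$ and $n\in\NN$. This exactly says that $E^{cu}$ is uniformly area-expanding on $\Lambda_Y$, and combined with the uniform contraction of $E^{ss}$ afforded by Proposition~\ref{Pro:continuitypseudoLorenz} it forces $\Lambda_Y$ to be singular hyperbolic, contradicting the $\cN$-set property.

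Consequently there exists a flow-ergodic $\mu_Y$ on $\Lambda_Y$ with $\int J_1\,{\rm d}\mu_Y\le 0$, and by the linearity noted above, $\int J_t\,{\rm d}\mu_Y\le 0$ for every $t>0$. It remains to verify $\supp(\mu_Y)=\Lambda_Y$: if $\supp(\mu_Y)$ were a proper compact invariant subset, then by the $\cN$-set property it would be singular hyperbolic, providing $C'>0$ and $\lambda'>0$ with $J_t(x)\ge -\log C'+\lambda' t$ for every $x\in\supp(\mu_Y)$ and $t>0$. Integrating against $\mu_Y$ and letting $t$ be large produces $\int J_t\,{\rm d}\mu_Y>0$, a contradiction. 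The main technical step, and the principal obstacle, is the Cesaro-limit argument that converts the hypothetical positive infimum of integrals against flow-invariant measures into a pointwise lower bound on $\liminf_n J_n(x)/n$; the subtle point is that the time averages along the orbit of a single point are only guaranteed to accumulate on flow-invariant (not flow-ergodic) measures, which is precisely why one must work with the infimum over all invariant probabilities rather than with individual ergodic ones.
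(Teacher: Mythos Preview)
Your proof is correct and uses the same essential ingredients as the paper: Lemma~\ref{Lem:compact}, the Krylov--Bogolyubov construction, ergodic decomposition, and the $\cN$-set property for full support. The paper organizes the argument in the direct (contrapositive) direction---it first uses Lemma~\ref{Lem:compact} to locate a point $x\in\Lambda_Y$ with $J_t(x)\le 0$ for all $t\ge 0$, then takes a weak-$*$ limit of its empirical measures and extracts an ergodic component---whereas you run the same circle of ideas by contradiction, but the content is the same.
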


\begin{proof}
By Lemma~\ref{Lem:compact}, there is a point $x\in\Lambda_Y$ such that $\log|{\rm Det}\Phi_t|_{E^{cu}(x)}|\le 0$ for any $t\ge 0$. By using a standard method, we can have an invariant measure $\nu$ such that for any $t>0$, one has
$$\int \log|{\rm Det}\Phi_t|_{E^{cu}(x)}|{\rm d}\nu\le 0.$$
By using the ergodic decomposition theorem, there is an ergodic component $\mu$ of $\nu$ such that
$$\int \log|{\rm Det}\Phi_t|_{E^{cu}(x)}|{\rm d}\mu\le 0.$$

\noindent ${\rm supp}(\mu)=\Lambda$: otherwise, ${\rm supp}(\mu)$ is a proper compact invariant set of $\Lambda_Y$; hence it is singular hyperbolic, which implies that the inequality above is false.
\end{proof}

We know some structures about minimally non-hyperbolic \emph{non-singular} set for $C^2$ vector fields when the set
admits a dominated splitting w.r.t. the linear Poincar\'e flow, which is the main theorem in \cite{ArR03}.

\begin{Theorem}\label{Thm:pujalssambarino}

Let $X\in{\cal X}^2(M^3)$. If a compact invariant set $\Lambda$ satisfies the following properties:
\begin{itemize}

\item $\Lambda$ is transitive.

\item $\Lambda\cap{\rm Sing}(X)=\emptyset$.

\item The linear Poincar\'e flow $\psi_t$ admits a dominated splitting on ${\cal N}_{\Lambda}$.

\item Every periodic point in $\Lambda$ is hyperbolic.

\end{itemize}

Then,

\begin{itemize}

\item either $\Lambda$ is hyperbolic;

\item or $\Lambda=\TT^2$, $\Lambda$ is a normally hyperbolic set with respect to $\Phi_t$, and $\phi_t|_\Lambda$ is equivalent to an irrational flow.

\end{itemize}

\end{Theorem}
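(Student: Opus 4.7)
The strategy is to reduce the statement to the classical Pujals--Sambarino theorem for $C^2$ surface diffeomorphisms \cite{PuS00}, taking advantage of the absence of singularities in $\Lambda$. Since $\Lambda$ is compact and $\Lambda\cap\operatorname{Sing}(X)=\emptyset$, there is $c>0$ with $|X(x)|\ge c$ for every $x\in\Lambda$, so the flow has uniformly bounded return times across any cross-section. Pick finitely many $C^2$ two-dimensional disks $\Sigma_1,\dots,\Sigma_N$ transverse to $X$ whose union $\Sigma$ covers an open flow-box neighborhood of $\Lambda$. Let $K=\Lambda\cap\Sigma$ and let $P\colon\mathrm{Dom}(P)\subset\Sigma\to\Sigma$ be the first-return map; $P$ is a $C^2$ diffeomorphism onto its image, $K$ is $P$-invariant, and $K$ is transitive because $\Lambda$ is. Moreover, each periodic orbit of $\phi_t$ in $\Lambda$ produces a finite set of hyperbolic periodic points of $P$ in $K$, and conversely.

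The key identification is between the dominated splitting for $\psi_t$ on $\mathcal{N}_\Lambda$ and a dominated splitting for $DP$ on $T_K\Sigma$. For each $x\in K$, let $\pi_x\colon T_xM^3\to T_x\Sigma_j$ be the projection along $\langle X(x)\rangle$, where $x\in\Sigma_j$; because $\Sigma$ is transverse to $X$ with uniformly bounded angles, $\pi_x$ is a uniform isomorphism between $\mathcal{N}_x$ and $T_x\Sigma_j$. A direct computation using the definition of $\psi_t$ shows that, up to the conjugacy by $\pi$, the derivative of the return map $DP$ at $x\in K$ equals $\psi_{\tau(x)}|_{\mathcal{N}_x}$, where $\tau(x)$ is the return time. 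Since return times lie in a bounded interval $[c_1,c_2]\subset(0,\infty)$, the domination inequality for $\psi_t$ transfers to a uniform domination inequality for iterates of $DP$ on $K$. Thus $K$ is a compact, transitive, invariant set of the $C^2$ surface diffeomorphism $P$, carrying a dominated splitting, and all its periodic points are hyperbolic.

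Now invoke the Pujals--Sambarino theorem: $K$ decomposes as $K=K_h\cup K_c$, where $K_h$ is hyperbolic and $K_c$ is a finite disjoint union of $P$-periodic $C^1$ closed curves on which the induced map is $C^0$-conjugate to an irrational rotation. Transitivity of $K$ forces either $K=K_h$ (hyperbolic) or $K$ to be a single $P$-invariant simple closed curve $\gamma$ with $P|_\gamma$ an irrational rotation. In the first case, the hyperbolic splitting of $K$ for $P$ pulls back through the flow boxes to a hyperbolic splitting of $\Lambda$ for $\Phi_t$, so $\Lambda$ is hyperbolic. In the second case, the flow saturation $\Lambda=\bigcup_{t\in\mathbb{R}}\phi_t(\gamma)$ is the suspension of $(\gamma,P|_\gamma)$; since $P|_\gamma$ is an irrational rotation, $\Lambda$ is a topological $2$-torus and $\phi_t|_\Lambda$ is equivalent to an irrational linear flow on $\TT^2$. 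The dominated splitting $\Delta^{cs}\oplus\Delta^{cu}$ of $\mathcal{N}_\Lambda$ together with $\langle X\rangle$ provides the continuous invariant splitting $TM^3|_\Lambda=\Delta^{cs}\oplus T\Lambda\oplus\Delta^{cu}$ required for normal hyperbolicity, with the rotation dynamics on $T\Lambda$ dominated on both sides by the hyperbolic factors.

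\textbf{Main obstacle.} The delicate point is not the reduction itself but ensuring that the various hypotheses of Pujals--Sambarino are cleanly satisfied in the cross-section: one must check that the choice of $\Sigma$ can be made so that $K$ is indeed compact and $P$-invariant (not merely forward invariant where defined), which requires a careful flow-box argument using the uniform lower bound on $|X|$ and the uniform upper bound on return times. The second subtlety is the normal hyperbolicity claim in the torus case, where one must argue that the dominated splitting on $\mathcal{N}_\Lambda$ extends continuously and provides genuine uniform contraction/expansion transverse to $\Lambda$; this uses the fact that the rotation dynamics on the central direction has zero Lyapunov exponent while $\Delta^{cs}$ and $\Delta^{cu}$ dominate it.
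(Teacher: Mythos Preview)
The paper does not prove this theorem; it is quoted as ``the main theorem in \cite{ArR03}'' and used as a black box. Your reduction to the Pujals--Sambarino surface theorem via a cross-section is exactly the strategy of \cite{ArR03}, so at the level of overall approach you are aligned with the cited source.

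There is, however, a genuine error in your treatment of the torus alternative. You write the splitting as $TM^3|_\Lambda=\Delta^{cs}\oplus T\Lambda\oplus\Delta^{cu}$, but this is four-dimensional: $\dim T\Lambda=2$ and $\dim\Delta^{cs}=\dim\Delta^{cu}=1$. What actually happens is that the $P$-invariant circle $\gamma\subset\Sigma$ produced by Pujals--Sambarino is \emph{normally hyperbolic in $\Sigma$}; its tangent direction in $\Sigma$ has zero exponent (irrational rotation), so by domination it must coincide with exactly one of the two one-dimensional bundles $\Delta^{cs}$ or $\Delta^{cu}$, while the other bundle is the normal direction to $\gamma$ in $\Sigma$ and is uniformly contracting or expanding. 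Saturating by the flow, $T\Lambda=\langle X\rangle\oplus\Delta^{c}$ where $\Delta^c\in\{\Delta^{cs},\Delta^{cu}\}$ is the tangent one, and the remaining one-dimensional bundle is the hyperbolic normal bundle of the torus in $M^3$. Your final paragraph hints that you sense the issue (``zero Lyapunov exponent while $\Delta^{cs}$ and $\Delta^{cu}$ dominate it''), but the displayed splitting is incompatible with this and should be corrected; as written, the normal-hyperbolicity conclusion does not follow from the splitting you claim.
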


\begin{Corollary}\label{Cor:containsingularity}
Under the assumption of Lemma~\ref{Lem:transitiveinclass}, every ${\cal N}$-set $\Lambda_Y$ contains a singularity for every $C^2$ weak Kupka-Smale vector field $Y\in{\cal U}_0$

\end{Corollary}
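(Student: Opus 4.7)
The plan is to proceed by contradiction: assume that $\Lambda_Y\cap\mathrm{Sing}(Y)=\emptyset$. By Proposition~\ref{Pro:continuitypseudoLorenz}, $C(\sigma_Y,Y)$ is pseudo-Lorenz and carries a partially hyperbolic splitting $T_{C(\sigma_Y,Y)}M^3=E^{ss}\oplus E^{cu}$ with $\dim E^{ss}=1$; restricting to the compact non-singular set $\Lambda_Y$ and projecting $E^{ss}$ orthogonally onto the normal bundle induces a dominated splitting of $\cN_{\Lambda_Y}$ with respect to the linear Poincar\'e flow $\psi_t^Y$. Moreover $\Lambda_Y$ is transitive by Lemma~\ref{Lem:transitiveinclass}, and every periodic point in $\Lambda_Y$ is hyperbolic because $Y$ is weak Kupka--Smale. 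Since $Y\in\cX^2(M^3)$, all hypotheses of Theorem~\ref{Thm:pujalssambarino} are satisfied, yielding the dichotomy: either (i) $\Lambda_Y$ is hyperbolic, or (ii) $\Lambda_Y$ is an embedded 2-torus $\TT^2$ normally hyperbolic with respect to $\Phi_t^Y$, on which the flow is topologically equivalent to an irrational linear flow.

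In case (i), I would observe that a non-singular hyperbolic set is automatically singular hyperbolic: starting from the hyperbolic splitting $T_{\Lambda_Y}M^3=E^s\oplus\langle Y\rangle\oplus E^u$, I regroup to $E^s\oplus(\langle Y\rangle\oplus E^u)$; the second bundle is area-expanding because $\|\Phi_t^Y|_{\langle Y\rangle}\|$ is uniformly bounded below on the compact non-singular set $\Lambda_Y$, while $E^u$ is uniformly exponentially expanded. Thus $\Lambda_Y$ itself is singular hyperbolic, contradicting the defining property of an $\cN$-set.

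The main work lies in case (ii). My plan there is to show that $\TT^2$ is itself a chain recurrent class of $Y$, which immediately yields a contradiction since $\TT^2\subsetneq C(\sigma_Y,Y)$ (the inclusion is strict because the singularity $\sigma_Y$ lies in $C(\sigma_Y,Y)\setminus\TT^2$). In the ambient 3-manifold the normal bundle of $\TT^2$ is one-dimensional, so normal hyperbolicity forces this normal line field to be either entirely contracting or entirely expanding. If it is contracting, $\TT^2$ is a topological attractor of $Y$ with an open trapping neighborhood, and hence an isolated chain-transitive set, i.e., a chain recurrent class of $Y$. If it is expanding, $\TT^2$ is a repeller of $Y$, equivalently an attractor of $-Y$; since chain recurrent classes are invariant under time reversal, $\TT^2$ is again a chain recurrent class of $Y$. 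In either sub-case, $\TT^2=C(\sigma_Y,Y)$ would be forced, contradicting $\sigma_Y\notin\TT^2$. The step requiring most care will be verifying that the normal hyperbolic structure on $\TT^2$ given by Theorem~\ref{Thm:pujalssambarino} matches the inherited splitting $E^{ss}\oplus E^{cu}$ in both sub-cases, so that the contracting/expanding dichotomy on the normal line is genuinely exhaustive; this reduces to a one-dimensional argument in the ambient normal direction and is expected to go through cleanly using uniqueness of dominated splittings together with the fact that the minimal flow on $\TT^2$ rules out $E^{ss}$ being tangent to $\TT^2$.
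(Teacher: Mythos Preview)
Your proof is correct and follows exactly the paper's one-sentence argument (apply Theorem~\ref{Thm:pujalssambarino}; then observe that $C(\sigma_Y)$ cannot contain a normally hyperbolic torus), supplying the details the paper omits. Your final paragraph about matching the normal structure on $\TT^2$ with the inherited splitting $E^{ss}\oplus E^{cu}$ is in fact unnecessary: once the one-dimensional normal bundle is either uniformly contracting or expanding, $\TT^2$ is already an isolated chain recurrent class properly contained in $C(\sigma_Y)$, and that contradiction stands without any compatibility check.
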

\begin{proof}

This is just an application of Theorem~\ref{Thm:pujalssambarino} because $C(\sigma_Y)$ cannot contain a normally hyperbolic torus.
\end{proof}

\begin{Remark}
This is another place that we need to use the assumption of ``weak Kupka-Smale'' besides the usage of the connecting lemma for pseudo orbits.
\end{Remark}

Let us concentrate on the proof of Theorem~\ref{Thm:partialtoperiodic}. We need to fix a neighborhood such that we can do perturbations freely in the neighborhood. Under the assumptions of Theorem~\ref{Thm:partialtoperiodic}, if the conclusion is not true, then there is a $C^1$ neighborhood ${\cal U}^*$ such that

\begin{itemize}
\item For every $Y\in{\cal U}^*$, $C(\sigma_Y)$ is not singular hyperbolic.

\item Since chain recurrent classes is upper semi-continuous by
Lemma~\ref{Lem:continuitychainclass}, one has for any $Y\in {\cal U}^*$, $C(\sigma_Y)$ has a partially hyperbolic splitting $T_{C(\sigma_Y)} M^3=E^{ss}\oplus E^{cu}$ with respect to the tangent flow.

\item By the continuity of eigenvalues of singularity and Lemma~\ref{Lem:twochainrecurrentclass}, every singularity in $C(\sigma_Y)$ is a continuation of a singularity in $C(\sigma)$, and it is Lorenz-like.

\item $C(\sigma_Y)$ is still Lyapunov stable by Lemma~\ref{Lem:stablepropertyofquasiattractor} if $Y$ is weak Kupka-Smale.

\item $C(\sigma_Y)$ is not singular hyperbolic by Lemma~\ref{Lem:notsingularhyperbolic}. As in Lemma~\ref{Lem:transitiveinclass}, $C(\sigma_Y)$ contains a transitive $\cal N$-set inside, which is not reduced to a singularity.
\end{itemize}

\begin{Proposition}\label{Pro:crosssectionforpseudoLorenz}
For $C^1$ generic vector field  $X\in{\cal U}^*$, and $\sigma\in{\rm Sing}(X)$, if $C(\sigma)$ is pseudo-Lorenz and contains no periodic orbit, then $C(\sigma)$ admits a cross-section system $(\Sigma,F)$.
\end{Proposition}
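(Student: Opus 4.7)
The plan is to construct the pair of singular cross-sections $S_i^{\pm}$ in a linearizing chart of each Lorenz-like singularity $\sigma_i\in C(\sigma)\cap\mathrm{Sing}(X)$, and then verify the five defining properties of a cross-section system. Since $C(\sigma)$ is pseudo-Lorenz, each $\sigma_i$ admits eigenvalues $\lambda_1<\lambda_2<0<-\lambda_2<\lambda_3$ with eigenspaces $E^{ss}(\sigma_i)\oplus E^{cs}(\sigma_i)\oplus E^u(\sigma_i)$; in such a chart I will place $S_i^{\pm}$ as small $C^1$ disks transverse to $X$, straddling $W^s_{loc}(\sigma_i)$ along the two branches of $W^u_{loc}(\sigma_i)$. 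The parametrization $h_i^{\pm}:[-1,1]^2\to\overline{S_i^{\pm}}$ will be chosen so that $\{x=0\}$ parametrizes $\ell_i^{\pm}=S_i^{\pm}\cap W^s_{loc}(\sigma_i)$ and $\{x=\mathrm{const}\}$-leaves are tangent to the (extended) strong stable foliation. Such a choice is possible because, by partial hyperbolicity on a small neighborhood of $C(\sigma)$, the foliation $W^{ss}$ extends to a locally invariant $C^0$ foliation with $C^1$ leaves, and within each $S_i^{\pm}$ these leaves form a one-parameter family parametrized by the $x$-coordinate, making properties (2) and (3) automatic.

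Properties (1) and (5) are then addressed using the global hypotheses. For (1), the horizontal edges $h_i^{\pm}([-1,1]\times\{\pm 1\})$ are segments of strong stable leaves at $x$-parameters away from $0$, and can be chosen disjoint from $C(\sigma)$ after a small perturbation of the cross-section size because the set of $r>0$ for which $h_i^{\pm}([-1,1]\times\{\pm r\})\cap C(\sigma)\neq\emptyset$ has empty interior; the vertical edges $h_i^{\pm}(\{\pm 1\}\times[-1,1])$ can be made disjoint from $C(\sigma)$ by choosing their $x$-position generically (they parametrize strong stable leaves transverse to $W^s_{loc}(\sigma_i)$, avoiding $C(\sigma)$ on a residual subset of the $x$-parameter). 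For (5), I use the fact that $C(\sigma)$ contains no periodic orbits: if $x\in C(\sigma)\setminus\bigcup_i W^s_{loc}(\sigma_i)$, then $\omega(x)\subset C(\sigma)$; if $\omega(x)$ were disjoint from $\mathrm{Sing}(X)$, then being a non-singular chain transitive set with a dominated splitting on ${\cal N}$ w.r.t.\ $\psi_t$, it would be hyperbolic by Lemma~\ref{Lem:dominatedwithoutsingularity} and would contain periodic orbits by shadowing, contradicting the hypothesis. Hence $\omega(x)\ni\sigma_j$ for some $j$, and since the forward orbit of $x$ accumulates on $\sigma_j$ without entering $W^s_{loc}(\sigma_j)$ asymptotically, it must pierce $S_j^{+}\cup S_j^{-}$ infinitely often, yielding $x\in\mathrm{Dom}(F)$ after at most finitely many steps.

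The main obstacle is property (4): if $p\in\mathrm{Dom}(F)$ with stable leaf $\gamma_p=h_p(\{x_p\}\times(-1,1))$, then $t_r$ must be continuous for every $r\in\gamma_p$. The mechanism is the uniform contraction along $E^{ss}$: the orbit of $\gamma_p$ remains an arbitrarily thin strip of strong stable length decaying like $C\lambda^t$, hence the first return of $\gamma_p$ is forced to land on a single cross-section $S_j^{\pm}$ entirely in its interior. Concretely, I will argue by contradiction: if $t_r$ were discontinuous at some $r_0\in\gamma_p$, then the orbit of $r_0$ would hit $\partial\Sigma$ at the return time; but $\partial\Sigma\cap C(\sigma)=\emptyset$ from property (1), so a small neighborhood of $r_0$ within $\gamma_p$ remains away from $\partial\Sigma$ at the return, and continuity of the flow yields a uniform return time nearby. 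The subtlety here is ensuring that a stable leaf cannot cross the boundary between two different $S_j^{\pm}$ during its transit; this is where I use the freedom to shrink the cross-sections and the no-periodic-orbit hypothesis, which prevents $\gamma_p$ from folding and re-crossing $\ell_j^{\pm}$ discontinuously. The key technical step will be to iterate the shrinking procedure finitely many times (one for each pair $(i,j)$ of singularities) to produce a final cross-section system in which every stable leaf returns coherently.

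The hard part will be coupling (1) and (4): the boundary avoidance in (1) must be strong enough to guarantee the coherent-return statement in (4), and this requires a global argument using Lyapunov stability and the uniform contracting rate of $E^{ss}$, rather than a purely local one at a single singularity. I expect to close the argument by a compactness reduction on $C(\sigma)\setminus\bigcup_i W^s_{loc}(\sigma_i)$, which is a compact forward-invariant set (modulo its passage through $\Sigma$) on which the return map is well-defined with uniformly bounded return time on each compact piece away from $W^s_{loc}(\mathrm{Sing}(X))$.
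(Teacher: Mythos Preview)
Your overall architecture matches the paper's, but two of the steps you flag as routine are exactly where the real work lies, and your sketched arguments for them do not go through.

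\textbf{Property (1), vertical edges.} You assert that the set of $x$-parameters for which $h_i^{\pm}(\{x\}\times(-1,1))\cap C(\sigma)=\emptyset$ is residual, without a mechanism. This is not a soft genericity fact: a priori $C(\sigma)$ could project onto a full interval of $x$-parameters. The paper's argument uses both hypotheses you have not yet exploited together. Since $X$ is $C^1$ generic, $C(\sigma)$ is accumulated by periodic orbits of $X$ (Lemma~\ref{Lem:generic}, item 5). If every leaf over some interval $(a,b)$ met $C(\sigma)$, pick a periodic point $p$ close to a leaf over $(a+b)/2$; then $W^{ss}_{\varepsilon_0}(p)\cap C(\sigma)\neq\emptyset$. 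But then $\omega$ of that intersection point is $\mathrm{Orb}(p)$, and Lyapunov stability of $C(\sigma)$ forces $\mathrm{Orb}(p)\subset C(\sigma)$, contradicting that $C(\sigma)$ contains no periodic orbit. This is the step where the ``no periodic orbit'' hypothesis is genuinely needed for (1), not just for (5).

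\textbf{Property (4).} Your contradiction argument says: if $t_r$ is discontinuous at $r_0\in\gamma_p$, then the orbit of $r_0$ hits $\partial\Sigma$, and $\partial\Sigma\cap C(\sigma)=\emptyset$ gives a contradiction. But $r_0$ is an arbitrary point on the strong stable leaf through $p$; it has no reason to lie in $C(\sigma)$, so $\partial\Sigma\cap C(\sigma)=\emptyset$ tells you nothing. The paper's mechanism is different and uses Lyapunov stability in an essential way: \emph{first} fix a neighborhood $U$ of $C(\sigma)$ whose closure misses the horizontal boundaries $h_i^{\pm}((-\beta_0,\beta_0)\times\{\pm 1\})$; \emph{then} shrink the horizontal width $\beta$ of the cross-sections so that every point of $\Sigma(\beta)$ is so close to $W^s_{loc}(\sigma_i)$ that its forward orbit enters $B_\epsilon(\sigma_i)$ before returning to $\Sigma(\beta)$. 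Lyapunov stability then forces $\phi_t(B_\epsilon(\sigma_i))\subset U$ for all $t\ge 0$, so after that moment the orbit cannot touch the horizontal boundary. This traps the \emph{entire} leaf $\gamma_p$, not just points of $C(\sigma)$, and is why the iterated shrinking you propose is unnecessary: one sufficiently small $\beta$ works for all singularities at once. Your final paragraph hints at Lyapunov stability but does not identify this ``shrink first, then trap by stability'' order of operations, which is the crux.
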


\begin{proof}

{\bf The preparation: curves in the stable manifolds.\ } Assume that
$$
C(\sigma)\cap{\rm Sing}(X)=\{\sigma_1, \sigma_2, \cdots, \sigma_k\}.
$$
For each singularity $\sigma_i$, one can choose a local
chart $\phi^s_i:\RR^2\to W^s_{loc}(\sigma_i)$ such that
\begin{itemize}

\item $\phi^s_i(\{0\}\times\RR)=W^{ss}_{loc}(\sigma_i)$,

\item $\phi^s_i(\RR\times\{0\})$ is an invariant central manifold.

\end{itemize}
Take two curves $\gamma_{i,1},\gamma_{i,2}:\RR\to W^s(\sigma_i)$ as the images of $y=x$ and $y=-x$ under $\phi^s_i$ such that $\gamma_{i,j}(0)=\sigma_i$ for $j=1,2$.

\begin{Claim}
There is $\rho_i>0$ such that $C(\sigma)\cap \phi^s_i([-\rho_i,\rho_i]^2)\subset \phi_i^s(\{(x,y):|x|<
|y|\})$. As a corollary, $\gamma_{i,1}([-\rho_i,\rho_i])\cap C(\sigma)=\{\sigma_i\}$ and
$\gamma_{i,2}([-\rho_i,\rho_i])\cap C(\sigma)=\{\sigma_i\}$.

\end{Claim}

\begin{proof}[Proof of the claim]
If the claim is not true, there are $x_n\in\phi_i^s(\{(x,y):|x|\ge |y|\})\cap C(\sigma)$ such that
$\lim_{n\to\infty}x_n=\sigma_i$ and $x_n\neq\sigma_i$. The negative iterations of $x_n$ are still in
$C(\sigma)$. Choose a small neighborhood $B_i$ of $\sigma_i$. Let
$$t_n=\sup\{t:~\phi_{-s}(x_n)\in B_i,~\forall 0\le s\le t\}.$$
We have that $t_n\to\infty$ as $x_n\to\sigma_i$. Let $a$ be an accumulation point of $\phi_{-t_n}(x_n)$. Then $\phi_t(a)\in \{(x,y):|x|\ge |y|\}$ for $t\ge 0$. Hence $a\in C(\sigma)\cap W^{ss}(\sigma_i)\cap \partial B_i$. This contradicts to the fact that $W^{ss}(\sigma_i)\cap
C(\sigma)=\{\sigma_i\}$.
\end{proof}

{\bf The first step of the construction:} Let
$\rho=\min\{\rho_i: 1\le i\le k\}$. For each $\sigma_i$, there are two connected components $\Theta_i^{\pm}$ of
$W^s_{loc}(\sigma_i)\setminus W^{ss}_{loc}(\sigma_i)$. In the following, we will use $\Theta_i^+$ to construct $S_i^+$, while the other case can be constructed similarly.

There are two
points $x_{i,1}\in\gamma_{i,1}\cap\Theta_i^+$ and $x_{i,2}\in\gamma_{i,2}\cap\Theta_i^+$ such that
\begin{itemize}
\item $x_{i,1},x_{i,2}\notin C(\sigma)$.

\item $x_{i,1}\in W^{ss}_{loc}(x_{i,2})$.

\end{itemize}

Thus there is a cross-section $\widetilde{S}_i^+=h_i^+((-1,1)^2)$, where $h_i^+:[-1,1]^2\to\overline{\widetilde{S}_i^+}$ is a homeomorphism, such that

\begin{itemize}

\item $h_i^+((0,-1))=x_{i,1}$, $h_i^+((0,1))=x_{i,2}$ and $h_i^+(\{0\}\times(-1,1))$ is a connected part of
a strong stable manifold of $\phi_t$.

\item $h_i^+((-1,1)\times\{-1,1\})\cap C(\sigma)=\emptyset$.

\item $\widetilde{S}_i^+$ is foliated by strong stable foliation in the following sense: for each $x\in \widetilde{S}_i^{+}$, one define ${\cal F}^s(x)$ to be the connected component of
$\bigcup_{t\ge0}\phi_t(W^{ss}_{\varepsilon_1}(x))\cap \widetilde{S}_i^{+}$, $\widetilde{S}_{i}^{+}$ can be foliated by
${\cal F}^s$. Moreover, $h_i^+(\{z\}\times(-1,1))$ is a leaf of the strong stable foliation.

\item For any arbitrarily small number $\alpha>0$, one can require that $D^h(\widetilde{S}_i^+)<\alpha$.

\item $\cup_{x\in(-1,1)}h_i^+(\{x\}\times(-1,1))$ is a family of $C^1$ curves, and as a $C^1$ family, it varies continuously with respect to $x$.
\end{itemize}

One can construct $\widetilde{S}_i^-$ for $\Theta_i^-$ also.
%
%
%

\paragraph{Refine the construction:}We take a neighborhood $U=B_\delta(C(\sigma))$ with $\delta$ small enough such that $B_\delta(\sigma_i)$ and $B_\delta(\sigma_j)$ are disjoint for $i\not=j$ and $\overline{U}$ is disjoint from $h_i^\pm((-\beta_0,\beta_0)\times\{-1,1\})$ for any $i$ and any
$\pm\in\{+,-\}$. Denote $S_i^\pm(\beta)=h_i^\pm((-\beta,\beta)\times(-1,1))$ for $\beta\in(0,\beta_0]$.
Denote by
$$
\Sigma(\beta)=\bigcup_{1\le i\le k,\ \ \pm\in\{+,-\}}S_i^{\pm}(\beta).
$$
As before, consider the first return map $F$ with respect to $\Sigma(\beta)$.

\begin{Claim} If $\beta$ is small enough, for any $x\in {\rm Dom}(F)$, $t_x=\inf\{t>0: \phi_t(x)\in\Sigma(\beta)\}$ is continuous on ${\cal F}^s(x)$, the strong stable leaf of $x$ with respect to $F$.
\end{Claim}

\begin{figure}[h]
\centering
\includegraphics[width=0.60\textwidth]{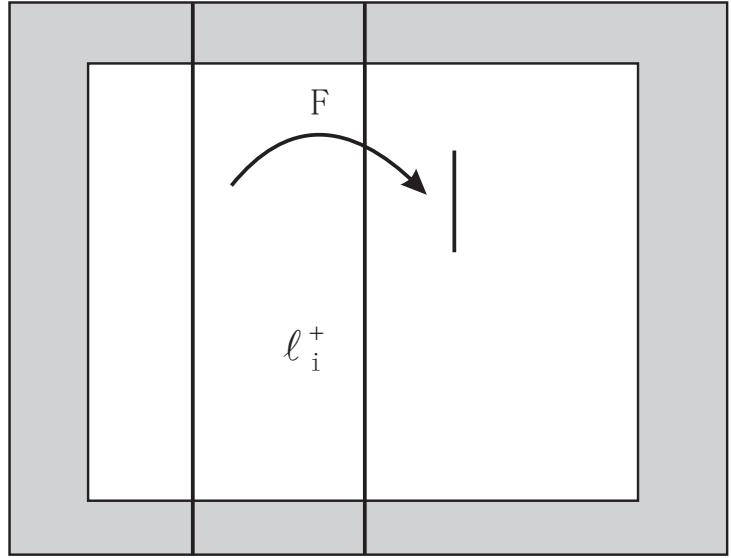}
\caption{Cross-section and return map: $\Lambda$ does not intersect the shaded area and the image of any strong stable leaf under the return map does not intersect the shaded area.}
\end{figure}

\begin{proof}[Proof of the claim]
Otherwise, $\phi_t({\cal F}^s(x))$ will intersect some $h_i^\pm((-\beta_0,\beta_0)\times\{-1,1\})$ for some $t>0$. Since $C(\sigma)$ is Lyapunov stable, for $\epsilon>0$ small enough, for any $t\ge 0$, $\phi_t(B_\epsilon(\sigma_i))\subset U$. But for $\beta$ small enough, for any $x\in {\rm Dom}(F)$, there exists $T>0$ such that $\phi_T(x)\in B_\epsilon(\sigma_i)$ for some $i$ before it first returns to $\Sigma(\beta)$ since $S_i^{\pm}(\beta)$ is very close to the stable manifold of $\sigma_i$. So for $t>T$, $\phi_t(x)\in U$. Especially, $\phi_t(x)\cap h_i^\pm((-\beta_0,\beta_0)\times\{-1,1\})=\emptyset$ for any $i$ and any $\pm\in\{+,-\}$. This contradiction proves the claim.
\end{proof}
Since $X$ is a $C^1$ generic vector field, $C(\sigma)$ can be accumulated by periodic orbits. Since $X\in{\cal U}_0$, $C(\sigma)$ contains no hyperbolic periodic orbit.

\begin{Claim}
Given $\beta\in(0,\beta_0]$, for every $i=1,2,\cdots,k$ and $\pm\in\{+,-\}$,
$$J_i^\pm=\{z\in (-\beta,\beta): h_i^\pm(\{z\}\times(-1,1))\cap C(\sigma)=\emptyset\}$$ is open and dense
in $(-\beta,\beta)$.
\end{Claim}

\begin{proof}[Proof of the claim]

We only have to prove that: Given $a,b\in (-\beta,\beta), a<b$, there exists $z\in (a,b)$ such that
$h_i^\pm(\{z\}\times(-1,1))\cap C(\sigma)=\emptyset$. Otherwise, since $C(\sigma)$ can be accumulated by
periodic orbits, there is a periodic point $p\in S_i^\pm$ close to $h_i^\pm(\{(a+b)/2\}\times(-1,1))$. Thus
$W^{ss}_{\varepsilon_0}(p)\cap C(\sigma)\neq\emptyset$. This contradicts to the fact that $C(\sigma)\cap {\rm Per}(X)=\emptyset$.
\end{proof}

For $\beta>0$ small enough,
$$
G=\bigcap_{1\le i\le k, \pm\in\{+,-\}}(0,\beta)\cap J_i^\pm \cap (-J_i^\pm)
$$
is open and dense in $(0,\beta)$.

So, take $\beta'\in G$ and let $\Sigma=\Sigma(\beta')$. After scaling, we may assume coordinate mappings $h_i^\pm$ are defined on $(-1,1)^2$. Then $(\Sigma,F)$ satisfies item 1)-4) in the definition of cross-section system.

\paragraph{The domain of $F$:} Since we assume that $C(\sigma)$ contains no periodic orbit, we have that $\omega(x)$ contains a singularity for any $x\in C(\sigma)$. This implies that for every $x\in C(\sigma)$, if $x$ is not in the local stable manifold of some singularity, then the positive iterations of $x$ will intersect $\Sigma$. This finishes the proof of the existence of cross-section system.
\end{proof}

By summarizing the construction as in the above proof, we first find some ``large'' cross-section $\Sigma(1)$, then we just take some smaller part $\Sigma(\beta')$ which is modified by the strong stable foliation. Since the local strong stable manifolds are continuous with respect to the vector fields, for any $Y$ $C^1$ close to $X$, the intersection of the strong stable manifolds $W^{ss}_{loc}(z,Y)$ of $z\in \Sigma(\beta')$ and $\Sigma(1)$ is close to $\Sigma(\beta')$. Thus, the cross-section system has some continuous property from the proof of the above proposition.
\begin{Corollary}\label{Cor:crosssectionforpseudoLorenz}
For $C^1$ generic vector field $X\in{\cal U}^*$, let $C(\sigma)$ be a chain recurrent class satisfying the
assumption of Theorem~\ref{Thm:partialtoperiodic}. Then there is a $C^1$ neighborhood ${\cal U}\subset{\cal U}^*$
of $X$, such that for any $Y\in\cal U$, $C(\sigma_Y)$ admits a cross-section system $(\Sigma_Y,F_Y)$.
Moreover, when $Y\to X$, we have $\Sigma_Y\to\Sigma$.
%
\end{Corollary}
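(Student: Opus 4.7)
The plan is to show that every ingredient in the construction of Proposition~\ref{Pro:crosssectionforpseudoLorenz} varies continuously under a $C^1$-perturbation of $X$, so that the cross-section system for $C(\sigma)$ can be transported to one for $C(\sigma_Y)$. The construction there had three layers: (a) the ``large'' transverse surfaces $\widetilde{S}_i^{\pm}$ built from the local stable manifolds of the Lorenz-like singularities $\sigma_i$, (b) the strong stable foliation on them, and (c) the choice of a good parameter $\beta'$ whose vertical boundary leaves miss $C(\sigma)$. We perturb each layer in turn.

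For layer (a), by Proposition~\ref{Pro:continuitypseudoLorenz} there is a $C^1$-neighborhood $\mathcal{U}\subset\mathcal{U}^*$ on which every $\sigma_i$ has a Lorenz-like continuation $\sigma_{i,Y}$ whose local stable manifold $W^s_{loc}(\sigma_{i,Y})$ varies $C^1$-continuously with $Y$. The auxiliary curves $\gamma_{i,1},\gamma_{i,2}$ and the reference points $x_{i,1},x_{i,2}$ on them can therefore be moved continuously to $x_{i,1,Y},x_{i,2,Y}$; since they are disjoint from $C(\sigma)$, upper semi-continuity of chain recurrent classes (Lemma~\ref{Lem:u.s.c}) guarantees they stay disjoint from $C(\sigma_Y)$ after shrinking $\mathcal{U}$. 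This yields $C^1$-close cross-sections $\widetilde{S}_{i,Y}^{\pm}$, transverse to $Y$ because transversality is open. For layer (b), the trapping neighborhood $U$ of $C(\sigma)$ remains $Y$-forward invariant for $Y\in\mathcal{U}$ (after shrinking once more), so strong stable manifolds $W^{ss}_{\varepsilon_0}(\cdot,Y)$ exist uniformly, and the foliation on $\widetilde{S}_{i,Y}^{\pm}$ defined by intersecting with them is $C^0$-close to the original, with leaves that are $C^1$-close. For layer (c), the boundary leaves $h_i^{\pm}(\{\pm\beta'\}\times(-1,1))$ are compact and disjoint from $C(\sigma)$; upper semi-continuity again gives disjointness from $C(\sigma_Y)$ for $Y\in\mathcal{U}$, so the refined cross-section $\Sigma_Y$ produced by taking the $Y$-strong-stable leaves through the same base points is well-defined and converges to $\Sigma$ as $Y\to X$.

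It remains to verify the five axioms of a cross-section system for $(\Sigma_Y,F_Y)$. Axioms 1--3 are immediate from the above continuity statements. Axiom 4, continuity of the first-return time on stable leaves, is proved exactly as in Proposition~\ref{Pro:crosssectionforpseudoLorenz}: a failure would force a $Y$-orbit segment to hit the horizontal boundary $h_{i,Y}^{\pm}((-\beta_0,\beta_0)\times\{-1,1\})$, which is disjoint from the $Y$-trapping region containing $C(\sigma_Y)$ by construction.

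The main obstacle is Axiom 5, namely that every $x\in C(\sigma_Y)$ outside the local stable manifolds returns to $\Sigma_Y$. Unlike in Proposition~\ref{Pro:crosssectionforpseudoLorenz} we cannot a priori assume that $C(\sigma_Y)$ contains no periodic orbit, since $Y$ is an \emph{arbitrary} vector field in $\mathcal{U}$. The remedy is geometric: $\Sigma_Y$ is an $\epsilon$-$C^0$ perturbation of $\Sigma$ and $C(\sigma_Y)$ is contained in an arbitrarily small neighborhood of $C(\sigma)$. Outside any fixed neighborhood of $\mathrm{Sing}(Y)$, the union $\phi^Y_{[-T,T]}(\Sigma_Y)$ covers the corresponding region of $C(\sigma_Y)$ because the original $\phi^X_{[-T,T]}(\Sigma)$ did so for $C(\sigma)$ and both flows and cross-sections are $C^0$-close; and orbits that linger near $\sigma_{i,Y}$ must, unless they lie in $W^s_{loc}(\sigma_{i,Y})$, eventually leave a small neighborhood of the singularity along $W^u(\sigma_{i,Y})$ and pass through one of the singular cross-sections $S_{j,Y}^{\pm}$ associated to a nearby singularity. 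The Lyapunov stability of $C(\sigma_Y)$ (for weak Kupka--Smale $Y$, by Lemma~\ref{Lem:stablepropertyofquasiattractor}, and by density in general) confines these excursions to $U$. Combining these ingredients with the compactness of $C(\sigma_Y)$ finishes the verification.
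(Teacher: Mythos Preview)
Your proposal is essentially correct and follows the same strategy as the paper: both arguments transport the cross-section system of $X$ to nearby $Y$ by exploiting the $C^1$-continuity of the local strong stable manifolds, the upper semi-continuity (in fact continuity, for generic $X$) of $C(\sigma_Y)$, and the robustness of the trapping neighborhood $U$. The paper is slightly more explicit, writing
\[
\Sigma_Y=\Bigl(\bigcup_{z\in\Sigma_X}\phi_{[-\varepsilon(z),\varepsilon(z)]}\bigl(W^{ss}_{loc}(z,Y)\bigr)\Bigr)\cap \Sigma(1),
\]
whereas you rebuild the three layers of Proposition~\ref{Pro:crosssectionforpseudoLorenz} for each $Y$; the content is the same.

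One small correction: in your verification of Axiom~5 you invoke Lyapunov stability of $C(\sigma_Y)$ ``for weak Kupka--Smale $Y$, and by density in general.'' Lyapunov stability is not a closed condition, so the density argument does not work as stated. Fortunately it is also not needed: you have already observed that the trapping neighborhood $U$ remains $Y$-forward invariant for all $Y\in\mathcal{U}$, and since $C(\sigma_Y)\subset U$ by upper semi-continuity, every orbit of a point in $C(\sigma_Y)$ is confined to $U$ automatically. This is exactly what the paper uses. With that adjustment your argument for Axiom~5 matches the paper's: away from singularities the return time is controlled by continuity of the flow and of the return time for $X$; near a singularity $\sigma_{i,Y}$, any orbit not in $W^s_{loc}(\sigma_{i,Y})$ exits along $W^u(\sigma_{i,Y})$, which by continuity still meets $\Sigma_Y$.
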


\begin{proof}
As explained above, we take

$$\Sigma_Y=(\bigcup_{z\in\Sigma_X}\phi_{[-\varepsilon(z),\varepsilon(z)]}(W^{ss}_{loc}(z,Y)))\cap \Sigma(1).$$

Moreover, one can defined the first return map $F_Y$ by using this cross section.

By the continuity of the local strong stable manifolds w.r.t. the vector fields, we have that $\Sigma_Y$ is close to $\Sigma=\Sigma_X$ when $Y$ is close to $X$. Since $C(\sigma_Y)$ is continuous w.r.t. $Y$, we have that $C(\sigma_Y)\cap \partial\Sigma_Y=\emptyset.$ By the definition of $\Sigma_Y$, Item 2 of the cross section system is satisfied. Item 3 is true because that $C(\sigma_Y)\cap W^s_{loc}(\sigma_Y)$ is continuous w.r.t. $Y$. Item 4 is true if the return time is long, which can be guaranteed if the cross section is thin. We have Item 5  because

\begin{itemize}
\item The return time is uniformly continuous when the point is not close to the local stable manifolds of the singularities.

\item When the point is close to the local stable manifolds of the singularities, the return will follow the unstable manifolds of the singularities, which are stably contained in the new cross-section.
\end{itemize}
\end{proof}

Now we take a $C^1$ neighborhood ${\cal U}^{**}\subset{\cal U}^*$ of $X$
such that ${\cal U}^{**}$ verifies Corollary~\ref{Cor:crosssectionforpseudoLorenz}.

\subsection{Homoclinic orbits of singularities}

Now we are going to prove Proposition~\ref{Pro:roughsinkbasin}: for generic $X\in{\cal X}^1(M^3)$, if $C(\sigma)$ is pseudo-Lorenz, then either $C(\sigma)$ is a singular hyperbolic attractor; or for any $C^1$ neighborhood $\cal U$ of $X$, there is a weak Kupka-Smale vector field $Y\in\cal U$ such that $Y$ has a periodic sink $\gamma$ and $\overline{{\rm Basin}(\gamma)}\cap C(\sigma_Y)\neq\emptyset$. As the end of Section~\ref{Sec:reduction}, once we know Proposition~\ref{Pro:roughsinkbasin} is true, we will get the final result.

For each Lorenz-like singularity $\rho$ and a small neighborhood $U$ of this singularity, the local stable manifold of $\rho$ divides $U$ into two parts: the left part and the right part. Thus the unstable manifold $W^u(\rho)$ has two separatrice: the left separatrix and the right separatrix.

For each $Y\in{\cal U}^{**}$, one defines $n(Y)$ to be the number of homoclinic orbits of singularities contained in $C(\sigma_Y)$. Since there are only $k$ singularities in $C(\sigma)$, $n(Y)\le 2k$. Let
$$n=\max\{n(Y):~Y\in{\cal U}_1,~Y~{\rm is}~C^2~{\rm and}~{\rm~is~weak~Kupka-Smale}\}.$$
Let ${\cal M}_n\subset{\cal U}^{**}$ be the set of $C^2$ weak Kupka-Smale vector fields $Y$ with $n(Y)=n$.

\begin{Lemma}\label{Lem:homoclinicintransitive}
For $Y\in{\cal M}_n$ and for the transitive ${\cal N}$-set $\Lambda_Y\subset C(\sigma_Y)$ as in
Lemma~\ref{Lem:transitiveinclass}, if $\sigma_{i,Y}\in\Lambda_Y$, then $\sigma_{i,Y}$ has a homoclinic orbit
$\Gamma_i^\pm$.

\end{Lemma}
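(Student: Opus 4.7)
The plan is to argue by contradiction against the maximality of $n$. Suppose $\sigma_{i,Y}\in\Lambda_Y$ but \emph{neither} separatrix of $W^u(\sigma_{i,Y})$ gives a homoclinic orbit; I will construct a $C^2$ weak Kupka-Smale perturbation $Z\in{\cal U}^{**}$ of $Y$ with $n(Z)\ge n+1$. First, since $Y\in{\cal M}_n\subset {\cal U}^{**}$ is weak Kupka-Smale, Lemma~\ref{Lem:stablepropertyofquasiattractor} together with the setup of ${\cal U}^{**}$ implies that $C(\sigma_Y)$ is Lyapunov stable, and then Lemma~\ref{Lem:unstableinLyapunov} gives $W^u(\sigma_{i,Y})\subset C(\sigma_Y)$. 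In particular, picking a point $p$ in, say, the positive separatrix $W^{u,+}(\sigma_{i,Y})$, the point $p$ lies in the chain recurrent class of $\sigma_{i,Y}$, so $\sigma_{i,Y}$ is chain attainable from $p$ (and in fact, all points of $C(\sigma_Y)$ are bi-chain-attainable).

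Next I would apply Hayashi's $C^1$ connecting lemma (Lemma~\ref{Lem:wenxia}) along a pseudo-orbit from $p\in W^{u,+}(\sigma_{i,Y})$ back to $W^s_{loc}(\sigma_{i,Y})$. The perturbation is supported in a small flow-box around a non-periodic, non-critical intermediate point $z$, which I choose to lie outside every pre-existing singular homoclinic orbit $\Gamma_1,\dots,\Gamma_n$, outside small neighborhoods of ${\rm Sing}(Y)$, and outside the local stable/unstable manifolds of all singularities. This produces a vector field $Y_1$, $C^1$-close to $Y$, coinciding with $Y$ on a neighborhood of $\bigcup_j\Gamma_j \cup {\rm Sing}(Y)$, and having an actual orbit from $p$ into $W^s_{loc}(\sigma_{i,Y_1})$; this orbit closes up to a new homoclinic orbit $\Gamma$ of $\sigma_{i,Y_1}$. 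Because the support of the perturbation was chosen disjoint from the $\Gamma_j$, the $n$ original singular homoclinic orbits are unchanged as orbits of $Y_1$.

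Then I would apply Theorem~\ref{Thm:xiruibin} to the finite family of non-transverse connections $\{\Gamma,\Gamma_1,\dots,\Gamma_n\}$ to obtain a $C^2$ weak Kupka-Smale vector field $Z$, arbitrarily $C^1$-close to $Y_1$, preserving all $n+1$ orbits as non-transverse homoclinic connections. Taking the total perturbation small enough that $Z\in{\cal U}^{**}$, Proposition~\ref{Pro:continuitypseudoLorenz} together with Lemma~\ref{Lem:twochainrecurrentclass} ensures $C(\sigma_Z)$ is pseudo-Lorenz and contains all the singularities $\sigma_{j,Z}$ that were in $C(\sigma_Y)$. Since each homoclinic orbit of a singularity is automatically chain recurrent through that singularity and therefore lies in the chain recurrent class of $\sigma_{j,Z}$, which equals $C(\sigma_Z)$, we conclude $n(Z)\ge n+1$, contradicting the definition of $n$.

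The main obstacle is the bookkeeping in the last step: guaranteeing that (i) the newly created homoclinic orbit $\Gamma$ survives the Kupka-Smale perturbation of $Y_1$, and (ii) all $n+1$ homoclinic orbits genuinely sit inside the \emph{same} chain recurrent class $C(\sigma_Z)$. Point (i) is exactly what Theorem~\ref{Thm:xiruibin} is designed for. Point (ii) requires that the identification $C(\sigma_{j,Z})=C(\sigma_Z)$ for all $\sigma_{j,Z}$ originally in $C(\sigma_Y)$ is robust under $C^1$-small perturbation within ${\cal U}^{**}$; this is precisely what the generic properties behind Lemma~\ref{Lem:twochainrecurrentclass} and the construction of ${\cal U}^{**}$ supply. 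Once these two points are settled, the contradiction with the maximality of $n$ completes the proof.
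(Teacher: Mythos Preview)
Your overall strategy---contradict the maximality of $n$ by manufacturing one extra singular homoclinic orbit while keeping the $n$ existing ones intact, then apply Theorem~\ref{Thm:xiruibin}---is exactly the paper's. The gap is in the connecting step.

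You invoke Lemma~\ref{Lem:wenxia} ``along a pseudo-orbit'' and claim the perturbation is supported in a single flow-box at some intermediate point $z$. But Lemma~\ref{Lem:wenxia} requires \emph{actual} orbits: the forward orbit of your point $p\in W^{u,+}(\sigma_{i,Y})$ and the backward orbit of some $q\in W^s_{loc}(\sigma_{i,Y})$ must both enter the box $\widetilde W_z$. Chain attainability of $\sigma_{i,Y}$ from $p$ does not furnish such a $z$; indeed $p$ need not even lie in $\Lambda_Y$, and nothing you have said forces $\omega(p)$ to meet $W^s(\sigma_{i,Y})$. If instead you meant to use the pseudo-orbit connecting lemma (Lemma~\ref{Lem:connecting}), then the support of the perturbation is no longer confined to a single small box, and the lemma as stated only lets you protect neighborhoods of finitely many \emph{periodic} orbits---not the $n$ singular homoclinic orbits $\Gamma_1,\dots,\Gamma_n$---so you cannot guarantee they survive.

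The paper closes this gap by exploiting the fact that $\Lambda_Y$ is genuinely \emph{transitive} (Lemma~\ref{Lem:transitiveinclass}), not merely chain transitive. One picks $x^s\in\Lambda_Y\cap W^s(\sigma_{i,Y})\setminus\{\sigma_{i,Y}\}$ and $x^u\in\Lambda_Y\cap W^u(\sigma_{i,Y})\setminus\{\sigma_{i,Y}\}$ (both nonempty by transitivity), places \emph{two} Wen--Xia boxes $W_{x^s}$ and $W_{x^u}$ around them---chosen disjoint from all $\Gamma_j$ and from $\sigma_{i,Y}$, which is possible precisely because $\sigma_{i,Y}$ is assumed to have no homoclinic orbit---and then takes a transitive point $z\in\Lambda_Y\setminus(W_{L,x^s}\cup W_{L,x^u})$ whose actual orbit visits both boxes. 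Two successive applications of Lemma~\ref{Lem:wenxia} then connect $x^u$ to $x^s$ through $z$, with support contained in $W_{L,x^s}\cup W_{L,x^u}$, hence disjoint from every $\Gamma_j$. Your final bookkeeping via Theorem~\ref{Thm:xiruibin} is fine and matches what the paper does (implicitly here, explicitly later in the section).
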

\begin{proof}
Suppose on the contrary, $\Lambda_Y$ contains a $\sigma_{i,Y}$, but $C(\sigma_Y)$
does not contain a homoclinic orbit of $\sigma_{i,Y}$. Since $\Lambda_Y$ is transitive, $\Lambda_Y\cap
W^s(\sigma_{i,Y})\setminus\{\sigma_{i,Y}\}\neq\emptyset$ and $\Lambda_Y\cap
W^u(\sigma_{i,Y})\setminus\{\sigma_{i,Y}\}\neq\emptyset$. Take $x^s\in \Lambda_Y\cap
W^s(\sigma_{i,Y})\setminus\{\sigma_{i,Y}\}$ and $x^u\in \Lambda_Y\cap
W^u(\sigma_{i,Y})\setminus\{\sigma_{i,Y}\}$. By the assumptions, $x^s$ and $x^u$ are not in a homoclinic orbit.

\paragraph{Construction of perturbation boxes.} For $Y\in{\cal M}_n$, one can choose $\varepsilon>0$,
such that $B(Y,\varepsilon)\subset {\cal U}^{**}$. By Lemma~\ref{Lem:wenxia}, one can choose $L>0$ and
neighborhoods $\widetilde{W}_{x^s}\subset W_{x^s}$ of $x^s$, $\widetilde{W}_{x^u}\subset W_{x^u}$ of $x^u$ as in
Lemma~\ref{Lem:wenxia} such that
\begin{itemize}
\item $W_{L,x^s}\cap W_{L,x^u}=\emptyset$.

\item $\Lambda_Y\setminus (W_{L,x^s}\cup W_{L,x^u})\neq\emptyset$.

\item $W_{L,x^s}\cup W_{L,x^u}$ is disjoint from any other homoclinic orbits of singularities.

\item $W_{L,x^s}\cup W_{L,x^u}$ is disjoint from $\sigma_{i,Y}$.

\end{itemize}

\paragraph{Choosing the orbits.} Since $\Lambda_Y$ is transitive, there is $z\in \Lambda_Y\setminus (W_{L,x^s}\cup W_{L,x^u})$ such that $\alpha(z)=\omega(z)=\Lambda$. Choose $t_1,t_2>0$ such that $\phi_{-t_1}^Y(z)\in \widetilde{W}_{x^s}$ and $\phi_{t_2}^Y(z)\in \widetilde{W}_{x^u}$. Choose $t_s>0$ and $t_u>0$ such that $\phi^Y_{t_s}(x^s)\notin W_{L,x^s}\cup W_{L,x^u}$ and $\phi^Y_{-t_u}(x^u)\notin W_{L,x^s}\cup W_{L,x^u}$.

\paragraph{Connecting the orbit from $\phi^Y_{-t_2}(z)$ to $\phi^Y_{t_s}(x^s)$.} Since the negative orbit of $\phi^Y_{t_s}(x^s)$ and the positive orbit of $\phi^Y_{-t_2}(z)$ both enter $\widetilde{W}_{x^s}$, by using Lemma~\ref{Lem:wenxia}, there is $Y_1$ which is $\varepsilon$-close to $Y$ such that
\begin{itemize}
\item There is $T_1>0$ such that $\phi^{Y_1}_{-T_1}(\phi^Y_{t_s}(x^s))=\phi^Y_{-t_2}(z)$.

\item $Y_1(x)=Y(x)$ for any $x\in M^3\setminus W_{L,x^s}$.

\end{itemize}

As a corollary, we have
\begin{itemize}

\item Any homoclinic orbit of $Y$ is still a homoclinic orbit of $Y_1$.

\item $\sigma_{i,Y}$ is still a singularity of $Y_1$, $\phi^Y_{t_s}(x^s)$ is still in the stable manifold of
$\sigma_{i,Y}$ and $\phi^Y_{-t_u}(x^u)$ is still in the unstable manifold of $\sigma_{i,Y}$ with respect to $Y_1$.
\end{itemize}

\paragraph{Connecting the orbit from $\phi^Y_{-t_u}(x^u)$ to $\phi^Y_{t_s}(x^s)$.} Since
$\phi^{Y_1}_{-T_1}(\phi^Y_{t_s}(x^s))=\phi^Y_{-t_2}(z)$ is contained in $\widetilde{W}_{x^u}$, by using
Lemma~\ref{Lem:wenxia} again, there is $Y_2$  which is $\varepsilon$-close to $Y_1$ such that
\begin{itemize}

\item There is $T_2>0$ such that $\phi^{Y_1}_{-T_2}(\phi^Y_{t_s}(x^s))=\phi^Y_{-t_u}(x^u)$.

\item $Y_2(x)=Y_1(x)$ for any $x\in M^3\setminus W_{L,x^u}$.

\end{itemize}

As a corollary, we have
\begin{itemize}
\item $Y(x)=Y_2(x)$ for any $x\in M\setminus (W_{L,x^s}\cup W_{L,x^u})$.

\item Any homoclinic orbit of $Y$ is still a homoclinic orbit of $Y_1$.

\item $\phi^Y_{t_s}(x^s)$ is in a homoclinic orbit of $\sigma_{i,Y}$.

\end{itemize}

Since $Y_2\in{\cal U}_1$ and $Y\in{\cal M}_n$, we have that $Y_2$ has $n+1$ homoclinic orbits of singularities in $C(\sigma_Y)$, which contradicts to the maximality of $n$.
\end{proof}

\begin{Definition}
A point $p\in M$ is called a \emph{typical point} of a probability ergodic measure $\mu$ of a vector field $Y$, if the following conditions are satisfied:
\begin{enumerate}
\item $p$ is strongly closable;
\item $\omega(p)={\rm supp}(\mu)$;
\item for every continuous function $f: M\to \mathbb{R}$,
$$
\lim_{T\to+\infty}\frac 1T\int_0^Tf(\phi_t^Y(p)){\rm d} t=\int f(x){\rm d}\mu(x).
$$
\end{enumerate}
\end{Definition}

According to Ergodic Closing Lemma and Birkhoff Ergodic Theorem, the set of typical points of $\mu$ has $\mu$-full measure.

\begin{Proposition}\label{Pro:generalresult}

For any $Y\in{\cal M}_n$,and $\sigma_{i,Y}\in \Lambda_Y\cap {\rm Sing}(Y)$, assume that

\begin{enumerate}

\item The right separatrix $\Gamma$ of $\sigma_{i,Y}$ is a homoclinic orbit. Let $x\in W^u_{loc}(\sigma_{i,Y})\cap \Gamma$ and denote
$$
{\cal T}_{x}=\{t>0:\phi^Y_{t}(x)\in\Sigma\}=\{t_1<t_2<\cdots<t_{N(x)}\},$$
where $N(x)=\#{\cal T}_x$. We may assume that
$\phi^Y_{t_{N(x)}}(x)\in S_i^+$.

\item There are\footnote{Notice that here we use $N$ but not $N(x)$, because we need to deal with some general case.} $N\in\NN$ and $\beta\in(0,1)$ such that
$$F^{N}(h_i^+((0,\beta)\times(-1,1)))\subset S_i^+.$$

\item Let $\mu_Y$ be the ergodic measure as in Corollary~\ref{Cor:measure}. If $(\Lambda_Y,\mu_Y)$ accumulates $\phi^Y_{t_{N(x)}}(x)$ on the right: there is a sequence of typical points
$x_n\in h_i^+((0,1)\times(-1,1))$ of $\mu_Y$ such that $\lim_{n\to\infty}x_n=\phi^Y_{t_{N(x)}}(x).$

\end{enumerate}

Then for any $C^1$ neighborhood ${\cal V}$ of $Y$, there is a weak Kupka-Smale vector field $Z\in\cal V$ such that $Z$ has a periodic sink, whose basin accumulates on $C(\sigma_Z)$. In other words, Proposition~\ref{Pro:roughsinkbasin} is true in this case.

\end{Proposition}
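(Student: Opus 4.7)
The approach is to combine the Ergodic Closing Lemma with a Franks-type perturbation that exploits the nonpositive Lyapunov exponent of $\mu_Y$ on $E^{cu}$. Concretely, I would first produce a periodic orbit $\gamma_1$ of a perturbation $Z_1$ of $Y$ that (i) preserves $\Gamma$ as a homoclinic orbit of $\sigma_{i,Z_1}$, (ii) passes through the strip $h_i^+((0,\beta)\times(-1,1))$ on the right side of $W^s_{loc}(\sigma_i)$, and (iii) has empirical measure arbitrarily close to $\mu_Y$; then perturb $Z_1$ by Franks' Lemma to turn $\gamma_1$ into a hyperbolic sink; finally restore weak Kupka-Smaleness via Theorem~\ref{Thm:xiruibin}. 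The basin accumulation then follows because $\gamma_1$ lies arbitrarily close to $\Gamma\subset C(\sigma_Z)$.

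\textbf{Closing step.} By assumption~(3) typical points $x_n$ of $\mu_Y$ converge to $\phi^Y_{t_{N(x)}}(x)$ from inside $h_i^+((0,1)\times(-1,1))$, so some $x_{n_0}$ sits in $h_i^+((0,\beta)\times(-1,1))$. Note that $\mu_Y(\Gamma)=0$: if not, ergodicity would force $\mathrm{supp}(\mu_Y)\subset \overline{\Gamma\cup\{\sigma_i\}}=\Gamma\cup\{\sigma_i\}$, contradicting $\mathrm{supp}(\mu_Y)=\Lambda_Y$. Apply Corollary~\ref{Cor:subsetisrobust} to $\mu_Y$ with $\Lambda=\Gamma$, with the observable $f(z)=\log|\det \Phi^Y_1|_{E^{cu}(z)}|$, and starting from the strongly-closable typical point $x_{n_0}$. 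This yields, for any prescribed $\epsilon>0$ and any neighborhood $\mathcal{V}$, a vector field $Z_1\in\mathcal{V}$ and a periodic orbit $\gamma_1$ such that $\Gamma$ remains a homoclinic orbit of $\sigma_{i,Z_1}$, $\gamma_1$ enters the strip, and $|\int f\,d\delta_{\gamma_1}-\int f\,d\mu_Y|<\epsilon$. By Corollary~\ref{Cor:measure}, $\int f\,d\mu_Y\le 0$, so $\tfrac{1}{\tau(\gamma_1)}\log|\det\Phi^{Z_1}_{\tau(\gamma_1)}|_{E^{cu}(p)}|<\epsilon$ for $p\in\gamma_1$. Since $\Phi^{Z_1}_{\tau(\gamma_1)}$ fixes $X^{Z_1}(p)\in E^{cu}(p)$, the flow direction contributes Lyapunov exponent zero, and the transverse direction within $E^{cu}$ has Lyapunov exponent at most $\epsilon$ (arbitrarily close to nonpositive), while $E^{ss}$ stays uniformly contracting.

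\textbf{Sink creation and finish.} Next, apply Franks' Lemma~\ref{Lem:Franks} in a tubular neighborhood of $\gamma_1$ disjoint from the compact set $\Gamma\cup\{\sigma_i\}$ (possible since $\gamma_1$ and $\Gamma\cup\{\sigma_i\}$ are disjoint compact sets) to weaken the transverse $E^{cu}$-direction into a uniformly contracting one, yielding $Z_2\in\mathcal{V}$ in which $\gamma_1$ is a hyperbolic sink and $\Gamma$ is still a homoclinic orbit of $\sigma_{i,Z_2}$. Then apply Theorem~\ref{Thm:xiruibin} to the non-transverse homoclinic orbit $\Gamma$ to obtain a weak Kupka-Smale vector field $Z\in\mathcal{V}$ that retains the sink $\gamma_Z$ (the continuation of $\gamma_1$) and the connection $\Gamma$. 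Since $\Gamma$ is chain-recurrent for $\sigma_{i,Z}$, we have $\Gamma\subset C(\sigma_Z)$; and since $\gamma_Z$ passes within $O(\epsilon)$ of $\phi^Y_{t_{N(x)}}(x)\in\Gamma$, the open basin $B(\gamma_Z)$ contains points accumulating on $\Gamma$, whence $\overline{B(\gamma_Z)}\cap C(\sigma_Z)\ne\emptyset$.

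\textbf{Main obstacle.} The delicate point is the Franks' step: as $x_{n_0}$ is pushed closer to $\phi^Y_{t_{N(x)}}(x)\in W^s_{loc}(\sigma_i)$ in order to ensure $\gamma_1$ intersects the strip, the orbit $\gamma_1$ comes extremely close to $\Gamma$ near $\sigma_i$, forcing the Franks' tubular neighborhood to shrink. Assumption~(2) is what controls this: the return invariance $F^N(h_i^+((0,\beta)\times(-1,1)))\subset S_i^+$ confines the returns of $\gamma_1$ to the right side of $W^s_{loc}(\sigma_i)$ with uniformly bounded return geometry, so one can quantitatively trade off the perturbation size $\epsilon$ against the distance from $\gamma_1$ to $\Gamma$, and thereby guarantee that the required perturbation still lies inside $\mathcal{V}$ and keeps $\Gamma$ unaffected.
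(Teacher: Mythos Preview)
Your closing step and sink-creation step are essentially the same as the paper's: close a typical orbit of $\mu_Y$ via Corollary~\ref{Cor:subsetisrobust} while preserving the homoclinic orbits, push it to a sink by Franks' Lemma, and restore weak Kupka-Smale by Theorem~\ref{Thm:xiruibin}. This part is fine.

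The gap is in your ``Finish'' paragraph. You assert that because $\gamma_Z$ meets $S_i^{+,r}$ at a point $O(\epsilon)$-close to $\phi^Y_{t_{N(x)}}(x)\in\Gamma$, the basin $B(\gamma_Z)$ ``contains points accumulating on $\Gamma$''. It does not: once $Z$ and $\epsilon$ are fixed, $\gamma_Z$ sits at a \emph{fixed positive} horizontal distance from $\ell_i^+$, and nothing you have said forces $B(\gamma_Z)$ to reach any closer to $C(\sigma_Z)$ than that. All you know a priori is that $B(\gamma_Z)$ contains the strong-stable leaf $h_i^+(\{c\}\times(-1,1))$ through $\gamma_Z\cap S_i^+$; this is a single vertical fibre at horizontal coordinate $c>0$. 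Taking a sequence $\epsilon_n\to 0$ would give different vector fields $Z_n$, not the single $Z$ required.

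The paper closes this gap with an additional argument that does the real work and is where assumption~(2) is actually used (not for controlling the Franks' tube, as you suggest in your ``Main obstacle''). After constructing the sink $\gamma$, one exploits the Lorenz-like expansion near $\sigma_{i,Z}$: the return map $F_Z$ stretches the thin strip $h_i^+((0,\beta)\times(-1,1))$ horizontally, and assumption~(2) guarantees the iterates stay in $S_i^+$ long enough for this to be meaningful. One then argues by dichotomy. Either some iterate of the strip crosses a singular line $\ell_j^\pm$; then, since $\gamma$ was positioned on the correct side of $\Gamma$ at every crossing with $\Sigma$, the strip must first sweep across a strong-stable leaf of $\gamma$, so part of $h_i^+((0,\beta)\times(-1,1))$ lies in $B(\gamma)$ for every $\beta>0$, and letting $\beta\to 0$ gives basin points accumulating on $\ell_i^+\subset C(\sigma_Z)$. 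Or no iterate ever crosses a singular line; then the horizontal coordinates $\beta_r$ of $F_Z^r(h_i^+(\{\beta_0\}\times(-1,1)))$ form a monotone sequence converging to some $\beta'$, the leaf $h_i^+(\{\beta'\}\times(-1,1))$ is $F_Z$-invariant and hence contains a periodic sink of $Z$, and the entire strip $h_i^+((0,\beta_Z]\times(-1,1))$ lies in the basin of that sink. Either way the basin genuinely accumulates on $\ell_i^+$. This iteration-and-sweep argument is the missing ingredient in your proof.
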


\begin{proof}

Since every singularity in $C(\sigma_{Y})$ is Lorenz-like, for $\beta_0$ small enough, we have
$F^{N(x)}(h_i^+(\{\beta_0\})\times(-1,1)))\subset h_i^+(\{\beta_1\}\times(-1,1)))$ for some $\beta_1\in(\beta_0,1)$.

\begin{Claim}

There is a weak Kupka-Smale vector field $Z$ arbitrarily $C^1$-close to $Y$ such that $Z$ has a periodic sink $\gamma$ with the following properties:
\begin{itemize}

\item $\gamma$ intersects $S_i^{+,r}$, and one can choose the intersection points arbitrarily close to $\phi^Y_{t_{N(x)}}(x)$.

\item Every homoclinic orbit of singularities of $Y$ is still a homoclinic orbit of singularities of $Z$.

\end{itemize}

\end{Claim}

\begin{proof}[Proof of the claim]

Given $T>0$, define $f_T(x)=\log|{\rm Det}\Phi_T^Y|_{E^{cu}(x)}|$ for any $x\in C(\sigma_Y)$. One knows that $f_T$ is a continuous function on $C(\sigma)$. Since $E^{cu}$ can be extended continuously in a small neighborhood of $C(\sigma)$, $f_T$ can be also extended continuously. Denote by $\widehat{E}^{cu}$ and $\widehat{f}_T$ the extension of $E^{cu}$ and $f_T$ respectively. Note that we don't require that $\widehat{E}^{cu}$ is invariant. By the property of $\mu_Y$, one has $\int f_T{\rm d}\mu_Y\le 0$

Since $\mu_Y$ is ergodic and ${\rm supp}(\mu_Y)=\Lambda$, one has that the set of homoclinic orbits of singularities has zero measure w.r.t. $\mu_Y$. Choose a typical point $x$ of $\mu_Y$, which is very close to $\phi^Y_{t_{N(x)}}(x)$ and which is in $S_i^{+,r}$. Since $x$ is typical, one can assume that $x$ is a strong closable point by Lemma~\ref{Lem:ergodicclosing}. Now by Corollary~\ref{Cor:subsetisrobust}, for any $\varepsilon>0$, there is $Z$ $\varepsilon$-close to $Y$ such that
\begin{itemize}

\item Every homoclinic orbit of singularities of $Y$ is still a homoclinic orbit of singularities of $Z$.

\item $Z$ has a periodic $\gamma$ such that
$$\left|\int \widehat{f}_T {\rm d}\delta_{\gamma}-\int \widehat{f}_T{\rm d}\mu_Y\right|<\varepsilon.$$

\end{itemize}

Since $\gamma$ is close to $\Lambda_Y$, one knows that $\gamma$ admits a partially hyperbolic splitting $T_{\gamma} M^3=E^{ss,Z}\oplus E^{cu,Z}$. By the property of dominated splittings, for any $x\in\gamma$, one has that $\widetilde{d}(\widehat{E}^{cu}(x),E^{cu,Z}(x))={\rm O}(\varepsilon)$, where ${\rm O}(\varepsilon)$ means that there is a constant $C>0$ such that ${\rm O}(\varepsilon)/\varepsilon\le C$ for $\varepsilon$ small enough.

Thus, one has that $|\int \log|{\rm Det}\Phi_T^Z|{\rm d}\delta_{\gamma}-\int f_T {\rm d}\mu_Y|={\rm O}(\varepsilon)$. As a consequence,
$$\int \log|{\rm Det}\Phi_T^Z|{\rm d}\delta_{\gamma}\le {\rm O}(\varepsilon).$$
Thus, by using the Franks Lemma (Lemma~\ref{Lem:Franks}), by an extra small perturbation in an arbitrarily small neighborhood of $\gamma$, one gets that $\gamma$ is a periodic sink. Finally, by Theorem~\ref{Thm:xiruibin}, one can assume that $Z$ is weak Kupka-Smale.
\end{proof}

Since $Z$ is close enough to $Y$, $Z$ has a cross-section system close to $\Sigma$ by Corollary~\ref{Cor:crosssectionforpseudoLorenz}. For simplicity, we still denote by $\Sigma$ the cross-section system of $Z$.

\begin{Claim}
There is $\beta_Z>0$ such that for any $\beta\in (0,\beta_Z]$, there is $N(\beta)\in\NN$ such that ${
F}_Z^{N(\beta)}({h}_i^+((0,\beta)\times(-1,1)))\supset {h}_i^+((0,\beta_Z]\times(-1,1))$.
\end{Claim}

\begin{proof}

This is true because the singularities is Lorenz-like: for the return map near the local stable manifold of the singularities, the horizontal direction will be expanded. Moreover, the local stable manifold of the singularities served as ``fixed points'' of the return map.
\end{proof}

We need to consider the relation between the intersections of $\gamma$ and $\Gamma$ with $S_j^{\pm}$.

For each $j$ and $\pm\in\{+,-\}$, denote
$$
{\cal T}_{j,\gamma}^\pm=\{t_\gamma:~\gamma\cap h_j^\pm(\{t_\gamma\}\times[-1,1])\neq\emptyset\}.
$$
$$
{\cal T}_{j,\Gamma}^\pm=\{t_\Gamma:~\Gamma\cap h_j^\pm(\{t_\Gamma\}\times[-1,1])\neq\emptyset\}.
$$

$$
a_j^{\pm,l}=\max{{\cal T}}_{j,\gamma}^\pm\cap[-1,0),~~~a_j^{\pm,r}=\min{{\cal T}}_{j,\gamma}^\pm\cap(0,1].
$$
$$
b_j^{\pm,l}=\max{{\cal T}}_{j,\Gamma}^\pm\cap[-1,0),~~~b_j^{\pm,r}=\min{{\cal T}}_{j,\Gamma}^\pm\cap(0,1].
$$
${\cal T}_{j,\Gamma}^\pm$ and ${\cal T}_{j,\Gamma}^\pm$ may be empty for some $j,\pm$. Since $\gamma$ can be chosen arbitrarily close to $\Gamma$ and obtained by a typical orbit, we may assume that if ${\cal T}_{j,\Gamma}^\pm$ is nonempty, then ${\cal T}_{j,\gamma}^\pm$ is also nonempty. Moreover, consider the iterates of the right direction $R$ attached to $\Gamma$ in $S_i^+$. We may assume that
\begin{enumerate}
\item If $\Gamma$ intersects $S_j^{\pm,l}$, and the corresponding iterate of $R$ is still the right direction in $S_j^{\pm}$, then $b_j^{\pm,l}<a_j^{\pm,l}$.
\item If $\Gamma$ intersects $S_j^{\pm,r}$, and the corresponding iterate of $R$ is the left direction in $S_j^{\pm}$, then $b_j^{\pm,r}>a_j^{\pm,r}$.
\end{enumerate}

Since $Z$ is weak Kupka-Smale, $C(\sigma_Z)$ is Lyapunov stable. This implies that for given $r\in\mathbb{N}$, ${F}_Z^r({h}_i^+((0,\beta)\times(-1,1)))$ is well defined for $\beta$ small enough since the boundary of $\Sigma_Z$ dose not intersect $C(\sigma_Z)$.

We will deal with the following two cases:
\begin{enumerate}
\item For any $\beta\in(0,\beta_Z]$, there exists some $r\ge N(\beta)$, such that ${F}_Z^r({h}_i^+((0,\beta_Z)\times(-1,1)))$ intersects $l_j^\pm$ for some $j, \pm$.
\item For any $r>0$, ${F}_Z^r({h}_i^+((0,\beta_Z)\times(-1,1)))$ will never intersect $l_j^\pm$ for any $j, \pm$.
\end{enumerate}

{\bf Case 1.} We may assume that $r$ is the smallest positive integer such that ${F}_Z^r({h}_i^+((0,\beta_Z)\times(-1,1)))$ intersects $l_j^\pm$ for some $j, \pm$. Without loss of generality, we may assume that
$$
\lim_{t\to 0+}{F}_Z^r({h}_i^+(\{t\}\times(-1,1))) \in S_j^{\pm,l}.
$$
Since $C(\sigma_Z)$ is Lyapunov stable, the corresponding iterate $F^r_Z(R)$ of $R$ is still the right direction in $S_j^{\pm}$. And hence
${F}_Z^r({h}_i^+((0,\beta)\times(-1,1)))$ intersects ${h}_j^\pm(\{a_J^{\pm,l}\}\times(-1,1))$, which are contained in the basin of the sink $\gamma$. The conclusion follows by letting $\beta\to 0$.

{\bf Case 2.} For any $r>0$, ${F}_Z^r({h}_i^+((0,\beta_Z]\times(-1,1)))$ will never intersect $l_j^\pm$ for any $j, \pm$.

This will imply that  for every $r>0$, ${F}_Z^r({h}_i^+((0,\beta_Z]\times(-1,1)))$  is connected.

For any $\beta_0\in (0,\beta_Z]$, there is an increasing
sequence $\{\beta_r\}_{r\in\NN}\subset(0,1)$ such that ${
F}^r_Z({h}_i^+(\{\beta_0\}\times(-1,1)))\subset {h}_i^+(\{\beta_r\}\times(-1,1))$ for any
$r\in\NN$. We assume that $\beta'=\lim_{r\to\infty}\beta_r$. By the continuity of ${F_Z}$ and
${h}_i^+$, one has
${F}_Z({h}_i^+(\{\beta'\}\times(-1,1)))\subset {h}_i^+(\{\beta'\}\times(-1,1))$.
 Since $h_i^+(\{\beta'\}\times(-1,1))$ is a stable
leaf of $\widetilde{F}$, one has $h_i^+(\{\beta'\}\times(-1,1))$ contains a periodic point of $Z$, which is a hyperbolic sink of $Z$. This implies that
${h}_i^+((0,\beta_Z]\times(-1,1))$ is in a basin of the sink.
\end{proof}

\begin{Remark}

In the above proof, since we are in the non-generic case, we need to do one extra perturbation to get a sink in a suitable position. This is one of the main difficulties in this paper.

\end{Remark}

A direct application of Proposition~\ref{Pro:generalresult} would be the following result:

\begin{Proposition}\label{Pro:righttoleft}
For any $Y\in{\cal M}_n$,and $\sigma_{i,Y}\in \Lambda_Y\cap {\rm Sing}(Y)$, assume that the right separatrix of $W^u(\sigma_{i,Y})$ is a homoclinic orbit $\Gamma_i$ of $\sigma_{i,Y}$. Let $x\in W^u_{loc}(\sigma_{i,Y})\cap \Gamma_i$ and denote by
$$
{\cal T}_{x}=\{t>0:\phi^Y_{t}(x)\in\Sigma\}=\{t_1<t_2<\cdots<t_{N(x)}\},$$
where $N(x)=\#{\cal T}_x$. We may assume that
$\phi^Y_{t_{N(x)}}(x)\in S_i^+$.

Let $\mu_Y$ be the ergodic measure as in Corollary~\ref{Cor:measure}. If $(\Lambda_Y,\mu_Y)$ accumulates $\phi^Y_{t_{N(x)}}(x)$ on the right: there is a sequence of typical points
$x_n\in h_i^+((0,1)\times(-1,1))$ of $\mu_Y$ such that $\lim_{n\to\infty}x_n=\phi^Y_{t_{N(x)}}(x)$, then

\begin{itemize}

\item either, for $t>0$ small enough we have that
$$
F^{N(x)}(h_i^+((0,t)\times(-1,1)))\subset h_i^+((-1,0)\times(-1,1)).
$$
\item or, for any neighborhood $\cal V$ of $Y$, there is a weak Kupka-Smale vector field $Z\in\cal V$ such that $Z$ has a periodic sink $\gamma$ and $\overline{{\rm Basin}(\gamma)}\cap C(\sigma_Z)\neq\emptyset$.

\end{itemize}
\end{Proposition}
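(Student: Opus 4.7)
The plan is to derive Proposition~\ref{Pro:righttoleft} as a direct application of Proposition~\ref{Pro:generalresult}, once one establishes a continuity/shadowing claim for the iterated return map near $\ell_i^+$. Proposition~\ref{Pro:generalresult} already supplies the perturbation machinery producing the sink; the role of the present statement is to package that output as a clean dichotomy according to which side of $\ell_i^+$ the iterated image lands on.

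The preparatory step is to prove that for every sufficiently small $t>0$,
\[
F^{N(x)}\bigl(h_i^+((0,t)\times(-1,1))\bigr)\subset S_i^+.
\]
Since $\sigma_{i,Y}$ is Lorenz-like and $h_i^+((0,t)\times(-1,1))\subset S_i^{+,r}$ sits immediately to the right of $\ell_i^+=S_i^+\cap W^s_{loc}(\sigma_{i,Y})$, the forward orbit of any point $y$ in this strip (off the measure-zero subset where some auxiliary local stable manifold cuts in) first enters a neighborhood of $\sigma_{i,Y}$, is ejected along the right branch of $W^u_{loc}(\sigma_{i,Y})$ (the initial segment of $\Gamma_i$), and then shadows $\Gamma_i$ through its remaining $N(x)-1$ crossings of $\Sigma$. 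By continuity of the flow on the compact orbit segment $\phi^Y_{[0,t_{N(x)}]}(x)$, $F^{N(x)}(y)$ lies in an arbitrarily small neighborhood of $\phi^Y_{t_{N(x)}}(x)\in\ell_i^+$ once $t$ is small. Since $\ell_i^+\subset S_i^+$ and $\partial\Sigma\cap C(\sigma_Y)=\emptyset$ by item~(1) of the cross-section system definition, the image is therefore contained in $S_i^+$.

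Given this inclusion, a trivial dichotomy on the image's location finishes the argument. \emph{Case (a):} there exists $t_0>0$ such that $F^{N(x)}(h_i^+((0,t)\times(-1,1)))\subset h_i^+((-1,0)\times(-1,1))$ for every $t\in(0,t_0)$; this is exactly the first alternative. \emph{Case (b):} along some sequence $t_n\downarrow 0$ the image is not contained in $h_i^+((-1,0)\times(-1,1))$. Pick any such $t_n$ also small enough that the preparatory inclusion into $S_i^+$ still applies. Then the three hypotheses of Proposition~\ref{Pro:generalresult} are fulfilled with $N:=N(x)$ and $\beta:=t_n$: hypothesis~1 is the homoclinic assumption on $\Gamma_i$, hypothesis~2 is the inclusion just established, and hypothesis~3 is the accumulation assumption we have imposed. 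Its conclusion is precisely the second alternative.

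The main obstacle is the continuity/shadowing claim, which requires controlling orbits as they pass through the Lorenz-like singularity $\sigma_{i,Y}$, where trajectories slow down without bound and the flow is strongly expanding in the unstable direction. The Lorenz-like spectrum $\lambda_1<\lambda_2<0<-\lambda_2<\lambda_3$ guarantees that points from $S_i^{+,r}$ are uniformly ejected along a single branch of $W^u_{loc}$, so that the orbit indeed follows $\Gamma_i$ rather than the opposite separatrix; item~(4) of the cross-section system definition then provides the uniform return-time control along stable leaves that keeps the shadowing valid through all $N(x)$ returns. One should also verify that, by choosing $t$ small enough, the strip $h_i^+((0,t)\times(-1,1))$ is bordered only by $\ell_i^+$ among the local stable manifolds of singularities entering the picture, so that the first-return map is continuous on an open dense subset of the strip and the preceding argument applies on each connected component.
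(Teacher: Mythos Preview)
Your proposal is correct and follows essentially the same route as the paper. Both arguments reduce the statement to Proposition~\ref{Pro:generalresult} after establishing that $F^{N(x)}(h_i^+((0,t)\times(-1,1)))\subset S_i^+$ for all sufficiently small $t>0$; the paper asserts this in one sentence (``a connected set contained in $S_i^+$'') while you supply the underlying shadowing argument through the Lorenz-like singularity, which is exactly the content the paper leaves implicit.
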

\begin{figure}[h]
\centering
\includegraphics[width=0.60\textwidth]{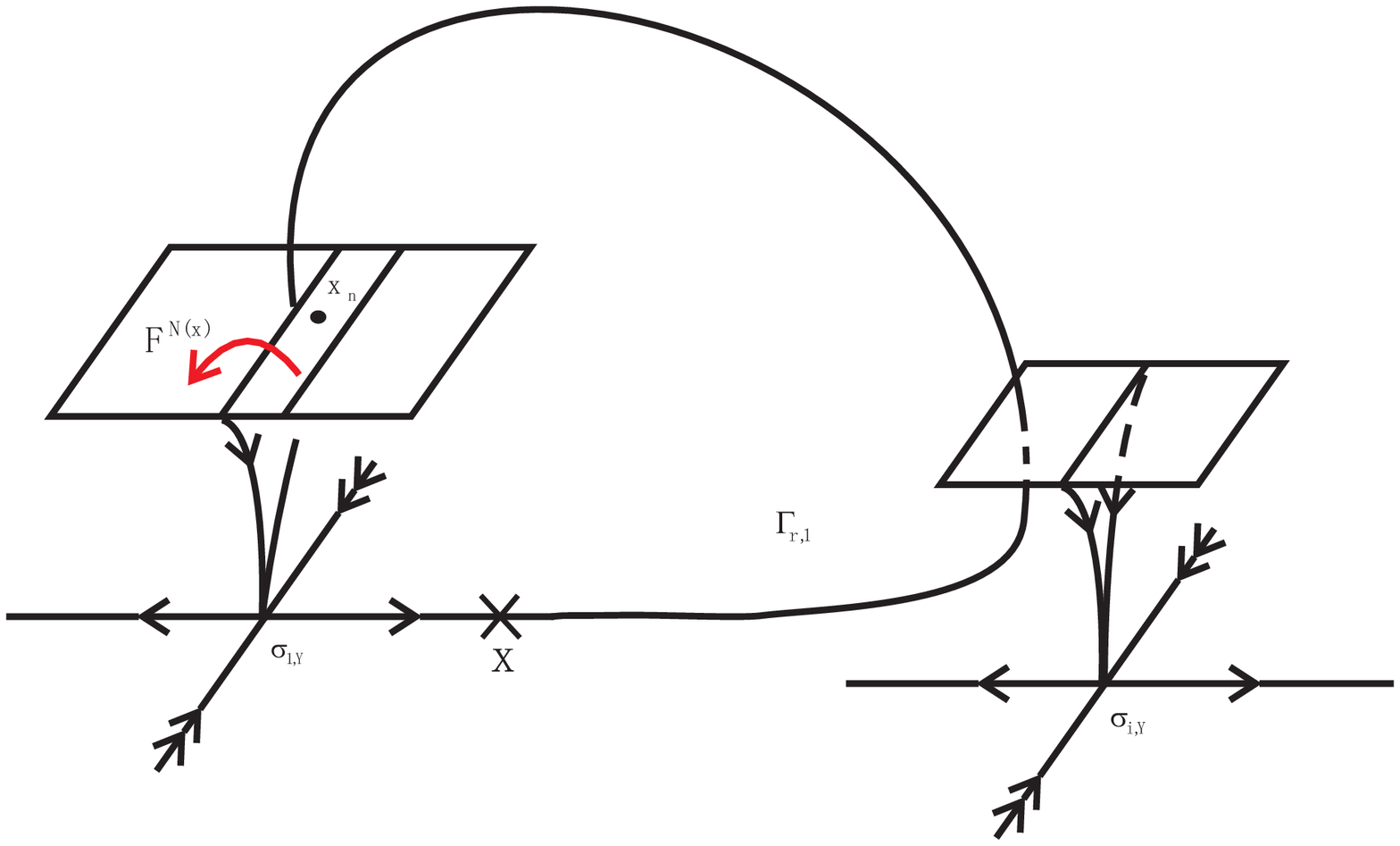}
\caption{}
\end{figure}
\begin{proof}
First, notice that $N(x)=\#{\cal T}_x$ is finite.  For $t_0>0$ small enough,
$F^{N(x)}(h_i^+((0,t_0]\times(-1,1)))$ is a connected set contained in $S_i^+$. If$F^{N(x)}(h_i^+((0,t_0]\times(-1,1)))\subset h_i^+((0,1)\times(-1,1))$, then it is true for any $t\in(0,t_0)$. Hence, by Proposition~\ref{Pro:generalresult}, we know that for any neighborhood $\cal V$ of $Y$, there is a weak Kupka-Smale vector field $Z\in\cal V$ such that $Z$ has a periodic sink $\gamma$ and $\overline{{\rm Basin}(\gamma)}\cap C(\sigma_Z)\neq\emptyset$.
\end{proof}

\begin{Remark}
For generic $X$, as in the proof of Proposition~\ref{Pro:crosssectionforpseudoLorenz}, the projection of $C(\sigma)\cap\Sigma$ to the horizontal direction along the strong stable foliation is no where dense. But $Y\in {\cal M}_n$ is not generic. This is why we need to consider a transitive subset $\Lambda_Y$ and the ergodic measure supported on $\Lambda$.
\end{Remark}

\begin{Corollary}\label{Cor:righttoleft}

For any $Y\in{\cal M}_n$,and $\sigma_{i,Y}\in \Lambda_Y\cap {\rm Sing}(Y)$, assume that the right separatrix of $W^u(\sigma_{i,Y})$ is a homoclinic orbit $\Gamma_i$ of $\sigma_{i,Y}$. Let $x\in W^u_{loc}(\sigma_{i,Y})\cap \Gamma_i$ and denote
$$
{\cal T}_{x}=\{t>0:\phi^Y_{t}(x)\in\Sigma\}=\{t_1<t_2<\cdots<t_{N(x)}\},$$
where $N(x)=\#{\cal T}_x$. If $\phi_{t_{N(x)}}^Y(x)$ is contained in $\Lambda_Y$, then $(\Lambda_Y,\mu_Y)$ accumulates $\phi^Y_{t_{N(x)}}(x)$ on the left.
\end{Corollary}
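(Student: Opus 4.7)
I would argue by contradiction: assume $(\Lambda_Y,\mu_Y)$ does not accumulate $\phi^Y_{t_{N(x)}}(x)$ on the left. Since $\phi^Y_{t_{N(x)}}(x)\in\Lambda_Y={\rm supp}(\mu_Y)$ and the iterates of a typical orbit under the first return map are themselves typical and dense in $\Lambda_Y\cap\Sigma$, I can extract typical points $x_n\in\Lambda_Y\cap S_i^+$ of $\mu_Y$ with $x_n\to\phi^Y_{t_{N(x)}}(x)$. None of the $x_n$ may lie on $\ell_i^+\subset W^s_{\rm loc}(\sigma_{i,Y})$, for otherwise $\omega(x_n)=\{\sigma_{i,Y}\}\neq\Lambda_Y$ (using that $\Lambda_Y$ is not reduced to a singularity by Lemma~\ref{Lem:transitiveinclass}), contradicting typicality. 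By the standing assumption no subsequence lies in $S_i^{+,l}$, so after extraction $\{x_n\}\subset S_i^{+,r}$; this is precisely the accumulation-on-the-right hypothesis of Proposition~\ref{Pro:righttoleft}.

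Applying Proposition~\ref{Pro:righttoleft} gives two alternatives. In the sink-perturbation alternative we obtain a weak Kupka--Smale vector field $Z$ arbitrarily $C^1$-close to $Y$ (and hence in ${\cal U}^{**}$) carrying a periodic sink $\gamma$ with $\overline{{\rm Basin}(\gamma)}\cap C(\sigma_Z)\neq\emptyset$. By Lemma~\ref{Lem:stablepropertyofquasiattractor}, $C(\sigma_Z)$ is Lyapunov stable; any point $p\in\overline{{\rm Basin}(\gamma)}\cap C(\sigma_Z)$ is a limit of basin points sitting inside an arbitrarily small forward-invariant neighborhood of $C(\sigma_Z)$, forcing $\gamma\subset C(\sigma_Z)$. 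Since a hyperbolic sink is its own chain recurrent class, this yields $C(\sigma_Z)=\gamma$, contradicting $\sigma_Z\in C(\sigma_Z)$.

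In the remaining flip alternative, $F^{N(x)}\bigl(h_i^+((0,t)\times(-1,1))\bigr)\subset h_i^+((-1,0)\times(-1,1))$ for some small $t>0$. Since the horizontal coordinates of the $x_n$ tend to $0$, eventually $F^{N(x)}(x_n)\in S_i^{+,l}\cap\Lambda_Y$ is again a typical point of $\mu_Y$. It remains to show $F^{N(x)}(x_n)\to\phi^Y_{t_{N(x)}}(x)$, which is one-sided continuity of $F^{N(x)}$ from the right: the orbit of $x_n$ shadows $\phi^Y_t(\phi^Y_{t_{N(x)}}(x))$ while the latter approaches $\sigma_{i,Y}$, then escapes $\sigma_{i,Y}$ along the separatrix containing $x$, so $F(x_n)\to\phi^Y_{t_1}(x)$; each $\phi^Y_{t_k}(x)$ with $k<N(x)$ is a regular point of $F$ (its orbit meets $\Sigma$ again before hitting any singularity, so it lies outside every $W^s_{\rm loc}$), hence $F$ is continuous there and iteration yields $F^{k+1}(x_n)\to\phi^Y_{t_{k+1}}(x)$ for $k=1,\ldots,N(x)-1$. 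Thus typical points in $S_i^{+,l}\cap\Lambda_Y$ do converge to $\phi^Y_{t_{N(x)}}(x)$, which is accumulation on the left and contradicts the starting assumption. The delicate point is precisely this one-sided continuity of $F^{N(x)}$; the rest is routine unwinding of the definitions and of the perturbation machinery already in place.
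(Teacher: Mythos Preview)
Your argument is correct and follows the same route as the paper: show that if left accumulation fails then right accumulation holds, invoke Proposition~\ref{Pro:righttoleft}, and in the flip alternative push the right-side typical points through $F^{N(x)}$ to the left. The only difference is bookkeeping: the paper places this corollary \emph{after} the standing boldface assumption that no nearby $Z$ admits a sink whose basin accumulates on $C(\sigma_Z)$, so its proof dismisses the sink alternative of Proposition~\ref{Pro:righttoleft} in silence and jumps straight to the flip case, whereas you re-derive that impossibility from Lyapunov stability. Your one-sided continuity paragraph for $F^{N(x)}$ is exactly the detail the paper suppresses in the sentence ``This implies that the typical points in $\Lambda_Y$ also accumulate $\phi^Y_{t_{N(x)}}(x)$ on the left.''
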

\begin{proof}
If $(\Lambda_Y,\mu_Y)$ accumulates $\phi^Y_{t_{N(x)}}(x)$ on the left, it is finished. Otherwise, $(\Lambda_Y,\mu_Y)$ will accumulate $\phi^Y_{t_{N(x)}}(x)$ on the right. Then according to Proposition \ref{Pro:righttoleft}, for $t>0$ small enough we have that
$$
F^{N(x)}(h_i^+((0,t)\times(-1,1)))\subset h_i^+((-1,0)\times(-1,1)).
$$
This implies that the typical points in $\Lambda_Y$ also accumulate $\phi^Y_{t_{N(x)}}(x)$ on the left.
\end{proof}

In the following, we will assume:

\noindent {\bf There is a neighborhood ${\cal V}^*$ of $Y$ such that for any $Z\in{\cal V}^*$, the basin of any sink of $Z$ cannot accumulate on $C(\sigma_Z)$.}

\begin{Corollary}\label{Cor:leftinside}

For any $Y\in{\cal M}_n$,and $\sigma_{i,Y}\in \Lambda_Y\cap {\rm Sing}(Y)$, if the right separatrix of $W^u(\sigma_{i,Y})$ is a homoclinic orbit $\Gamma_i$ of $\sigma_{i,Y}$, then the left separatrix of $W^u(\sigma_{i,Y})$ is contained in $\Lambda_Y$.
\end{Corollary}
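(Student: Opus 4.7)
The plan is to combine Corollary~\ref{Cor:righttoleft} with a $\lambda$-lemma style argument in linearizing coordinates around the Lorenz-like singularity. First I would establish that the last intersection $p^{+}:=\phi^{Y}_{t_{N(x^{+})}}(x^{+})\in\Sigma$ of the right separatrix with the cross-section lies in $\Lambda_{Y}$. Since $\sigma_{i,Y}\in\Lambda_{Y}$ and $\Lambda_{Y}$ is transitive, there exists $y_{0}\in\Lambda_{Y}$ with $\omega(y_{0})=\Lambda_{Y}$, whose orbit returns arbitrarily close to $\sigma_{i,Y}$. Each approach is governed by the linearized flow near $\sigma_{i,Y}$, and the orbit exits its neighborhood essentially along one of the two separatrices. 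If some subsequence of visits exits along the left separatrix $\Gamma_{i}^{-}$, then by closedness of $\Lambda_{Y}$ one has $\Gamma_{i}^{-}\subset\Lambda_{Y}$ and we are done. Otherwise, eventually all exits follow the homoclinic right separatrix, so $\Gamma_{i}\subset\overline{\mathcal{O}(y_{0})}=\Lambda_{Y}$ and in particular $p^{+}\in\Lambda_{Y}$.

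Assuming $p^{+}\in\Lambda_{Y}$, I would invoke Corollary~\ref{Cor:righttoleft} to deduce that $(\Lambda_{Y},\mu_{Y})$ accumulates $p^{+}$ on the left: there is a sequence of typical points $x_{n}\in S_{i}^{+,l}$ with $x_{n}\to p^{+}$. Since typical points of $\mu_{Y}$ lie in $\mathrm{supp}(\mu_{Y})=\Lambda_{Y}$ by Corollary~\ref{Cor:measure}, we have $x_{n}\in\Lambda_{Y}$. Moreover $x_{n}\notin W^{s}_{loc}(\sigma_{i,Y})$, since otherwise $\omega(x_{n})=\{\sigma_{i,Y}\}\neq\Lambda_{Y}$.

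Now I would carry out a $\lambda$-lemma argument in linearizing coordinates $(a,b,c)$ near $\sigma_{i,Y}$, where the eigenvalues $\lambda_{1}<\lambda_{2}<0<\lambda_{3}$ correspond to the $a,b,c$ directions respectively. In these coordinates, $W^{s}_{loc}(\sigma_{i,Y})=\{c=0\}$ and the left separatrix is parametrized by $c<0$. By construction of the singular cross-section, $S_{i}^{+}$ sits in the half-space $b>b_{0}>0$, and the orientation convention ensures that $S_{i}^{+,l}=\{x<0\}$ corresponds to $\{c<0\}$. Writing $x_{n}=(a_{n},b_{n},c_{n})$ with $c_{n}<0$, $c_{n}\to 0$, $(a_{n},b_{n})\to(a^{*},b^{*})$, and writing any $y=(0,0,c_{0})\in\Gamma_{i}^{-}$ in the linearizing neighborhood with $c_{0}<0$, set $T_{n}=\frac{1}{\lambda_{3}}\log(c_{0}/c_{n})$. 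Then $T_{n}\to\infty$, while $a_{n}e^{\lambda_{1}T_{n}}\to 0$ and $b_{n}e^{\lambda_{2}T_{n}}\to 0$ (because $\lambda_{1}/\lambda_{3}<0$ and $\lambda_{2}/\lambda_{3}<0$), so $\phi^{Y}_{T_{n}}(x_{n})\to y$. Invariance and closedness of $\Lambda_{Y}$ give $y\in\Lambda_{Y}$. For any $y'=\phi^{Y}_{s}(y)\in\Gamma_{i}^{-}$ outside the linearizing neighborhood, $\phi^{Y}_{T_{n}+s}(x_{n})\to y'$ by continuity of the flow, so $y'\in\Lambda_{Y}$ as well. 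This yields $\Gamma_{i}^{-}\subset\Lambda_{Y}$.

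The main obstacle is the first step: the dense orbit $y_{0}$ has countably many passes near $\sigma_{i,Y}$ and a priori some of them could land on $W^{s}_{loc}(\sigma_{i,Y})$ (a $\mu_{Y}$-null set) where the exit direction is not defined. This needs to be handled by passing to typical points of $\mu_{Y}$, for which the generic (non-stable-manifold) passes are visible to the measure, and by checking that the trichotomy (exits left infinitely often / exits right infinitely often) is exhaustive modulo the null set. A secondary technical point is the orientation matching between the horizontal parametrization of the singular cross-section $S_{i}^{+}$ and the unstable eigendirection $\pm v_{3}$; this follows directly from the definition of singular cross-section as lying on one specific side of $E^{ss}(\sigma_{i,Y})\oplus E^{u}(\sigma_{i,Y})$ and the definition of $\ell_{i}^{+}=S_{i}^{+}\cap W^{s}_{loc}(\sigma_{i,Y})=\{x=0\}$.
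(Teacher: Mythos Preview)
Your proposal is correct and follows essentially the same route as the paper: a dichotomy on whether $\Gamma_i\subset\Lambda_Y$, then Corollary~\ref{Cor:righttoleft} plus a local $\lambda$-lemma argument in the case $\Gamma_i\subset\Lambda_Y$. The paper's dichotomy is set up more simply: since $\Lambda_Y$ is transitive and non-trivial one has $(W^u(\sigma_{i,Y})\setminus\{\sigma_{i,Y}\})\cap\Lambda_Y\neq\emptyset$, and as the one-dimensional unstable manifold minus the singularity consists of exactly two orbits, if $\Gamma_i\not\subset\Lambda_Y$ then the left separatrix must lie in $\Lambda_Y$; this bypasses your ``main obstacle''. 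That obstacle is in any case illusory: a point $y_0$ with $\omega(y_0)=\Lambda_Y$ can never land on $W^s_{loc}(\sigma_{i,Y})$, since that would force $\omega(y_0)=\{\sigma_{i,Y}\}$.
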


\begin{proof}
If $\Gamma_i\subset\Lambda_Y$, then $(\Lambda_Y,\mu_Y)$ accumulates $\phi^Y_{t_{N(x)}}(x)$ on the left, which implies the left separatrix of $W^u(\sigma_{i,Y})$ is contained in $\Lambda_Y$ by Corollary~\ref{Cor:righttoleft}; otherwise, since $W^u(\sigma_{i,Y})\setminus{\sigma_{i,Y}}\cap\Lambda_Y\neq\emptyset$, we have that the left separatrix of $W^u(\sigma_{i,Y})$ is contained in $\Lambda_Y$.
\end{proof}

\begin{Corollary}\label{Cor:twohomoclinicinside}
For any $Y\in{\cal M}_n$, and the transitive $\cal N$-set $\Lambda_Y\subset C(\sigma_Y)$, if $\sigma_{i,Y}\in\Lambda_Y$ has two homoclinic orbits, then both the two homoclinic orbits are contained in $\Lambda_Y$.
\end{Corollary}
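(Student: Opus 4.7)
The plan is to invoke Corollary~\ref{Cor:leftinside} twice in a symmetric fashion. Let $\Gamma_i^+$ and $\Gamma_i^-$ denote the two separatrices of $W^u(\sigma_{i,Y})$; by hypothesis, each of them is a homoclinic orbit of $\sigma_{i,Y}$. What we need to show is $\Gamma_i^+\cup\Gamma_i^-\subset \Lambda_Y$.

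First I would read Corollary~\ref{Cor:leftinside} as the following logical statement: if one prescribed separatrix of $W^u(\sigma_{i,Y})$ (the one labeled ``right'') is a homoclinic orbit of $\sigma_{i,Y}$, then the other separatrix is contained in $\Lambda_Y$. The labels ``left/right'' and the preference for landing on the cross-section $S_i^+$ are merely conventions of the construction in Proposition~\ref{Pro:crosssectionforpseudoLorenz}: for each singularity $\sigma_{i,Y}$ the cross-section system contains both $S_i^+$ and $S_i^-$, and each $S_i^\pm$ has its own left/right sides via the chart $h_i^\pm$. Proposition~\ref{Pro:righttoleft}, Corollary~\ref{Cor:righttoleft}, and Corollary~\ref{Cor:leftinside} go through verbatim after relabeling ``right'' as ``left'' and exchanging $S_i^+$ with $S_i^-$; the proof is insensitive to which branch of $W^u(\sigma_{i,Y})$ one follows.

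Now I would apply Corollary~\ref{Cor:leftinside} to the assumption that $\Gamma_i^+$ is a homoclinic orbit: this yields $\Gamma_i^-\subset \Lambda_Y$. Then I apply the symmetric version of Corollary~\ref{Cor:leftinside} to the assumption that $\Gamma_i^-$ is a homoclinic orbit: this yields $\Gamma_i^+\subset \Lambda_Y$. Combining the two inclusions, both homoclinic orbits lie in $\Lambda_Y$, as required.

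There is essentially no obstacle here; the content lies entirely in recognizing the built-in symmetry of the cross-section system. The one minor point to verify when applying the corollary twice is that the input of its ``otherwise'' case, namely $W^u(\sigma_{i,Y})\setminus\{\sigma_{i,Y}\}\cap \Lambda_Y \neq \emptyset$, is available in both invocations; but this is immediate from the transitivity of $\Lambda_Y$ together with $\sigma_{i,Y}\in \Lambda_Y$, exactly as noted in the proof of Lemma~\ref{Lem:homoclinicintransitive}, and is independent of which separatrix plays the role of ``right''.
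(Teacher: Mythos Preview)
Your proof is correct and uses the same key ingredient as the paper, namely Corollary~\ref{Cor:leftinside}. The paper's argument is marginally shorter: it first uses transitivity of $\Lambda_Y$ to conclude that one separatrix (relabelled ``right'') already lies in $\Lambda_Y$, and then a single application of Corollary~\ref{Cor:leftinside} gives the other; your version simply applies Corollary~\ref{Cor:leftinside} twice via the left/right symmetry, which is an equally valid route.
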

\begin{proof}
Since $\Lambda_Y$ is transitive and $\sigma_{i,Y}\in\Lambda_Y$, one separatrix of $W^u(\sigma_{i,Y})$ is contained in $\Lambda_Y$. We may assume that the right separatrix is contained in $\Lambda_Y$. According to Corollary \ref{Cor:leftinside}, the left separatrix of $W^u(\sigma_{i,Y})$ is contained in $\Lambda_Y$, which means the left homoclinic orbit associated to $\sigma_{i,Y}$ is also contained in $\Lambda_Y$.
\end{proof}
\begin{Proposition}\label{Pro:intersectionontheotherside}
Given $Y\in{\cal M}_n$,  assume that $\Lambda_Y$ contains a homoclinic orbit $\Gamma_i$ of $\sigma_{i,Y}$. If $\Gamma_i\cap \ell_i^+\neq\emptyset$, then $\Lambda_Y\cap \ell_i^-\neq\emptyset$, where $\ell_i^\pm=W^s_{loc}(\sigma_i)\cap S_i^{\pm}$, $\pm\in\{+,-\}$.
\end{Proposition}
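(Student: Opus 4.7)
The plan is to argue by contradiction: assume $\Lambda_Y\cap\ell_i^-=\emptyset$ and produce a weak Kupka--Smale $Z$ arbitrarily $C^1$--close to $Y$ carrying a periodic sink whose basin accumulates $C(\sigma_Z)$, contradicting the defining property of ${\cal V}^*$. Let $\eta$ denote the separatrix of $W^u(\sigma_{i,Y})$ other than the outgoing direction of $\Gamma_i$; by Corollary~\ref{Cor:leftinside}, $\eta\subset\Lambda_Y$. Under the contradiction hypothesis the entire positive orbit of $\eta$ avoids $\ell_i^-$, so if $\eta$ ever returns to $\sigma_{i,Y}$ it must do so through $\ell_i^+$.

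Next I would bring in the information coming from $\Gamma_i$. By Corollary~\ref{Cor:righttoleft} applied to $\Gamma_i$, typical points $x_n\in\Lambda_Y$ of $\mu_Y$ accumulate $\phi^Y_{t_{N(x)}}(x)\in\ell_i^+$ from one specific side of $\ell_i^+$ in $S_i^+$; chasing orbits past the singularity region in the local chart around $\sigma_{i,Y}$ identifies the ejection direction of the $x_n$'s with $\eta$. Moreover, applying the alternative of Proposition~\ref{Pro:righttoleft} (which must hold since the sink case is excluded in ${\cal V}^*$), one gets the flip property $F^{N(x)}(h_i^+((0,t)\times(-1,1)))\subset h_i^+((-1,0)\times(-1,1))$ for small $t>0$, a key input for verifying condition (2) of Proposition~\ref{Pro:generalresult}.

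Now I would split into two cases according to the behavior of $\eta$. In the first case $\eta$ is a homoclinic orbit of $\sigma_{i,Y}$; by the above it returns through $\ell_i^+$. Applying Corollary~\ref{Cor:righttoleft} to $\eta$ with the natural relabeling of ``left'' and ``right'' (since $\eta$ is the opposite separatrix) yields typical points of $\mu_Y$ accumulating the return point $\phi^Y_{t_{N(y)}}(y)$ from the side of $\ell_i^+$ \emph{opposite} to the one given by $\Gamma_i$. With the accumulation from the right now at our disposal, combined with the flip property (giving the invariance $F^N(h_i^+((0,\beta)\times(-1,1)))\subset S_i^+$ for suitable $N,\beta$), all three hypotheses of Proposition~\ref{Pro:generalresult} are met and it delivers the desired $Z$. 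In the second case, $\eta$ is not a homoclinic; then its positive orbit returns to $\Sigma$ infinitely often, $\omega(\eta)$ is a proper compact invariant subset of $\Lambda_Y$ and hence singular hyperbolic by the $\cal N$-set property. Tracking iterates of a small transverse interval through $\ell_i^+$ that shadows $\eta$, the contradiction hypothesis $\Lambda_Y\cap\ell_i^-=\emptyset$ together with the Lyapunov stability of $C(\sigma_Y)$ and the transitivity of $\Lambda_Y$ forces these iterates to stay within $S_i^+$ and eventually to revisit a neighborhood of $\ell_i^+$, again verifying condition (2) of Proposition~\ref{Pro:generalresult}, and we conclude as in the first case.

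The main obstacle will be the rigorous handling of the second case: one needs uniform control on the number of returns and on the ``shadowing radius'' of the interval following $\eta$, without losing it to $\ell_i^-$ or to $\partial\Sigma$. This is where the cross-section continuity of Corollary~\ref{Cor:crosssectionforpseudoLorenz}, the ${\cal N}$-set structure of $\Lambda_Y$ (giving singular hyperbolicity of $\omega(\eta)$ and hence uniform expansion transverse to the stable direction), and the fact that $\partial\Sigma\cap C(\sigma_Y)=\emptyset$ must be combined carefully. Once condition (2) of Proposition~\ref{Pro:generalresult} is established, the sink production and the final contradiction with ${\cal V}^*$ are routine.
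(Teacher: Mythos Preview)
Your approach diverges substantially from the paper's, and it contains genuine gaps.

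The paper does \emph{not} try to manufacture a sink via Proposition~\ref{Pro:generalresult}. Instead it runs a direct backward--orbit argument. Assuming $\Lambda_Y\cap\ell_i^-=\emptyset$, it splits according to whether $F^{N(x)}$ sends $h_i^+((0,s)\times(-1,1))$ into $S_i^{+,r}$ (orientation preserving) or $S_i^{+,l}$ (orientation reversing). In the preserving case, Proposition~\ref{Pro:righttoleft} rules out right--accumulation (since the flip alternative fails), so typical points $x_n\in\Lambda_Y$ accumulate the return of $\Gamma_i$ from the left. The heart of the proof is then a Claim: for such $x_n\in\Lambda_Y\cap S_i^{+,l}$, the \emph{backward} orbit of $x_n$ must pass through $h_i^-((0,t)\times(-1,1))$ before its last visit to the ball around $\sigma_{i,Y}$, because $(B\setminus\overline{L_i^+\cup L_i^-})\cap C(\sigma_Y)=\emptyset$. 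Invariance of $\Lambda_Y$ then forces $\Lambda_Y\cap h_i^-((0,t)\times(-1,1))\neq\emptyset$ for all small $t$, and compactness yields $\Lambda_Y\cap\ell_i^-\neq\emptyset$. The reversing case is similar. No perturbation, no sink.

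Your route has two concrete problems. First, in your Case~2 you assert that $\omega(\eta)$ is a \emph{proper} subset of $\Lambda_Y$, hence singular hyperbolic; but nothing prevents $\eta$ from being a dense orbit in the transitive set $\Lambda_Y$, in which case $\omega(\eta)=\Lambda_Y$ is not singular hyperbolic and your expansion mechanism collapses. Second, there is a left/right mismatch throughout: points in $h_i^+((0,\beta)\times(-1,1))$ exit along the \emph{right} separatrix $\Gamma_i$, not along $\eta$, so tracking forward iterates that shadow $\eta$ tells you about $h_i^+((-\beta,0)\times(-1,1))$ and does not verify condition~(2) of Proposition~\ref{Pro:generalresult}. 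Likewise, Corollary~\ref{Cor:righttoleft} only guarantees left--accumulation at the return of $\Gamma_i$; you never establish the right--accumulation that condition~(3) demands, and your appeal to ``the flip property'' via Proposition~\ref{Pro:righttoleft} presupposes right--accumulation, which is exactly what is missing. The paper's backward--iteration Claim bypasses all of this.
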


\begin{proof}
Suppose on the contrary,  $\Lambda_Y\cap W^s_{loc}(\sigma_{i,Y})\cap S_i^-=\emptyset$. Without loss of generality, we assume that $\Gamma_i$ is the right separatrix of $W^u(\sigma_{i,Y})$. Since $\Lambda_Y\cap W^s_{loc}(\sigma_{i,Y})\neq\emptyset$, we
will have $\Lambda_Y\cap W^s_{loc}(\sigma_{i,Y})\cap S_i^+\neq\emptyset$. Since $\Gamma_i\subset \Lambda_Y$ and the ergodic measure $\mu_Y$ satisfies ${\rm supp}(\mu_Y)=\Lambda_Y$, $\Gamma_i$ can be approximated by typical points of $\mu_Y$. As before, denote by
$$
{\cal T}_{x}=\{t_i>0:\phi^Y_{t_i}(x)\in\Sigma\}=\{t_1<t_2<\cdots<t_{N(x)}\},
$$
where $N(x)=\# {\cal T}_{x}$. We have that $\phi^Y_{t_{N(x)}}(x)\in S_i^+$. For the return map $F$,
there are two cases:

\begin{enumerate}

\item The orientation preserving case: For $t>0$ small enough, one has $F^{N(x)}(h_i^+((0,t)\times(-1,1)))\subset S_i^{+,r}$.

\item The orientation reversing case: For $t>0$ small enough, one has $F^{N(x)}(h_i^+((0,t)\times(-1,1)))\subset S_i^{+,l}$.

\end{enumerate}

In case 1, one will have

\begin{itemize}
\item For $t>0$ small enough, one has $F^{N(x)}(h_i^-((0,t)\times(-1,1)))\subset S_i^{+,l}$.

\item Typical points of $\mu_Y$ in $\Lambda_Y$ can not accumulate $\Gamma_i\cap W^s_{loc}(\sigma_{i,Y})$ on the right by Proposition~\ref{Pro:righttoleft}.

\end{itemize}

Thus, typical points of $\mu_Y$ in $\Lambda_Y$ accumulates $\Gamma_i\cap W^s_{loc}(\sigma_{i,Y})$ on the left. What we need to prove is that: for every point in $\Lambda_Y$ which is close to $x$, then its backward iteration will intersect $S_i^+\cup S_i^-$.

\begin{Claim}
Let $x_n\in\Lambda_Y\cap S_i^{+,l}$ such that $x_n\to \phi^Y_{t_{N(x)}}(x)\in l_i^+$. Then for $n$ large enough $x_n\in F^{N(x)}(h_i^-((0,t)\times(-1,1)))$.
\end{Claim}

\begin{figure}[h]
\centering
\includegraphics[width=0.60\textwidth]{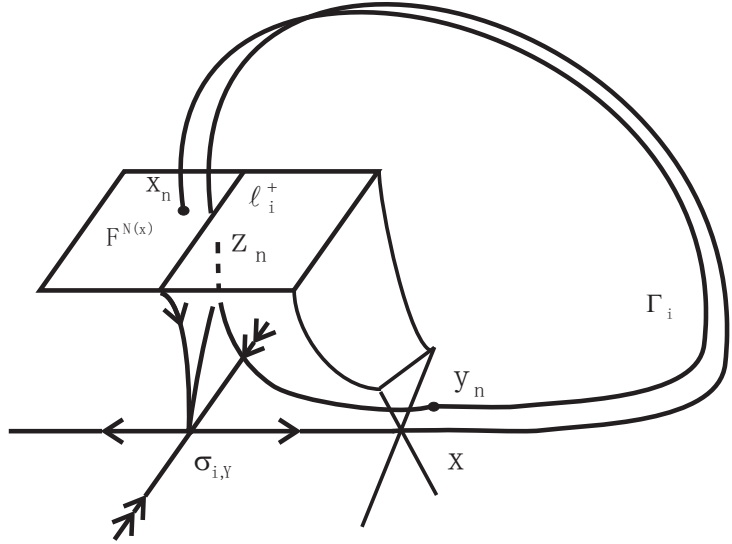}
\caption{Illustration of the Claim}
\end{figure}

\begin{proof}[\bf Proof of the claim:\ ]

Let $U$ be a small ball neighborhood of $\sigma_{i,Y}$. We may assume that $x\in U$ and $B=U\cap W^s_{loc}(\sigma_{i,Y})$ is a 2-disc, which is a neighborhood $\sigma_{i,Y}$ in $W^s_{loc}(\sigma_{i,Y})$.
Moreover, we may assume that $\ell_i^{\pm}\subset B$. According to the construction of cross-section system, we know that

$$\left(B\setminus\overline{L_i^+\cup L_i^-}\right)\cap C(\sigma_Y)=\emptyset,$$ where $$L_i^\pm=\bigcup_{p\in{\ell_i^\pm}}\phi_{[t_p,+\infty)}^Y(p), \quad t_p=\min\{t: \phi_{[t,0]}^Y(p)\subset B \}.$$

Denote by $y_n= \phi^Y_{-t_{N(x)}}(x_n)$. Then $\lim_{n\to\infty}y_n=x$. Let $\tau_n=\min\{t: \phi_{[t,0]}^Y(y_n)\subset U \}.$ Then $\lim_{n\to\infty}\tau_n=-\infty$. Let $z_n=\phi_{\tau_n}^Y(y_n)$. Then $z_n\in B\cap\partial U\cap\Lambda_Y$. Assume that $z=\lim_{n\to\infty}z_n$. Hence $z\in\partial B\cap \Lambda$. The positive orbit of $z$ will intersect $\ell_i^+\cup\ell_i^-$, where the claim follows.
\end{proof}

According to the claim, we get a contradiction in Case 1.

In case 2, for $t>0$ small enough, one has $F^{N(x)}(h_i^-((0,t)\times(-1,1)))\subset S_i^{+,r}$.
No matter typical points of $\mu_Y$ in $\Lambda_Y$ accumulates $\Gamma_i\cap W^s_{loc}(\sigma_{i,Y})$ on the left or on the right, for any $t>0$ small enough, $\Lambda_Y\cap
h_i^-((0,t)\times(-1,1))\neq\emptyset$. Since $\Lambda_Y$ is compact, one has $\Lambda_Y\cap\ell_i^-\neq\emptyset$.
\end{proof}

\begin{Proposition}\label{Pro:notwohomoclinic}
For any $Y\in{\cal M}_n$, and the transitive set $\Lambda_Y\subset C(\sigma_Y)$, if $\sigma_{i,Y}\in\Lambda_Y$, then $\sigma_{i,Y}$ cannot have two homoclinic orbits.

\end{Proposition}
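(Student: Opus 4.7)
The plan is to argue by contradiction. Suppose $\sigma_{i,Y}$ has two homoclinic orbits $\Gamma_R$ and $\Gamma_L$, corresponding to the two separatrices of the one-dimensional unstable manifold $W^u(\sigma_{i,Y})$. By Corollary~\ref{Cor:twohomoclinicinside} both $\Gamma_R,\Gamma_L\subset\Lambda_Y$. Each one enters $W^s_{loc}(\sigma_{i,Y})$ through one of the two local stable arcs $\ell_i^+$ or $\ell_i^-$, and Proposition~\ref{Pro:intersectionontheotherside} applied to each homoclinic gives $\Lambda_Y\cap\ell_i^+\neq\emptyset$ and $\Lambda_Y\cap\ell_i^-\neq\emptyset$. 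The strategy is then to use these extra intersection points, together with transitivity of $\Lambda_Y$ and the connecting lemma, to produce a weak Kupka--Smale vector field $Z$ arbitrarily $C^1$-close to $Y$ with $n(Z)\geq n+1$, contradicting $Y\in{\cal M}_n$.

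Split into two cases depending on the entry sides. In the \emph{same-side} case (both $\Gamma_R,\Gamma_L$ enter $\ell_i^+$, the other subcase being symmetric), pick $p\in\Lambda_Y\cap\ell_i^-$ via Proposition~\ref{Pro:intersectionontheotherside}; because both $\Gamma_R,\Gamma_L$ enter through $\ell_i^+$, the forward orbit of $p$ is a stable trajectory reaching $\sigma_{i,Y}$ through $\ell_i^-$ that is distinct from both homoclinic orbits. By transitivity of $\Lambda_Y$, the backward orbit of $p$ is dense in $\Lambda_Y$ and in particular comes arbitrarily close to regular orbits of $\Lambda_Y$ outside $\Gamma_R\cup\Gamma_L$. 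I would then imitate the double perturbation used in the proof of Lemma~\ref{Lem:homoclinicintransitive}: apply Lemma~\ref{Lem:wenxia} in two small Hayashi boxes, placed in regions of $\Lambda_Y$ disjoint from $\Gamma_R\cup\Gamma_L$ and from all other existing homoclinic orbits, to splice together orbit arcs of $\Lambda_Y$ so as to synthesize a fresh homoclinic connection of some singularity in $\Lambda_Y$ (necessarily of a singularity other than $\sigma_{i,Y}$, whose two unstable separatrices are already used up, or of $\sigma_{i,Y}$ itself if we sacrifice and then recover the corresponding separatrix via a compensating connection). After a final application of Theorem~\ref{Thm:xiruibin} to restore the weak Kupka--Smale and $C^2$ properties, the perturbed $Z\in{\cal U}^{**}$ still has $\Gamma_R,\Gamma_L$ together with the new connection, giving $n(Z)\geq n+1$. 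In the \emph{cross} case ($\Gamma_R$ enters $\ell_i^+$, $\Gamma_L$ enters $\ell_i^-$), both $\ell_i^\pm$ already host a homoclinic entry point, so the extra intersections $\Lambda_Y\cap\ell_i^\pm$ may a priori coincide with the existing ones; here one extracts the needed additional orbit by combining the turnover of Proposition~\ref{Pro:righttoleft} near both $q_R=\Gamma_R\cap\ell_i^+$ and $q_L=\Gamma_L\cap\ell_i^-$ with the one-sided accumulation of $\mu_Y$ from Corollary~\ref{Cor:righttoleft}, producing typical orbits of $\mu_Y$ that approach $W^s_{loc}(\sigma_{i,Y})$ on a side not used by either homoclinic. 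As a safety net in this subcase, Corollary~\ref{Cor:subsetisrobust} together with Franks' lemma (Lemma~\ref{Lem:Franks}) can be used to convert such a typical orbit into a periodic sink that preserves the compact invariant set $\Gamma_R\cup\Gamma_L\cup\{\sigma_{i,Y}\}$; by the choice of $\mu_Y$ from Corollary~\ref{Cor:measure}, the Lyapunov exponents along the resulting sink are negative, and the closure of its basin intersects $C(\sigma_Z)$, contradicting the standing assumption on ${\cal V}^*$.

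The main obstacle lies in the fact that, because $\dim W^u(\sigma_{i,Y})=1$, both unstable separatrices are already exhausted by the hypothetical $\Gamma_R$ and $\Gamma_L$; hence one cannot naively increase $n(Y)$ by creating a third homoclinic orbit of $\sigma_{i,Y}$ without destroying one of the two given ones. The new connection must therefore either involve another singularity of $\Lambda_Y$ or be produced through a careful two-step perturbation whose Hayashi boxes lie entirely away from $\Gamma_R\cup\Gamma_L$ and all other homoclinic orbits of $Y$, so that the count of homoclinic orbits of singularities in the continuation of $C(\sigma_Y)$ strictly increases. Ensuring that the required accompanying orbit in $\Lambda_Y$ is genuinely distinct from $\Gamma_R\cup\Gamma_L$—which is immediate in the same-side case but requires extracting finer geometric information from the return map in the cross case—is the delicate technical core of the argument.
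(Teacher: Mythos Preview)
Your proposal diverges from the paper's approach and contains a genuine gap in the same-side case.

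The paper's proof is uniform: in \emph{every} configuration of the two homoclinic orbits (two-side or one-side, with all orientation subcases), it verifies that the hypotheses of Proposition~\ref{Pro:generalresult} are satisfied. The key observation is item~2 of that proposition: one needs some $N$ with $F^{N}(h_i^+((0,\beta)\times(-1,1)))\subset S_i^+$. With two homoclinic orbits one can always manufacture such an $N$ by composing the two return blocks $F^{N_r}$ and $F^{N_l}$ (the worked example in the paper takes $N=2(N_r+N_l)$ in the orientation-reversing two-side case). Together with the accumulation of typical points of $\mu_Y$ on $\ell_i^\pm$, this produces a weak Kupka--Smale $Z$ with a sink whose basin closure meets $C(\sigma_Z)$, contradicting the standing assumption on ${\cal V}^*$. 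No attempt is made to increase $n(Y)$.

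Your same-side strategy, by contrast, aims to raise $n(Y)$ by building a fresh homoclinic connection via the connecting lemma. The obstruction you yourself flag is fatal: since $\dim W^u(\sigma_{i,Y})=1$, both unstable separatrices of $\sigma_{i,Y}$ are already committed to $\Gamma_R$ and $\Gamma_L$, so any new homoclinic of $\sigma_{i,Y}$ through $p\in\ell_i^-$ must destroy one of $\Gamma_R,\Gamma_L$, leaving the count at $n$. Your fallback of using ``a singularity other than $\sigma_{i,Y}$'' is not available in general: nothing guarantees that $\Lambda_Y$ contains a second singularity, and your ``sacrifice and then recover'' suggestion is not an argument. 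In the cross case your sink-based endgame is essentially the paper's idea, but there is no reason to restrict it to that case; the missing step is precisely the realization that the presence of \emph{two} homoclinic loops lets you compose the return maps to satisfy item~2 of Proposition~\ref{Pro:generalresult} in every subcase, making the sink contradiction work uniformly and rendering the $n\mapsto n+1$ route unnecessary.
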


\begin{proof}
Suppose on the contrary, some $\sigma_{i,Y}\in\Lambda_Y$ has two homoclinic orbits. By Corollary~\ref{Cor:twohomoclinicinside}, both homoclinic orbits are contained in $\Lambda_Y$. There are two cases:

\begin{enumerate}
\item[A.] Two-side case: one homoclinic orbit intersects $\ell_i^+$, and the other intersects $\ell_i^-$.
\item[B.] One-side case: both homoclinic orbits intersect $\ell_i^+$ (or $\ell_i^-$).
\end{enumerate}

We may have many subcases. For completeness, we will list all the cases here. We
assume that $x_r\in W^u_{loc}(\sigma_{i,Y})$ is on the right separatrix and $x_l\in W^u_{loc}(\sigma_{i,Y})$ the left separatrix. For $x_r$ and $x_l$, one can define
\begin{eqnarray*}
{\cal T}_{r}&=&\{t>0:\phi^Y_{t}(x_r)\in\Sigma\}=\{t_1<t_2<\cdots<t_{N_r}\},\\
{\cal T}_{l}&=&\{t>0:\phi^Y_{t}(x_l)\in\Sigma\}=\{\tau_1<\tau_2<\cdots<t_{N_l}\},
\end{eqnarray*}
where
$$
N_r=\#{\cal T}_r,\qquad N_l=\#{\cal T}_l.
$$

\begin{enumerate}

\item The right separatrix of the
unstable manifold intersect $S_i^+$ and the left separatrix of the unstable manifold intersect $S_i^-$ (symmetrically, the right separatrix of the
unstable manifold intersect $S_i^-$ and the left separatrix of the unstable manifold intersect $S_i^+$).
\begin{itemize}

\item Orientation-reversing: for small $s>0$, $F^{N_r}(h_i^+((0,s)\times(-1,1)))\subset S_i^{+,l}$ and $F^{N_l}(h_i^-((-s,0)\times(-1,1)))\subset S_i^{-,r}.$

\item Orientation-preserving: for small $s>0$, $F^{N_r}(h_i^+((0,s)\times(-1,1)))\subset S_i^{+,r}$ and $F^{N_l}(h_i^-((-s,0)\times(-1,1)))\subset S_i^{-,l}.$

\item Mixing: for small $s>0$, $F^{N_r}(h_i^+((0,s)\times(-1,1)))\subset S_i^{+,r}$ and $F^{N_l}(h_i^-((-s,0)\times(-1,1)))\subset S_i^{-,r}$; or $F^{N_r}(h_i^+((0,s)\times(-1,1)))\subset S_i^{+,l}$ and $F^{N_l}(h_i^-((-s,0)\times(-1,1)))\subset S_i^{-,l}.$

\end{itemize}

\item Both the right separatrix and the left separatrix of the
unstable manifolds intersect $S_i^+$ (symmetrically, both the right separatrix and the left separatrix of the
unstable manifolds intersect $S_i^-$).
\begin{itemize}

\item Orientation-reversing: for small $s>0$, $F^{N_r}(h_i^+((0,s)\times(-1,1)))\subset S_i^{+,l}$ and $F^{N_l}(h_i^+((-s,0)\times(-1,1)))\subset S_i^{+,r}.$

\item Orientation-preserving: for small $s>0$, $F^{N_r}(h_i^+((0,s)\times(-1,1)))\subset S_i^{+,r}$ and $F^{N_l}(h_i^+((-s,0)\times(-1,1)))\subset S_i^{+,l}.$

\end{itemize}

\end{enumerate}

But in any case, the assumption of Proposition~\ref{Pro:generalresult} will be satisfied.
We only consider one case for instance. For the two-side case, without loss of generality, one can assume that the right separatrix of the
unstable manifold intersect $S_i^+$ and the left separatrix of the unstable manifold intersect $S_i^-$.


We considers the following case:

\begin{itemize}
\item For any $s>0$ small enough,  $F^{N_r}(h_i^+((0,s)\times(-1,1)))\subset S_i^{+,l}$ and $F^{N_l}(h_i^-((-s,0)\times(-1,1)))\subset S_i^{-,r}.$
\end{itemize}

Then for any $\pm\in\{+,-\}$, we have $F^{2(N_r+N_l)}(h_i^+((0,s)\times(-1,1)))\subset h_i^+((0,1)\times(-1,1))$ and $F^{2(N_r+N_l)}(h_i^+((-s,0)\times(-1,1)))\subset h_i^+((-1,0)\times(-1,1))$.The typical point of $\mu_Y$ will accumulate local stable manifold of $\sigma_i$. Then we can apply Proposition~\ref{Pro:generalresult} to get a contradiction.

\end{proof}

Now we can give a contradiction and finish the proof of Theorem~\ref{Thm:partialtoperiodic}.

\paragraph{The contradiction.} For any $Y\in{\cal M}_n$, and the transitive set $\Lambda_Y\subset
C(\sigma_Y)$, according to Theorem \ref{Thm:pujalssambarino}, $\Lambda_Y$ contains a singularity. Take a singularity $\rho=\rho_Y\in\Lambda_Y$. By Lemma \ref{Lem:homoclinicintransitive}, $\rho$ has a homoclinic orbit $\Gamma$. We assume that $\Gamma$ is the right
separatrix of $W^u(\rho)$. By Proposition \ref{Pro:notwohomoclinic}, the left separatrix of $W^u(\rho)$ is not a homoclinic orbit.

If $\Gamma\subset\Lambda_Y$, by Corollary~\ref{Cor:righttoleft}, the left separatrix of $W^u(\rho)$ is contained in $\Lambda_Y$. We assume that $\Gamma\cap \ell^+\neq\emptyset$. By Proposition \ref{Pro:intersectionontheotherside}, $\Lambda_Y\cap \ell^-\neq\emptyset$.

For any neighborhood ${\cal U}$ of $Y$, by using Lemma~\ref{Lem:wenxia} to a transitive orbit, we can get a $C^2$ vector field $Z\in{\cal U}$ without perturbing the homoclinic orbit of singularities of $Y$, which connects the orbit of the left separatrix of $W^u(\rho)$ and $\ell^-$. Then $Z$ has one more
homoclinic orbit of $\rho$. And then by Theorem~\ref{Thm:xiruibin}, there is a $C^2$ weak Kupka-Smale $Z'\in{\cal U}$ such that $Z'$ has at least $(n+1)$ homoclinic orbits of singularities. This fact contradicts the maximality of $n$. So, we proved that $\Gamma \not\subset\Lambda_Y$. Then by a similar argument, for any neighborhood ${\cal U}$ of $Y$, there is a $C^2$ weak Kupka-Smale $Z\in{\cal U}$ which connects the orbit of the left separatrix of $W^u(\rho)$ and some point $a\in(\ell^+\cup\ell^-)\cap \Lambda_Y$. Since $a\not\in \Gamma$, $Z$ also contains at least $(n+1)$ homoclinic orbit of singularities. This finishes the proof of Theorem \ref{Thm:partialtoperiodic}.

\subsection*{Acknowledgements.}

We thank L. Wen for his comments and encouragements, S. Crovisier who listened the proof carefully. We also thank C. Bonatti, Y. Shi, Y. Zhang and X. Wang for useful discussion.

\vskip 5pt

\noindent Shaobo Gan

\noindent Laboratory of Mathematics and Applied Mathematics (LMAM),

\noindent School of Mathematical Sciences

\noindent Peking University, Beijing, 100871, P. R. China

\noindent gansb@math.pku.edu.cn

\vskip 5pt

\noindent Dawei Yang

\noindent School of Mathematics,

\noindent Jilin University, Changchun, 130012, P.R. China

\noindent yangdw1981@gmail.com

\end{document}